\title{Homotopy coherent companionships and conjunctions}
\author{Jaco Ruit}
\address{Max-Planck-Institut f\"ur Mathematik, Vivatsgasse 7, Bonn, Germany}
\email{ruit@mpim-bonn.mpg.de} 
\newtheorem{thmintro}{Theorem}
\newcommand{\inewtheorem}[2]{
	\newaliascnt{#1}{thmintro}
	\newtheorem{#1}[#1]{#2}
	\aliascntresetthe{#1}
}
\newtheorem{theorem}{Theorem}
\newcommand{\jnewtheorem}[2]{
	\newaliascnt{#1}{theorem}
	\newtheorem{#1}[#1]{#2}
	\aliascntresetthe{#1}
}
\numberwithin{theorem}{section}
\theoremstyle{definition}
\numberwithin{table}{subsection}
\newcommand{\op}{\mathrm{op}}
\newcommand{\map}{\mathrm{Map}}
\def\colim{\qopname\relax m{colim}}
\newcommand{\fun}{\mathrm{Fun}}
\newcommand{\FUN}{\mathrm{FUN}}
\newcommand{\Cat}{\mathrm{Cat}}
\newcommand{\CAT}{\mathrm{CAT}}
\newcommand{\lax}{\mathrm{lax}}
\renewcommand{\S}{\mathscr{S}}
\newcommand{\C}{\mathscr{C}}
\renewcommand{\D}{\mathscr{D}}
\renewcommand{\P}{\mathscr{P}}
\newcommand{\Q}{\mathscr{Q}}
\newcommand{\E}{\mathscr{E}}
\newcommand{\X}{\mathscr{X}}
\newcommand{\Y}{\mathscr{Y}}
\newcommand{\N}{\mathbb{N}}
\renewcommand{\Vert}{\mathrm{Vert}}
\newcommand{\Hor}{\mathrm{Hor}}
\newcommand{\Seg}{\mathrm{Seg}}
\newcommand{\Hom}{\mathrm{Hom}}
\newcommand{\tp}{\mathrm{t}}
\newcommand{\DblCat}{\mathrm{DblCat}}
\newcommand{\PSh}{\mathrm{PSh}}
\newcommand{\Sq}{\mathrm{Sq}}	
\newcommand{\id}{\mathrm{id}}
\newcommand{\vop}{\mathrm{vop}}
\newcommand{\CCAT}{\mathbb{C}\mathrm{at}}
\newcommand{\SSPAN}{\mathbb{S}\mathrm{pan}}
\newcommand{\FFUN}{\mathbb{F}\mathrm{un}}
\newcommand{\hop}{\mathrm{hop}}
\newcommand{\vrectangle}{{\ooalign{\lower.3ex\hbox{$\sqcup$}\cr\raise.4ex\hbox{$\sqcap$}}}}
\newcommand{\Sp}{\mathrm{Sp}}
\newcommand{\comp}{\mathfrak{comp}}
\newcommand{\conj}{\mathfrak{conj}}
\newcommand{\adj}{\mathfrak{adj}}
\begin{document}

\begin{abstract}
We demonstrate that companionships and conjunctions in double $\infty$-categories --- and more generally, in double Segal spaces ---
extend to functors out of the free-living companionship and conjunction respectively.  Specifically, we prove that these 
extensions are (homotopically) unique: the corresponding spaces of extensions are contractible under suitable completeness assumptions. 
The developed theory is then put to use to give a characterization 
of companions and conjoints in functor double Segal spaces in terms of so-called companionable and conjointable 2-cells. 
We end with an application of our results to $(\infty,2)$-category theory.
\end{abstract}

\maketitle

\setcounter{tocdepth}{1}
\tableofcontents
	
\section{Introduction}

Double $\infty$-categories are two-dimensional $\infty$-categorical structures with objects, two directions of 1-cells: a horizontal and vertical direction, and a notion of 2-cells.
They are the weak $\infty$-categorical generalizations of Ehresmann's strict double categories \cite{Ehresmann}, and were first introduced in the Ph.D.\ thesis of Haugseng \cite{HaugsengPhD}. 
It is shown by Moser \cite{Moser} that the strict double categories embed fully faithfully into double $\infty$-categories.
In a precise sense, double $\infty$-categories can be viewed as a generalization of $(\infty,2)$-categories, 
which admit now two distinct directions for arrows. There are many examples of double $\infty$-categories. For instance, 
in \cite{HaugsengSpans}, Haugseng constructs 
the double $\infty$-categories of spans. In the realm of higher algebra, there are double $\infty$-categories of algebras, maps of algebras and bimodules \cite[Section 4.4]{HA}. 
In \cite{EquipI}, we discuss the construction of the double $\infty$-category 
of $\infty$-categories, functors and profunctors, as well as an internal variant. More generally, Haugseng constructs  a double $\infty$-category 
of suitably enriched $\infty$-categories in \cite{HaugsengEnriched}. 

\subsection{Double Segal spaces and the cartesian closed structure}

Throughout this article, we will view double $\infty$-categories as special kinds of \textit{double Segal spaces}. 
From this point of view, double $\infty$-categories are double Segal spaces with an extra \textit{completeness} or \textit{univalence} condition.  
A self-contained and detailed introduction to double Segal spaces, alongside with the different variations 
on completeness, is found in \ref{section.dss}. 

In particular, we will demonstrate here that the ambient $\infty$-category $$\Cat^2(\S)$$
of double Segal spaces is cartesian closed. To do so, we will use the language of \textit{exponential ideals}, to be recollected in \ref{section.prelims},
and verify that various localizations of double Segal spaces are  exponential ideals in the $\infty$-category $\PSh(\Delta^{\times 2})$ of presheaves on $\Delta^{\times 2}$. 
If $\P$ and $\Q$ are double Segal spaces, then the associated internal hom in $\Cat^2(\S)$ of functors between $\P$ and 
$\Q$ is denoted by 
$$
\FFUN(\P,\Q).
$$
This double Segal space of functors may 
be computed as the internal hom in $\PSh(\Delta^{\times 2})$. The vertical and horizontal arrows 
of $\FFUN(\P,\Q)$ are called \textit{vertical natural transformations} and \textit{horizontal natural transformations}.
A horizontal natural transformation $\alpha : h \rightarrow k$ between two
functors $h,k : \P \rightarrow \Q$ is compromised of the following data:
\begin{itemize}
	\item for every object $x\in \P$, a horizontal arrow in $\Q$ $$\alpha_x : h(x) \rightarrow k(x),$$
	\item for every vertical arrow $f : x \rightarrow y$ in $\P$, a naturality 2-cell in $\Q$
	\[
		\begin{tikzcd}
			h(x) \arrow[r, " \alpha_x"name=f]\arrow[d, "h(f)"'] & k(x) \arrow[d, "k(f)"] \\
			h(y) \arrow[r, "\alpha_y"name=t] & k(y),
			\arrow[from=f,to=t,Rightarrow, shorten <= 6pt]
		\end{tikzcd}
	\]
\end{itemize}
and the data of (usually, infinitely) many coherences. This is a double $\infty$-categorical variant 
on the notions of vertical and horizontal natural transformations that were studied by Grandis and Par\'e \cite{GrandisPareLimits} for strict double categories.

\subsection{Companionships and conjunctions}
The main result of this paper concerns \textit{companionships} and \textit{conjunctions} in double Segal spaces.
These are double categorical analogs of the adjunctions
for 2-categories, and were introduced by Grandis--Par\'e \cite{GrandisPare} in the strict context.
Likewise, these analogs play an important
role in the theory and applications of double categories \cite{ShulmanFramedBicats} \cite{DawsonParePronk} \cite{Vasilakopoulou}.
The notions of companions and conjoints may be readily generalized to the weak $\infty$-categorical setting.

In this higher setting, companions and conjoints were first considered  (albeit under a different name) by Gaitsgory and Rozenblyum to set up the $(\infty,2)$-categorical theory for 
their six-functor formalisms in derived algebraic geometry \cite{GR}. 
Moreover, companions and conjoints play a central role in a double $\infty$-categorical approach to \textit{formal category theory} \cite{EquipI}. In this approach, 
companionships and conjunctions witness (co)representability of \textit{abstract profunctors}, i.e.\ abstract families of presheaves.

A companionship between a vertical arrow $f : x \rightarrow y$ and a horizontal arrow $F: x \rightarrow y$ 
in a double Segal space $\P$ is witnessed by the following data:
\begin{enumerate}
	\item a \textit{companionship unit} 2-cell 
	\[
		\eta = \begin{tikzcd}
			x \arrow[r,equal, ""name=f]\arrow[d,equal] & x \arrow[d, "f"] \\
			x \arrow[r, "F"'name=t] & y,
			\arrow[from=f,to=t,Rightarrow, shorten <= 6pt, shorten >= 6pt]
		\end{tikzcd}
	\]
	\item a \textit{companionship counit} 2-cell 
	\[
		\epsilon = \begin{tikzcd}
			x \arrow[r,"F"name=f]\arrow[d,"f"'] & y \arrow[d, equal] \\
			y \arrow[r, equal, ""name=t] & y,
			\arrow[from=f,to=t,Rightarrow, shorten <= 6pt]
		\end{tikzcd}
	\]
	\item an equivalence in the space of 2-cells of $\P$ between the vertical identity 2-cell 
	\[
		\begin{tikzcd}
			x \arrow[d,"f"'name=f]\arrow[r,equal] & x\arrow[d, "f"name=t]\\
			y \arrow[r,equal] & y,
			\arrow[from=f,to=t,equal, shorten <= 10pt, shorten >= 10pt]
		\end{tikzcd}
	\]and the vertical pasting of $\eta$ and $\epsilon$,
	\item an equivalence in the space of 2-cells of $\P$ between the horizontal identity 2-cell 
	\[
		\begin{tikzcd}
			x \arrow[r,"F"name=f]\arrow[d,equal] & y \arrow[d, equal] \\
			x \arrow[r, "F"'name=t] & y,
			\arrow[from=f,to=t,equal, shorten <= 6pt, shorten <= 6pt, shorten >= 6pt]
		\end{tikzcd}
	\]and the horizontal pasting of $\eta$ and $\epsilon$.
\end{enumerate}
In this context, $F$ is called the companion of $f$. The equivalences of (3) and (4) witness the \textit{triangle identities} for the companionship. The terminology 
is chosen to reflect the similarity with adjunctions.
The definition of conjunctions is (formally) dual to the definition of companionships, and will be given in \ref{section.comp-conj}. 
We will also consider some examples of companionships and conjunctions in this section.

It is a celebrated result of Riehl and Verity \cite{RiehlVerityAdj} that particular choices of adjunction data in an $(\infty,2)$-category 
uniquely (in the homotopic sense) upgrade to a homotopy coherent adjunction, i.e.\ a functor out of the \textit{free-living adjunction} of Schanuel--Street \cite{SchanuelStreet}. 
One of the central objectives of this paper is to prove an analogous result for companionships and conjunctions.

We will show that, likewise, every companionship in $\P$ can uniquely be upgraded to a \textit{homotopy coherent} companionship. 
Analogously, a homotopy coherent companionship is another name for a functor 
from the \textit{free-living companionship} double Segal space $$\comp$$
to $\P$. The double Segal space $\comp$ is level-wise discrete and described as follows. 
The space $\comp_{n,m}$ of $n \times m$ grids of 2-cells in $\comp$ is given by the \textit{set}
$$
{\mathrm{Poset}}([n] \times [m], [1])
$$
of maps of posets $[n] \times [m] \rightarrow [1]$.
We will show the following results:

\begin{thmintro}
Any companionship in a double Segal space $\P$ extends to a homotopy coherent companionship $\comp \rightarrow \P$. Moreover, 
suppose that $\eta$ is a companionship unit. Then the space of 
functors $\comp \rightarrow \P$ extending $\eta$ is contractible.
\end{thmintro}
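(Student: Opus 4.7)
The plan is to prove the stronger statement that the restriction map
\[ \map_{\Cat^2(\S)}(\comp, \P) \longrightarrow \map_{\Cat^2(\S)}(\mathbb{U}, \P) \]
is an equivalence for every double Segal space $\P$, where $\mathbb{U} \subseteq \comp$ is the sub-double-Segal-space generated by the 2-cell $\eta \in \comp_{1,1}$ corresponding under $\comp_{1,1} = \mathrm{Poset}([1]\times [1], [1])$ to the upward-closed subset $\{(1,1)\}$. Equivalently, $\mathbb{U}$ is the quotient of the representable $\Delta^{1,1}$ that collapses its top horizontal and left vertical edges to identities. By the Yoneda lemma in $\PSh(\Delta^{\times 2})$, mapping $\mathbb{U}$ into $\P$ recovers precisely the space of unit-shaped 2-cells in $\P$, i.e.\ the space of companionship units. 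Contractibility of the fibers of the above restriction then yields the uniqueness clause of the theorem, and the existence clause follows at once: any companionship provides a unit $\eta$ whose fiber is in particular nonempty, and any point of that fiber supplies a desired extension $\comp \to \P$.

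To establish the equivalence, I would filter $\comp$ by sub-double-Segal-spaces $\mathbb{U} = \mathbb{F}_0 \subseteq \mathbb{F}_1 \subseteq \cdots$ with colimit $\comp$, indexed by a complexity measure (for instance, bidegree sum plus the area of the staircase) on the non-degenerate cells of $\comp$ — which are exactly the staircase poset maps $\phi : [n]\times[m] \to [1]$ that are injective on each axis. At each step $\mathbb{F}_k \subseteq \mathbb{F}_{k+1}$ one adjoins a single new staircase, expressed as a pushout
\[
\begin{tikzcd}
\mathrm{Sp}(\phi) \arrow[r]\arrow[d] & \mathbb{F}_k \arrow[d] \\
\mathbb{P}(\phi) \arrow[r] & \mathbb{F}_{k+1},
\end{tikzcd}
\]
in which $\mathbb{P}(\phi)$ is the sub-object generated by $\phi$ and $\mathrm{Sp}(\phi) \subseteq \mathbb{P}(\phi)$ is its Segal spine, assembled from the unit 2-cells of shape $\{(i,j)\}$ that paste together to produce $\phi$. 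Because $\P$ satisfies the Segal conditions, restriction along this spine inclusion is an equivalence on mapping spaces into $\P$; hence so is each step of the filtration, and transfinite composition produces the desired equivalence.

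The main technical obstacle is the combinatorial content of the previous paragraph: verifying that every non-degenerate staircase $\phi$ admits a canonical decomposition as a horizontal and vertical pasting of unit 2-cells, and that the resulting spine inclusions $\mathrm{Sp}(\phi) \hookrightarrow \mathbb{P}(\phi)$ are genuinely Segal-anodyne in the homotopy theory of $\PSh(\Delta^{\times 2})$. Notably, the counit $\epsilon$ (the upward-closed subset $\{(0,1),(1,0),(1,1)\}$) and the 2-cells witnessing the triangle identities (3)--(4) do not lie in $\mathbb{U}$, but must be produced from $\eta$ by the pasting procedure; this is the structural reason the triangle identities are not extra data on the unit but are contractibly determined by it, in parallel with the Riehl--Verity treatment of coherent adjunctions.
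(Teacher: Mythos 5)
Your global architecture matches the paper's: the subobject $\mathbb{U}$ you describe is exactly the free-living lower triangle $L$ of \ref{def.lower-triangle}, the paper likewise reformulates the theorem as the statement that restriction along $L \rightarrow \comp$ is an equivalence onto the subspace $C_\P$ of companionship units, and it likewise filters $\comp$ by subobjects generated by staircase bisimplices (\ref{con.filtration-comp}, \ref{lem.filtration-comp}). However, there is a genuine gap at the heart of your argument: the claim that each attaching map $\mathrm{Sp}(\phi) \hookrightarrow \mathbb{P}(\phi)$ is Segal-anodyne, so that restriction along it is an equivalence for \emph{every} double Segal space $\P$ ``because $\P$ satisfies the Segal conditions,'' cannot be correct. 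If the composite inclusion $L \rightarrow \comp$ were contained in the saturation of (\textit{Seg}), then every 2-cell of $\P$ of the shape $L$ --- not just the companionship units --- would extend essentially uniquely to a functor $\comp \rightarrow \P$, and hence every such 2-cell would be a companionship unit; this is plainly false, and indeed the theorem asserts an equivalence onto the subspace $C_\P$, with empty fibers over non-units. You acknowledge at the end that the counit and the triangle identities must be ``produced from $\eta$ by the pasting procedure,'' but this is in direct tension with your proposed mechanism: a pure spine/Segal argument never uses the hypothesis that $\eta$ is a unit, so it cannot produce them.

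What is actually needed, and what the paper spends most of its effort on, is a \emph{conditional} extension result: the inclusions arising in the filtration are horn-type inclusions $\Gamma^0_L[n,m] \rightarrow L[n,m]$ (see \ref{not.special-lifting-notation}) that are \emph{not} inner/Segal-anodyne, and restriction along them becomes an equivalence only after pulling back to the locus where the distinguished $(1,1)$-faces are companionship units. This is \ref{thm.lifting-thm}, the analogue of Joyal's special outer horn lifting theorem; its proof reduces to the case $(n,m)=(1,2)$ and there constructs an explicit inverse to the restriction map by pasting in the counit $\epsilon$ and invoking the triangle identities, using the pasting-shape calculus. Your proposal is missing this entire ingredient, which is where the companionship hypothesis enters and where the real combinatorial and homotopical work lies; without it the argument proves too much and therefore proves nothing.
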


\begin{thmintro}
Suppose that $f$ is a vertical arrow in a locally complete double Segal space $\P$. 
If $f$ has a companion, then the space of functors $\comp \rightarrow \P$ extending $f$ is contractible.
\end{thmintro}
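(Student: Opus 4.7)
The plan is to reduce Theorem B to a uniqueness statement about companions by means of Theorem A. Restriction along the inclusion of the companionship unit 2-cell in $\comp$, followed by further restriction to its vertical leg, produces a factorisation
$$
\map(\comp, \P) \xrightarrow{U^*} \mathrm{CompUnit}(\P) \xrightarrow{V^*} \P_v
$$
of the evaluation map that sends a homotopy coherent companionship to its vertical leg, where $\mathrm{CompUnit}(\P)$ denotes the subspace of companionship units inside the space of unit-shaped 2-cells of $\P$. Theorem A precisely asserts that $U^*$ is an equivalence: the extension part gives essential surjectivity onto $\mathrm{CompUnit}(\P)$ and the uniqueness part supplies contractibility of fibers. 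Taking the fiber over $f$, the statement therefore reduces to showing that $\mathrm{CompUnit}(\P; f)$, the space of companionship units in $\P$ whose vertical leg is $f$, is contractible whenever $f$ admits a companion.

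For this uniqueness claim the informal argument is standard. Given two companionships $(F_0, \eta_0, \epsilon_0)$ and $(F_1, \eta_1, \epsilon_1)$ over $f$, the horizontal pasting of $\eta_1$ after $\epsilon_0$ exhibits a canonical 2-cell $F_0 \Rightarrow F_1$ which the triangle identities show to be an equivalence with canonical inverse. Local completeness of $\P$ is the hypothesis needed to promote such local equivalences into genuine paths in the spaces of horizontal arrows and unit 2-cells of $\P$, and thereby into a single path in $\mathrm{CompUnit}(\P;f)$ between the two chosen companionship units.

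Rendering this sketch homotopy coherent is the core difficulty. One concrete approach is to fix a chosen companionship $(F_0, \eta_0, \epsilon_0)$ with vertical leg $f$ and to construct a deformation retraction of $\mathrm{CompUnit}(\P; f)$ onto the point $(F_0, \eta_0)$ by systematic pasting with $\epsilon_0$, relying on local completeness to rigidify the pasting identities into actual homotopies. A more structural alternative, which I expect to prefer, is to introduce the pushout $\comp \cup_{\Delta^1_v} \comp$, encoding two companionships sharing a vertical leg, and to prove that the fold map onto $\comp$ induces an equivalence on $\map(-, \P)$. By the universal property of the pushout, this is equivalent to the contractibility of the fibers of $\map(\comp,\P) \to \P_v$ at companionable vertical arrows, and hence yields the desired statement. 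Establishing that the fold induces such an equivalence is where Theorem A and local completeness will do the heavy lifting, and I expect it to proceed by a relative reapplication of the extension-and-uniqueness argument underlying Theorem A itself.
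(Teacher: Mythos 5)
Your reduction is exactly the one the paper makes: by \ref{thm.htpy-coh-comp-1} the restriction $\map_{\Cat^2(\S)}(\comp,\P) \to C_\P$ is an equivalence, so the theorem becomes the claim that $C_\P \to \map_{\Cat^2(\S)}([1]_v,\P)$ is a monomorphism whose image is the companionable arrows, i.e.\ that the space of companionship units over a fixed $f$ is empty or contractible. Your ``structural alternative'' via the fold map is also the paper's route (carried out at the level of $L \cup_{[1]_v} L \to L$ rather than $\comp \cup_{[1]_v}\comp \to \comp$, which is equivalent after \ref{thm.htpy-coh-comp-1}). So the architecture is right.

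The gap is that the decisive step --- making the uniqueness of units over $f$ homotopy coherent --- is not actually carried out; you name it as ``the core difficulty'' and then offer two strategies without executing either, and the phrase ``a relative reapplication of the extension-and-uniqueness argument underlying Theorem A'' does not suffice, because the extension theorem \ref{thm.lifting-thm} alone never compares two \emph{different} companions of the same $f$. Two further ingredients are needed. First, one factors the fold map as $L \cup_{[1]_v} L \to A \to L$, where $A$ encodes a companionship unit vertically composed with a horizontal $2$-cell; the first map is a pushout along $\Gamma^0_L[1,2]\to L[1,2]$, so \ref{thm.lifting-thm} applies, and one then needs the observation (\ref{lem.two-companionship-units}) that the composite of a unit $\eta$ with a horizontal $2$-cell $\alpha$ is again a unit precisely when $\alpha$ is invertible in $\Hor(\P)$ --- this is the precise form of your informal ``$\epsilon_0$ pasted with $\eta_1$'' comparison. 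Second, ``local completeness promotes equivalences to paths'' has to be implemented: the paper does this by applying Rezk's completeness theorem to $\Hor(\P)_{1,\bullet}$, so that $\map([1;J]_h,\P) \to \map([1;1]_h,\P)$ is a monomorphism onto the invertible $2$-cells, and then contracting along the (\textit{lhc})-local map $L \cup_{[1]_h}[1;J]_h \to L$. Without these two inputs neither your deformation-retraction sketch nor the fold-map strategy closes; with them, your outline becomes the paper's proof.
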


\noindent The precise statements appear as \ref{thm.htpy-coh-comp-1} and \ref{thm.htpy-coh-comp-2}. 
Note that the above results justify that the double Segal space $\comp$ carries the name of the free-living companionship. 
We will explain in \ref{section.comp-conj} that one can use the above results to formally obtain dual results for conjunctions. 
In this case, 
the double Segal space $\comp$ is replaced by its appropriate dual, the \textit{free-living conjunction}, denoted by
$$
\conj.
$$
We will elucidate the relation of our results to the work of Riehl--Verity \cite{RiehlVerityAdj} in \ref{rem.free-adj}.

\subsection{Companions of vertical natural transformations} 

We will use the results of \ref{section.comp-conj} in \ref{section.comp-conj-fundbl} 
to  study companions (and dually, conjoints) in functor double Segal spaces. 
The key step is the introduction of the 
notion of \textit{companionable} and \textit{conjointable} 2-cells in a double Segal space. In a precise sense, 
this is a double $\infty$-categorical variant on the $(\infty,2)$-categorical notion of \textit{adjointability} \cite[Definition 7.3.1.2]{HTT} for lax commutative squares. We will explain this in \ref{ex.conjointable-sq}. The 
main result of \ref{section.comp-conj-fundbl} is the following:

\begin{thmintro}\label{thmintro.companions-fun-dblcat}
Let $\P$ and $\Q$ be double Segal spaces, so that $\Q$ is locally complete. Suppose that $\alpha : h \rightarrow k$ is a horizontal natural transformation between functors $h,k : \P \rightarrow \Q$. 
Then the following assertions are equivalent:
\begin{enumerate}
	\item $\alpha$ is a companion in $\FFUN(\P,\Q)$,
	\item for every vertical arrow $f : x \rightarrow y$ in $\P$, the associated naturality 2-cell 
	\[
		\begin{tikzcd}
			h(x) \arrow[r, " \alpha_x"name=f]\arrow[d, "h(f)"'] & k(x) \arrow[d, "k(f)"] \\
			h(y) \arrow[r, "\alpha_y"'name=t] & k(y)
			\arrow[from=f,to=t,Rightarrow, shorten <= 6pt, shorten >= 6pt]
		\end{tikzcd}
	\]
	is companionable in $\Q$.
\end{enumerate}
\end{thmintro}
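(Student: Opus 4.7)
The implication $(1) \Rightarrow (2)$ is by direct evaluation. Applying Theorem A to the double Segal space $\FFUN(\P,\Q)$, the companionship structure on $\alpha$ upgrades to a homotopy coherent companionship $\Phi\colon \comp \to \FFUN(\P,\Q)$, whose transpose $\widetilde\Phi\colon \P \times \comp \to \Q$ across the cartesian closed structure restricts along any vertical arrow $f\colon x \to y$ in $\P$ to a functor $\comp \to \Q$. The latter is a homotopy coherent companionship in $\Q$ whose underlying 2-cell is $\alpha_f$, witnessing the companionability of $\alpha_f$.

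For $(2) \Rightarrow (1)$, the strategy is to reformulate the existence of a companion of $\alpha$ as a lifting problem. Let $H$ denote the walking horizontal arrow, viewed as a double Segal space. The canonical inclusion $H \hookrightarrow \comp$ selecting the horizontal companion induces a restriction functor $\iota\colon \FFUN(\comp, \Q) \to \FFUN(H, \Q)$. Via the cartesian closed structure, producing a homotopy coherent companionship on $\alpha$ is equivalent to lifting the classifying functor $\widetilde\alpha\colon\P \to \FFUN(H,\Q)$ through $\iota$; by Theorem A applied to $\FFUN(\P,\Q)$ (which inherits local completeness from $\Q$), this is in turn equivalent to $\alpha$ being a companion in $\FFUN(\P,\Q)$.

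The crux of the proof is to show that $\iota$ is a levelwise subspace inclusion whose image is characterized in terms of companionability. Specifically, by applying Theorem B fibrewise, for each bisimplicial level $(n,m)$ and each cell $\sigma \in \FFUN(H,\Q)_{n,m}$, the fibre of $\iota$ over $\sigma$ is either empty or contractible, with non-emptiness governed by a pointwise companionability condition on $\sigma$. Granted this, the lifting problem is resolved by verifying that $\widetilde\alpha$ lands in the image of $\iota$ at each bisimplicial level. Hypothesis $(2)$ supplies precisely what is needed: at the object level it gives that each $\alpha_x$ is a companion (by applying $(2)$ to $f=\id_x$), and at the level of vertical arrows it gives the companionability of each $\alpha_f$. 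Higher-dimensional conditions follow formally from the Segal conditions on $\P$ and the uniqueness aspect of Theorem B.

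The main obstacle lies in carefully analyzing $\iota$ at each bisimplicial level and verifying that local completeness of $\Q$ persists through the relevant internal homs. This is where the homotopy coherence results of the previous section are indispensable: the uniqueness in Theorem B, applied in families, is what allows a pointwise companionability condition to automatically produce a globally coherent companion of $\alpha$ in $\FFUN(\P,\Q)$, without having to assemble an infinite tower of coherences by hand.
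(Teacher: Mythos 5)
Your high-level strategy matches the paper's: both directions are routed through the homotopy-coherence theorems, the converse is recast as an extension problem along $[1]_h \rightarrow \comp$ (the paper uses \ref{thm.htpy-coh-comp-2} to see that $\map(\comp,\FFUN(X,\Q)) \rightarrow \map([1]_h,\FFUN(X,\Q))$ is a monomorphism onto the companions), and one reduces to cells $[n,m]$ and then to $X=[1]_h$ and $X=[1]_v$ via the Segal decomposition. However, there is a genuine gap at the heart of $(2)\Rightarrow(1)$. You assert that the fibre of $\iota$ over a cell $\sigma\in\FFUN(H,\Q)_{n,m}$ is nonempty precisely when a ``pointwise companionability condition'' holds. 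What \ref{cor.htpy-coh-comp-3} actually gives is that the fibre is nonempty iff $\sigma$ is a companion in $\FFUN([n,m],\Q)$; translating \emph{that} into a pointwise companionability condition is exactly the statement of the theorem for $X=[n,m]$, so as written the argument is circular. The real content, which your sketch defers to ``follows formally,'' is concentrated in the base cases: for $X=[1]_v$ one must show that a companionable 2-cell $[1]_v\times[1]_h\rightarrow\Q$ extends over $[1]_v\times\comp$, which the paper achieves by exhibiting $[1]_v\times L$ as a pushout coequalizing two explicit pastings $c_1,c_2$ (\ref{lem.companions-cotensor1}), so that companionability in the form of \ref{prop.alternative-char-companionables} is precisely the datum needed to descend; the case $X=[1]_h$ needs \ref{lem.companions-cotensor2}. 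Both rest on the pointwise recognition of companionship \emph{units} in functor double Segal spaces (\ref{lem.companionship-units-fundbl}), which itself requires a nontrivial filtration by explicit bisimplices fed into the special extension theorem \ref{thm.lifting-thm}. None of this combinatorial input is supplied, and it cannot be replaced by appeals to uniqueness.

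There is also a smaller gap in $(1)\Rightarrow(2)$: restricting $\widetilde\Phi$ along a vertical arrow $f$ yields a map $[1]_v\times\comp\rightarrow\Q$, not a map $\comp\rightarrow\Q$, and knowing that each component $\alpha_x$ underlies a coherent companionship does not by itself show that the naturality 2-cell is companionable. One must still identify the relevant pasting of units and counits with an identity 2-cell; the paper extracts this from the fact that $c_1$ and $c_2$ are coequalized by $q:Q\rightarrow[1]_v\times L$. Your direction of travel is right, but this identification is a step, not an observation.
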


\noindent The precise statement appears as \ref{thm.companions-fun-dblcat} in the paper.

\subsection{An application to $(\infty,2)$-category theory} As evident in the 
work of Gaitsgory--Rozenblyum \cite{GR}, having good foundations of double $\infty$-categories 
can be fruitful in building the theory of $(\infty,2)$-categories. We will attest to this principle by giving the following 
application of \ref{thmintro.companions-fun-dblcat} to $(\infty,2)$-category theory.

Firstly, we will show in \ref{section.app-lax-nt} that, given $(\infty,2)$-categories $\X$ and $\Y$, one can 
construct an associated double $\infty$-category 
$$
\FFUN^\lax(\X, \Y),
$$
as a certain functor double $\infty$-category, such that:
\begin{itemize}
	\item its objects are functors $\X \rightarrow \Y$,
	\item its vertical arrows are natural transformations,
	\item its horizontal arrows are \textit{lax} natural transformations.
\end{itemize}
Here, the lax natural transformations are defined using a double categorical 
construction of the Gray tensor product that is due to Gaitsgory--Rozenblyum.
The {horizontal fragment} is given by the $(\infty,2)$-category $\FUN^\lax(\X,\Y)$ 
of functors and lax natural transformations.
We will show that \ref{thmintro.companions-fun-dblcat} can be applied to give a quick, double categorical proof of the following theorem of Haugseng \cite{HaugsengLax}:

\begin{thmintro}\label{thmintro.haugseng}
	Let $\beta : k \rightarrow h$ be a lax natural transformation between functors $k, h : \X \rightarrow \Y$. Then the 
	following assertions are equivalent:
	\begin{enumerate}
		\item $\beta$ is a right adjoint in $\FUN^\lax(\X, \Y)$,
		\item for any arrow $f : x \rightarrow y$ in $\X$, the horizontal morphisms in the lax square
		\[
			\begin{tikzcd}
				k(x) \arrow[r, "\beta_x"] \arrow[d, "k(f)"']& |[alias=f]| h(x) \arrow[d, "h(f)"] \\
				|[alias=t]| k(y) \arrow[r, "\beta_y"'] & h(y)
				\arrow[from=f,to=t, Rightarrow]
			\end{tikzcd}
		\]
		admit left adjoints and the associated mate of the square is an equivalence.	
	\end{enumerate}
\end{thmintro}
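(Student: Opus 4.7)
The plan is to deduce \ref{thmintro.haugseng} from the conjunction-dual of \ref{thmintro.companions-fun-dblcat}, bridged by the observation that right adjoints in the horizontal fragment $\FUN^\lax(\X, \Y)$ of $\FFUN^\lax(\X, \Y)$ correspond precisely to conjoints of vertical arrows in the ambient double $\infty$-category. First I would invoke the construction of $\FFUN^\lax(\X, \Y)$ from \ref{section.app-lax-nt}, noting that its local completeness follows from that of $\Y$; and I would record \ref{ex.conjointable-sq}, which identifies conjointable 2-cells in $\Y$ with precisely those lax squares whose horizontal arrows admit left adjoints and whose associated mate is an equivalence.

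The key structural claim is: for any horizontal arrow $\beta$ in $\FFUN^\lax(\X, \Y)$, being a right adjoint in $\FUN^\lax(\X, \Y)$ is equivalent to being a conjoint of some vertical arrow in $\FFUN^\lax(\X, \Y)$. For the ``conjoint implies right adjoint'' direction, one uses that every vertical (strict) natural transformation $\gamma$ automatically admits a companion in $\FFUN^\lax(\X, \Y)$ --- namely itself, viewed as a lax transformation via identity 2-cells --- and a (companion, conjoint) pair of a common vertical arrow always witnesses an adjunction in the horizontal fragment. For the converse, one must, from an adjunction $\alpha \dashv \beta$ of lax transformations, produce a strict natural transformation $\gamma$ of which $\beta$ is the conjoint. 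Granted this equivalence, the conjunction-dual of \ref{thmintro.companions-fun-dblcat} (obtained formally as described in \ref{section.comp-conj}) yields that $\beta$ is a right adjoint if and only if each naturality 2-cell of $\beta$ is conjointable in $\Y$, and then \ref{ex.conjointable-sq} unpacks this into condition (2).

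The main obstacle is the ``right adjoint implies conjoint'' direction: extracting a strict $\gamma$ from an adjunction of lax transformations requires care, since adjunction data a priori lives only at the lax level. The cleanest workaround is to prove $(1) \Rightarrow (2)$ directly --- the pointwise adjunctions $\alpha_x \dashv \beta_x$ are immediate from evaluation at objects, and the mate equivalence follows from the triangle identities applied to the mutual naturality squares of $\alpha$ and $\beta$ --- and only then recover $\gamma$ for the conjoint interpretation via the conjunction-dual of \ref{thmintro.companions-fun-dblcat}. This routes the argument through Theorem A in a way that leverages the homotopy-coherent uniqueness it provides, rather than wrestling with explicit coherence data by hand.
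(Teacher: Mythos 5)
Your proposal matches the paper's own route: the paper likewise proves the theorem by identifying right adjoints in $\FUN^\lax(\X,\Y)$ with conjoints in $\FFUN^\lax(\X,\Y)$ (via \ref{prop.adjunctions-cotensor-hor} applied to $\P = \Sq(\Y)$, where every horizontal arrow is a companion) and then translating conjoint-ness into condition (2) through the conjoint dual of \ref{thm.companions-fun-dblcat} together with \ref{ex.conjointable-sq}. The only difference is organizational: where you close the implication cycle by proving $(1)\Rightarrow(2)$ directly with a pointwise mate computation, the paper packages that same computation into the hard direction of \ref{prop.adjunctions-cotensor-hor}, which is exactly the verification --- using the pointwise decompositions of the adjunction unit and counit and the companionship and conjunction identities --- that the relevant mates are invertible.
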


\subsection*{Conventions} We will use the language of $\infty$-categories as developed by Joyal and Lurie. Moreover, we  use the following notational conventions:
\begin{itemize}
	\item The  $\infty$-categories of spaces (i.e.\ $\infty$-groupoids) and $\infty$-categories are denoted by $\S$ and $\Cat_\infty$ respectively. 
	\item For an $\infty$-category $\C$, we write $\PSh(\C) := \fun(\C^\op, \S)$ for the $\infty$-category of pre\-sheaves on $\C$.
	\item We will use uppercase notation for $(\infty,2)$-categorical upgrades of particular $\infty$-categories. The double $\infty$-categorical variants will be decorated with blackboard bolds.
	For instance, in this article, we will see both the $(\infty,2)$-category $\CAT_\infty$ and the double $\infty$-category $\CCAT_\infty$ of $\infty$-categories.
\end{itemize}

\subsection*{Acknowledgements}
The results in this article formed a part of my Ph.D. thesis, and  I would like to express my gratitude towards my supervisor, Lennart Meier, for the helpful comments and discussions on the contents of this paper.
I thank Viktoriya Ozornova for her remarks on a preliminary version.
During a period of writing this paper, the author was funded by the Dutch Research Council (NWO) through the grant ``The interplay of orientations and symmetry'', grant no. OCENW.KLEIN.364.
The author is also grateful to the Max Planck Institute for Mathematics in
Bonn for its hospitality and financial support.

\section{Preliminaries on exponentiability}\label{section.prelims}

This first section is dedicated to reviewing and collecting both the 
definitions and basic results concerning \textit{exponentiable objects} and \textit{ideals} in $\infty$-categories, and \textit{(locally) cartesian closed} $\infty$-categories.

\begin{definition}
	Let $\C$ be an $\infty$-category with finite products. Then an object $x \in \C$ is called \textit{exponentiable} 
	if the functor $x \times (-) : \C \rightarrow \C$ admits a right adjoint. If all objects 
	in $\C$ are exponentiable, then $\C$ is called \textit{cartesian closed}.
\end{definition}

\begin{notation}
	Let $\C$ be a cartesian closed $\infty$-category. Then we will denote the internal hom functor by 
	$$
	\Hom_\C : \C^\op \times \C \rightarrow \C,
	$$
	which has the defining property that its composite with the Yoneda embedding 
	$\C \rightarrow \PSh(\C)$ is given by 
	$$\C^\op \times \C \rightarrow \PSh(\C) : (x,y)  \mapsto \map_\C((-) \times x, y).$$
\end{notation}

There is also a local notion of exponentiability. To this end, note that if $\C$ is an $\infty$-category with pullbacks then all 
its slices admit finite products and these are computed by pullbacks in $\C$.

\begin{proposition}\label{prop.exp-arrows}
	Suppose that $f : x\rightarrow y$ is an arrow in an $\infty$-category $\C$ with pullbacks. Then the following assertions are equivalent:
	\begin{enumerate}
		\item the functor $f^* : \C/y \rightarrow \C/x$ admits a right adjoint,
		\item the arrow $f$ is exponentiable when viewed as an object of $\C/y$,
		\item for every arrow $g : a \rightarrow y$, the functor $(f^*g)^* : \C/a \rightarrow \C/(a \times_y x)$ admits a right adjoint.
	\end{enumerate}
\end{proposition}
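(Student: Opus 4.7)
The plan is to establish $(1)\Leftrightarrow(2)$ by exploiting the forgetful-pullback adjunction together with the internal product structure of $\C/y$, and to obtain $(1)\Leftrightarrow(3)$ by base-change stability of right adjoints along slices. For $(1)\Leftrightarrow(2)$, the starting point is the adjunction $\Sigma_f\dashv f^*$, where $\Sigma_f:\C/x\to\C/y$ denotes postcomposition with $f$. Since the terminal object of $\C/y$ is $\id_y$ and binary products in $\C/y$ are pullbacks in $\C$ over $y$, the product-with-$f$ functor on $\C/y$ factors as the composite $\Sigma_f\circ f^*$. The direction $(1)\Rightarrow(2)$ is then formal: if $f^*\dashv \Pi_f$, then $\Pi_f\circ f^*$ is right adjoint to $\Sigma_f\circ f^*$.

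For the converse $(2)\Rightarrow(1)$, I would construct $\Pi_f$ explicitly from the exponentials. Viewing $p:c\to x$ in $\C/x$ as a morphism $p:C\to X$ in $\C/y$ between the objects $X=(x\xrightarrow{f}y)$ and $C=(c\xrightarrow{fp}y)$, set
$$
\Pi_f p \;:=\; C^X \times_{X^X} \{\id_X\},
$$
where $\{\id_X\}:1\to X^X$ is the point of the self-exponential classifying $\id_X$, the map $C^X\to X^X$ is induced by $p$, and the pullback is formed in $\C/y$ (which has pullbacks since $\C$ does). The required natural equivalence $\map_{\C/y}(q,\Pi_f p)\simeq \map_{\C/x}(f^*q,p)$ then follows by unfolding both sides as fibers of mapping spaces: the exponential adjunction identifies $\map_{\C/y}(q,\Pi_f p)$ with the fiber of the postcomposition map $\map_{\C/y}(X\times_y q,C)\to \map_{\C/y}(X\times_y q,X)$ over the projection $\pi_X$, and this fiber is precisely $\map_{\C/x}(f^*q,p)$ by the universal property of the pullback defining $f^*q$.

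For $(1)\Leftrightarrow(3)$, the direction $(3)\Rightarrow(1)$ is immediate on taking $g=\id_y$, so that $f^*g=f$, $a\times_y x=x$, and $(f^*g)^*=f^*$. For $(1)\Rightarrow(3)$, I would fix $g:a\to y$ and use the Joyal slice identifications $\C/a\simeq(\C/y)/g$ and $\C/(a\times_y x)\simeq(\C/x)/f^*g$; under these, the pullback functor $(f^*g)^*$ corresponds to the functor on slices induced by $f^*$. The general $\infty$-categorical fact that an adjunction $f^*\dashv \Pi_f$ induces an adjunction on slices when the source has pullbacks, with induced right adjoint sending $(r\to f^*g)$ to the pullback $\Pi_f r\times_{\Pi_f f^*g} g$ along the unit $g\to \Pi_f f^*g$, then supplies the desired right adjoint.

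The most delicate step is $(2)\Rightarrow(1)$: although the pullback formula for $\Pi_f$ is classical, verifying the naturality and coherence of the equivalence of mapping spaces in the $\infty$-categorical setting requires some care to set up cleanly. Once these spaces are expressed as iterated homotopy fibers and the exponential adjunction is applied symmetrically on both sides, however, the argument reduces to routine manipulations with pullbacks of spaces. The remaining implications rely only on standard $\infty$-categorical facts about slice categories and base change of adjunctions.
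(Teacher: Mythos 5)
Your proposal is correct, but it closes the cycle of implications differently from the paper, and the difference matters for how much work is needed. The paper proves $(1)\Rightarrow(2)$ exactly as you do (via $f\times(-)\simeq f_!\circ f^*$), but then goes $(2)\Rightarrow(3)$ directly: it slices the exponential adjunction $f\times(-)\dashv[f,-]$ on $\C/y$ over $g$, using \cite[Proposition 5.2.5.1]{HTT} to see that the induced adjunction $(\C/y)/g\rightleftarrows(\C/y)/(f\times g)$ has left adjoint $(f^*g)^*$ and right adjoint $\eta_g^*\circ[f,-]$; then $(3)\Rightarrow(1)$ is the specialization $g=\id_y$. This completely bypasses your hardest step, the explicit construction of $\Pi_f p$ as $C^X\times_{X^X}\{\id_X\}$, since a right adjoint to $f^*$ itself is only ever produced as the $g=\id_y$ instance of the sliced exponential adjunction. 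Your route instead proves $(2)\Rightarrow(1)$ via the classical pullback-of-exponentials formula and then $(1)\Rightarrow(3)$ by slicing $f^*\dashv\Pi_f$ (the same HTT input, applied to a different adjunction). The pointwise formula and the identification of $\map_{\C/y}(q,\Pi_f p)$ with $\map_{\C/x}(f^*q,p)$ are both correct, but, as you acknowledge, promoting the objectwise formula to a functor with a coherent adjunction is the delicate point; the clean way to finish is to invoke the representability criterion for the existence of a right adjoint (checking that each presheaf $q\mapsto\map_{\C/x}(f^*q,p)$ is representable) rather than constructing $\Pi_f$ by hand. What your approach buys is an explicit description of the dependent product; what the paper's buys is that no such construction is ever needed.
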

\begin{proof}
 	The product functor $f \times (-) : \C/y \rightarrow \C/y$ decomposes as a composite 
	\begin{equation*}
	\C/y \xrightarrow{f^*} \C/x \xrightarrow{f_!} \C/y,
	\end{equation*}
	and this proves that (1) implies (2). It is clear that (3) implies (1).
	Hence, it remains to show that (2) implies (3). Suppose that  we have an adjunction 
	$$
	f \times (-) : \C/y \rightleftarrows \C/y : [f,-]
	$$
	with unit $\eta$. Let $g : a\rightarrow y$ be an arrow.
	Passing to slices over $g$ gives rise to an adjunction
	$$
	\C/a \simeq (\C/y)/g \rightleftarrows (\C/y)/(f\times g) \simeq \C/(a \times_y x)
	$$
	where the right adjoint is given by the composite 
	$$
	(\C/y)/(f\times g)\xrightarrow{[f,-]} (\C/y)/[f,f\times g] \xrightarrow{\eta_{g}^*} (\C/y)/g,
	$$
	on account of \cite[Proposition 5.2.5.1]{HTT}. The left adjoint is precisely given by base change along 
	$f^*g$.
\end{proof}

\begin{definition}\label{def.exp-arrow}
Let $\C$ be an $\infty$-category with pullbacks. Then an arrow $f : x\rightarrow y$ in $\C$
is called \textit{exponentiable} if the equivalent conditions of \ref{prop.exp-arrows} are met. 
If every arrow in $\C$ is exponentiable, then $\C$ is called \textit{locally cartesian closed}.
\end{definition}

\begin{example}
	The exponentiable arrows in $\Cat_\infty$ are also called \textit{Conduch\'e fibrations}. They were studied 
	by Ayala and Francis in \cite{AyalaFrancis} and by Lurie in \cite[Section B.3]{HA}.
\end{example}

\begin{example}
	Since colimits in $\infty$-toposes are universal, all $\infty$-toposes are locally cartesian closed.
\end{example}

We highlight the following implications of \ref{prop.exp-arrows}:

\begin{corollary}
	The class of exponentiable arrows in an $\infty$-category with pullbacks 
	is closed under pullbacks.
\end{corollary}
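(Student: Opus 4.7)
The plan is to reduce the claim immediately to condition (3) of \ref{prop.exp-arrows}, in which the relevant base-change work has already been packaged. Suppose that $f : x \rightarrow y$ is an exponentiable arrow in $\C$ and consider an arbitrary pullback square
\[
\begin{tikzcd}
x' \arrow[r] \arrow[d, "f'"'] & x \arrow[d, "f"] \\
y' \arrow[r, "g"'] & y.
\end{tikzcd}
\]
I want to show that $f' : x' \rightarrow y'$ is exponentiable.

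The key observation is that, by the universal property of pullbacks, the arrow $f'$ is canonically equivalent to the arrow $f^*g : y' \times_y x \rightarrow y'$ featured in the statement of \ref{prop.exp-arrows}. Under this identification, the pullback functor $(f^*g)^* : \C/y' \rightarrow \C/(y' \times_y x)$ becomes exactly $(f')^* : \C/y' \rightarrow \C/x'$. Applying condition (3) of \ref{prop.exp-arrows} to the exponentiable arrow $f$ with respect to the morphism $g : y' \rightarrow y$ then produces a right adjoint to $(f')^*$. Invoking the equivalence between conditions (1) and (2) of \ref{prop.exp-arrows} in the opposite direction, this says precisely that $f'$ is exponentiable.

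In short, there is no serious obstacle here: all the real content lies in the equivalence of conditions (1) and (3) of \ref{prop.exp-arrows}, which has already absorbed the slicing argument \`a la \cite[Proposition 5.2.5.1]{HTT}. The corollary then amounts to invoking this equivalence in both directions on the nose, with no further coherence bookkeeping required.
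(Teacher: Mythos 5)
Your argument is exactly the intended one: the paper states this corollary as an immediate consequence of \ref{prop.exp-arrows}, and your identification of the pulled-back arrow $f'$ with $f^*g : y'\times_y x \to y'$, followed by the implication $(1)\Rightarrow(3)$ applied to $f$ and read as condition $(1)$ for $f'$, is precisely that deduction. The proof is correct and matches the paper's (implicit) reasoning.
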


\begin{corollary}
	An $\infty$-category $\C$ with pullbacks is locally cartesian closed if and only if the 
	slice $\C/x$ is cartesian closed for every $x\in \C$.
\end{corollary}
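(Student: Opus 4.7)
The strategy is to apply condition (2) of \ref{prop.exp-arrows} object-wise to each slice. First I would observe that since $\C$ admits pullbacks, every slice $\C/y$ has finite products, computed by pullbacks in $\C$: the terminal object of $\C/y$ is $\id_y$, and the product of $f : a \to y$ and $g : b \to y$ is their pullback $a \times_y b \to y$. In particular, it makes sense to ask whether $\C/y$ is cartesian closed, and an object $f : x \to y$ of $\C/y$ is exponentiable in $\C/y$ precisely in the sense of the definition preceding \ref{prop.exp-arrows}.

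For the forward implication, assume that $\C$ is locally cartesian closed and fix $y \in \C$. Every object of $\C/y$ is of the form $f : x \to y$ for some arrow $f$ in $\C$, and by hypothesis each such $f$ is exponentiable as an arrow of $\C$. By the equivalence of (1) and (2) in \ref{prop.exp-arrows}, this means that $f$ is exponentiable when viewed as an object of $\C/y$. Hence every object of $\C/y$ is exponentiable, so $\C/y$ is cartesian closed.

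Conversely, if every $\C/y$ is cartesian closed, then for any arrow $f : x \to y$ in $\C$, the object $f \in \C/y$ is exponentiable in $\C/y$, which by the same equivalence in \ref{prop.exp-arrows} is the same as saying that $f$ is exponentiable as an arrow. As this holds for every arrow, $\C$ is locally cartesian closed. There is no real obstacle here: the entire content of the corollary is packaged into condition (2) of \ref{prop.exp-arrows}, and the only small check is the identification of products in a slice with pullbacks, which is standard.
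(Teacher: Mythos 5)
Your proof is correct and matches the paper's intent exactly: the paper states this corollary without proof as an immediate consequence of the equivalence of conditions (1) and (2) in \ref{prop.exp-arrows} together with \ref{def.exp-arrow}, which is precisely the unpacking you carry out. Nothing is missing.
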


It will be helpful later in this article, to recognize subcategories of cartesian closed $\infty$-categories 
that are again cartesian closed. To this end, we will use the formalism of \textit{exponential ideals} (cf.\ \cite[Section 1.5]{Sketches}).

\begin{definition}
	Let $\C$ be a cartesian closed $\infty$-category.
	A fully faithful functor $i : \mathscr{I} \rightarrow \C$  is called an \textit{exponential ideal}
	if for every $x \in \C$ and $y\in \mathscr{I}$, the internal hom $\Hom_\C(x,iy)$ is contained in $ \mathscr{I}$.
\end{definition}

\begin{proposition}
	Let $\C$ be a cartesian closed $\infty$-category.
	Suppose that $i : \mathscr{I} \rightarrow \C$ is an exponential ideal so that its essential image is closed under finite products. Then $\mathscr{I}$ is again cartesian closed, and $i$ 
	preserves internal homs.
\end{proposition}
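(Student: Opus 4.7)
The plan is to transfer the internal-hom structure from $\C$ to $\mathscr{I}$ along the inclusion $i$. First I would record that $\mathscr{I}$ itself has finite products and that $i$ preserves them: since the essential image of $i$ is closed under finite products in $\C$ and $i$ is fully faithful, for any $y_1,\dotsc,y_n\in \mathscr{I}$ the product $iy_1\times\cdots\times iy_n$ formed in $\C$ lies in the essential image of $i$, and fully faithfulness promotes such a representative to the product in $\mathscr{I}$. In particular (taking $n=0$) $\mathscr{I}$ has a terminal object.

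Next I would construct the candidate internal hom. For $y,z\in \mathscr{I}$, the exponential-ideal hypothesis says $\Hom_\C(iy,iz)$ belongs to the essential image of $i$, so by fully faithfulness we may (essentially uniquely) choose an object $\Hom_\mathscr{I}(y,z)\in \mathscr{I}$ together with an equivalence $i\Hom_\mathscr{I}(y,z)\simeq \Hom_\C(iy,iz)$. Using fully faithfulness and the fact that $i$ preserves products, for any $w\in\mathscr{I}$ there is a chain of natural equivalences
$$\map_\mathscr{I}(w\times y,z) \simeq \map_\C(iw\times iy, iz) \simeq \map_\C(iw, \Hom_\C(iy,iz)) \simeq \map_\mathscr{I}(w, \Hom_\mathscr{I}(y,z)).$$
This exhibits $\Hom_\mathscr{I}(y,-)$ as a right adjoint of $(-)\times y\colon \mathscr{I}\to \mathscr{I}$, so every $y\in\mathscr{I}$ is exponentiable and $\mathscr{I}$ is cartesian closed. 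Because $i\Hom_\mathscr{I}(y,z)\simeq \Hom_\C(iy,iz)$ by construction, the functor $i$ preserves internal homs.

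The technical subtlety I expect to be most careful about is ensuring that the pointwise construction assembles into a bona fide functor $\Hom_\mathscr{I}\colon \mathscr{I}^\op\times \mathscr{I}\to \mathscr{I}$ adjoint to the product, rather than a loose family of objects with the correct mapping spaces. This is handled by invoking the uniqueness of right adjoints: the displayed hom-equivalence shows that $(-)\times y\colon\mathscr{I}\to\mathscr{I}$ admits a right adjoint, and a right adjoint is automatically functorial in $y$ as well (e.g.\ by applying the right-adjoint-functor machinery to $(-)\times (-)\colon \mathscr{I}\times \mathscr{I}\to \mathscr{I}$ viewed as a parametrized left adjoint). No further verifications are needed, since the preservation of internal homs by $i$ is built into the definition of $\Hom_\mathscr{I}$.
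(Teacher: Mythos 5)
Your proposal is correct and follows essentially the same route as the paper: establish that $\mathscr{I}$ has finite products preserved by $i$ (via fully faithfulness), then use the chain of equivalences $\map_\mathscr{I}(w\times y,z)\simeq\map_\C(iw\times iy,iz)\simeq\map_\C(iw,\Hom_\C(iy,iz))$ together with the fact that $\Hom_\C(iy,iz)$ lies in the image of $i$. Your extra remarks on assembling the pointwise construction into a functor via uniqueness of right adjoints are a reasonable elaboration of what the paper leaves implicit.
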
 
\begin{proof}
	Since $i$ is fully faithful, it reflects limits.  Hence $\mathscr{I}$ is closed under finite products and $i$ preserves them. It follows that for objects $t, x, y$ in $\mathscr{I}$, 
	we have natural equivalences
	$$
	\map_\mathscr{I}(t \times x, y) \simeq \map_\C(it \times ix, iy) \simeq \map_\C(it, \Hom_\C(ix, iy)).
	$$
	Since $\Hom_\C(ix, iy)$ is in the image of $i$, the result follows.
\end{proof}

\begin{remark}
	Note that the full inclusion of a reflective subcategory is always closed under finite products.
\end{remark}

The following observation is immediate, and we omit its proof:

\begin{lemma}\label{lem.exp-ideals}
	Suppose that $i : \mathscr{I} \rightarrow \C$ is a reflective subcategory of a cartesian closed $\infty$-category $\C$, so that $i$ 
	admits a left adjoint $L : \C \rightarrow \mathscr{I}$. 
	Let $x \in \C$ be an object. Then the following assertions are equivalent:
	\begin{enumerate}
		\item the functor 
		$$
		x \times (-) : \C \rightarrow \C 
		$$
		preserves $L$-local equivalences,
		\item if $S$ is a class of generating $L$-local equivalences, then $x \times f$ is a $L$-local equivalence for every $f \in S$,
		\item the functor $\Hom_\C(x,-)$ carries objects in $\mathscr{I}$ to objects in $\mathscr{I}$.
	\end{enumerate}
\end{lemma}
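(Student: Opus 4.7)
The plan is to reduce everything to the characterization of the objects of $\mathscr{I}$ as precisely the $z \in \C$ for which $\map_\C(f,z)$ is an equivalence for every $L$-local equivalence $f$, combined with the defining adjunction $\map_\C(x \times a, z) \simeq \map_\C(a, \Hom_\C(x,z))$ for the internal hom.

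For $(1) \Rightarrow (3)$, I would fix $y \in \mathscr{I}$ and verify that $\Hom_\C(x,y)$ is $L$-local by showing $\map_\C(f, \Hom_\C(x,y))$ is an equivalence for every $L$-local equivalence $f$. Unwinding via the internal hom adjunction this becomes $\map_\C(x \times f, y)$, which is an equivalence since by $(1)$ the arrow $x \times f$ is itself an $L$-local equivalence while $y$ is $L$-local. The converse $(3) \Rightarrow (1)$ is the mirror argument: for an $L$-local equivalence $f$, testing $x \times f$ against any $z \in \mathscr{I}$ gives $\map_\C(x \times f, z) \simeq \map_\C(f, \Hom_\C(x,z))$, and by $(3)$ the object $\Hom_\C(x,z)$ lies in $\mathscr{I}$, so this is an equivalence.

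For $(1) \Leftrightarrow (2)$, I would use the standard closure argument: clearly $(1)$ implies $(2)$, while for the converse the class of arrows $g$ with $x \times g$ an $L$-local equivalence is closed under colimits in the arrow category and under the $2$-out-of-$3$ property, because $L$-local equivalences enjoy these closure properties and $x \times (-)$ is a left adjoint by cartesian closedness of $\C$. Hence a class of generators is already enough, in the standard saturated sense for Bousfield-type localizations.

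I do not anticipate a genuine obstacle here; the entire content is the interplay between the internal hom adjunction and the defining property of $L$-local objects. The only slightly subtle point is pinning down what \emph{generating} should mean in $(2)$, but the natural reading --- saturation under small colimits of arrows and $2$-out-of-$3$ --- makes the implication $(2) \Rightarrow (1)$ routine.
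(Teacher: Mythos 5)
The paper does not actually prove this lemma --- it is stated as ``immediate'' and the proof is omitted --- so there is no argument to compare against. Your proposal is correct and supplies exactly the standard argument the author had in mind: the equivalence of (1) and (3) is the adjunction $\map_\C(x \times f, y) \simeq \map_\C(f, \Hom_\C(x,y))$ played against the characterization of $L$-local objects, and (2) $\Leftrightarrow$ (1) is the saturation argument using that $x \times (-)$ preserves colimits and that the $L$-local equivalences form the strongly saturated class generated by $S$. Your caveat about the meaning of ``generating'' is the right one, and it is satisfied in the paper's setting (accessible localizations of presheaf $\infty$-categories).
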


\begin{proposition}\label{prop.exp-ideals-reflector}
	A reflective subcategory $i : \mathscr{I} \rightarrow \C$ is an exponential ideal if and only if the reflector 
	$L : \C \rightarrow \mathscr{I}$ preserves finite products.
\end{proposition}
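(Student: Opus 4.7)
The plan is to reduce both implications to an application of \ref{lem.exp-ideals}, which identifies an object $x \in \C$ satisfying the exponential-ideal condition $\Hom_\C(x, iy) \in \mathscr{I}$ (for all $y \in \mathscr{I}$) with the condition that $x \times (-)$ preserves $L$-local equivalences. In particular, $\mathscr{I}$ is an exponential ideal precisely when the functor $x \times (-)$ preserves $L$-local equivalences for every $x \in \C$.

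For the direction assuming $L$ preserves finite products, I would argue as follows. Given any $x \in \C$ and any $L$-local equivalence $f$, the map $L(x \times f)$ is equivalent to $Lx \times Lf$ by hypothesis, and the latter is an equivalence because $Lf$ is. So $x \times (-)$ preserves $L$-local equivalences, and \ref{lem.exp-ideals} exhibits $\mathscr{I}$ as an exponential ideal.

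For the converse, suppose $\mathscr{I}$ is an exponential ideal. Preservation of the terminal object is immediate: $i$ preserves limits as a right adjoint, so the terminal object of $\C$ already lies in $\mathscr{I}$, and $L \circ i \simeq \id_\mathscr{I}$ gives the desired equivalence. For binary products, take objects $x, y \in \C$ with reflection units $\eta_x : x \to iLx$ and $\eta_y : y \to iLy$, both of which are $L$-local equivalences. By \ref{lem.exp-ideals} applied to the exponential-ideal hypothesis, the class of $L$-local equivalences is stable under taking products with any fixed object, so the composite
$$\eta_x \times \eta_y : x \times y \longrightarrow iLx \times iLy$$
is an $L$-local equivalence. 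Because $i$ preserves finite products, the target lies in $\mathscr{I}$, and applying $L$ yields an equivalence $L(x \times y) \xrightarrow{\sim} L(iLx \times iLy) \simeq Lx \times Ly$.

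The main subtlety I anticipate is essentially bookkeeping: one must verify that the equivalence produced by $L(\eta_x \times \eta_y)$ coincides with the canonical comparison map $L(x \times y) \to Lx \times Ly$ induced by the two projections, rather than being merely some abstract equivalence of objects. This is a routine naturality check using that $L\eta \simeq \id$, but it is what actually converts the preservation of local equivalences into the statement that $L$ preserves binary products.
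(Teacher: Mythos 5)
Your proposal is correct and follows essentially the same route as the paper: both directions reduce to characterization (1) of \ref{lem.exp-ideals}, and the converse proceeds by showing $\eta_x \times \eta_y$ is an $L$-local equivalence and then identifying $L(iLx \times iLy)$ with $Lx \times Ly$. The ``bookkeeping'' step you flag at the end is exactly what the paper's commutative diagram carries out --- naturality of the comparison map in both variables plus 2-out-of-3 --- so your outline is complete once that diagram is written down.
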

\begin{proof}
	It readily follows from characterization (1) of \ref{lem.exp-ideals} that $i$ is an exponential ideal if $L$ preserves finite products. 
	Conversely, suppose that $i$ is an exponential ideal. 
	Then we have to show that for any two objects $x,y \in \C$, the canonical comparison map 
	$L(x\times y) \rightarrow Lx \times Ly$ is an equivalence. Let us denote the unit of the adjunction of $(L,i)$ by $\eta$. 
	Then we obtain a commutative diagram 
	\[
		\begin{tikzcd}
			L(x \times y) \arrow[r, "L(x \times \eta_y)"]\arrow[d] & L(x \times iLy) \arrow[d] \arrow[r,"L(\eta_x \times iLy)"] & L(iLx \times iLy)\arrow[d,"\simeq"]\\
			Lx \times Ly \arrow[r, "Lx \times L\eta_y"', "\simeq"] & Lx \times LiLy \arrow[r, "L\eta_x \times LiLy"', "\simeq"] & LiLx \times LiLy.
		\end{tikzcd}
	\]
	The rightmost vertical arrow is an equivalence since $L$ is a reflector and $i$ preserves finite products. By characterization (1) of \ref{lem.exp-ideals}, the two top horizontal arrows are equivalences as well.
	Hence, the desired result follows from 2-out-of-3.
\end{proof}

\section{Preliminaries on double Segal spaces}\label{section.dss}
In this section, we will review and study a selection of types of Barwick's \textit{double} and \textit{2-fold Segal spaces}
with different completeness conditions. We will then proceed to show that 
their ambient $\infty$-categories are cartesian closed by using the theory of exponential ideals 
that was discussed in \ref{section.prelims}.

\subsection{Double Segal spaces}\label{ssection.dss} 
Double Segal spaces arise as certain presheaves on $\Delta^{\times 2}$. Throughout, 
 we will write $$[n,m] \in \PSh(\Delta^{\times 2})$$ for the image of $([n],[m])$ under the Yoneda embedding $\Delta^{\times 2} \rightarrow \PSh(\Delta^{\times 2})$.
\begin{definition}
	The presheaves $[0,0]$, $[1,0]$, $[0,1]$ and $[1,1]$ are called the \textit{free-living object} (or \textit{0-cell}), the \textit{free-living horizontal} and \textit{vertical arrow} (or \textit{1-cell}), and the \textit{free-living 2-cell}, 
	respectively.
\end{definition}

\begin{definition}\label{def.dss}
	A double Segal space is a presheaf on $\PSh(\Delta^{\times 2})$ that is local with respect to the class of  following maps:
	\begin{enumerate}
		\item[(\textit{Seg})] the spine inclusions $$[1,m] \cup_{[0,m]} [1,m] \dotsb \cup_{[0,m]} [1,m]  \rightarrow [n,m],$$ and 
	  $$[n,1] \cup_{[n,0]} [n,1] \dotsb \cup_{[n,0]} [n,1] \rightarrow [n,m]$$ for $n,m \geq 0$.
	\end{enumerate}
	We will write 
	$$
	\Cat^2(\S) \subset \PSh(\Delta^{\times 2})
	$$
	for the reflective full subcategory of double Segal spaces.
\end{definition}

\begin{remark}
	Unraveling the definitions, we observe that a double Segal space $\P$ contains: 
	\begin{itemize} 
	\item a space $\P_{0,0}$ of objects,
	\item a space $\P_{0,1}$ of \textit{vertical arrows},
	\item a space $\P_{1,0}$ of \textit{horizontal arrows},
	\item and a space $\P_{1,1}$ of 
	\textit{2-cells}.
	\end{itemize}
	In general, the Segal condition for $\P$ 
	tells us that we may view $\P_{n,m}$ as a grid of compatible cells. 
	 The 2-cells and the compatible arrows of $\P$ that have the same direction may be composed in a mutually compatible and coherently associative fashion. A 2-cell may be pictured as a square 
	\[
		\begin{tikzcd}
			a \arrow[r, ""'name=f]\arrow[d] & b \arrow[d] \\
			c \arrow[r, ""name=t] & d
			\arrow[from=f, to=t, Rightarrow]
		\end{tikzcd}
	\]
	where the horizontal/vertical directed arrows are horizontal/vertical arrows of $\P$. 
\end{remark}

\begin{remark}\label{rem.segal-spaces}
	Double Segal spaces are generalizations of Rezk's Segal spaces \cite{RezkSeg}. Recall that the $\infty$-category $$\Seg(\S) \subset \PSh(\Delta)$$ of Segal spaces is
	defined to be the reflective subcategory 
	generated by the spine inclusions $[1] \cup_{[0]} [1] \cup_{[0]} \dotsb \cup_{[0]} [1] \rightarrow [n]$ for $n \geq 0$.
	Here, we implicitly view every $[n] \in \Delta$ as an object of $\PSh(\Delta)$ via the Yoneda embedding. 
\end{remark}

\begin{construction}\label{cons.dualities}
	We will use three involutive operations on $\PSh(\Delta^{\times 2})$: 
	$$
	(-)^\tp, (-)^\hop, (-)^\vop : \PSh(\Delta^{\times 2}) \rightarrow \PSh(\Delta^{\times 2}),
	$$
	called the \textit{transpose}, \textit{horizontal opposite}, and \textit{vertical opposite} respectively. They are defined by restricting along the functors 
	\begin{gather*}
	\mathrm{t} : \Delta \times \Delta \rightarrow \Delta \times \Delta : ([n],[m]) \mapsto ([m],[n]), \\ 
	\hop : \Delta \times \Delta \xrightarrow{\op \times \id} \Delta \times \Delta, \quad \vop :  \Delta \times \Delta \xrightarrow{\id \times \op} \Delta \times \Delta.
	\end{gather*}
	Here $\op : \Delta \rightarrow \Delta$ denotes the usual opposite involution, i.e.\ it is restricted 
	from the involution functor $(-)^\op$ for categories.

	One readily verifies that these involutions restrict to involutions 
	$$
	(-)^\tp, (-)^\hop, (-)^\vop : \Cat^2(\S) \rightarrow \Cat^2(\S),
	$$
	on the $\infty$-category 
	of double Segal spaces.
\end{construction}

\subsection{Two-fold Segal spaces}
As will be dicussed later in further detail, double Segal spaces may be used 
to obtain Barwick's model of $(\infty,2)$-categories. To this end, we will need 
to consider the double Segal spaces with only degenerate vertical arrows.

\begin{definition}
	A double Segal space $X$ is called a \textit{2-fold Segal space} if the structure map $$(\id, s_0)^* : X_{0, 0} \rightarrow X_{0,1}$$is an equivalence. 
	In other words, a presheaf $X \in \PSh(\Delta^{\times 2})$ is a 2-fold Segal space 
	if it is local with respect to (\textit{Seg}) of \ref{def.dss}, and additionally to: \begin{enumerate}
		\item[(\textit{deg})] the degeneracy map $
		[0,1] \rightarrow [0,0].
		$
	\end{enumerate}
	We will write $$\Seg^2(\S) \subset \PSh(\Delta^{\times 2})$$
for the reflective subcategory of $\PSh(\Delta^{\times 2})$ spanned by those 2-fold Segal spaces.
\end{definition}
	
Note that every 2-fold Segal space $X \in \PSh(\Delta^{\times 2})$ has the feature that the restriction $X_{0,\bullet}$ is an essentially constant simplicial space. 
It will be convenient to consider the subcategory of such presheaves.
	
\begin{definition}
		We will write 
		$$(-)_h : \PSh(\Delta^{\times 2})_{\mathrm{deg}} \rightarrow \PSh(\Delta^{\times 2})
		$$
		for the inclusion of the full subcategory of those presheaves $X \in \PSh(\Delta^{\times 2})$ 
		so that $X_{0, \bullet}$ is essentially constant. This is called the 
		\textit{horizontal inclusion} functor. Note that this is a inclusion of a reflective 
		subcategory, with generating local maps given by the degeneracy maps $[0,n] \rightarrow [0,0]$ for all $n$.
\end{definition}

\begin{construction}
		There is also a \textit{vertical inclusion} functor 
		$$
		(-)_v : \PSh(\Delta^{\times 2})_{\mathrm{deg}} \rightarrow \PSh(\Delta^{\times 2})
		$$
		given by the composite 
		$$
		\PSh(\Delta^{\times 2})_{\mathrm{deg}}  \xrightarrow{(-)_h} \PSh(\Delta^{\times 2}) \xrightarrow{(-)^\tp} \PSh(\Delta^{\times 2}) \xrightarrow{(-)^\hop} \PSh(\Delta^{\times 2}).
		$$
		Note that this restricts to a functor $\Seg^2(\S) \rightarrow \Cat^2(\S)$.
\end{construction}

\begin{construction}\label{con.gaunt-nerve}
	Let $\mathrm{Gaunt}_2$ be the full subcategory of the category of 2-categories that is spanned by the 
	\textit{gaunt 2-categories}. Then one can write down the bicosimplicial object 
	$$
	\Delta^{\times 2} \rightarrow \mathrm{Gaunt}_2 : ([n], [m]) \mapsto [n;m,\dotsc,m],
	$$where $[n;m\dotsc, m]$ denotes the globular 2-category that may be pictured as
	\[
			{[n;m, \dotsc, m]} := \begin{tikzcd}[column sep = large]
				0 \arrow[r, "0"name=a0, looseness = 2, bend left = 70pt ]\arrow[r, "1"'name=a1, bend left = 45pt]\arrow[r, "m-1"name=am1, bend right = 45pt]\arrow[r, "m"'name=am, looseness = 2, bend right = 70pt] & 1	
				\arrow[from=a0,to=a1, Rightarrow, shorten <= 3pt, shorten >= 3pt]
				\arrow[from=a1,to=am1, phantom, "\vdots", yshift = 3pt]
				\arrow[from=am1,to=am, Rightarrow, shorten <= 3pt, shorten >= 3pt] \arrow[r, phantom, "\cdots"] &  
				n-1\arrow[r, "0"name=b0, looseness = 2, bend left = 70pt ]\arrow[r, "1"'name=b1, bend left = 45pt]\arrow[r, "m-1"name=bm1, bend right = 45pt]\arrow[r, "m"'name=bm, looseness = 2, bend right = 70pt] & n.	
				\arrow[from=b0,to=b1, Rightarrow, shorten <= 3pt, shorten >= 3pt]
				\arrow[from=b1,to=bm1, phantom, "\vdots", yshift = 3pt]
				\arrow[from=bm1,to=bm, Rightarrow, shorten <= 3pt, shorten >= 3pt]
			\end{tikzcd}
	\]
	The restriction to $\{[0]\} \times \Delta$ is constant, so that the we obtain an induced functor 
	$$
	N : \mathrm{Gaunt}_2 \rightarrow \PSh(\Delta^{\times 2})_\mathrm{deg},
	$$
	so that the image of a gaunt 2-category $G$ is level-wise described by the \textit{set}
		$$
		NG_{n,m} = \mathrm{Hom}([n;m,\dotsc, m], G)
		$$
		of functors from $[n;m,\dotsc,m]$ to $G$.
	One readily verifies that the functor $N$ is fully faithful; we will leave the inclusion $N$ implicit throughout this chapter. 
\end{construction}

\begin{remark}\label{rem.quotient-grid}
	The obvious commutative square
	\[
				\begin{tikzcd}
					\{0, \dotsc, n\}_h \times {[m]_v} \arrow[r]\arrow[d] & {[n,m] = [n]_h \times [m]_v} \arrow[d] \\
					\{0, \dotsc, n\}_h  \arrow[r] & {[n;m,\dotsc,m]}_h,
				\end{tikzcd}
			\]
	of bisimplicial spaces is a pushout square in $\PSh(\Delta^{\times 2})$, as one readily verifies. If $X$ is a presheaf in $\PSh(\Delta^{\times 2})_\mathrm{deg}$, 
	then $X$ is local with respect to the left map in this square. Consequently, $X$ is also local to the right (quotient) map in the square. 
	It follows that the we obtain a natural equivalence
	$$
	\map_{\PSh(\Delta^{\times2})}([n;m\dotsc,m]_h, X) \rightarrow \map_{\PSh(\Delta^{\times2})}([n,m], X) = X_{n,m}.
	$$
	
	This observation implies that $\PSh(\Delta^{\times 2})_{\mathrm{deg}}$ is a presheaf $\infty$-topos. Namely, let $\Theta_2' \subset \mathrm{Gaunt}_2$ be the full subcategory spanned by the 
	2-categories $[n;m, \dotsc, m]$, $n,m \geq 0$. Then we obtain an adjunction
	$$
	\PSh(\Theta_2') \rightleftarrows \PSh(\Delta^{\times 2})_{\mathrm{deg}}
	$$ where the left adjoint is the cocontinuous extension of the restricted nerve $N|\Theta_2'$, 
	and the right adjoint is induced by restriction along the bicosimplicial object $\Delta^{\times 2} \rightarrow \Theta_2'$. 
	Using the above, one readily concludes that this sets up an adjoint equivalence.

	In turn, this can be used to recover the comparison of \cite{BergnerRezk2} between 2-fold Segal spaces and \textit{$\Theta_2$-spaces}. Namely, 
	the subcategory  $\Theta_2 \subset \mathrm{Gaunt}_2$ of \textit{Joyal disks} $[n;m_1,\dotsc, m_n]$ (see \cite{RezkTheta}) is generated by $\Theta_2'$ under retracts. 
	So the inclusion $\Theta_2' \subset \Theta_2$ induces an equivalence $\PSh(\Theta_2) \rightarrow \PSh(\Theta_2')$. A direct verification 
	shows that the 
	composed equivalence $$\PSh(\Theta_2) \rightarrow \PSh(\Theta_2') \rightarrow \PSh(\Delta^{\times 2})_{\mathrm{deg}}$$
	derives to an equivalence between $\Theta_2$-spaces and 2-fold Segal spaces.
\end{remark}

\begin{remark}
	One may picture the horizontal and vertical inclusions 
	of the image of the 2-globe $[1;1]$
	as
	\[
		[1;1]_h = \begin{tikzcd}
			0 \arrow[d,equal]\arrow[r, "0"name=f] & 1 \arrow[d,equal] \\
			0 \arrow[r, "1"name=t] & 1
			\arrow[from=f,to=t, Rightarrow, shorten <= 6pt]
		\end{tikzcd}
		\quad \text{and}\quad
		[1;1]_v = \begin{tikzcd}
			0 \arrow[r,equal, ""name=f]\arrow[d, "1"'] & 0 \arrow[d,"0"] \\
			0 \arrow[r, equal, ""name=t] & 1.
			\arrow[from=f,to=t,Rightarrow, shorten <= 6pt,]
		\end{tikzcd}
	\]
\end{remark}

\begin{construction}\label{con.1core}
		Note that there is an adjunction $\Delta \times \Delta \rightleftarrows \Delta$, where the left adjoint is given by projection onto the first coordinate, 
		and the right adjoint is given by the inclusion of the subcategory $\Delta \times \{[0]\}$. This gives rise to an adjunction 
		$\PSh(\Delta) \rightleftarrows \PSh(\Delta^{\times 2})$ between presheaf $\infty$-categories, that in turn restricts to an adjunction 
		$$
		\PSh(\Delta) \rightleftarrows \PSh(\Delta^{\times 2})_\mathrm{deg} : (-)^{(1)}.
		$$
		We will leave the fully faithful left adjoint implicit throughout this chapter; every simplicial 
		space is viewed as a presheaf in $\PSh(\Delta^{\times 2})_\mathrm{deg}$ via this functor. 
		The above adjunction also restricts to give an adjunction $$\Seg(\S) \rightleftarrows \Seg^2(\S) : (-)^{(1)}$$ as well.
\end{construction}

We will use the following special names for the vertical and horizontal 
opposites of 2-fold Segal spaces:

\begin{definition}
	If $X$ is a 2-fold Segal space, then its \textit{1-opposite} and \textit{2-opposite} 
	are defined by respectively $X^{1-\op} := X^{\hop}$ and $X^{2-\op}$ := $X^{\vop}$.
\end{definition}

Moreover, we recall the definition of the associated mapping Segal spaces of a 2-fold Segal space:

\begin{definition}
	If $x$ and $y$ are objects of a 2-fold Segal space $X$, then we will write $X(x,y)$ for the Segal space 
	of maps from $x$ to $y$ that is
	defined by the pullback square 
	\[
		\begin{tikzcd}
			X(x,y)\arrow[d] \arrow[r] & X_{1,\bullet} \arrow[d] \\
			\{(x,y)\} \arrow[r] & X_{0,\bullet}^{\times 2}.
		\end{tikzcd}
	\]
\end{definition}

\subsection{Horizontal and vertical fragments} On account of \cite[Proposition 4.12]{HaugsengSpans}, the inclusion $(-)_h : \Seg^2(\S) \rightarrow \Cat^2(\S)$ admits a right adjoint. 
We will give an alternative short proof of this here. 

\begin{construction}
We will write 
$$
\Hor(-) : \PSh(\Delta^{\times 2}) \rightarrow \PSh(\Delta^{\times 2})_\mathrm{deg}.
$$
for the functor that is induced by the bicosimplicial object
$$
\Delta^{\times 2} \rightarrow\PSh(\Delta^{\times 2}) : ([n],[m]) \mapsto [n;m,\dotsc, m]_h.
$$
\end{construction}

\begin{proposition}\label{prop.hor-right-adjoint}
	The functor $\Hor(-)$ is right adjoint to $(-)_h$. 
\end{proposition}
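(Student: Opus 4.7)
The plan is to recognize $(-)_h$ as a cocontinuous functor and then identify its right adjoint using the universal property of the presheaf $\infty$-category $\PSh(\Theta_2')$ appearing in the preceding remark.

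First I would verify that $(-)_h$ is cocontinuous. The subcategory $\PSh(\Delta^{\times 2})_\mathrm{deg} \subset \PSh(\Delta^{\times 2})$ is defined by locality with respect to the degeneracies $[0,n] \rightarrow [0,0]$, and this condition is preserved under arbitrary colimits: colimits in $\PSh(\Delta^{\times 2})$ are computed pointwise, and a colimit of equivalences remains an equivalence. Hence $\PSh(\Delta^{\times 2})_\mathrm{deg}$ is closed under colimits in $\PSh(\Delta^{\times 2})$, and the inclusion $(-)_h$ preserves them.

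Next, pre-composing $(-)_h$ with the equivalence $\PSh(\Theta_2') \simeq \PSh(\Delta^{\times 2})_\mathrm{deg}$ recalled in the preceding remark, I obtain a cocontinuous functor $\PSh(\Theta_2') \rightarrow \PSh(\Delta^{\times 2})$ whose restriction to the representables $[n;m,\dotsc,m] \in \Theta_2'$ is the assignment $[n;m,\dotsc,m] \mapsto [n;m,\dotsc,m]_h$. By the universal property of $\PSh(\Theta_2')$, this is the unique cocontinuous extension of that assignment, and its right adjoint is given by restriction along it, namely
$$X \mapsto \bigl([n;m,\dotsc,m] \mapsto \map_{\PSh(\Delta^{\times 2})}([n;m,\dotsc,m]_h, X)\bigr).$$
Transporting back through the equivalence, this right adjoint is precisely $\Hor$ by the construction given.

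The only substantive step is the cocontinuity of $(-)_h$; once that is in place, the identification of $\Hor$ as the right adjoint is a formal consequence of the universal property of presheaf $\infty$-categories, and the main care required is simply to distinguish between mapping spaces in $\PSh(\Delta^{\times 2})_\mathrm{deg}$ and in the ambient $\PSh(\Delta^{\times 2})$, which agree by full faithfulness.
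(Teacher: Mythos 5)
Your proof is correct. It differs from the paper's in packaging rather than in substance: the paper proves the adjunction by a direct computation, writing an arbitrary $X \in \PSh(\Delta^{\times 2})$ as a colimit of representables $[n,m]$ and chaining the natural equivalences $\map([n,m],\Hor(\P)) \simeq \map([n;m,\dotsc,m]_h,\P) \simeq \map((L[n,m])_h,\P)$ coming from the quotient maps of \ref{rem.quotient-grid}, then reassembling via cocontinuity of $(-)_h$ and the reflector $L$; this in fact exhibits $\Hor$ as right adjoint to the composite $(-)_h \circ L$ on all of $\PSh(\Delta^{\times 2})$. You instead route through the identification $\PSh(\Delta^{\times 2})_{\mathrm{deg}} \simeq \PSh(\Theta_2')$ from the same remark and invoke the universal property of presheaf $\infty$-categories: a cocontinuous functor out of $\PSh(\Theta_2')$ is determined by its values on representables and has restricted-Yoneda as right adjoint. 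Both arguments rest on exactly the same two inputs --- closure of $\PSh(\Delta^{\times 2})_{\mathrm{deg}}$ under colimits and the quotient-map observation of \ref{rem.quotient-grid} --- and your limit computation is simply absorbed into the general presheaf formalism. What your version buys is brevity and a conceptual reading of $\Hor$ as restricted Yoneda along $[n;m,\dotsc,m] \mapsto [n;m,\dotsc,m]_h$; what the paper's version buys is independence from the $\Theta_2'$ identification and the slightly stronger statement involving $L$. One small point of care in your last step: identifying the transported right adjoint with $\Hor$ uses that the equivalence with $\PSh(\Theta_2')$ carries a presheaf $F$ to the bisimplicial space $([n],[m]) \mapsto F([n;m,\dotsc,m])$, which is again the natural equivalence $\map([n;m,\dotsc,m]_h, X) \simeq X_{n,m}$ of \ref{rem.quotient-grid}; you gesture at this with ``by the construction given,'' and it does go through.
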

\begin{proof}
	Let us write  
	 $L : \PSh(\Delta^{\times 2}) \rightarrow \PSh(\Delta^{\times 2})_\mathrm{deg}$ for the left adjoint to $(-)_h$.
	Recall from \ref{rem.quotient-grid} that we have a quotient map 
	$
	[n,m] \rightarrow [n;m,\dotsc,m]_h
	$
	that is natural in $\Delta^{\times 2}$ and is carried to an equivalence by $L$.
	Let $X$ and $Y$ be presheaves on $\Delta^{\times 2}$. Then we now obtain natural equivalences 
	\begin{align*}
		\map_{\PSh(\Delta^{\times 2})}(X, \Hor(\P)) &\simeq {\lim_{[n,m] \in (\Delta^{\times 2}/X)^\op}}\map_{\PSh(\Delta^{\times 2})}([n,m], \Hor(\P)) \\
		&\simeq{\lim_{[n,m] \in (\Delta^{\times 2}/X)^\op}}\map_{\PSh(\Delta^{\times 2})}([n;m \dotsc, m]_h, \P)  \\ 
		& \simeq{\lim_{[n,m] \in (\Delta^{\times 2}/X)^\op}}\map_{\PSh(\Delta^{\times 2})}((L[n,m])_h, \P) \\
		& \simeq \map_{\PSh(\Delta^{\times 2})}((LX)_h, \P).
	\end{align*}
	In the last step, we used that $(-)_h$ preserves colimits, which follows from the easily verifiable fact that $\PSh(\Delta^{\times 2})_\mathrm{deg}$ 
	is closed under colimits in $\PSh(\Delta^{\times 2})$.
\end{proof}

\begin{remark}\label{rem.desc-hor}
Note that the functor $\Hor(-)$ carries double Segal spaces to 2-fold Segal spaces. Consequently, we obtain a restricted adjunction 
$$
(-)_h : \Seg^2(\S) \rightleftarrows \Cat^2(\S) : \Hor(-).
$$ 
If $\P$ is a double Segal space, then $\Hor(\P)$ is called the \textit{horizontal fragment} of $\P$.
This 2-fold Segal space may  be described by the pullback squares
\[
	\begin{tikzcd}
		\Hor(\P)_{n,m} \arrow[r]\arrow[d] & \P_{n,m} \arrow[d] \\
		\P_{0,0}^{\times(n+1)} \arrow[r] & \P_{0,m}^{\times(n+1)}
	\end{tikzcd}
\]
level-wise. This follows from \ref{rem.quotient-grid}; see also \cite[Remark 4.13]{HaugsengSpans}.
\end{remark}

\begin{definition}
	It now directly follows that the vertical inclusion $(-)_v : \Seg^2(\S) \rightarrow \Cat^2(\S)$ admits a right adjoint as well.
	We will write 
	$$
	\Vert(-) :  \Cat^2(\S) \rightarrow \Seg^2(\S) : \P \mapsto \Hor((\P^\hop)^\tp)
	$$
	for its right adjoint. This is called the \textit{vertical fragment} functor.
\end{definition}

\begin{remark}\label{rem.vertical-fragment}
One may show that the functors $\Hor(-)$ and $\Vert(-)$ are compatible with taking opposites as follows: 
there are natural equivalences
\begin{gather*}
	\Hor(\P^\hop) \simeq \Hor(\P)^{1-\op},\quad \Hor(\P^\vop) \simeq \Hor(\P)^{2-\op},  \\ 
	 \Vert(\P^\hop) \simeq \Vert(\P)^{2-\op}, \quad \Vert(\P^\vop) \simeq \Vert(\P)^{1-\op},
\end{gather*} for every double Segal space $\P$.

	Note that $\Vert(\P) \simeq \Hor(\P^\tp)^{2-\op}$ and 
	$X_v \simeq (X^{2-\op})_h^\tp$.
	One could also have defined the vertical inclusion and vertical fragments in such a way that there would not appear  2-opposites in these descriptions. 
	However, our introduced convention seems more natural regarding the 
	material that will follow.
\end{remark}
	
\subsection{Completeness conditions} So far, we have not talked about completeness conditions 
for 2-fold and double Segal spaces. We will start by recalling the situation for Segal spaces \cite{RezkSeg}.

\begin{definition}
	We define $J \in \PSh(\Delta)$ to be the simplicial set defined by the pushout square 
	\[
		\begin{tikzcd}[column sep = large]
		{[1]^{\sqcup 2}} \arrow[r, "\{0\leq 2\} \sqcup \{1 \leq 3\}"]\arrow[d] & {[3]} \arrow[d] \\
		{[0]^{\sqcup 2}} \arrow[r] & J
		\end{tikzcd}
	\] 
	of simplicial spaces.
\end{definition}

\begin{definition}
	A Segal space $X$ is called \textit{complete} if it is local with respect to the map 
	$J \rightarrow [0].$
\end{definition}

\begin{remark}\label{rem.joyal-tierney}It is a celebrated result of Joyal and Tierney \cite{JoyalTierney} that complete Segal spaces are a model for $\infty$-categories.
	Precisely, the functor
	$$
	\Cat_\infty \rightarrow \PSh(\Delta) : \C \mapsto ([n] \mapsto \map_{\Cat_\infty}([n],\C)),
	$$
	obtained by restricting the Yoneda embedding, is fully faithful, with image given by the complete Segal spaces.
\end{remark}

To discuss the two-dimensional completeness conditions, we will need an additional suspended version of $J$.

\begin{notation}\label{def.invertible-2cell}
	We will write $[1;J]$ for the object that fits in the pushout square 
	\[
	\begin{tikzcd}[column sep = large]
		{[1;1]}^{\sqcup 2} \arrow[rr, "{[1;\{0\leq 2\}] \sqcup [1;\{1 \leq 3\}]}"]\arrow[d] && {[1;3]} \arrow[d] \\
		{[1]}^{\sqcup 2} \arrow[rr] && {[1;J]}
	\end{tikzcd}
	\]
	in $\PSh(\Delta^{\times 2})_\mathrm{deg}$. Note that there is a canonical map $[1;J] \rightarrow [1]$.
\end{notation}

\begin{notation}\label{not.local-maps-dss}
	We will consider the following labeled families of maps: 
	\begin{enumerate}
		\item[(\textit{hc})] $[1]_h \times J_v \rightarrow [1]_h$,
		\item[(\textit{vc})] $J_h \times [1]_v \rightarrow [1]_v$,
		\item[(\textit{lhc})]  $[1;J]_h \rightarrow [1]_h$,
		\item[(\textit{lvc})]   $[1;J]_v \rightarrow [1]_v$.
	\end{enumerate}
	\end{notation}

\begin{remark}\label{rem.replacement-hc}
	In light of \ref{rem.quotient-grid}, 
	we have a pushout square 
	\[
				\begin{tikzcd}
					\{0, 1\}_h \times {J_v} \arrow[r]\arrow[d] & {[1]_h \times J_v} \arrow[d] \\
					\{0, 1\}_h   \arrow[r] & {[1;J]}_h
				\end{tikzcd}
			\]
	in $\PSh(\Delta^{\times 2})$. Consequently, the map (\textit{hc}) is contained in the saturated
	class generated by
	 (\textit{lhc}) and the map $J_v \rightarrow [0]$. Conversely, the map $J_v \rightarrow [0]_v$ is a retract 
	of (\textit{hc}), so that (\textit{lhc}) and $J_v \rightarrow [0]_v$ are contained in the saturation 
	of (\textit{hc}).
\end{remark}

The following result appears in greater generality in \cite[Section 7]{HaugsengSpans} as well.

\begin{proposition}\label{prop.locally-complete}
	Let $X$ be a 2-fold Segal space. Then the following assertions are equivalent:
	\begin{enumerate}
		\item for every two objects $x,y \in X$, the Segal space $X(x,y)$ is complete,
		\item $X$ is local with respect to the map (\textit{hc}),
		\item $X$ is local with respect to the map (\textit{lhc}).
	\end{enumerate}
\end{proposition}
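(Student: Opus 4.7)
The plan is to split the three-way equivalence into the two halves $(2) \Leftrightarrow (3)$ and $(1) \Leftrightarrow (2)$, and to treat them separately.

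First, I handle $(2) \Leftrightarrow (3)$ by appealing to \ref{rem.replacement-hc}, which shows that the classes of $(\textit{hc})$-local and $(\textit{lhc})$-local presheaves coincide, provided one also inverts the collapse $J_v \rightarrow [0]$. It therefore suffices to check that every 2-fold Segal space is local with respect to $J_v \rightarrow [0]$. The degeneracy condition $X_{0, 0} \simeq X_{0, 1}$ together with the vertical Segal conditions gives $X_{0, n} \simeq X_{0, 0}$ for all $n$, so $X_{0, \bullet}$ is essentially constant on $X_{0,0}$. Unraveling the pushout description of $J_v$ in terms of the representables $[0, n]$, one then obtains $\map(J_v, X) \simeq X_{0, 0} \simeq \map([0], X)$, which is the required locality.

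Next, for $(1) \Leftrightarrow (2)$, my approach is to reformulate $(2)$ as a completeness condition on the simplicial space $X_{1, \bullet}$, and then reduce this to $(1)$ by a fiberwise argument over $X_{0, 0}^{\times 2}$. Expanding $J_v = [0, 3] \cup_{[0, 1]^{\sqcup 2}} [0, 0]^{\sqcup 2}$ and using $[1]_h \times [0, n] \simeq [1, n]$ as representables in $\PSh(\Delta^{\times 2})$, I compute
\[
\map([1]_h \times J_v, X) \simeq X_{1, 3} \times_{X_{1, 1}^{\times 2}} X_{1, 0}^{\times 2} \simeq \map_{\PSh(\Delta)}(J, X_{1, \bullet}),
\]
so that $(2)$ is precisely the assertion that $X_{1, \bullet}$ is local with respect to $J \rightarrow [0]$. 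The horizontal Segal condition at $n = 1$ guarantees that $X_{1, \bullet}$ is itself a Segal space, so this locality is exactly Rezk completeness of $X_{1, \bullet}$ (cf.\ \ref{rem.joyal-tierney}).

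Finally, to relate completeness of $X_{1, \bullet}$ to completeness of the fibers $X(x, y)$, I would use that $X_{0, \bullet}^{\times 2}$ is essentially constant on $X_{0, 0}^{\times 2}$ and that $|J|$ is contractible, whence $\map(J, X_{0, \bullet}^{\times 2}) \simeq X_{0, 0}^{\times 2}$. Combined with the defining pullback square of $X(x, y)$ and the commutation of mapping spaces with pullbacks, this yields a natural equivalence
\[
\map(J, X(x, y)) \simeq \map(J, X_{1, \bullet}) \times_{X_{0, 0}^{\times 2}} \{(x, y)\}.
\]
The completeness map of $X_{1, \bullet}$ therefore agrees, fiber by fiber over $X_{0, 0}^{\times 2}$, with the completeness maps of the various $X(x, y)$, so that $X_{1, \bullet}$ is complete if and only if each $X(x,y)$ is. I expect the main obstacle to lie in this last fiberwise step: it requires careful bookkeeping with two nested pullback squares and a clean use of the essential constancy of $X_{0, \bullet}$ together with the contractibility of $|J|$, after which the desired equivalence falls out immediately.
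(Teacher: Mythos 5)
Your proof is correct and follows essentially the same route as the paper: the equivalence $(2)\Leftrightarrow(3)$ via \ref{rem.replacement-hc} together with the observation that every 2-fold Segal space is local with respect to $J_v \rightarrow [0]$, and the equivalence $(1)\Leftrightarrow(2)$ by a fiberwise comparison over $X_{0,0}^{\times 2}$. Your intermediate reformulation of $(2)$ as Rezk completeness of the Segal space $X_{1,\bullet}$ is just a mild repackaging of the paper's check of the map $\map([1]_h,X) \rightarrow \map([1]_h \times J_v, X)$ fiberwise over $\map(\{0,1\}_h,X) \simeq X_{0,0}^{\times 2}$.
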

\begin{proof}
	 It follows from \ref{rem.replacement-hc} that (2) and (3) are equivalent, since $J_v \rightarrow [0]_v$ is contained in the saturation of (\textit{Seg}) and (\textit{deg}).
	To show that (1) and (2) are equivalent, we use that the map $[1]_h \times J_v \rightarrow [1]_h$ is cofibered under $\{0,1\}_h$. 
	So, we obtain a commutative triangle 
	\[
	\begin{tikzcd}[row sep = small, column sep = tiny]
	\map_{\PSh(\Delta^{\times 2})}([1]_h, X)\arrow[dr] \arrow[rr] && \map_{\PSh(\Delta^{\times 2})}([1]_h \times J_v, X).\arrow[dl] \\
	& \map_{\PSh(\Delta^{\times 2})}(\{0,1\}_h, X)
	\end{tikzcd}
	\]
	Now, (2) holds if and only if the top arrow is an equivalence. This can be checked fiber-wise, which 
	precisely recovers condition (1).
\end{proof}

\begin{definition}
	Let $X$ be a 2-fold Segal space. Then $X$ is called \textit{locally complete} if the equivalent conditions of \ref{prop.locally-complete} 
	are met. We will write $$\Seg^2(\S)^{lc}\subset \Seg^2(\S)$$ for the reflective subcategory of locally complete 
	2-fold Segal spaces.
\end{definition}

\begin{proposition}\label{prop.complete}
	Let $X$ be a 2-fold Segal space. Then the following are equivalent:
	\begin{enumerate}
		\item $X^{(1)}$ is a complete Segal space,
		\item $X$ is local with respect to (\textit{vc}).
	\end{enumerate}
\end{proposition}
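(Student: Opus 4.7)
The plan is to mimic the proof of \ref{prop.locally-complete}, adapting it to the global completeness condition for $X^{(1)}$. First, since $\PSh(\Delta^{\times 2})$ is a presheaf $\infty$-category and hence cartesian closed, the hom--tensor adjunction yields that the locality of $X$ with respect to (\textit{vc}) $J_h \times [1]_v \to [1]_v$ is equivalent to the locality of the internal hom $\Hom_{\PSh(\Delta^{\times 2})}([1]_v, X)$ with respect to the map $J_h \to [0,0]$.

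Next, since $J_h = p_1^* J$ and $[0,0] = p_1^*[0]$, the adjunction $p_1^* \dashv j^* = (-)^{(1)}$ from \ref{con.1core} translates this into the locality of the simplicial space $(\Hom_{\PSh(\Delta^{\times 2})}([1]_v, X))^{(1)}$ with respect to the map $J \to [0]$ in $\PSh(\Delta)$; that is, into the Rezk completeness of this associated Segal space. The Segal condition on this simplicial space follows directly from the horizontal Segal condition on $X$.

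Finally, I would compare this Segal space to $X^{(1)}$ via the natural map induced by the vertical degeneracy $[0,1] \to [0,0]$. The 2-fold Segal condition (\textit{deg}) $X_{0,1} \simeq X_{0,0}$ identifies their object spaces, and the key step is to show that this comparison also matches their spaces of invertible morphisms, so that the completeness of one is equivalent to the completeness of the other. I expect this last identification to be the main obstacle; a natural approach is to unpack both Segal spaces using the gaunt nerve description of \ref{con.gaunt-nerve} and verify directly on the fibers over $X_{0,0}^{\times 2}$ that the $J$-structure relevant for completeness matches, with the (\textit{deg}) condition used to collapse the trivial vertical data.
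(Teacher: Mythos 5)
Your first two reductions are correct: by the exponential adjunction and the adjunction $p_1^* \dashv (-)^{(1)}$ of \ref{con.1core}, locality of $X$ with respect to (\textit{vc}) is equivalent to completeness of the Segal space $\Hom_{\PSh(\Delta^{\times 2})}([1]_v,X)^{(1)}$, which is the simplicial space $X_{\bullet,1} : n \mapsto X_{n,1}$, and its Segal condition is one of the conditions in (\textit{Seg}). The problem is the last step. The comparison $X^{(1)} = X_{\bullet,0} \rightarrow X_{\bullet,1}$ is an equivalence on objects by (\textit{deg}), but it does \emph{not} identify the spaces of Rezk-equivalences, and no argument using only (\textit{Seg}) and (\textit{deg}) can make it do so: an equivalence of $X_{\bullet,1}$ is a globular $2$-cell that is invertible under \emph{horizontal} composition, and such $2$-cells need not be degenerate. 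Concretely, let $G$ be a nontrivial group and let $X$ be the levelwise discrete bisimplicial space $X_{n,m} = \mathrm{Hom}([n;m,\dotsc,m], B^2G) \cong G^{nm}$, where $B^2G$ is the $2$-category with one object, one $1$-cell and $2$-cells $G$. This is a $2$-fold Segal space, $X^{(1)}$ is a point and hence complete, but $X_{\bullet,1}$ is the nerve of the one-object groupoid $BG$, whose space of equivalences is $G \neq X_{0,1}$. So $X$ is not local with respect to (\textit{vc}), and the identification you flagged as ``the main obstacle'' is not merely difficult but false at this level of generality.

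Your computation in fact isolates exactly what is at stake: by the dual of \ref{rem.replacement-hc}, (\textit{vc})-locality is the conjunction of locality with respect to $J_h \rightarrow [0]_h$ (which is precisely completeness of $X^{(1)}$) and locality with respect to (\textit{lvc}). The paper's one-line proof disposes of the second condition by asserting that (\textit{lvc}) lies in the saturation of (\textit{Seg}) and (\textit{deg}); the example above refutes that assertion as well, since $\map_{\PSh(\Delta^{\times 2})}([1;J]_v, X) \cong G$ there while $\map_{\PSh(\Delta^{\times 2})}([1]_v,X) = X_{0,1}$ is a point. The statement becomes correct --- by your route or the paper's --- once one additionally assumes that $X$ is locally complete (or at least local with respect to (\textit{lvc})): a horizontally invertible $2$-cell then has invertible source and target $1$-cells, between such $1$-cells horizontal and vertical invertibility of $2$-cells coincide, and local completeness forces vertically invertible $2$-cells to be degenerate, which is exactly the matching of equivalence spaces your final step requires. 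Since \ref{prop.complete} is only invoked after local completeness has been imposed (in the definition of complete $2$-fold Segal spaces and in \ref{table.2fss}, where (\textit{hc}) is also inverted), the missing hypothesis is harmless downstream, but it must be added for your argument --- and for the proposition as stated --- to go through.
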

\begin{proof}
	This follows from the dual form of \ref{rem.replacement-hc} and the fact that $(\textit{lvc})$ is contained in the 
	saturation of (\textit{Seg}) and (\textit{deg}).
\end{proof}

\begin{definition}
	We say that a 2-fold Segal space $X$ is \textit{complete} if it is locally complete and the equivalent conditions of 
	\ref{prop.complete} are met.
\end{definition}

The reflective subcategory of $\Seg^2(\S)$ that is spanned by the 
complete 2-fold Segal spaces meets the characterization of \cite{BarwickSchommerPries} of the $\infty$-category of $(\infty,2)$-categories; see \cite[Theorem 4.16]{BarwickSchommerPries}. 
We will write $$\Cat_{(\infty,2)} \subset \Seg^2(\S)$$ for this full subcategory, and also refer to complete 2-fold Segal spaces as $(\infty,2)$-categories. 
There are concrete comparisons between complete 2-fold Segal spaces and other models for $(\infty,2)$-categories in \cite{LurieInfty2}, \cite{BergnerRezk1} and \cite{BergnerRezk2}.

\begin{remark}
	Note that the adjunction $\Seg(\S) \rightleftarrows \Seg^2(\S) : (-)^{(1)}$ restricts to an adjunction 
	$\Cat_\infty \rightleftarrows \Cat_{(\infty,2)}: (-)^{(1)}$.
\end{remark}

\begin{remark}
	One easily verifies that the nerve functor that was constructed in \ref{con.gaunt-nerve} has image 
	contained in $\Cat_{(\infty,2)}$, so it defines an inclusion 
	$$
	\mathrm{Gaunt}_2 \rightarrow \Cat_{(\infty,2)}.
	$$
	Although we will not use it in this chapter, it is worth mentioning that this functor has been extended to a fully faithful functor defined on the whole of $\Cat_2$ by Moser \cite{Moser}.
	Here, $\Cat_2$ denotes the underlying $\infty$-category (actually, a weak $(3,1)$-category) of the \textit{canonical model structure on 2-categories} \cite{Lack}. 
\end{remark}

We will now discuss several completeness conditions for double Segal spaces in terms of completeness of their 
vertical and horizontal fragments.

\begin{definition}
	A double Segal space $\P$ is called \textit{locally complete} if $\Vert(\P)$ and $\Hor(\P)$ are locally complete, i.e.\ whenever it local with respect to 
	(\textit{lhc}) and (\textit{vhc}). 
	We will write 
	$$
	\Cat^2(\S)^{lc} \subset \Cat^2(\S)
	$$
	for the full subcategory spanned by the locally complete double Segal spaces.
\end{definition}

\begin{remark}
	Note that locally complete double Segal spaces are closed under horizontal and vertical oppposites, and transposes.
\end{remark}

\begin{definition}\label{def.dbl-cat}
	A double Segal space $\P$ is called a double $\infty$-category if it is local with respect to (\textit{hc}), 
	i.e.\ if $\Vert(\P)^{(1)}$ is complete and $\Hor(\P)$ is locally complete (see \ref{rem.replacement-hc}).
	We will write $$\DblCat_\infty \subset \Cat^2(\S)$$ for the reflective subcategory spanned 
	by the double $\infty$-categories.
\end{definition}

\begin{remark}\label{rem.dbl-cat}
	We may view double $\infty$-categories as simplicial objects in $\Cat_\infty$ as follows.
	Applying $\fun(\Delta^\op, -)$ to the embedding that 
	was discussed in \ref{rem.joyal-tierney}, we will obtain a fully faithful functor 
	$$
	i : \fun(\Delta^\op, \Cat_\infty) \rightarrow \PSh(\Delta^{\times 2})
	$$
	that carries a simplicial $\infty$-category $\P'$ to the bisimplicial space 
	$$
	i(\P')_{n,m} = \map_{\Cat_\infty}([m], \P'_n).
	$$
	One readily verifies that this restricts to an equivalence between:
	\begin{enumerate}
		\item the \textit{Segal objects} in $\Cat_\infty$, i.e.\ simplicial objects $\P' : \Delta^\op \rightarrow \Cat_\infty$ 
		so that each functor 
		$\P'_n \rightarrow \P'_1 \times_{\P'_0} \dotsb \times_{\P'_0} \P'_1$
	   is an equivalence for every $n \geq 0$,
	   \item the double $\infty$-categories.
	\end{enumerate}
	We will leave this identification implicit, and view every double $\infty$-category as a Segal object in $\Cat_\infty$.
\end{remark}

\begin{remark}
	The subcategory $\DblCat_\infty$ is not closed under transposition, as one may readilt check. The 
	involutions $(-)^\hop$ and $(-)^\vop$ do restrict to an endofunctor on $\DblCat_\infty$.
	Suppose that $\P$ is a double $\infty$-category, viewed as a simplicial $\infty$-category. Then the vertical opposite $\P$ 
	is described level-wise by  
	$$
	(\P^\vop)_n = \P_n^\op.
	$$
\end{remark}

\begin{example}\label{ex.ccat-infty}
	In \cite{EquipI}, we discuss the double $\infty$-category $$\CCAT_\infty$$ which is loosely described as follows:
	\begin{itemize}
		\item its objects are are $\infty$-categories,
		\item its vertical arrows are functors,
		\item its horizontal arrows are  \textit{profunctors}, i.e.\ a horizontal arrow $F : \C \rightarrow \D$ is given by a functor 
		$\D^\op \times \C \rightarrow \S$,
		\item its 2-cells are given as follows: a 2-cell \[
	\begin{tikzcd}
		\mathscr{A} \arrow[r ,"F"name=f] \arrow[d,"f"'] & \mathscr{B} \arrow[d, "g"] \\
		\C \arrow[r,"G"to=T] & \D
		\arrow[from=f,to=t,Rightarrow, shorten <= 6pt, shorten >= 6pt]
	\end{tikzcd}
	\]
	corresponds to a natural transformation 
		\[
			\begin{tikzcd}[column sep = large]
				|[alias=f]|\mathscr{B}^\op \times \mathscr{A} \arrow[r,"F"]\arrow[d, "g^\op \times f"'] & \S. \\
				\D^\op \times \C \arrow[ur, "G"'name=t]
				\arrow[from=f,to=t,Rightarrow, shorten >= 8pt, shorten <= 4pt]
			\end{tikzcd}
		\]
	\end{itemize}
	More generally, we construct in \cite{EquipI} a double $\infty$-ca\-te\-go\-ry $$\CCAT_\infty(\E),$$
	for every suitable $\infty$-category $\E$, whose objects are \textit{$\infty$-categories internal to $\E$}. In \cite{HaugsengEnriched}, 
	Haugseng constructs a similarly described double $\infty$-category $$\CCAT_\infty^\mathscr{V}$$ of $\infty$-categories enriched in a suitably monoidal $\infty$-category $\mathscr{V}$.
\end{example}

In practice, double $\infty$-categories, such as these in \ref{ex.ccat-infty}, are usually more conviently constructed as Segal objects in $\Cat_\infty$. We will briefly discuss how 
one may verify the completeness of double $\infty$-categories when viewed as simplicial $\infty$-categories.

\begin{notation}
	Let $\P$ be a double $\infty$-category, viewed as a Segal object in $\Cat_\infty$. Then we write $\P_\mathrm{eq}$ for the $\infty$-category 
	defined by the pullback square 
	\[
	\begin{tikzcd}
		\P_\mathrm{eq} \arrow[r]\arrow[d] & \P_3 \arrow[d,"{(\{0\leq 2\}^*,\{1 \leq 3\}^*)}"] \\
		\P_0\times \P_0 \arrow[r,"s_0^* \times s_0^*"] & \P_1 \times \P_1.
	\end{tikzcd}
	\]
\end{notation}

\begin{proposition}\label{prop.loc-complete-dss}
	Suppose that $\P$ is a double Segal space. Then the following assertions are equivalent:
	\begin{enumerate}
		\item $\Vert(\P)$ is complete and $\Hor(\P)$ is locally complete,
		\item $\P$ is a double $\infty$-category and the canonical functor $\P_0 \rightarrow \P_\mathrm{eq}$ is fully faithful.
	\end{enumerate}
\end{proposition}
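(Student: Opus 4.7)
The plan is to view $\P$ as a Segal object $\P : \Delta^\op \to \Cat_\infty$ via Remark 2.3.11 and then compare mapping spaces on the two sides. Observe first that both conditions imply $\P$ is a double $\infty$-category: condition (2) states this directly, and in (1) the assumption ``$\Vert(\P)$ complete'' contains ``$\Vert(\P)^{(1)}$ complete'' as part of its definition, which together with ``$\Hor(\P)$ locally complete'' is precisely the defining property of a double $\infty$-category (\ref{def.dbl-cat} and \ref{rem.replacement-hc}). It therefore suffices to prove, for $\P$ a double $\infty$-category, that ``$\Vert(\P)$ is locally complete'' is equivalent to ``$\P_0 \to \P_{\mathrm{eq}}$ is fully faithful''.

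Towards this, I would first identify the vertical mapping Segal spaces. Combining the pullback formula of \ref{rem.desc-hor} with the transpose–opposite description $\Vert(\P) \simeq \Hor(\P^\tp)^{2-\op}$ of \ref{rem.vertical-fragment}, one obtains for each $x, y \in \P_{0,0}$ and $m \geq 0$ a natural identification
\[\Vert(\P)(x,y)_m \;\simeq\; \map_{\P_m}(\iota_m x, \iota_m y),\]
where $\iota_m : \P_0 \to \P_m$ is the iterated simplicial degeneracy and $\iota_m x$ is the constant chain at $\id_x$. For $m = 3$, writing $\sigma_x := \iota_3 x$ and $\sigma_y := \iota_3 y$, the Segal condition on $\P$ exhibits $\map_{\P_3}(\sigma_x, \sigma_y) = \Vert(\P)(x,y)_3$ as a threefold fibre product of $\map_{\P_1}(\id_x, \id_y) = \Vert(\P)(x,y)_1$ over $\map_{\P_0}(x,y) = \Vert(\P)(x,y)_0$. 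Since $\P_{\mathrm{eq}}$ is a pullback in $\Cat_\infty$ and mapping spaces in such pullbacks are computed as pullbacks of mapping spaces, combining this with the previous decomposition produces
\[\map_{\P_{\mathrm{eq}}}\bigl((x,x,\sigma_x),(y,y,\sigma_y)\bigr) \;\simeq\; \Vert(\P)(x,y)_{\mathrm{eq}},\]
the Rezk $J$-space of the Segal space $\Vert(\P)(x,y)$.

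Under these identifications, the canonical functor $\P_0 \to \P_{\mathrm{eq}}$ restricts on the mapping space from $x$ to $y$ to the natural Rezk map $\Vert(\P)(x,y)_0 \to \Vert(\P)(x,y)_{\mathrm{eq}}$. By \ref{rem.joyal-tierney} applied to the Segal space $\Vert(\P)(x,y)$, this map is an equivalence if and only if $\Vert(\P)(x,y)$ is complete; demanding this for every pair $x,y$ is fully faithfulness of $\P_0 \to \P_{\mathrm{eq}}$ on one side and, by \ref{prop.locally-complete}, local completeness of $\Vert(\P)$ on the other. I expect the main obstacle to be the first mapping space identification: carefully tracking $\Vert(\P)_{1,m}$ through the three successive functorial operations (transpose, horizontal opposite, horizontal inclusion) to recognise it as the subspace of $\P_{m,1}$ consisting of morphisms in $\P_m$ whose source and target chains are degenerate. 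Once that is established, the remainder of the argument is a formal consequence of pullback compatibility for mapping $\infty$-categories together with Rezk's completeness criterion.
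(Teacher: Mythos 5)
Your argument is correct, and it reaches the same reduction as the paper (both conditions force $\P$ to be a double $\infty$-category, so everything comes down to: for a double $\infty$-category, local completeness of $\Vert(\P)$ is equivalent to fully faithfulness of $\P_0 \rightarrow \P_{\mathrm{eq}}$), but you then execute that equivalence differently. The paper stays ``global'': it rephrases fully faithfulness as left orthogonality to $\{0,1\} \rightarrow [1]$, identifies the resulting square of mapping spaces with mapping spaces out of $[1]_v \times \{0\}_h$, $[1]_v \times J_h$, etc., and invokes the pushout square of \ref{rem.replacement-hc} to recognize the pullback condition as locality with respect to $[1;J]_v \rightarrow [1]_v$, i.e.\ characterization (3) of \ref{prop.locally-complete} for $\Vert(\P)$. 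You instead work fiberwise over pairs of objects: you identify $\Vert(\P)(x,y)_m$ with $\map_{\P_m}(\iota_m x, \iota_m y)$, compute mapping spaces in the pullback $\P_{\mathrm{eq}}$, and land on characterization (1) of \ref{prop.locally-complete} via the completeness of each $\Vert(\P)(x,y)$. Your route buys a more concrete, object-level picture (it makes visible exactly which Segal spaces must be complete), at the cost of the mapping-space identification you correctly flag as the fiddly step; the paper's route avoids that identification entirely by reusing the combinatorial pushout square it has already established. Two small points: the statement you attribute to \ref{rem.joyal-tierney} (that $X_0 \rightarrow X_{\mathrm{eq}}$ is an equivalence iff $X$ is complete) is really just the definition of completeness unwound through the defining pushout of $J$, not the Joyal--Tierney comparison; and you should make sure the identification $\Vert(\P)(x,y)_m \simeq \map_{\P_m}(\iota_m x, \iota_m y)$ is carried out naturally in $[m] \in \Delta$, since the comparison of $\map_{\P_{\mathrm{eq}}}$ with the $J$-space uses the simplicial structure maps, not just the individual levels.
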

\begin{proof}
	We must show that a double $\infty$-category $\P$ is local with respect to (\textit{vhc}) 
	if and only if $\P_0 \rightarrow \P_\mathrm{eq}$ is fully faithful.
	This functor is fully faithful if and only if it is left orthogonal to the inclusion $\{0,1\}\rightarrow [1]$, i.e.\ if and only if the induced square
	\[
	\begin{tikzcd}
		\map_{\Cat_\infty}([1], \P_{0}) \arrow[r]\arrow[d] & \map_{\Cat_\infty}([1], \P_{\mathrm{eq}}) \arrow[d] \\
		\map_{\Cat_\infty}(\{0,1\}, \P_0) \arrow[r] & \map_{\Cat_\infty}(\{0,1\}, \P_{\mathrm{eq}})
	\end{tikzcd}
	\]
	is a pullback square. This can be identified with the commutative square 
	\[
	\begin{tikzcd}
		\map_{\PSh(\Delta^{\times 2})}([1]_v \times \{0\}_h, \P) \arrow[r]\arrow[d] & \map_{\PSh(\Delta^{\times 2})}([1]_v \times J_h, \P) \arrow[d] \\
		\map_{\PSh(\Delta^{\times 2})}(\{0,1\}_v \times \{0\}_h, \P)\arrow[r] & \map_{\PSh(\Delta^{\times 2})}(\{0,1\}_v \times J_h, \P).
	\end{tikzcd}
	\]
	In light of the pushout square that appears in \ref{rem.replacement-hc},
	this square is a pullback square if and only if the restriction map 
	$$
	\map_{\PSh(\Delta^{\times 2})}([1]_v, \P) \rightarrow \map_{\PSh(\Delta^{\times 2})}([1;J]_v, \P)
	$$
	induced by (\textit{vhc}) is an equivalence.
\end{proof}

\begin{example}
	In \cite{EquipI}, we show that $\CCAT_\infty(\E)$ is locally complete 
	for every suitable $\infty$-category $\E$. In particular, $\CCAT_\infty$ is locally complete. 
	Throughout this article, we will denote its vertical fragment by 
	$$
	\CAT_\infty := \Vert(\CCAT_\infty).
	$$
	It is our preferred model for the $(\infty,2)$-category of $\infty$-categories.
\end{example}

\begin{proposition}\label{prop.complete-dss}
	Let $\P$ be a double Segal space. Then the following assertions are equivalent:
	\begin{enumerate}
		\item $\Vert(\P)$ and $\Hor(\P)$ are complete,
		\item $\P$ is local with respect to (\textit{hc}) and (\textit{vc}),
		\item $\P$ is a double $\infty$-category and the functor $\P_0 \rightarrow \P_\mathrm{eq}$ is an equivalence.
	\end{enumerate}
\end{proposition}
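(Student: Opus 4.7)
The plan is to prove $(1) \Leftrightarrow (2)$ directly through saturated classes of locality conditions, and $(1) \Leftrightarrow (3)$ by upgrading \ref{prop.loc-complete-dss}. For the first equivalence, recall that by definition a 2-fold Segal space is complete iff it is locally complete and its underlying simplicial object (via $(-)^{(1)}$) is a complete Segal space. Combining \ref{prop.locally-complete} with the adjunction $(-)_h \dashv \Hor(-)$, local completeness of $\Hor(\P)$ is equivalent to $\P$ being local with respect to (\textit{lhc}); combining \ref{prop.complete} with the adjunction of \ref{con.1core} composed with $(-)_h$, completeness of $\Hor(\P)^{(1)}$ is equivalent to $\P$ being local with respect to $J_h \to [0]_h$. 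Dually, $\Vert(\P)$ is complete iff $\P$ is local with respect to both (\textit{lvc}) and $J_v \to [0]_v$. By \ref{rem.replacement-hc} and its transposed analog, combined with the fact that $\P$ is already local with respect to (\textit{Seg}), the saturated class generated by (\textit{hc}) agrees with that generated by (\textit{lhc}) and $J_v \to [0]_v$, and symmetrically for (\textit{vc}). This yields $(1) \Leftrightarrow (2)$.

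For $(1) \Leftrightarrow (3)$, \ref{prop.loc-complete-dss} identifies ``$\Vert(\P)$ complete and $\Hor(\P)$ locally complete'' with ``$\P$ is a double $\infty$-category and $\P_0 \to \P_\mathrm{eq}$ is fully faithful''. It then remains to show that, under these equivalent conditions, $\Hor(\P)^{(1)}$ is complete iff $\P_0 \to \P_\mathrm{eq}$ is an equivalence. Since $\Hor(\P)^{(1)}_n = \P_{n,0} = (\P_n)_0$ is the space of objects of the $\infty$-category $\P_n$, the defining pullback of $\P_\mathrm{eq}$ together with the Segal condition gives
\[
(\P_\mathrm{eq})_0 \simeq \P_{3,0} \times_{\P_{1,0}^{\times 2}} \P_{0,0}^{\times 2} \simeq \map_{\PSh(\Delta)}(J, \Hor(\P)^{(1)}),
\]
so completeness of $\Hor(\P)^{(1)}$ is exactly the statement that the induced map $(\P_0)_0 \to (\P_\mathrm{eq})_0$ is an equivalence of spaces. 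For $\infty$-categories viewed as complete Segal spaces, completeness identifies spaces of objects with groupoid cores, so a fully faithful functor between them is an equivalence precisely when it induces an equivalence on spaces of objects. This closes the equivalence.

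The main conceptual step is the identification in the second part: recognizing that completeness of the ``object-spaces'' Segal space $\Hor(\P)^{(1)}$ encodes precisely the essential surjectivity of $\P_0 \to \P_\mathrm{eq}$. The remaining steps in both parts are formal and amount to standard manipulations of adjunctions and saturated classes of maps.
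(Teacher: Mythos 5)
Your proposal is correct and follows essentially the same route as the paper: the equivalence $(1)\Leftrightarrow(2)$ via the saturation comparison of \ref{rem.replacement-hc}, and the remaining equivalence by combining \ref{prop.loc-complete-dss} with the observation that the locality map $\map_{\PSh(\Delta^{\times 2})}([0]_h,\P)\rightarrow\map_{\PSh(\Delta^{\times 2})}(J_h,\P)$ is exactly what one obtains by applying $\map_{\Cat_\infty}([0],-)$ to $\P_0\rightarrow\P_{\mathrm{eq}}$, so that fully faithfulness plus an equivalence on groupoid cores yields an equivalence. You merely spell out the details (the pullback computation of $(\P_{\mathrm{eq}})_0$ and the adjunction bookkeeping) that the paper leaves implicit.
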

\begin{proof}
	It is readily verified that (1) and (2) are equivalent using \ref{rem.replacement-hc}. To show that (2) and (3) are equivalent, 
	we must show that a double $\infty$-category $\P$ is local with respect to (\textit{vc}) if and only if 
	$\P_0 \rightarrow \P_\mathrm{eq}$ is an equivalence. But this readily 
	follows from \ref{prop.loc-complete-dss}, and 
	the observation that one recovers the map  $\map_{\PSh(\Delta^{\times 2})}([0]_h, \P) \rightarrow \map_{\PSh(\Delta^{\times 2})}(J_h, \P)$ 
	if one applies $\map_{\Cat_\infty}([0],-)$ to the functor $\P_0 \rightarrow \P_\mathrm{eq}$.
\end{proof}

\begin{definition}
	A double Segal space is called \textit{complete} if it meets the equivalent conditions of \ref{prop.complete-dss}. We will write 
	$$\DblCat_\infty^c \subset \DblCat_\infty$$ for the full and reflective subcategory spanned by the complete double Segal spaces.
\end{definition}

\begin{remark}
	The complete double $\infty$-categories are closed under horizontal and vertical opposites, and transposes.
\end{remark}

We conclude this subsection by summarizing the reflective subcategory of
$\PSh(\Delta^{\times 2})$ that were constructed above:
\begin{table}[H]
	\centering
\renewcommand{\arraystretch}{1.2}
\begin{tabular}{| c | c | c |}
	\hline
	\textbf{Subcategory} & \textbf{Local objects} & \textbf{Generating local maps} \\
	\hline 
	$\Cat^2(\S)$ & \textit{Double Segal spaces} & (\textit{Seg}) \\ 
	\hline
	$\Cat^2(\S)^{lc}$ & \textit{Locally complete double Segal spaces} & (\textit{Seg}), (\textit{lvc}), (\textit{lhc})\\
	\hline
	$\DblCat_\infty$ & \textit{Double $\infty$-categories} & (\textit{Seg}), (\textit{hc}) \\
	\hline
	$\DblCat_\infty^c$ & \textit{Complete double $\infty$-categories}  & (\textit{Seg}), (\textit{hc}), (\textit{vc})\\
	\hline
\end{tabular}
\caption{Relevant notions of double Segal spaces}\label{table.dss}
\end{table}

\begin{table}[H]
	\centering
	\renewcommand{\arraystretch}{1.2}
	\begin{tabular}{| c | c | c |}
		\hline
		\textbf{Subcategory} & \textbf{Local objects} & \textbf{Generating local maps} \\
		\hline
		$\Seg^2(\S)$ & \textit{2-Fold Segal spaces} & (\textit{Seg}), (\textit{deg}) \\
		\hline
		$\Seg^2(\S)^{lc}$ & \textit{Locally complete 2-fold Segal spaces} & (\textit{Seg}), (\textit{deg}), (\textit{lhc}) \\
		\hline
		$\Cat_{(\infty,2)}$ & \textit{$(\infty,2)$-Categories} & (\textit{Seg}), (\textit{deg}), (\textit{hc}), (\textit{vc})\\
		\hline
	\end{tabular}
	\caption{Relevant notions of 2-fold Segal spaces}\label{table.2fss}
\end{table}

\subsection{Exponentiability of double Segal spaces} 
We will now show the following:

\begin{proposition}\label{prop.exp-ideals-dss}
	The reflective subcategories listed in \ref{table.dss} are exponential ideals of $\PSh(\Delta^{\times 2})$. In particular, 
	all these subcategories are cartesian closed and inherit the internal homs from $\PSh(\Delta^{\times 2})$.
\end{proposition}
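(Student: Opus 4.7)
By \ref{prop.exp-ideals-reflector}, it suffices to show that, for each reflective subcategory $\mathscr{I} \subseteq \PSh(\Delta^{\times 2})$ appearing in \ref{table.dss}, the associated reflector $L$ preserves finite products. The criterion of \ref{lem.exp-ideals}(2) reduces this further: we need only take the generating class $S$ of $L$-local equivalences exhibited in the rightmost column of \ref{table.dss} and verify that $x \times f$ is an $L$-local equivalence for every $x \in \PSh(\Delta^{\times 2})$ and every $f \in S$. Since $\PSh(\Delta^{\times 2})$ is cartesian closed, $x \times (-)$ commutes with colimits; as $L$-local equivalences are closed under colimits and every presheaf is a colimit of representables, it suffices to verify that $[k,l] \times f$ is an $L$-local equivalence for each representable $[k,l]$ and each generator $f \in S$.

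For the Segal generators (\textit{Seg}), the bisimplicial space $[k,l] \times [n,m]$ is the bi-nerve of the product poset $([k]\times[n]) \times ([l] \times [m])$. Applying the classical shuffle decomposition to $[k]\times[n]$ and $[l]\times [m]$, one writes $[k,l]\times[n,m]$ as a finite iterated pushout of bi-representables along horizontal and vertical spine inclusions. Consequently, for a horizontal spine inclusion $\sigma : [1,m] \cup_{[0,m]} \cdots \cup_{[0,m]} [1,m] \to [n,m]$, the product $[k,l] \times \sigma$ lies in the saturated class generated by (\textit{Seg}); the vertical case is symmetric.

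For the local completeness generators (\textit{lhc}) and (\textit{lvc}), we use the Segal decomposition of $[k]_h \times [1]_h$ (already handled above) to reduce $[k,l] \times ([1;J]_h \to [1]_h)$ to the claim that $[1;J]_h \times [l]_v \to [1]_h \times [l]_v$ lies in the relevant saturation. This follows from the construction of $[1;J]$ in \ref{def.invertible-2cell} as a finite pushout of bi-representables: products with $[l]_v$ commute with this pushout, and each term is handled by (\textit{Seg}) together with the original (\textit{lhc}). For the completeness generators (\textit{hc}) and (\textit{vc}), we invoke \ref{rem.replacement-hc}, which places (\textit{hc}) in the saturated class generated by (\textit{lhc}) and $J_v \to [0]_v$, and exhibits $J_v \to [0]_v$ as a retract of (\textit{hc}); hence closure under products for (\textit{hc}) reduces to the (\textit{lhc}) case, and similarly for (\textit{vc}).

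\textbf{Main obstacle.} The combinatorially intricate step is the (\textit{Seg}) case: carefully decomposing $[k,l] \times [n,m]$ into bi-representables via shuffles and verifying that the resulting description exhibits $[k,l] \times \sigma$ as an iterated pushout along spine inclusions. Once this is in place, the remaining cases follow from formal manipulations using Segal decompositions together with the observations recorded in \ref{rem.replacement-hc}.
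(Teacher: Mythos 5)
Your overall architecture matches the paper's: reduce via \ref{lem.exp-ideals} and \ref{prop.exp-ideals-reflector} to checking that products of representables with the generating local maps remain local equivalences, and handle the Segal generators by shuffle combinatorics (the paper simply cites \cite[Proposition 10.3]{RezkSeg} for this, which is the same argument). The gap is in the completeness generators. The essential nontrivial input there is Rezk's cylinder lemma (\ref{lem.Jcilinder} in the paper): the map $P(K) \rightarrow [1] \times K$ lies in the saturation of the spine inclusions, which is what lets one trade the cylinder $[1] \times J$ for a pushout of $\{0,1\} \times J$ onto $[1] \times [1]$. Your argument never invokes this or any substitute, and the places where it is needed are exactly the places you pass over. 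First, the reduction of (\textit{hc}) via \ref{rem.replacement-hc} produces not only (\textit{lhc}) but also the generator $J_v \rightarrow [0]_v$; the product of the latter with $[1]_v$ is $([1] \times J)_v \rightarrow [1]_v$, and showing that this is a local equivalence is precisely the content of the cylinder lemma --- it does not ``reduce to the (\textit{lhc}) case.'' Second, for $[1;J]_h \times [l]_v \rightarrow [1]_h \times [l]_v$ the assertion that ``each term is handled'' does not parse: a map between pushouts is not a local equivalence merely because its source and target decompose into representables; one must exhibit the map itself as built from the generators, and the paper does this by forming the pushout-products $Q(K)$ and again appealing to \ref{lem.Jcilinder}.

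There is a second, smaller omission. You claim that the Segal decomposition of $[k]_h \times [1]_h$ reduces the product of $[k,l]$ with (\textit{lhc}) directly to $[1;J]_h \times [l]_v \rightarrow [1]_h \times [l]_v$, but the Segal decomposition only reduces $[k]_h$ to copies of $[1]_h$, so the case $[1]_h \times [1;J]_h \rightarrow [1]_h \times [1]_h$ remains and is not covered by the $[l]_v$ case. The paper handles it by the genuinely combinatorial identification of $([1] \times [1;J])_h$ as the pushout $[2;J,0]_h \cup_{[1;J]_h} [2;0,J]_h$, together with the observation that the evident spine-type inclusions into $[2;J,0]_h$ and $[2;0,J]_h$ lie in the saturation of (\textit{Seg}). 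In summary: the frame of your proof is correct and the (\textit{Seg}) case is fine modulo the shuffle combinatorics, but the completeness cases require the cylinder lemma and the $[2;J,0]$-type decompositions, neither of which your argument supplies.
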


In order to prove \ref{prop.exp-ideals-dss}, we will use the following result of Rezk:

\begin{lemma}[Rezk]\label{lem.Jcilinder}
	Suppose that $K = [0]$ or $J$ and consider the the pushout square 
	\[
		\begin{tikzcd}
			\{0,1\} \times [1] \arrow[r]\arrow[d] & \{0,1\} \times K \arrow[d] \\
			{[1]} \times [1] \arrow[r] & P(K)
		\end{tikzcd}
	\]
	in $\PSh(\Delta)$. Then the canonical map $P(K) \rightarrow [1] \times K$ is 
	is contained in the saturated class 
	of morphisms generated by the spine inclusions.
\end{lemma}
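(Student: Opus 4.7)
The plan is to combine the standard identification of the saturated class generated by the spine inclusions with the class of local equivalences for the Bousfield localization $\PSh(\Delta) \to \Seg(\S)$ --- i.e.\ maps $f$ for which $\map(f, X)$ is an equivalence for every Segal space $X$ --- with a direct computation of mapping spaces. This identification is a standard consequence of the theory of reflective localizations of presentable $\infty$-categories, as already invoked implicitly elsewhere in this section. It therefore suffices to check that $P(K) \to [1] \times K$ becomes an equivalence after applying $\map(-, X)$ for every Segal space $X$.

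Unwinding the pushout defining $P(K)$ yields
\[
\map(P(K), X) \simeq \map([1] \times [1], X) \times_{\map(\{0,1\} \times [1], X)} \map(\{0,1\} \times K, X),
\]
i.e.\ the space of squares $[1] \times [1] \to X$ whose restriction to the two vertical edges factors through the chosen map $[1] \to K$.

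For $K = [0]$, such data is a square in $X$ with degenerate left and right edges. The two non-degenerate $2$-simplices of $[1] \times [1]$ then present themselves as triangles in $X$ each containing a degenerate edge; by the Segal condition, each such triangle is a contractible space of data witnessing that the remaining two edges coincide. Hence the top, bottom, and diagonal edges are all forced to agree, and the mapping space collapses to $X_1 \simeq \map([1], X) = \map([1] \times [0], X)$, as desired.

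For $K = J$, the argument is analogous but more involved. Using the pushout presentation $J = [3] \cup_{[1] \sqcup [1]} ([0] \sqcup [0])$, both mapping spaces become iterated pullbacks built from $\map([1] \times [3], X)$ together with degeneracy conditions dictated by the collapsed edges $\{0 \leq 2\}$ and $\{1 \leq 3\}$. The comparison then reduces to applying the Segal condition to each $2$-simplex occurring in the shuffle decomposition of the prism $[1] \times [3]$. The main obstacle here is precisely this case: since $J$ is weakly contractible only in the complete Segal space model, not the Segal space model, one cannot short-circuit the argument by contracting $J$ to a point; instead all the Segal composition witnesses arising from the prism $[1] \times [3]$ must be tracked honestly.
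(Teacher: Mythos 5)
Your overall framework is legitimate: the paper's ``saturation'' is elsewhere manipulated using 2-out-of-3 (e.g.\ in the proof of \ref{lem.gen-horns}), so it is the \emph{strongly} saturated class generated by the spine inclusions, which by the standard theory of accessible localizations coincides with the class of maps $f$ such that $\map(f,X)$ is an equivalence for every Segal space $X$. Reducing to a mapping-space computation is therefore valid, and your analysis of the case $K=[0]$ --- decomposing $[1]\times[1]$ into its two triangles and using that each has a degenerate spine edge --- is correct. Note that the paper does not carry out any computation here: its proof is a citation to Rezk's Subsection 12.3, so you are attempting strictly more than the source text does.

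The gap is the case $K=J$, which is the actual content of the lemma (the case $K=[0]$ is an easy warm-up), and your last paragraph is a plan rather than a proof. Concretely, $\map([1]\times J,X)$ is $\map([1]\times[3],X)$ with degeneracy imposed along the restrictions to $[1]\times\{0\leq 2\}$ and $[1]\times\{1\leq 3\}$, whereas the Segal-type decomposition of the prism $[1]\times[3]$ you invoke is by the elementary squares $[1]\times\{j\leq j+1\}$ (equivalently by the four shuffle $4$-simplices --- not $2$-simplices, as you write). The collapsed squares are ``long'': they span two columns of the prism and are not faces of any single elementary square, so the degeneracy conditions cannot be processed one cell of the decomposition at a time. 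Tracking how the collapse of $[1]\times\{0\leq 2\}$ and $[1]\times\{1\leq 3\}$ propagates through the iterated fiber product of copies of $X_1\times_{X_0}\dotsb\times_{X_0}X_1$, and matching the result against $\map([1]\times[1],X)\times_{X_1^{\times 2}}\map(J,X)^{\times 2}$, is exactly the work that Rezk's subsection performs; saying that the witnesses ``must be tracked honestly'' names the difficulty without discharging it. As it stands the proposal proves the lemma only for $K=[0]$.
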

\begin{proof}
	This is essentially the content of \cite[Subsection 12.3]{RezkSeg}. 
\end{proof}

\begin{proof}[Proof of \ref{prop.exp-ideals-dss}]
	Let $\C$ be one of these full subcategories of $\PSh(\Delta^{\times 2})$.
	Write $S$ for the generating local maps of the localization $\C$, as listed in 
	the table in \ref{ssection.dss}.
	Let $X$ be a bisimplicial space. Then \ref{lem.exp-ideals} asserts 
	that we have to show that the functor 
	$$
	X \times (-) : \PSh(\Delta^{\times 2}) \rightarrow \PSh(\Delta^{\times 2})
	$$
	carries each map $f\in S$ to a map in the saturation of $S$. Since the product 
	preserves colimits in each variable, we may reduce to showing this for $X = [n,m]$. 
	But one can readily verify that $[n,m]$ is a retract of $[1,0]^{\times n} \times [0,1]^{\times m}$. 
	Thus it suffices to handle the cases that 
	$X = [1,0]$ and $X = [0,1]$.

	Now, in any of the cases that $f$ is of type (\textit{Seg}), the desired result can directly be obtained 
	from the 1-dimensional case \cite[Proposition 10.3]{RezkSeg}.
	
	Suppose that $f$ is of the form (\textit{hc}). If $X = [1]_h$, the result readily follows.
	Hence, it remains to show that 
	$$
	[1]_v \times ([1]_h \times J_v) = [1]_h \times ([1]_v \times J_v) \rightarrow [1]_h \times [1]_v = [1]_v \times [1]_h
	$$
	sits in the saturation of (\textit{Seg}) and (\textit{hc}). 
	Let $P(K) \rightarrow [1] \times K$ be the map of \ref{lem.Jcilinder} for $K = [0]$ and $J$. 
	It follows from this lemma that the vertical arrows in the commutative square
	\[
		\begin{tikzcd}
			{[1]_h} \times P(J)_v \arrow[r]\arrow[d] & {[1]_h} \times P([0])_v \arrow[d] \\
			{[1]_h} \times [1]_v \times J_v \arrow[r] & {[1]_h} \times [1]_v.
		\end{tikzcd}
	\]
	are contained in the saturation of (\textit{Seg}). The upper map is a pushout along $
	[1]_h \times \{0,1\}_v \times J_v \rightarrow [1]_h \times \{0,1\}_v$, so it lies in the saturation of (\textit{hc}). Consequently, 
	by 2-out-of-3, the bottom map lies in the saturation generated by (\textit{Seg}) and (\textit{hc}), as desired.
	
	Let us now handle the map $f$ of type (\textit{lhc}) and the case that
	 $X = [1]_h$. One can check that the map 
	$$
	([1] \times [1;J])_h \rightarrow ([1] \times [1])_h
	$$
	is obtained by taking pushouts of the rows in the commutative diagram
	\[
		\begin{tikzcd}
			{[2; J, 0]}_h \arrow[d] & \arrow[l] [1;J]_h \arrow[r] \arrow[d]& {[2;0,J]_h} \arrow[d] \\
			{[2]}_h & \arrow[l] [1]_h \arrow[r] & {[2]_h}.
		\end{tikzcd}
	\]
	The middle vertical map is precisely (\textit{lhc}). Consequently, it suffices to check that the outer 
	vertical maps are contained in the saturation of (\textit{Seg}) and (\textit{lhc}). But this readily follows 
	from the observation that the canonical inclusions 
	$$
	[1;J]_h \cup_{[0]_h} [1]_h \rightarrow [2; J, 0]_h, \quad \text{and} \quad [1]_h \cup_{[0]_h} [1;J]_h \rightarrow [2; 0, J]_h
	$$
	are contained in the saturation (\textit{Seg}). 
	
	Next, we handle the case that $f$ is of type (\textit{lhc}) and $X = [1]_v$. We have to verify that 
	$$
	[1]_v \times [1;J]_h \rightarrow  [1]_v \times [1]_h
	$$
	sits in the saturation of (\textit{lhc}) and (\textit{Seg}). 
	Let $(-) \boxtimes (-)$ denote the pushout-product for arrows in $\PSh(\Delta^{\times 2})$. 
	We can then write
	$$
	[1]_v \times [1;K]_h = (\{0,1\}_h \rightarrow [1]_h) \boxtimes (([1] \times K)_v \rightarrow [1]_v).
	$$
	for $K = [0]$ and $J$.
	Using the pushout-product, we define
	$$
	Q(K) := (\{0,1\}_h \rightarrow [1]_h) \boxtimes (P(K)_v \rightarrow [1]_v),
	$$
	and obtain a commutative square
	\[
		\begin{tikzcd}
			Q(J) \arrow[r]\arrow[d] & Q([0]) \arrow[d] \\
			{[1]_v \times [1;J]_h} \arrow[r] &  {[1]_v \times [1]_h}.
		\end{tikzcd}
	\]
	The vertical arrows lie in the saturation of (\textit{Seg}) on account of \ref{lem.Jcilinder}. 
	The top arrow is a pushout along 
	$$
	(\{0,1\}_h \rightarrow [1]_h) \boxtimes ((\{0,1\} \times J)_v \rightarrow \{0,1\}_v) \rightarrow (\{0,1\}_h \rightarrow [1]_h) \boxtimes (\{0,1\}_v \rightarrow \{0,1\}_v)
	$$
	and this is precisely the map $\{0,1\}_v \times [1;J]_h \rightarrow \{0,1\}_v$, which 
	is contained in the saturation of (\textit{lhc}). Thus the result follows from 2-out-of-3.

	The cases (\textit{vc}) and (\textit{lvc}) formally follow from the above cases, and this finishes the proof.
\end{proof}

\begin{definition}
	The \textit{double Segal space of functors} between a 
	bisimplicial space $X$ and a double Segal space $\P$ is defined to be 
	$$
	\FFUN(X, \P) := \mathrm{Hom}_{\Cat^2(\S)}(X, \P) = \Hom_{\PSh(\Delta^{\times 2})}(X, \P).
	$$
\end{definition}

\begin{definition}
	The \textit{vertical} and \textit{horizontal cotensor products}  of a 2-fold Segal space $X$ with a double Segal space are defined as
	$$
	[X,\P] := \FFUN(X_v, \P) \quad \text{ and } \quad \{X,\P\} := \FFUN(X_h, \P)
	$$
	respectively.
\end{definition}

Note that $\PSh(\Delta^{\times 2})_\mathrm{deg}$ is \textit{not} an exponential ideal of $\PSh(\Delta^{\times 2})$. However, 
it is still cartesian closed since it is a presheaf $\infty$-topos (see \ref{rem.quotient-grid}).
The following can be readily deduced from \ref{prop.exp-ideals-dss}:

\begin{corollary}
	The full subcategories $\Seg^2(\S)$, $\Seg^2(\S)^{lc}$ and $\Cat_{(\infty,2)}$ are exponentiable 
	ideals of $\PSh(\Delta^{\times 2})_{\mathrm{deg}}$.
\end{corollary}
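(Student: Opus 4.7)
The plan is to leverage the exponential ideal structure of these subcategories inside the larger presheaf $\infty$-category $\PSh(\Delta^{\times 2})$, established in \ref{prop.exp-ideals-dss}, together with the fact that $\PSh(\Delta^{\times 2})_\mathrm{deg}$ is a reflective subcategory of $\PSh(\Delta^{\times 2})$.

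First I would note that each of the three subcategories $\Seg^2(\S)$, $\Seg^2(\S)^{lc}$, and $\Cat_{(\infty,2)}$ is contained in $\PSh(\Delta^{\times 2})_\mathrm{deg}$: all three are local with respect to the degeneracy map $(\textit{deg})$, which generates $\PSh(\Delta^{\times 2})_\mathrm{deg}$ as a reflective subcategory. Next, since the inclusion $\PSh(\Delta^{\times 2})_\mathrm{deg} \hookrightarrow \PSh(\Delta^{\times 2})$ is a right adjoint, products of objects in $\PSh(\Delta^{\times 2})_\mathrm{deg}$ agree with products taken in $\PSh(\Delta^{\times 2})$.

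Fix a subcategory $\mathscr{I}$ from the list, and let $X \in \PSh(\Delta^{\times 2})_\mathrm{deg}$ and $Y \in \mathscr{I}$. By \ref{prop.exp-ideals-dss}, the internal hom $Z := \Hom_{\PSh(\Delta^{\times 2})}(X,Y)$ lies in $\mathscr{I}$, hence in $\PSh(\Delta^{\times 2})_\mathrm{deg}$. Using that products in $\PSh(\Delta^{\times 2})_\mathrm{deg}$ are inherited and that the inclusion is fully faithful, one verifies that for any $W \in \PSh(\Delta^{\times 2})_\mathrm{deg}$,
\[
\map_{\PSh(\Delta^{\times 2})_\mathrm{deg}}(W \times X, Y) \simeq \map_{\PSh(\Delta^{\times 2})}(W \times X, Y) \simeq \map_{\PSh(\Delta^{\times 2})_\mathrm{deg}}(W, Z),
\]
so $Z$ also computes the internal hom in $\PSh(\Delta^{\times 2})_\mathrm{deg}$. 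Since $Z \in \mathscr{I}$, this exhibits $\mathscr{I}$ as an exponential ideal of $\PSh(\Delta^{\times 2})_\mathrm{deg}$.

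There is no real obstacle here: the argument is formal once one has \ref{prop.exp-ideals-dss} and the observation that $\PSh(\Delta^{\times 2})_\mathrm{deg}$ sits as a reflective subcategory of $\PSh(\Delta^{\times 2})$. The only subtle point to articulate carefully is that the internal hom computed in the ambient presheaf $\infty$-category already lands in $\PSh(\Delta^{\times 2})_\mathrm{deg}$ (a consequence of $Y$ being in $\mathscr{I} \subset \PSh(\Delta^{\times 2})_\mathrm{deg}$ combined with \ref{prop.exp-ideals-dss} applied to $\PSh(\Delta^{\times 2})_\mathrm{deg}$ itself, whose exponential ideal property follows by the same token from \ref{prop.exp-ideals-reflector} once one observes its reflector is left exact as a presheaf topos localization).
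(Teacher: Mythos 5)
There is a genuine gap, and it sits at the pivot of your argument. The claim that $Z := \Hom_{\PSh(\Delta^{\times 2})}(X,Y)$ ``lies in $\mathscr{I}$, hence in $\PSh(\Delta^{\times 2})_{\mathrm{deg}}$'' is not what \ref{prop.exp-ideals-dss} says, and it is in fact false. That proposition concerns the subcategories of \ref{table.dss} (double Segal spaces and their completions), not the 2-fold Segal space localizations of \ref{table.2fss}; at best it gives $Z \in \Cat^2(\S)$ (resp.\ $\Cat^2(\S)^{lc}$, $\DblCat_\infty^c$). Concretely, take $X = [1]_h$ and $Y$ any 2-fold Segal space with a non-degenerate 2-cell, e.g.\ $Y = [1;1]$. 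Then $Z_{0,0} = \map_{\PSh(\Delta^{\times 2})}([1]_h, Y) = Y_{1,0}$ while $Z_{0,1} = \map_{\PSh(\Delta^{\times 2})}([1,1], Y) = Y_{1,1}$, so $Z$ violates (\textit{deg}) and does not even lie in $\PSh(\Delta^{\times 2})_{\mathrm{deg}}$. This is precisely why the paper remarks, just before the corollary, that $\PSh(\Delta^{\times 2})_{\mathrm{deg}}$ is \emph{not} an exponential ideal of $\PSh(\Delta^{\times 2})$. Your closing parenthetical, asserting that the reflector onto $\PSh(\Delta^{\times 2})_{\mathrm{deg}}$ is left exact ``as a presheaf topos localization'', contradicts this via \ref{prop.exp-ideals-reflector} and is also false: $L_{\mathrm{deg}}([1]_h \times [1]_v) \simeq [1;1]_h$, whereas $L_{\mathrm{deg}}([1]_h) \times L_{\mathrm{deg}}([1]_v) \simeq [1]_h$. (Being abstractly equivalent to a presheaf $\infty$-topos does not make the localization left exact.) Once $Z \notin \PSh(\Delta^{\times 2})_{\mathrm{deg}}$, the last equivalence in your displayed chain fails and $Z$ does not compute the internal hom of $\PSh(\Delta^{\times 2})_{\mathrm{deg}}$.

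The repair is to use the coreflection rather than the reflection. By \ref{prop.hor-right-adjoint} the inclusion $(-)_h : \PSh(\Delta^{\times 2})_{\mathrm{deg}} \rightarrow \PSh(\Delta^{\times 2})$ admits the right adjoint $\Hor(-)$, and since it also preserves finite products (the subcategory is reflective, hence closed under limits), one finds
\[
\Hom_{\PSh(\Delta^{\times 2})_{\mathrm{deg}}}(X,Y) \simeq \Hor\bigl(\Hom_{\PSh(\Delta^{\times 2})}(X_h, Y_h)\bigr).
\]
Now if $Y$ lies in $\Seg^2(\S)$ (resp.\ $\Seg^2(\S)^{lc}$, $\Cat_{(\infty,2)}$), then $Y_h$ lies in $\Cat^2(\S)$ (resp.\ $\Cat^2(\S)^{lc}$, using that (\textit{lvc}) is in the saturation of (\textit{Seg}) and (\textit{deg}); resp.\ $\DblCat_\infty^c$), so \ref{prop.exp-ideals-dss} places $\Hom_{\PSh(\Delta^{\times 2})}(X_h, Y_h)$ in that subcategory of \ref{table.dss}, and $\Hor(-)$ carries it back into the desired subcategory of \ref{table.2fss} (by \ref{rem.desc-hor}, by the definition of local completeness for double Segal spaces, and by \ref{prop.complete-dss}, respectively).
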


\begin{notation}
	The \textit{2-fold Segal space of functors} between 2-fold Segal spaces $X$ and $Y$ is denoted by 
	$$
	\FUN(X,Y) := \Hom_{\Seg^2(\S)}(X,Y) = \Hom_{\PSh(\Delta^{\times2})_{\mathrm{deg}}}(X,Y).
	$$
\end{notation}

The following result asserts that the defined cotensor products are compatible with fragments:

\begin{proposition}\label{prop.cotensor-vert}
	Let $X$ and $\P$ be a 2-fold and double Segal space respectively. There are canonical equivalences of 2-fold Segal spaces
	$$
	\Vert([X,\P]) \xrightarrow{\simeq} \FUN(X, \Vert(\P)), \quad \Hor(\{X,\P\}) \xrightarrow{\simeq} \FUN(X, \Hor(\P)).
	$$
\end{proposition}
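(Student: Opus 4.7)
The plan is to verify both equivalences by a Yoneda-type argument: for any 2-fold Segal space $Y$, I would exhibit a chain of natural equivalences between $\map_{\Seg^2(\S)}(Y, -)$ applied to the two sides, and then invoke the Yoneda lemma inside $\Seg^2(\S)$.

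For the horizontal statement, the key ingredient is that the inclusion $(-)_h : \PSh(\Delta^{\times 2})_\mathrm{deg} \rightarrow \PSh(\Delta^{\times 2})$ preserves finite products. This is automatic from \ref{prop.hor-right-adjoint} (together with the observation that $\PSh(\Delta^{\times 2})_\mathrm{deg}$ is a reflective subcategory), since $(-)_h$ is then both a left and right adjoint, hence preserves all limits \emph{and} colimits; in particular $(Y \times X)_h \simeq Y_h \times X_h$. Using this together with the adjunction $(-)_h \dashv \Hor(-)$, the internal hom description of $\FFUN$ from \ref{prop.exp-ideals-dss}, and the fact that $\FUN$ is the internal hom in $\Seg^2(\S)$, one chains the equivalences
\begin{align*}
\map_{\Seg^2(\S)}(Y, \Hor\{X,\P\}) &\simeq \map_{\Cat^2(\S)}(Y_h, \FFUN(X_h,\P)) \\
&\simeq \map_{\Cat^2(\S)}(Y_h \times X_h, \P) \\
&\simeq \map_{\Cat^2(\S)}((Y \times X)_h, \P) \\
&\simeq \map_{\Seg^2(\S)}(Y \times X, \Hor\P) \\
&\simeq \map_{\Seg^2(\S)}(Y, \FUN(X, \Hor\P)).
\end{align*}
Since all steps are natural in $Y$, the Yoneda lemma yields the desired equivalence, and by tracing through the adjunctions one verifies that it coincides with the canonical comparison map.

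For the vertical statement, the same argument goes through with $(-)_v \dashv \Vert(-)$ in place of $(-)_h \dashv \Hor(-)$; the only point to check is that $(-)_v$ preserves finite products, but this is immediate from its construction as the composite $(-)^\hop \circ (-)^\tp \circ (-)_h$, since the dualities $(-)^\hop$ and $(-)^\tp$ are involutive equivalences (hence preserve all limits) and $(-)_h$ preserves finite products as above.

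I do not expect a genuine obstacle: the argument is entirely formal from the adjunctions set up in \ref{ssection.dss} and the exponential ideal structure of \ref{prop.exp-ideals-dss}. The only subtle point is the verification that $(-)_h$ (and therefore $(-)_v$) preserves finite products, but this drops out from the fact that $(-)_h$ is both a left and right adjoint, which was already implicitly noted in \ref{prop.hor-right-adjoint}.
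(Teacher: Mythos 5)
Your argument is correct and matches the paper's own proof, which simply observes that the equivalences ``follow immediately from the fact that the inclusions $(-)_h$ and $(-)_v$ preserve products''; your chain of mapping-space equivalences is exactly the formal unwinding of that observation. Your justification that $(-)_h$ preserves products because it is simultaneously a left adjoint (to $\Hor(-)$, by \ref{prop.hor-right-adjoint}) and a right adjoint (to the reflector onto $\PSh(\Delta^{\times 2})_\mathrm{deg}$) is a valid way to supply the detail the paper leaves implicit.
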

\begin{proof}
	This follows immediately from the fact that the inclusions $(-)_h$ and $(-)_v$ preserve products.
\end{proof}

\begin{remark}\label{rem.vert-cotensor-dblcat}
	Let $X$ be an $\infty$-category and suppose that $\P$ is a double $\infty$-category. Then the vertical cotensor
	 double $\infty$-category
	$[X,\P]$ is given by the composite 
	$$
	\Delta^\op \rightarrow \Cat_\infty \xrightarrow{\fun(X,-)} \Cat_\infty
	$$
	as a simplicial $\infty$-category (see \ref{rem.dbl-cat}).
	To see this, we note that $X_v$ is always a double $\infty$-category and $(X_v)_n = X$ for all $n$.
	Since the functor $[X, -] : \DblCat_\infty \rightarrow \DblCat_\infty$ is right adjoint to the functor 
	$$
	X_v \times (-) : \DblCat_\infty \rightarrow \DblCat_\infty,
	$$
	the observation follows.
\end{remark}

\begin{example}\label{ex.cotensors-spans}
	Let $X$ be an $\infty$-category and $\C$ be an $\infty$-category with finite limits. Then 
	we may apply the vertical cotensor construction to the double $\infty$-category $\SSPAN(\C)$  of spans that is 
	constructed in \cite{HaugsengSpans}.
	One readily deduces using \ref{rem.vert-cotensor-dblcat} that there is a canonical equivalence of double $\infty$-categories
	$$
	[X,\SSPAN(\C)] \simeq \SSPAN(\fun(X,\C)).
	$$
\end{example}

\begin{example}\label{ex.indexed-cats}
	Let $T$ be an $\infty$-category. 
	We may apply the above construction to the double $\infty$-category $\CCAT_\infty$ (see \ref{ex.ccat-infty})
	so that we obtain a (locally complete) double $\infty$-category
	$$
	[T^\op, \CCAT_\infty]
	$$
	whose objects are  called \textit{$T$-indexed $\infty$-categories}. Its vertical fragment is given by the 
	 $(\infty,2)$-category of functors $\FUN(T^\op, \CAT_\infty)$.
	It is shown in \cite{EquipI} that $\CCAT_\infty \simeq \CCAT_\infty(\S)$,
	so there is an equivalence
	$$
	[T^\op, \CCAT_\infty] \simeq [T^\op, \CCAT_\infty(\S)].
	$$
	See \ref{ex.ccat-infty} for the notation.
	The right hand-side can be identified with the double $\infty$-category $\CCAT_\infty(\PSh(T))$, as may 
	be concluded directly from the construction of $\CCAT_\infty(-)$ in \cite{EquipI}, so that we obtain an equivalence
	$$
	[T^\op, \CCAT_\infty] \simeq  \CCAT_\infty(\PSh(T)).
	$$

	More generally, one can consider the vertical cotensor product 
	$$
	[\X^{1-\op}, \CCAT_\infty],
	$$
	where $\X$ is an $(\infty,2)$-category. The objects of this double $\infty$-category are functors $\X^{1-\op} \rightarrow \CAT_\infty$, and these are usually 
	called \textit{2-presheaves}.
\end{example}

\subsection{The squares construction}\label{ssection.squares-def}

We will make use of the \textit{squares functor} for 2-fold Segal spaces in this article.  In the strict context, this construction goes back to Ehresmann \cite{Ehresmann}.
It has been studied by Grandis and Par\'e \cite{GrandisPare}, and recently by Guetta-Moser-Sarazola-Verdugo \cite{GuettaMoserSarazolaVerdugo} as well. 
In the $\infty$-categorical context, it was introduced in the work of Gaitsgory--Rozenblyum on derived algebraic geometry, 
where it plays an important role in setting up the $(\infty,2)$-categorical foundations for their treatment of six-functor formalisms. It has also been studied recently 
by Abell\'an \cite{Abellan}.

To define the squares functor, we make use of the gaunt 2-categories
\[
[n] \otimes [m] = 
\begin{tikzcd}[column sep = small, row sep = small]
	(0,0)\arrow[d]  \arrow[r] & |[alias=f1]|(1,0)\arrow[d] \arrow[r] & \dotsb \arrow[r] & |[alias=f2]| (n-1,0)\arrow[r]\arrow[d] & |[alias=f3]| (n,0)\arrow[d] \\
	|[alias=t1]|(0,1) \arrow[r]\arrow[d] & |[alias=t2]|(1,1) \arrow[r]\arrow[d] & \dotsb \arrow[r] & |[alias=t3]|(n-1, 1)\arrow[d]\arrow[r] & |[alias=t4]|(n,1)\arrow[d] \\
	\vdots \arrow[d] &\vdots \arrow[d] & & \vdots \arrow[d] & \vdots\arrow[d]\\
	|[alias=g1]|(0, m-1)\arrow[d] \arrow[r] & |[alias=g2]|(1,m-1)\arrow[d] \arrow[r] & \dotsb \arrow[r] & |[alias=g3]|(n-1, m-1) \arrow[d] \arrow[r] &  |[alias=g4]| (n, m-1)\arrow[d] \\
	|[alias=y1]|(0,m) \arrow[r] &|[alias=y2]| (1,m) \arrow[r] & \dotsb \arrow[r] & |[alias=y3]|(n-1,m) \arrow[r] & (n,m)
	\arrow[from=f1,to=t1,Rightarrow, shorten <= 6pt, shorten >= 6pt]
	\arrow[from=f2,to=t2,Rightarrow, shorten <= 6pt, shorten >= 6pt]
	\arrow[from=f3,to=t3,Rightarrow, shorten <= 6pt, shorten >= 6pt]
	\arrow[from=t2,to=g1,Rightarrow, shorten <= 10pt, shorten >= 10pt]
	\arrow[from=t3,to=g2,Rightarrow, shorten <= 10pt, shorten >= 10pt]
	\arrow[from=t4,to=g3,Rightarrow, shorten <= 10pt, shorten >= 10pt]
	\arrow[from=g2,to=y1,Rightarrow, shorten <= 6pt, shorten >= 6pt]
	\arrow[from=g3,to=y2,Rightarrow, shorten <= 6pt, shorten >= 6pt]
	\arrow[from=g4,to=y3,Rightarrow, shorten <= 6pt, shorten >= 6pt]
\end{tikzcd}
\]
which
are obtained by taking the oplax Gray tensor product of $[n]$ and $[m]$; see \cite{Gray} for the original definition. 
These 2-categories can be organized into 
a bicosimplicial object
$$
[\cdot] \otimes [\cdot] : \Delta^{\times 2} \rightarrow \PSh(\Delta^{\times 2})_{\mathrm{deg}} : ([n],[m]) \mapsto [n] \otimes [m].
$$
We will need the following result:

\begin{proposition}\label{prop.combinatorics-grids}
	Let $n,m \geq 1$. Then the canonical maps
	\begin{enumerate}[label=(\roman*)]
	\item $
	[1] \otimes [m] \cup_{[0] \otimes [m]} \dotsb \cup_{[0] \otimes [m]} [1] \otimes [m] \rightarrow [n] \otimes [m],
	$
	\item $
	[n] \otimes [1] \cup_{[n] \otimes [0]} \dotsb \cup_{[n] \otimes [0]} [n] \otimes [1] \rightarrow [n] \otimes [m],
	$
	\end{enumerate}
	are equivalences of 2-fold Segal spaces.
\end{proposition}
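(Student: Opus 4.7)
The plan is to prove (i) by induction on $n$; statement (ii) follows from the parallel argument using the decomposition $[m]\cong[1]\sqcup_{[0]}\cdots\sqcup_{[0]}[1]$ and the preservation of this colimit by the functor $[n]\otimes(-)$. The case $n=1$ is trivial, so for the inductive step it suffices to show that the square
\[
\begin{tikzcd}
{[0]\otimes[m]}\arrow[r]\arrow[d] & {[1]\otimes[m]}\arrow[d]\\
{[n-1]\otimes[m]}\arrow[r] & {[n]\otimes[m]}
\end{tikzcd}
\]
is a pushout in $\Seg^2(\S)$. That it is a pushout of gaunt 2-categories is immediate from $[n]\cong[1]\sqcup_{[0]}[n-1]$ in $\Cat$ together with the fact that the oplax Gray tensor product $(-)\otimes[m]$ preserves colimits in its first variable.

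To promote this to a pushout in $\Seg^2(\S)$, I would test against an arbitrary 2-fold Segal space $Y$. Via the equivalence $\PSh(\Delta^{\times 2})_\mathrm{deg}\simeq\PSh(\Theta_2)$ from \ref{rem.quotient-grid} and the generation of $\Theta_2$ under retracts by $\Theta_2'=\{[p;q,\ldots,q]\}$, the pushout property reduces to verifying that the natural map
\[
\map([n]\otimes[m],Y)\to \map([1]\otimes[m],Y)\times_{\map([0]\otimes[m],Y)}\map([n-1]\otimes[m],Y)
\]
of mapping spaces in $\Seg^2(\S)$ is an equivalence. Using the Segal condition for $Y$ applied in the outer direction (expressing $Y_{n,q}$ as an iterated fiber product of $Y_{1,q}$ over $Y_{0,q}$), this reduces further, at each bisimplicial level $(p,q)$, to a splitting of 2-functors $[p;q,\ldots,q]\to[n]\otimes[m]$ at the column $\{1\}\times[m]$ into data supported on $[1]\otimes[m]$ and $[n-1]\otimes[m]$, compatibly along the shared column $[0]\otimes[m]\cong[m]$.

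The main obstacle lies in the combinatorial verification of this splitting on the $2$-cell data. A $2$-functor $[p;q,\ldots,q]\to[n]\otimes[m]$ packages, for each consecutive pair of object images, a family of $q+1$ parallel monotone paths in the grid together with $q$ oplax $2$-cells between them; when these paths span columns $0$ and $\geq 2$, they may cross $\{1\}\times[m]$ at distinct rows. I would resolve this by observing that every oplax $2$-cell of $[n]\otimes[m]$ is a horizontal pasting of unit-square fillers, each contained in a single column strip, so that each such $2$-functor splits as a horizontal pasting of $2$-functors into $[1]\otimes[m]$ and $[n-1]\otimes[m]$, with the gluing datum along $\{1\}\times[m]=[m]$ reconciled by the Segal condition for $Y$. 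Iterating this decomposition yields (i).
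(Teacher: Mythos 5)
Your overall strategy differs from the paper's: you aim for a direct combinatorial verification that the square $[0]\otimes[m] \to [1]\otimes[m]$, $[0]\otimes[m]\to[n-1]\otimes[m]$ pushes out to $[n]\otimes[m]$ in $\Seg^2(\S)$, whereas the paper deliberately avoids this and instead reduces the claim, via a recognition lemma (a map of 2-fold Segal spaces is an equivalence if it is one on objects, on underlying Segal spaces after 1-categorical completion, and after completion to $\Cat_{(\infty,2)}$), to the corresponding known statement for the Gray tensor product of Gagna--Harpaz--Lanari together with \cite[Proposition 5.1.9]{HHLN}. The first half of your argument is sound: the induction, the reduction to the single pushout square, and the observation that it suffices to test against an arbitrary 2-fold Segal space $Y$ are all correct (the detour through $\Theta_2$ is unnecessary for this). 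The problem is that everything after that point is precisely the hard content of the proposition, and your sketch does not supply it.

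Concretely: the comparison map from the presheaf-level pushout $N([1]\otimes[m])\cup_{N([0]\otimes[m])}N([n-1]\otimes[m])$ to $N([n]\otimes[m])$ is very far from a cellwise isomorphism, because a $(p,q)$-bisimplex of $N([n]\otimes[m])$ whose constituent lattice paths cross the cut column $\{1\}\times[m]$ at distinct rows does not factor through either piece, nor does it factor as a $(p,q)$-bisimplex glued along a bisimplex of $[0]\otimes[m]$. Such a cell only becomes decomposable after Segal subdivision: already for a single $2$-cell $p_1\Rightarrow p_2$ with crossing rows $j_1\leq j_2$ one must interpose the path $p_1'\ast v\ast p_2''$ (with $v$ the vertical segment of the cut column from $(1,j_1)$ to $(1,j_2)$) and write the cell as a \emph{vertical} composite of two whiskered cells, one from each strip, sharing the datum $v$; for general $(p,q)$ the interleaving of these intermediate segments across a whole chain of paths has to be organized coherently over the category of elements, and one must then prove that the resulting map is an equivalence onto the iterated fiber product, not merely that each cell "splits" in some way. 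Your proposal asserts this splitting and says it is "reconciled by the Segal condition for $Y$," but that reconciliation is the entire proof --- it amounts to showing the presheaf-level comparison map is a local equivalence for the Segal localization, i.e.\ to reproving the spine decomposition of the Gray tensor product from scratch. As it stands the argument has a genuine gap at exactly the step the paper outsources to the literature; to close it you would either need to carry out this combinatorial bookkeeping in full (in the style of \cite[Proposition 5.1.9]{HHLN}), or adopt the paper's reduction to the complete case.
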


Instead of giving a direct proof of \ref{prop.combinatorics-grids} here, we will show how it can be leveraged from the literature. We will make use 
of the following observation: 

\begin{lemma}\label{lem.eq-infty2-vs-Segal}
	Suppose that $f : X \rightarrow Y$ is a map between 2-fold Segal spaces so that:
	\begin{enumerate}
	\item $f_{0,0}$ is an equivalence of spaces,
	\item $f^{(1)}$ is carried to an equivalence by the reflector $\Seg(\S) \rightarrow \Cat_\infty$,
	\item $f$ is carried to an equivalence by the reflector $\Seg^2(\S) \rightarrow \Cat_{(\infty,2)}$.
	\end{enumerate}
	Then $f$ is an equivalence of 2-fold Segal spaces.
\end{lemma}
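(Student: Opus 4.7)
My plan is to reduce the statement to checking that $f_{n, m} : X_{n, m} \to Y_{n, m}$ is an equivalence for all $n, m \geq 0$. Since both $X$ and $Y$ are 2-fold Segal spaces, the Segal conditions in both simplicial directions together with the degeneracy condition express every $X_{n, m}$ as an iterated fibre product involving only $X_{0, 0}$, $X_{n', 0}$ for $n' \geq 1$, and the levels $X(x, y)_m$ of the mapping Segal spaces. It therefore suffices to verify that $f_{0, 0}$ is an equivalence of spaces, that $f^{(1)} : X^{(1)} \to Y^{(1)}$ is an equivalence of Segal spaces, and that for every $x, y \in X_{0, 0}$ the induced map $f(x, y) : X(x, y) \to Y(fx, fy)$ is an equivalence of Segal spaces. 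The case of $f_{0, 0}$ is immediate from (1).

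For the other two parts I would invoke the following one-dimensional variant of the lemma, which I would prove first: a morphism $g : A \to B$ of Segal spaces is an equivalence provided $g_0$ is an equivalence of spaces and the image of $g$ in $\Cat_\infty$ under the reflector $L : \Seg(\S) \to \Cat_\infty$ is an equivalence. The key input is the standard fact, going back to Rezk, that the completion unit $A \to L A$ induces equivalences $A(a, a') \xrightarrow{\simeq} (L A)(a, a')$ on mapping spaces; by 2-out-of-3, the assumption on $L g$ then yields equivalences $A(a, a') \xrightarrow{\simeq} B(g a, g a')$ on each mapping space, and combining with the equivalence $g_0$ and the Segal decomposition one concludes that $g_n$ is an equivalence for every $n$. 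Applying this sub-claim to $f^{(1)}$ with hypotheses (1) and (2) immediately gives that $f^{(1)}$ is an equivalence of Segal spaces; in particular, $f_{n, 0}$ is an equivalence for each $n$.

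For the remaining step I would once more apply the sub-claim, this time to $f(x, y)$. The pullback defining the mapping Segal space, combined with $f_{0, 0}$ and $f_{1, 0}$ being equivalences, shows that $f(x, y)_0$ is an equivalence of spaces. It therefore suffices to prove that $L\bigl(f(x, y)\bigr) : L(X(x, y)) \to L(Y(fx, fy))$ is an equivalence in $\Cat_\infty$. For this I would invoke (3) together with the factorization $\Seg^2(\S) \xrightarrow{l} \Seg^2(\S)^{lc} \xrightarrow{c} \Cat_{(\infty, 2)}$ of the reflector $L^2$: the first step $l$ acts on mapping Segal spaces via the Segal-space completion $L$, while the second step $c$ localizes at $(\textit{vc})$ and affects only the space of objects. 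Consequently there is a natural identification $(L^2 X)(x, y) \simeq L(X(x, y))$ for $x, y \in X_{0, 0}$, and hypothesis (3) yields the desired equivalence of mapping $\infty$-categories via $L^2 f$.

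The main obstacle in executing this plan is justifying the identification of the mapping $\infty$-categories inside the $(\infty, 2)$-categorical completion with the Segal-space completions of the original mapping Segal spaces; this is the two-dimensional analogue of Rezk's preservation of mapping spaces by the completion functor. While essentially folklore in the theory of $(\infty, 2)$-categories, it does require a careful analysis of both stages of the two-step decomposition of $L^2$. Once this identification is in hand, the rest of the argument is formal.
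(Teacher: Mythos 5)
Your proposal is correct and follows essentially the same route as the paper: reduce via the Segal and degeneracy conditions to $f^{(1)}$ and the mapping Segal spaces, and then exploit that the relevant completion functors induce equivalences on mapping objects, so that hypotheses (2) and (3) propagate back to the uncompleted objects. The ``folklore'' identification you flag as the main obstacle is exactly the point the paper disposes of by citation, namely \cite[Theorem 1.2.13]{LurieInfty2} for full faithfulness of the one-dimensional completion and \cite[Remark 7.19]{HaugsengSpans} for the fact that the completion $\Seg^2(\S) \rightarrow \Cat_{(\infty,2)}$ induces equivalences on mapping Segal spaces.
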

\begin{proof}
	In light of the Segal condition and assumption (1),
	we have to verify that $f_{1,0}$ and $f_{1,1}$ are equivalences of spaces. Since (2) holds, the map 
	$$
	X_{1,0} \rightarrow Y_{1,0} \times_{Y_{0,0}^{\times 2}} X_{0,0}^{\times 2}
	$$
	induced by $f_{1,0}$ must be an equivalence; see \cite[Theorem 1.2.13]{LurieInfty2} for instance. Hence $f_{1,0}$ is an equivalence. 
	In light of \cite[Remark 7.19]{HaugsengSpans}, assumption (1) implies that 
	$$
	X(x,y)_1 \rightarrow Y(f(x),f(y))_1 \times_{Y(f(x),f(y))_0^{\times 2}} X(x,y)_0^{\times 2}
	$$
	is an equivalence for all objects $x,y \in X$. Recall 
	that the map $X(x,y)_n \rightarrow Y(f(x),f(y))_n$ is precisely the comparison map 
	$$X_{1,n} \rightarrow Y_{1,n} \times_{Y_{0,n}^{\times 2}} X_{0,n}^{\times 2}$$ on fibers above $(x,y) \in X_{0,0}^{\times 2} \xrightarrow{\simeq} X_{0,n}^{\times 2}$.
	Since $f_{1,0}$ is an equivalence, it follows that $X(x,y)_0 \rightarrow Y(x,y)_0$ is an equivalence. Thus 
	$X(x,y)_1 \rightarrow Y(f(x),f(y))_1$ must be an equivalence. 
	Since this holds for all $x,y$ in $X$, this in turn implies that $f_{1,1}$ is an equivalence.
\end{proof}

\begin{proof}
	We will just treat map (1); the other one is handled similarly. Let us consider the inclusion
	$$
	i : [1] \otimes [m] \cup_{[0] \otimes [m]} \dotsb \cup_{[0] \otimes [m]} [1] \otimes [m] \rightarrow [n] \otimes [m],
	$$
	of bisimplicial sets, where the iterated pushout on the left is computed in $\PSh(\Delta^{\times 2})$. 
	Then one readily verifies that $i_{0,0}$ is the identity. Moreover, one can readily show that $i^{(1)}$ lies in the saturated class 
	spanned by the 1-dimensional spine inclusions (see \ref{rem.segal-spaces}).
	Let us consider the reflectors $$
	L : \PSh(\Delta^{\times 2})_{\mathrm{deg}} \rightarrow \Seg^2(\S), \quad L' : \PSh(\Delta) \rightarrow \Seg(\S).
	$$
	One can verify that the (colimit-preserving)  functor $(-)^{(1)} : \PSh(\Delta^{\times 2})_{\mathrm{deg}} \rightarrow \PSh(\Delta)$ 
	carries  maps in (\textit{Seg}) to the saturated class of 1-dimensional spine inclusions. So it follows that $L'(Li)^{(1)} \simeq L'i^{(1)}$ is an equivalence.
	A similar argument shows that $(Li)_{0,0}$ is an equivalence, as every map in (\textit{Seg}) is carried to an equivalence by the evaluation functor 
	$(-)_{0,0}$.

	Now, \ref{lem.eq-infty2-vs-Segal} asserts that it is sufficient to check that $Li$ is carried to an equivalence 
	by the reflector $\Seg^2(\S) \rightarrow \Cat_{(\infty,2)}$.
	This follows from the work of Gagna-Harpaz-Lanari \cite{GagnaHarpazLanari} and \cite[Proposition 5.1.9]{HHLN}.
\end{proof}

\begin{construction}
	Since $\Seg^2(\S)$ is cocomplete, the bicosimplicial object 
	$$
	[\cdot] \otimes [\cdot] : \Delta^{\times 2} \rightarrow \Seg^2(\S) : ([n], [m]) \rightarrow [n] \otimes [m] 
	$$
	extends to a colimit preserving functor $\PSh(\Delta^{\times 2}) \rightarrow \Seg^2(\S)$. On account of  \ref{prop.combinatorics-grids}, 
	there exists a unique dotted extension in the diagram
	\[
		\begin{tikzcd}
			\PSh(\Delta^{\times 2}) \arrow[r]\arrow[d, "L"'] & \Seg^2(\S). \\
			\Cat^2(\S) \arrow[ur, dotted, "\mathrm{Gr}"']
		\end{tikzcd}
	\]
	The horizontal functor admits a right adjoint, which restricts to a right adjoint
	$$
	\Sq : \Seg^2(\S) \rightarrow \Cat^2(\S)
	$$ 
	for $\mathrm{Gr}$. This is called the \textit{squares functor}. For a 2-fold Segal space $X$, it is level-wise given by 
	$$
	\Sq(X)_{n,m} := \map_{\Seg^2(\S)}([n] \otimes [m], X).
	$$
\end{construction}

\begin{construction}\label{constr.tautological-maps}
	There is a canonical functor
	$$
	\iota_h^{n,m} : [n] \otimes [m] \rightarrow [n;m,\dotsc, m]
	$$
	between gaunt 2-categories that is natural in $([n], [m]) \in \Delta^{\times 2}$.
	The functor $\iota_h^{n,m}$ selects the non-degenerate oplax $n\times m$ grid in $[n; m,\dotsc,m]$ where all the vertical arrows in the grid are identities,
	pictured as 
	\[
		\begin{tikzcd}[row sep = small]
			0\arrow[d,equal]  \arrow[r] & |[alias=f1]|1\arrow[d,equal] \arrow[r] & \dotsb \arrow[r] & |[alias=f2]| n-1\arrow[r]\arrow[d,equal] & |[alias=f3]| n\arrow[d,equal] \\
			|[alias=t1]|0 \arrow[r]\arrow[d,equal] & |[alias=t2]|1 \arrow[r]\arrow[d,equal] & \dotsb \arrow[r] & |[alias=t3]|n-1\arrow[d,equal]\arrow[r] & |[alias=t4]|n\arrow[d,equal] \\
			\vdots \arrow[d,equal] &\vdots \arrow[d,equal] & & \vdots \arrow[d,equal] & \vdots\arrow[d,equal]\\
			|[alias=g1]|0\arrow[d,equal] \arrow[r] & |[alias=g2]|1 \arrow[d,equal] \arrow[r] & \dotsb \arrow[r] & |[alias=g3]|n-1 \arrow[r]\arrow[d,equal] &  |[alias=g4]| n\arrow[d,equal] \\
			|[alias=y1]|0 \arrow[r] &|[alias=y2]| 1 \arrow[r] & \dotsb \arrow[r] & |[alias=y3]|n-1 \arrow[r] & n.
			\arrow[from=f1,to=t1,Rightarrow, shorten <= 6pt, shorten >= 6pt, "0"]
	\arrow[from=f2,to=t2,Rightarrow, shorten <= 6pt, shorten >= 6pt]
	\arrow[from=f3,to=t3,Rightarrow, shorten <= 6pt, shorten >= 6pt, "0"]
	\arrow[from=t2,to=g1,Rightarrow, shorten <= 10pt, shorten >= 10pt]
	\arrow[from=t3,to=g2,Rightarrow, shorten <= 10pt, shorten >= 10pt]
	\arrow[from=t4,to=g3,Rightarrow, shorten <= 10pt, shorten >= 10pt]
	\arrow[from=g2,to=y1,Rightarrow, shorten <= 6pt, shorten >= 6pt, "m"]
	\arrow[from=g3,to=y2,Rightarrow, shorten <= 6pt, shorten >= 6pt]
	\arrow[from=g4,to=y3,Rightarrow, shorten <= 6pt, shorten >= 6pt, "m"]
		\end{tikzcd}
	\]
	There is also a similarly defined functor 
	$$
	\iota_v^{n,m} : [n]^\op \otimes [m] \rightarrow [m;n\dotsc, n]
	$$
	between gaunt 2-categories.
	The functor $\iota_v^{n,m}$ selects the non-degenerate oplax $n \times m$ grid in $[m;n\dotsc,n]$ where all horizontal arrows in the grid are identities, 
	pictured as 
	\[
\begin{tikzcd}[column sep = small]
	0\arrow[d]  \arrow[r,equal] & |[alias=f1]|0\arrow[d] \arrow[r,equal] & \dotsb \arrow[r,equal] & |[alias=f2]| 0\arrow[r,equal]\arrow[d] & |[alias=f3]| 0\arrow[d] \\
	|[alias=t1]|1 \arrow[r,equal]\arrow[d] & |[alias=t2]|1 \arrow[r,equal]\arrow[d] & \dotsb \arrow[r,equal] & |[alias=t3]| 1\arrow[d]\arrow[r,equal] & |[alias=t4]|1\arrow[d] \\
	\vdots \arrow[d] &\vdots \arrow[d] & & \vdots \arrow[d] & \vdots\arrow[d]\\
	|[alias=g1]|m-1\arrow[d] \arrow[r,equal] & |[alias=g2]|m-1\arrow[d] \arrow[r,equal] & \dotsb \arrow[r,equal] & |[alias=g3]|m-1\arrow[d] \arrow[r,equal] &  |[alias=g4]| m-1\arrow[d] \\
	|[alias=y1]|m \arrow[r,equal] &|[alias=y2]| m \arrow[r,equal] & \dotsb \arrow[r,equal] & |[alias=y3]|m \arrow[r,equal] & m.
	\arrow[from=f1,to=t1,Rightarrow, shorten <= 6pt, shorten >= 6pt, "n"]
	\arrow[from=f2,to=t2,Rightarrow, shorten <= 6pt, shorten >= 6pt]
	\arrow[from=f3,to=t3,Rightarrow, shorten <= 6pt, shorten >= 6pt,"0"]
	\arrow[from=t2,to=g1,Rightarrow, shorten <= 10pt, shorten >= 10pt]
	\arrow[from=t3,to=g2,Rightarrow, shorten <= 10pt, shorten >= 10pt]
	\arrow[from=t4,to=g3,Rightarrow, shorten <= 10pt, shorten >= 10pt]
	\arrow[from=g2,to=y1,Rightarrow, shorten <= 6pt, shorten >= 6pt ,"n"]
	\arrow[from=g3,to=y2,Rightarrow, shorten <= 6pt, shorten >= 6pt]
	\arrow[from=g4,to=y3,Rightarrow, shorten <= 6pt, shorten >= 6pt, "0"]
\end{tikzcd}
\] 

	Let $X$ be a 2-fold Segal space. Then \ref{rem.quotient-grid} implies that $X_h$ and $X_v$ are (naturally) described by 
	\begin{gather*}
	(X_h)_{n,m} \simeq \map_{\PSh(\Delta^{\times 2})}([n;m\dotsc, m]_h, X_h), \\
	(X_v^\hop)_{n,m} \simeq \map_{\PSh(\Delta^{\times 2})}([m;n\dotsc, n]_v, X_v).
	\end{gather*}
	In light of these descriptions, we can construct maps
	$$
	X_h \rightarrow \Sq(X), \quad X_v \rightarrow \Sq(X)
	$$
	using the natural transformations $\iota_h$ and $\iota_v$.
\end{construction}

\begin{proposition}\label{prop.vert-hor-sq}
	Let $X$ be a 2-fold Segal space.
	The functors $X_h \rightarrow \Sq(X)$ and $X_v \rightarrow \Sq(X)$ are adjunct to equivalences 
	$X \rightarrow \Hor(\Sq(X))$ and $X \rightarrow \Vert(\Sq(X))$.
\end{proposition}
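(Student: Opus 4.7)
The plan is to prove the first assertion (that $X_h \to \Sq(X)$ is adjunct to an equivalence $X \to \Hor(\Sq(X))$) by a direct levelwise computation; the second assertion then follows by an entirely analogous argument, replacing $\iota_h^{n,m}$ by $\iota_v^{n,m}$ and using the dual form of \ref{rem.quotient-grid} (with $[m;n,\dotsc,n]_v$ in place of $[n;m,\dotsc,m]_h$).

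Fix $n,m \geq 0$. By \ref{rem.desc-hor}, the $(n,m)$-th level of $\Hor(\Sq(X))$ is naturally identified with $\map_{\PSh(\Delta^{\times 2})}([n;m,\dotsc,m]_h, \Sq(X))$. Since $[n;m,\dotsc,m]_h$ is the nerve of a gaunt 2-category and hence lies in $\Cat^2(\S)$, the adjunction $\mathrm{Gr} \dashv \Sq$ identifies this mapping space with $\map_{\Seg^2(\S)}(\mathrm{Gr}([n;m,\dotsc,m]_h), X)$. To compute $\mathrm{Gr}([n;m,\dotsc,m]_h)$, I would apply the colimit-preserving functor $\mathrm{Gr}$ to the pushout square of \ref{rem.quotient-grid}, obtaining the pushout in $\Seg^2(\S)$
$$
\mathrm{Gr}([n;m,\dotsc,m]_h) \simeq ([n]\otimes[m]) \cup_{\{0,\dotsc,n\}\times[m]} \{0,\dotsc,n\}.
$$
A direct combinatorial check shows this pushout is $[n;m,\dotsc,m]$: collapsing each of the $n+1$ vertical columns of the oplax grid $[n]\otimes[m]$ to a point yields exactly the iterated horizontal 2-globe $[n;m,\dotsc,m]$.

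Consequently $\Hor(\Sq(X))_{n,m} \simeq \map_{\Seg^2(\S)}([n;m,\dotsc,m], X) \simeq X_{n,m}$, the last equivalence again being \ref{rem.quotient-grid} applied to $X$. It remains to verify that the canonical map $X \to \Hor(\Sq(X))$ corresponds to the identity under this chain of equivalences. Unwinding \ref{constr.tautological-maps}, at level $(n,m)$ the tautological map $X_h \to \Sq(X)$ is precomposition with the $(n,m)$-simplex $[n,m] \to [n;m,\dotsc,m]_h$ appearing in \ref{rem.quotient-grid}; applying $\mathrm{Gr}$ identifies this simplex with the canonical inclusion $[n]\otimes[m] \to [n;m,\dotsc,m]$, which is precisely $\iota_h^{n,m}$. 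Hence, under the adjunction $\mathrm{Gr} \dashv \Sq$, the induced map on mapping spaces out of $[n;m,\dotsc,m]$ is the identity.

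The main obstacle is the combinatorial identification $\mathrm{Gr}([n;m,\dotsc,m]_h) \simeq [n;m,\dotsc,m]$: while intuitive, confirming it requires knowing that the displayed pushout in $\Seg^2(\S)$ really yields the gaunt 2-category $[n;m,\dotsc,m]$. This can be checked either by direct inspection on mapping spaces out of representables, or by producing a dual pushout decomposition of $[n;m,\dotsc,m]$ along its column spine and comparing via \ref{prop.combinatorics-grids}. Once this identification is in hand, every remaining step is a formal consequence of the adjunctions $(-)_h \dashv \Hor$ and $\mathrm{Gr} \dashv \Sq$ together with \ref{rem.quotient-grid}.
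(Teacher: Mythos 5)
Your argument is correct and is essentially the paper's proof written on the left-adjoint side: both reduce to the single combinatorial fact that the square with top row $\{0,\dotsc,n\}\otimes[m] \to [n]\otimes[m]$ and bottom row $\{0,\dotsc,n\} \to [n;m,\dotsc,m]$ is a pushout in $\Seg^2(\S)$, which is exactly your identification $\mathrm{Gr}([n;m,\dotsc,m]_h)\simeq[n;m,\dotsc,m]$. The paper settles that step by invoking \cite[Lemma 5.1.11]{HHLN} and reducing via the co-Segal condition to the case $n=m=1$, i.e.\ to the equivalence $[1;1]\cup_{[1]^{\sqcup 2}}[2]\to[1]\otimes[1]$ --- this is the second of the two verification strategies you propose for your flagged obstacle.
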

\begin{proof}
	We will just show the statement for the horizontal case; the other case is handled similarly. We have to show that the map
	$$
	\map_{\Seg^2(\S)}([n;m \dotsc, m], X) \rightarrow \map_{\Seg^2(\S)}([n;m, \dotsc, m], \Hor(\Sq(X))) 
	$$
	is an equivalence for all $n,m \geq 0$. Note that this map can be identified with the map
	$$
	\map_{\Cat^2(\S)}([n;m \dotsc, m]_h, X_h) \rightarrow \map_{\Cat^2(\S)}([n;m, \dotsc, m]_h, \Sq(X)) 
	$$
	induced by $X_h \rightarrow \Sq(X)$. In light of \ref{rem.quotient-grid},  this is an equivalence if and only if the square 
	\[ 
	\begin{tikzcd}
		\map_{\Cat^2(\S)}([n;m \dotsc, m]_h, X_h) \arrow[r]\arrow[d] & \map_{\Cat^2(\S)}([n,m], \Sq(X)) \arrow[d] \\ 
		\map_{\Cat^2(\S)}([0,0], \Sq(X))^{\times(n+1)} \arrow[r] & \map_{\Cat^2(\S)}([0,m], \Sq(X))^{\times (n+1)}
	\end{tikzcd}
	\]
	is a pullback square. By construction of the map $X_h \rightarrow \Sq(X)$, this square 
	is obtained from the commutative square 
	\[
	\begin{tikzcd}
		\{0,\dotsc, n\} \otimes [m] \arrow[r]\arrow[d] & {[n]} \otimes [m] \arrow[d] \\
		\{0,\dotsc, n\} \arrow[r] & {[n;m,\dotsc, m]}
	\end{tikzcd}
	\]
	of gaunt 2-categories by applying $\map_{\Seg^2(\S)}(-,X)$. But this square is a pushout square in $\Seg^2(\S)$,
	as argued in \cite[Lemma 5.1.11]{HHLN}. Namely, to show that it is a pushout, one may reduce to $n,m \in \{0,1\}$ using the co-Segal condition. The only left-over non-trivial case 
	is $n = m =1$, where it follows from the fact that the canonical map $[1;1] \cup_{[1]^{\sqcup 2}} [2] \rightarrow [1] \otimes [1]$ is an equivalence --- already in $\PSh(\Delta^{\times 2})_{\mathrm{deg}}$. 
\end{proof}

\begin{corollary}\label{cor.sq-restrictions}
	The squares functor restricts to right adjoints 
	$\Sq : \Seg^2(\S)^{lc} \rightarrow \Cat^2(\S)^{lc}$ and $\Sq : \Cat_{(\infty,2)} \rightarrow \DblCat_\infty^c$ as well.
\end{corollary}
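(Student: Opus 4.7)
The plan is to deduce both restriction claims as direct consequences of \ref{prop.vert-hor-sq}, which supplies the natural equivalences $X \xrightarrow{\simeq} \Hor(\Sq(X))$ and $X \xrightarrow{\simeq} \Vert(\Sq(X))$ for any 2-fold Segal space $X$. The key observation is that local completeness and completeness of a double Segal space were set up in exactly the form ``the horizontal and vertical fragments have the corresponding property'', so these equivalences translate conditions on $X$ directly into conditions on $\Sq(X)$.

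First I would handle the locally complete case. Given $X \in \Seg^2(\S)^{lc}$, \ref{prop.vert-hor-sq} provides equivalences of 2-fold Segal spaces $\Hor(\Sq(X)) \simeq X$ and $\Vert(\Sq(X)) \simeq X$. Since $X$ is locally complete by assumption, both fragments of $\Sq(X)$ are locally complete, so $\Sq(X) \in \Cat^2(\S)^{lc}$ by definition. This shows $\Sq$ restricts at the level of objects. The right adjoint property on the restricted categories is then formal: since $\Seg^2(\S)^{lc} \subset \Seg^2(\S)$ and $\Cat^2(\S)^{lc} \subset \Cat^2(\S)$ are reflective subcategories and the restriction of $\Sq$ lands in $\Cat^2(\S)^{lc}$, the chain of natural equivalences
$$
\map_{\Cat^2(\S)^{lc}}(\P, \Sq(X)) \simeq \map_{\Cat^2(\S)}(\P, \Sq(X)) \simeq \map_{\Seg^2(\S)}(\mathrm{Gr}(\P), X) \simeq \map_{\Seg^2(\S)^{lc}}(L\mathrm{Gr}(\P), X)
$$
for $\P \in \Cat^2(\S)^{lc}$ and $X \in \Seg^2(\S)^{lc}$ exhibits the restriction as right adjoint to $L \circ \mathrm{Gr}\circ i$, where $L$ and $i$ are the appropriate reflector and inclusion.

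The case of $\Cat_{(\infty,2)}$ proceeds identically, using \ref{prop.complete-dss} instead of the definition of local completeness. If $X$ is a complete 2-fold Segal space, then $\Hor(\Sq(X)) \simeq X$ and $\Vert(\Sq(X)) \simeq X$ are both complete, and by the characterization (1) $\Leftrightarrow$ (3) of \ref{prop.complete-dss}, $\Sq(X)$ is a complete double $\infty$-category. The right adjoint property again follows formally from reflectivity of $\Cat_{(\infty,2)} \subset \Seg^2(\S)$ and $\DblCat_\infty^c \subset \Cat^2(\S)$.

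There is essentially no obstacle here: all the substantive work has been done in establishing \ref{prop.vert-hor-sq} and in the definitions of local completeness and completeness for double Segal spaces in terms of the two fragments. The only point demanding a sentence of care is the promotion from object-level preservation to adjunctness between the restricted categories, which is a standard consequence of reflectivity.
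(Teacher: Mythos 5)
Your proof is correct and follows essentially the same route as the paper: both deduce the object-level restriction from \ref{prop.vert-hor-sq} together with the fact that (local) completeness of a double Segal space is defined via its two fragments, and both identify the left adjoint of the restricted functor as the composite of the inclusion, $\mathrm{Gr}$, and the relevant localization. Your write-up merely spells out the mapping-space computation that the paper leaves implicit.
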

\begin{proof}
	It readily follows from \ref{prop.vert-hor-sq} that the squares functor restricts as such.
	The left adjoints are then given by the composites 
	$$
	\Seg^2(\S)^{lc} \rightarrow \Seg^2(\S) \xrightarrow{\mathrm{Gr}} \Cat^2(\S) \rightarrow \Cat^2(\S)^{lc},$$
	$$\Cat_{(\infty,2)} \rightarrow \Seg^2(\S) \xrightarrow{\mathrm{Gr}} \Cat^2(\S) \rightarrow \DblCat_\infty^c,$$
	where the functors that appear first and last in the composites are inclusions and localizations respectively.
\end{proof}

\section{Companionships and conjunctions, and homotopy coherence}\label{section.comp-conj}

Let us start by defining companionships and conjunctions in a double Segal space.

\begin{definition}\label{def.compconj}
	Let $f : x \rightarrow y$ be a vertical arrow in a double Segal space $\P$. A horizontal arrow 
	$F : x \rightarrow y$ in $\P$ is called the \textit{companion} of $f$  
	if there exist two 2-cells
	\[
		\eta = \begin{tikzcd}
			x\arrow[d, equal] \arrow[r, equal, ""name=f] & x\arrow[d, "f"] \\
			x \arrow[r, "F"'name=t] & y
			\arrow[from=f,to=t,Rightarrow, shorten <= 6pt, shorten >= 6pt]
		\end{tikzcd} 
		\quad 
		\text{and}
		\quad
		\epsilon = \begin{tikzcd}
			x \arrow[r, "F"name=f]\arrow[d, "f"' ] & y \arrow[d, equal] \\
			y \arrow[r, equal, ""'name=t] & y
			\arrow[from=f,to=t,Rightarrow, shorten <= 6pt, shorten >= 6pt]
		\end{tikzcd}
	\] 
	that satisfy the following two \textit{triangle identities}:
	\[
		\begin{tikzcd}
			x\arrow[d, equal] \arrow[r, equal, ""'name=h1] & x\arrow[d, "f"] \\
			x \arrow[r, ""name=h2]\arrow[d, "f"'] & y \arrow[d, equal] \\
			y \arrow[r, equal, ""'name=h3] & y
			\arrow[from=h1, to=h2, phantom, "\scriptstyle\eta"]
			\arrow[from=h2, to=h3, phantom, "\scriptstyle\epsilon"]
		\end{tikzcd}
		\simeq
		\begin{tikzcd}
			x\arrow[r,equal] \arrow[d, "f"'name=t1] & x \arrow[d, "f"name=f1] \\
			y \arrow[r, equal]  & y,
			\arrow[from=f1, to=t1, equal, shorten <= 14pt, shorten >= 14pt]
		\end{tikzcd}
		\quad 
		\begin{tikzcd}
			x \arrow[r, equal, ""name=h1]\arrow[d, equal] & x \arrow[d] \arrow[r, "F"name=h3] & y\arrow[d, equal] \\
			x \arrow[r, "F"'name=h2] & y \arrow[r, equal, ""'name=h4] & y
			\arrow[from=h1, to=h2, phantom, "\scriptstyle\eta"]
			\arrow[from=h3, to=h4, phantom, "\scriptstyle\epsilon"]
		\end{tikzcd}
		\simeq
		\begin{tikzcd}
			x\arrow[d,equal] \arrow[r, "F"name=f1] & y \arrow[d,equal] \\
			x \arrow[r, "F"'name=t1] & y.
			\arrow[from=f1, to=t1, equal, shorten <= 12pt, shorten >= 10pt]
		\end{tikzcd}
	\]
	In this case, $(f,F)$ is called a \textit{companionship}, $\eta$ is called the \textit{companionship unit}, and $\epsilon$ is called the 
	\textit{companionship counit}.

	Dually, a horizontal arrow $F':y\rightarrow x$ is called the \textit{conjoint} of $f$ 
	when there exist two 2-cells in $\P$
	\[
		\eta' = \begin{tikzcd}
			x\arrow[d, "f"' ] \arrow[r, equal, ""name=f] & x\arrow[d, equal] \\
			y \arrow[r, "F'"'name=t] & x
			\arrow[from=f,to=t,Rightarrow, shorten <= 6pt, shorten >= 6pt]
		\end{tikzcd} 
		\quad 
		\text{and}
		\quad
		\epsilon' = \begin{tikzcd}
			y \arrow[r, "F'"name=f]\arrow[d, equal] & x \arrow[d, "f"] \\
			y \arrow[r, equal, ""'name=t] & y
			\arrow[from=f,to=t,Rightarrow, shorten <= 6pt, shorten >= 6pt]
		\end{tikzcd}
	\] 
	that compose as follows:
	\[
		\begin{tikzcd}
			x\arrow[d, "f"'] \arrow[r, equal, ""'name=h1] & x\arrow[d, equal] \\
			y \arrow[d,equal ]\arrow[r, ""name=h2] & x \arrow[d, "f"] \\
			y \arrow[r, equal, ""'name=h3] & y
			\arrow[from=h1, to=h2, phantom, "\scriptstyle\eta'"]
			\arrow[from=h2, to=h3, phantom, "\scriptstyle\epsilon'"]
		\end{tikzcd}
		\simeq
		\begin{tikzcd}
			x\arrow[r,equal] \arrow[d, "f"'name=t1] & x \arrow[d, "f"name=f1] \\
			y \arrow[r, equal]  & y,
			\arrow[from=f1, to=t1, equal, shorten <= 14pt, shorten >= 14pt]
		\end{tikzcd}
		\quad 
		\begin{tikzcd}
			y \arrow[r, "F'"name=h1]\arrow[d, equal] & x \arrow[d] \arrow[r, equal, ""name=h3] & x\arrow[d, equal] \\
			y \arrow[r, equal, ""'name=h2] & y \arrow[r,"F'"'name=h4] & x
			\arrow[from=h1, to=h2, phantom, "\scriptstyle\eta'"]
			\arrow[from=h3, to=h4, phantom, "\scriptstyle\epsilon'"]
		\end{tikzcd}
		\simeq
		\begin{tikzcd}
			y\arrow[d,equal] \arrow[r, "F'"name=f1] & x \arrow[d,equal] \\
			y \arrow[r, "F'"'name=t1] & x.
			\arrow[from=f1, to=t1, equal, shorten <= 12pt, shorten >= 10pt]
		\end{tikzcd}
	\]
	In this case, $(f,F')$ is called a \textit{conjunction}, $\eta'$ is called the \textit{conjunction unit}, and $\epsilon'$ is called the 
	\textit{conjunction counit}.
	\end{definition}
	
	\begin{remark}
		These notions were also considered by Gaitsgory and Rozenblyum in \cite[Subsection 10.5.1]{GR}, albeit using different terminology.
	\end{remark}
	
	\begin{remark}\label{rem.comp-conj-dual}
		Suppose that $f : x \rightarrow y$ and $F : y \rightarrow x$ are a vertical and horizontal arrow 
		of a double Segal space $\P$ respectively. 
		Then one may easily deduce that the following assertions are equivalent:
		\begin{itemize}
			\item the pair $(f,F)$ forms a conjunction in $\P$,
			\item the pair $(f^\op, F)$ forms a companionship in $\P^\vop$,
			\item the pair $(f, F^\op)$ forms a companionship in $\P^\hop$.
		\end{itemize}
		Consequently, the theory of companionships is formally dual to the theory of conjunctions.
	\end{remark}
	
	\begin{remark}
		Suppose that $f : x \rightarrow y$ and $g : y \rightarrow z$ are vertical arrows of a double Segal space $\P$, with 
		companions $F$ and $G$ respectively. 
		Then it is instructive to verify $gf$ admits a companion as well, which is given by the composite $GF$. Dually, 
		if $F'$ and $G'$ are conjoints for $f$ and $g$ respectively, then $gf$ has a conjoint given by $F'G'$.
	\end{remark}

\begin{example}\label{ex.compconj-sq}
	We consider the double Segal space $\Sq(X)$ of squares in a 2-fold Segal space $X$ 
	that was defined in \ref{ssection.squares-def}.
	Let $f$ be an arrow of $X$, which we will view as a vertical arrow of $\Sq(X)$. Then one easily checks that the companion of $f$ in $\Sq(X)$ is given by $f$, now viewed as a horizontal arrow. 
	It is also instructive to verify that the conjoint
	 of $f$ in $\Sq(X)$ exists if and only if $f$ admits a right adjoint $g$, in which case 
	the conjoint of $f$ is given by $g$. In that case, the conjunction unit and counit for $(f,g)$ corresponds to the adjunction unit and counit for $(f,g)$.
\end{example}

\begin{example}
	Let $\C$ be an $\infty$-category with finite limits. 
	Then we may consider the double $\infty$-category $\SSPAN(\C)$ of spans in $\C$ that was constructed in \cite{HaugsengSpans}.
 If $f : x \rightarrow y$ is an arrow of $\C$
	that is viewed as a vertical arrow of $\SSPAN(\C)$, then it has both a companion and conjoint given by the spans 
	$$
	(\id_x, f) : x \rightarrow x \times y, \quad  (f, \id_x) : x \rightarrow y \times x.
	$$
\end{example}

\begin{example}
	In the double $\infty$-category $\CCAT_\infty$ of $\infty$-categories that was informally described in 
	\ref{ex.ccat-infty}, the companion and conjoint of a functor $f : \C \rightarrow \D$ are given by the profunctors
	$$
	\D^\op \times \C \rightarrow \S : (d,c) \mapsto \map_\D(d,f(c)), \quad \C^\op \times \D \rightarrow \S : (c,d) \mapsto \map_\D(f(c),d)
	$$
	respectively. This is shown in \cite{EquipI}.
\end{example}

As \ref{ex.compconj-sq} already suggests, the notion of adjunctions, companionships and conjunctions are closely related. The following observation attests to this as well.

\begin{proposition}\label{prop.conj-comp-adjunction}
	Let $f : x \rightarrow y$ be a vertical arrow in a double Segal space $\P$ with a companion $F : x \rightarrow y$ and a conjoint 
	$G:y\rightarrow x$.
	Then the pair $(F,G)$ forms an adjunction in the horizontal fragment $\Hor(\P)$.
\end{proposition}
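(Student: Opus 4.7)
The plan is to construct the unit and counit of the claimed adjunction $F \dashv G$ in $\Hor(\P)$ directly from the given companionship and conjunction data, and then to verify the two triangle identities by reassociating pasting diagrams in $\P$ and invoking the triangle identities of the companionship $(f,F)$ and conjunction $(f,G)$. Let $\eta,\epsilon$ denote the companionship unit and counit, and let $\eta',\epsilon'$ denote the conjunction unit and counit.

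First, I define the adjunction unit $u : \id_x \Rightarrow GF$ and counit $c : FG \Rightarrow \id_y$ in $\Hor(\P)$ as the horizontal pastings
\[
u := \eta \mid \eta', \qquad c := \epsilon' \mid \epsilon.
\]
These are well-defined pastings in $\P$ because the right vertical of the left cell equals the left vertical of the right cell (namely $f$) in both cases; furthermore, the outer vertical arrows are all identities, so $u$ and $c$ genuinely live in the horizontal fragment $\Hor(\P)$.

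For the first triangle identity, I want to check that the vertical composite of the whiskerings
\[
F\eta = \eta \mid \eta' \mid \mathrm{id}^h_F \quad\text{(on top),}\qquad \epsilon F = \mathrm{id}^h_F \mid \epsilon' \mid \epsilon \quad\text{(on bottom)}
\]
equals the horizontal identity 2-cell on $F$. This composite is a $3\times 2$ grid of 2-cells in $\P$. Using the interchange supplied by the Segal condition, I first compose vertically column by column: the leftmost and rightmost columns reduce to $\eta$ and $\epsilon$ respectively (vertical composition with an identity 2-cell on $F$ is inessential), while the middle column, being precisely $\eta'$ stacked over $\epsilon'$, reduces to the vertical identity 2-cell $\mathrm{id}^v_f$ on $f$ by the first triangle identity of the conjunction. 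What remains is the horizontal paste $\eta \mid \mathrm{id}^v_f \mid \epsilon$, which simplifies to $\eta \mid \epsilon$ (horizontal pasting with a vertical identity is inessential), and this in turn equals $\mathrm{id}^h_F$ by the second triangle identity of the companionship. The second triangle identity $G \Rightarrow GFG \Rightarrow G$ is handled by a symmetric pasting argument in which the roles swap: the outer columns of the analogous $3\times 2$ grid reduce to $\epsilon'$ and $\eta'$, the middle column $\eta$-over-$\epsilon$ collapses to $\mathrm{id}^v_f$ via the first companionship triangle identity, and the leftover horizontal paste $\epsilon' \mid \mathrm{id}^v_f \mid \eta'$ yields $\mathrm{id}^h_G$ via the second conjunction triangle identity.

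The main point of care in writing this up is the justification of the interchange law in the $\infty$-categorical setting of double Segal spaces — namely that, up to equivalence in the relevant space of 2-cells, one may compute the composite of a $3\times 2$ grid by composing column-wise first and then row-wise, or vice versa. This is a direct consequence of the Segal condition for $\P$ applied to the bi-Segal space $\P_{3,2}$, but must be stated as an equivalence of pasting witnesses rather than as an equality. Once this is in place, the triangle identities in $\Hor(\P)$ are exactly the equivalences produced by the above reductions.
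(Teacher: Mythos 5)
Your proposal is correct and matches the paper's proof: the unit and counit are the same horizontal pastings $\eta\mid\eta'$ and $\epsilon'\mid\epsilon$, and the triangle identities are verified exactly as the paper indicates (following Shulman's strict argument), with your column-wise interchange computation spelling out the step the paper leaves as ``readily follows.''
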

\begin{proof}
	The proof is analogous to the proof of the strict case \cite[Proposition 5.3]{ShulmanFramedBicats}.
	The candidate unit $\eta$ for the adjunction is defined by the pasting 
	\[
		\eta = \begin{tikzcd}
			x\arrow[d, equal] \arrow[r, equal, ""name=f] & x\arrow[d, "f"]\arrow[r,equal,""name=f2] & x\arrow[d,equal] \\
			x \arrow[r, "F"'name=t] & y \arrow[r, "G"'name=t2] & x
			\arrow[from=f,to=t,Rightarrow, shorten <= 6pt, shorten >= 6pt]
			\arrow[from=f2,to=t2,Rightarrow, shorten <= 6pt, shorten >= 6pt]
		\end{tikzcd}
	\]
	of the companionship and conjunction unit. The candidate counit $\epsilon$ is given by the pasting 
	of the companionship and conjunction counit:
	\[
		\epsilon = \begin{tikzcd}
			y \arrow[r, "G"name=f]\arrow[d, equal] & x \arrow[r, "F"name=f2]\arrow[d, "f" ] & y \arrow[d, equal] \\
			y \arrow[r, equal,""'name=t] & y \arrow[r, equal,""'name=t2] & y.
			\arrow[from=f,to=t,Rightarrow, shorten <= 6pt, shorten >= 6pt]
			\arrow[from=f2,to=t2,Rightarrow, shorten <= 6pt, shorten >= 6pt]
		\end{tikzcd} 
	\]
	It now readily follows from
	the triangle identities for companionships and conjunctions that $\eta$ and $\epsilon$ satisfy the triangle identities for adjunctions.
\end{proof}

\subsection{Homotopy coherence}

We will now show that every companionship uniquely upgrades to a homotopy coherent one. 
To make this statement concrete, we will need to set up some notation.

\begin{definition}
	The \textit{free-living companionship} is the (discrete) double Segal space defined by
	$$
	\comp := \Sq([1]).
	$$
	The dual \textit{free-living conjunction} is given by
	$$
	\conj := \comp^\hop.
	$$
\end{definition}

\begin{definition}\label{def.lower-triangle}
	The \textit{free-living lower triangle}  
	is the double Segal space that can be pictured as
	\[
		L= \begin{tikzcd}[column sep =small, row sep = small]
		  0 \arrow[r,equal, ""name=f]\arrow[d,equal] & 0 \arrow[d] \\
		  0 \arrow[r, ""'name=t]& 1.
		  \arrow[from=f,to=t,Rightarrow, shorten <= 6pt, shorten >= 6pt]
		\end{tikzcd}
	\]
	It is defined by the pushout square 
	\[
		\begin{tikzcd}
			{[\{0,1\}, 0] }\cup_{[\{0\}, \{0\}]} [0, \{0,1\}] \arrow[r]\arrow[d] & {[1,1]} \arrow[d] \\
			{[0,0]} \arrow[r] & L
		\end{tikzcd}	
	\]
	of bisimplicial spaces.
\end{definition}

\begin{notation}\label{not.space-units}
	If $\P$ is a double Segal space, then we will write 
	$$C_\P \subset \map_{\PSh(\Delta^{\times 2})}(L, \P)$$
	for the subspace spanned by the companionship units.
\end{notation}

The remainder of this section is dedicated to proving the following two statements:

\begin{theorem}\label{thm.htpy-coh-comp-1}
	Let $\P$ be a double Segal space. Then restriction along 
	the canonical inclusion $L \rightarrow \comp$ 
	induces an equivalence 
	$$
	\map_{\Cat^2(\S)}(\comp, \P) \xrightarrow{\simeq} C_\P.
	$$
	Equivalently, if $\eta$ is a companionship unit in $\P$, then the fiber 
	$$
	\map_{\Cat^2(\S)}(\comp, \P)_\eta = \map_{\Cat^2(\S)}(\comp, \P) \times_{\map_{\Cat^2(\S)}(L, \P)} \{\eta\}
	$$
	is contractible.
\end{theorem}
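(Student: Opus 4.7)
The statement has two parts: the restriction map lands in $C_\P$, and the fiber over any $\eta \in C_\P$ is contractible. The first part is essentially tautological, since any functor $\comp \to \P$ provides the data of a full companionship witnessing that its restriction to $L$ is a companionship unit. The substance of the theorem lies in the contractibility of the fiber, which I propose to prove by presenting $L \hookrightarrow \comp$ as a transfinite composition of cell attachments and showing each is ``Segal-trivial'' relative to the fiber over $\eta$.

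Concretely, the plan is to use the explicit description $\comp_{n,m} = \mathrm{Poset}([n] \times [m], [1])$ to organize the non-degenerate cells of $\comp$ by the surjective monotone maps $[n] \times [m] \to [1]$, i.e.\ by non-empty proper antichains in $[n] \times [m]$. I would build a cellular filtration $L = L_0 \hookrightarrow L_1 \hookrightarrow L_2 \hookrightarrow \dotsb$ with $\comp \simeq \colim_i L_i$ in $\Cat^2(\S)$, attaching cells in order of dimension $(n,m)$ and complexity of the antichain. The first attachments adjoin the companionship counit cell $\epsilon$ (the 2-cell corresponding to the map sending everything except $(0,0)$ to $1$) together with the two triangle identity cells in $\comp_{2,1}$ and $\comp_{1,2}$. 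By the very definition of $C_\P$, the fiber of the restriction map over $\eta$ after these stages is already a torsor for a contractible space of choices of $(\epsilon, \text{triangle equivalences})$, and standard essential uniqueness of adjoint-like data (as in \ref{prop.conj-comp-adjunction} and its proof) pins this torsor down to a point up to contractible choice.

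For the subsequent stages, the key observation is that every higher cell $\sigma : [n]\times[m] \to [1]$ of $\comp$ can, via the Segal condition combined with \ref{prop.combinatorics-grids} applied to $\Sq([1])$, be expressed as an iterated horizontal and vertical pasting of the basic $\eta$- and $\epsilon$-cells already present at the previous stage. Consequently each further attachment $L_i \hookrightarrow L_{i+1}$ is a pushout along a Segal spine inclusion, hence an equivalence in $\Cat^2(\S)$, and therefore contributes nothing to the fiber over $\eta$. Passing to the colimit gives the result.

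The main obstacle is the bookkeeping needed to verify the Segal-decomposition claim uniformly across all antichain shapes, and to ensure the initial ``triangle-identity stage'' genuinely has contractible fibers rather than just non-empty ones. To control the former, I would lean on $\comp = \Sq([1])$ together with the fact that $\mathrm{Gr} \dashv \Sq$ (so representable pieces of $\comp$ correspond to $[n] \otimes [m] \to [1]$ under $\mathrm{Gr}$), which converts combinatorial pasting statements about $\comp$ into elementary facts about functors out of the gaunt 2-categories $[n] \otimes [m]$. For the latter, one can rephrase the triangle-identity data as a single lifting problem against a map that is in the saturation of Segal maps after restriction to the locus where $\eta$ is a companionship unit.
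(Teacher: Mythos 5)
Your overall strategy --- filter $\comp$ cell by cell starting from $L$ and show each attachment contributes nothing to the fiber over $\eta$ --- is the same as the paper's (which uses the diagonal filtration $L = S_1 \subset S_2 \subset \dotsb$ of \ref{con.filtration-comp}, generated by the staircase bisimplices $\sigma_n$, so that each stage requires only two cell attachments). But both of the substantive claims in your proposal have genuine gaps. First, the base stage: the contractibility of the space of pairs (counit, triangle-identity witnesses) extending a given unit $\eta$ is not ``standard essential uniqueness of adjoint-like data,'' and it does not follow from \ref{prop.conj-comp-adjunction}, which merely assembles an adjunction from a companion and a conjoint and says nothing about spaces of witnesses. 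This contractibility is itself a consequence of the special extension theorem \ref{thm.lifting-thm} (in its low cases $(n,m)=(1,2)$, $(2,1)$, $(2,2)$), whose proof is the technical heart of the paper: an explicit construction of a homotopy inverse using pasting shapes, the counit, and the triangle identities. Your closing remark that one can ``rephrase the triangle-identity data as a single lifting problem against a map that is in the saturation of Segal maps after restriction to the locus where $\eta$ is a companionship unit'' is not a proof strategy but a restatement of \ref{thm.lifting-thm}; membership in the saturation of (\textit{Seg}) is a property of a map of presheaves and cannot hold ``after restriction to a locus.''

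Second, the inductive step is wrong as stated: the higher cells of $\comp$ are \emph{not} attached along maps in the saturation of (\textit{Seg}). In the paper's filtration, $S_n \to S_{n+1}$ is a composite of two pushouts along the generalized horns $\Gamma^0_L[n-1,n] \rightarrow L[n-1,n]$ and $\Gamma^0_L[n,n] \rightarrow L[n,n]$; these are outer-horn-type inclusions ($\Gamma^0_L = \Lambda^{\emptyset,\{0\}}_L$, and both complements $[n]\setminus\emptyset$ and $[m]\setminus\{0\}$ are convex, so \ref{lem.gen-horns} does not apply). If every attachment beyond the triangle-identity stage were a Segal spine inclusion, the restriction map would be an equivalence onto its image without any companionship-unit hypothesis, and the theorem would be nearly vacuous --- whereas the whole point is that these horns admit essentially unique fillers \emph{only} over companionship units, exactly as outer horns in a quasi-category are fillable only along equivalences (Joyal's lifting theorem, which the paper cites as the analogue). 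Your appeal to \ref{prop.combinatorics-grids} shows that a map out of $[n]\otimes[m]$ is determined by its restriction to a spine of $1\times 1$ squares, but that controls the Segal decomposition of the \emph{source} grids, not the attaching maps of the nondegenerate bisimplices of $\comp$ relative to the previously attached ones. In short, the proposal defers the two points where the actual work lies --- the special extension theorem and its application to the outer-horn attachments --- and so does not constitute a proof.
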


\begin{theorem}\label{thm.htpy-coh-comp-2}
	Suppose that $\P$ is a double Segal space so that $\Hor(\P)$ is locally complete. Then the restriction arrow 
	$$
	\map_{\Cat^2(\S)}(\comp, \P) \rightarrow \map_{\Cat^2(\S)}([1]_v, \P)
	$$
	is a monomorphism with image given by the subspace of vertical arrows of $\P$ that admit a companion. 
\end{theorem}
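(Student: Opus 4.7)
By \ref{thm.htpy-coh-comp-1}, restriction along the canonical inclusion $L \hookrightarrow \comp$ gives an equivalence $\map_{\Cat^2(\S)}(\comp, \P) \simeq C_\P$. Under this equivalence, the restriction to $[1]_v$ becomes the forgetful map $\pi : C_\P \to \map_{\Cat^2(\S)}([1]_v, \P) = \P_{0,1}$ remembering only the right-hand vertical edge of a companionship unit. By the very definition of ``admitting a companion'', the image of $\pi$ coincides with the indicated subspace of $\P_{0,1}$. The proof thus reduces to showing $\pi$ is a monomorphism, i.e.\ that for every companionable vertical arrow $f : x \to y$, the fiber $C_\P^f$ is contractible.

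\textbf{Comparison 2-cell.} Fix such an $f$ and two points $(F, \eta), (F', \eta') \in C_\P^f$. By \ref{thm.htpy-coh-comp-1}, each companionship unit extends, uniquely up to a contractible space of choices, to a homotopy coherent companionship, from which we extract the corresponding counits $\epsilon, \epsilon'$. Horizontally pasting then yields 2-cells $\phi := \eta' *_h \epsilon$ and $\phi' := \eta *_h \epsilon'$, both with identity vertical edges. They therefore represent morphisms between $F$ and $F'$ in the Segal mapping space $\Hor(\P)(x, y)$. An iterated application of the triangle identities --- the classical argument familiar from the strict theory of double categories --- shows that $\phi$ and $\phi'$ are mutually inverse in $\Hor(\P)(x, y)$, and the resulting equivalence $F \simeq F'$ carries $\eta$ to $\eta'$.

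\textbf{Passage to completeness.} Since $\Hor(\P)$ is locally complete, the Segal space $\Hor(\P)(x, y)$ is complete by \ref{prop.locally-complete}, hence an $\infty$-category via \ref{rem.joyal-tierney}. The invertible morphism $\phi$ therefore lifts to a genuine equivalence in the underlying space of horizontal arrows of $\P$, identifying $(F, \eta)$ and $(F', \eta')$ as points of $C_\P^f$. The main obstacle is promoting this pointwise identification into a contraction of the whole space $C_\P^f$: this is handled by running the comparison construction parametrically. Given any family of pairs $((F_s, \eta_s), (F'_s, \eta'_s))_{s \in S}$ of companionship units with common underlying vertical arrow $f$ parameterized by a space $S$, \ref{thm.htpy-coh-comp-1} furnishes coherent $S$-families of homotopy coherent companionships, hence coherent families of counits $\epsilon_s, \epsilon'_s$ and comparison equivalences $\phi_s$ in $\Hor(\P)(x,y)$. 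These assemble, via local completeness, into the desired contracting homotopy of $C_\P^f$.
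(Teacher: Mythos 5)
Your reduction is the same as the paper's: by \ref{thm.htpy-coh-comp-1} everything comes down to showing that $C_\P \rightarrow \map_{\Cat^2(\S)}([1]_v,\P)$ is a monomorphism, and your identification of the image is correct. The idea that two companionship units on the same $f$ differ by an invertible horizontal 2-cell, and that local completeness of $\Hor(\P)$ is what ultimately collapses this ambiguity, is also the right mechanism (it is \ref{lem.two-companionship-units} in the paper, combined with the locality of $\P$ with respect to $[1;J]_h \rightarrow [1]_h$).

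The gap is in your final step. Constructing, for each pair of points of the fiber $C^f_\P$, a comparison equivalence --- even one that you assert can be built ``parametrically'' --- does not by itself contract the fiber: a monomorphism of spaces requires the \emph{diagonal} $C^f_\P \rightarrow C^f_\P \times C^f_\P$ to be an equivalence, i.e.\ all path spaces must be contractible, not merely nonempty. The sentence ``these assemble, via local completeness, into the desired contracting homotopy'' is exactly the content that needs proof, and nothing in your argument supplies it; moreover your claim that ``the resulting equivalence $F \simeq F'$ carries $\eta$ to $\eta'$'' is itself an unverified coherence statement. The paper handles this by making the diagonal condition combinatorially explicit: it rewrites the monomorphism condition as the assertion that restriction along the fold map $L \cup_{[1]_v} L \rightarrow L$ induces an equivalence $C_\P \rightarrow \map(L\cup_{[1]_v}L,\P)\times_{\map(L,\P)^{\times 2}}C_\P^{\times 2}$, factors the fold map through an auxiliary subshape $A \subset \Sq([1;1])$, uses the special extension theorem \ref{thm.lifting-thm} to identify the relevant mapping space with a subspace of $\map(A,\P)$, applies \ref{lem.two-companionship-units} to recognize that subspace as ``one unit plus an invertible 2-cell'', replaces the invertible 2-cell by a map out of $[1;J]_h$ via Rezk's completeness theorem for $\Hor(\P)_{1,\bullet}$, and finally contracts using locality with respect to (\textit{lhc}). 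To repair your proof you would need to carry out some version of this shape-level argument (or otherwise produce a genuine homotopy between the two projections $C^f_\P\times C^f_\P \rightarrow C^f_\P$), rather than a pointwise comparison.
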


\begin{remark}
	In the context of (strict) double categories, \ref{thm.htpy-coh-comp-2} can be recovered as a special 
	case of a theorem of  Grandis and Par\'e \cite[Theorem 1.8]{GrandisPare}.
\end{remark}
One may formally deduce the following results by applying the transpose $(-)^\tp$ and horizontal opposite $(-)^\hop$  
dualities.

\begin{corollary}\label{cor.htpy-coh-comp-3}
	Suppose that $\P$ is a double Segal space so that $\Vert(\P)$ is locally complete. Then the restriction arrow 
	$$
	\map_{\Cat^2(\S)}(\comp, \P) \rightarrow \map_{\Cat^2(\S)}([1]_h, \P)
	$$
	is a monomorphism with image given by the subspace of horizontal arrows of $\P$ that are a companion.
\end{corollary}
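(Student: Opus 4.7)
The plan is to deduce this corollary from \ref{thm.htpy-coh-comp-2} by applying the transpose duality $(-)^\tp$ to the double Segal space $\P$. Concretely, I would apply \ref{thm.htpy-coh-comp-2} with $\P$ replaced by $\P^\tp$, and then translate each piece of data back through the involution $(-)^\tp$ on $\Cat^2(\S)$. Since $\Hor(\P^\tp) \simeq \Vert(\P)^{2-\op}$ by \ref{rem.vertical-fragment}, the first task is to verify that local completeness is invariant under the $2$-opposite (which is straightforward from the description of the generating local map (\textit{lhc}) together with the self-duality $J \simeq J^\op$ in $\PSh(\Delta)$). This ensures that the hypothesis ``$\Vert(\P)$ is locally complete'' is exactly the hypothesis needed to invoke \ref{thm.htpy-coh-comp-2} for $\P^\tp$.

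Next, I would identify both sides of the restriction map for $\P^\tp$ with the corresponding sides for $\P$. Since transpose is an involution, we have
$$
\map_{\Cat^2(\S)}([1]_v, \P^\tp) \simeq \map_{\Cat^2(\S)}([1]_h, \P), \qquad \map_{\Cat^2(\S)}(\comp, \P^\tp) \simeq \map_{\Cat^2(\S)}(\comp^\tp, \P).
$$
The first identification is immediate from $[1]_v^\tp \simeq [1]_h$. For the second, one needs the self-duality $\comp^\tp \simeq \comp$, which follows at once from the level-wise description $\comp_{n,m} = \mathrm{Poset}([n]\times[m], [1])$ together with the symmetry of cartesian products of posets.

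It then remains to match up the essential image, i.e.\ to check that a vertical arrow $f^\tp$ of $\P^\tp$ admits a companion in $\P^\tp$ if and only if the corresponding horizontal arrow $f$ of $\P$ is a companion. This is formal from \ref{def.compconj}: the transpose of a companionship unit and counit $(\eta, \epsilon)$ in $\P^\tp$ witnessing $(f^\tp, F^\tp)$ as a companionship yields 2-cells $(\eta^\tp, \epsilon^\tp)$ in $\P$, and a direct inspection of the two triangle identities shows that these exhibit $(F, f)$ as a companionship in $\P$ in which $F$ is vertical and $f$ is the horizontal companion. The main point to be careful about, though not really an obstacle, is this transposition of companionship data and the identification $\comp^\tp \simeq \comp$; once both are in place, the monomorphism statement and the description of the image transport along the equivalences above, and the result follows.
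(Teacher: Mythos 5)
Your proposal is correct and matches the paper's intended argument: the paper derives \ref{cor.htpy-coh-comp-3} from \ref{thm.htpy-coh-comp-2} precisely by the formal transpose duality (the horizontal opposite mentioned there is only needed for the companion/conjoint corollary), and you have supplied exactly the details left implicit --- the self-duality $\comp^\tp \simeq \comp$, the identification $\Hor(\P^\tp) \simeq \Vert(\P)^{2-\op}$ with invariance of local completeness under $2$-opposites, and the matching of companionship data under transposition.
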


\begin{corollary}\label{cor.htpy-coh-conj}
	Suppose that $\P$ is a double Segal space. Then the following assertions holds:
	\begin{itemize}
		\item if $\Hor(\P)$ is locally complete, then the restriction map 
		$$
	\map_{\Cat^2(\S)}(\conj, \P) \rightarrow \map_{\Cat^2(\S)}([1]_v, \P)
	$$
		is a monomorphism with image given by the subspace of vertical arrows of $\P$ that admit a conjoint,
		\item if $\Vert(\P)$ is locally complete, then the restriction map 
		$$
	\map_{\Cat^2(\S)}(\conj, \P) \rightarrow \map_{\Cat^2(\S)}([1]_h, \P)
	$$
	is a monomorphism with image given by the subspace of horizontal arrows of $\P$ that are a conjoint.
	\end{itemize}
\end{corollary}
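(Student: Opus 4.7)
The plan is to deduce both statements from \ref{thm.htpy-coh-comp-2} and \ref{cor.htpy-coh-comp-3} by applying the horizontal-opposite involution $(-)^\hop$ on $\Cat^2(\S)$, thus reducing conjunctions in $\P$ to companionships in $\P^\hop$.

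First, since $\conj = \comp^\hop$ by definition and $(-)^\hop$ is an auto-equivalence of $\Cat^2(\S)$ (see \ref{cons.dualities}), one has a natural equivalence
$$\map_{\Cat^2(\S)}(\conj, \P) \simeq \map_{\Cat^2(\S)}(\comp, \P^\hop).$$
Unwinding $(-)^\hop$ as restriction along $\op \times \id$ on $\Delta^{\times 2}$, one sees that $[1]_v^\hop \simeq [1]_v$ and $[1]_h^\hop \simeq [1]_h$ canonically, because $[1] \cong [1]^\op$ in $\Delta$ and the vertical factor is untouched by $\hop$. Consequently, the two restriction maps appearing in the corollary are naturally identified with the corresponding restriction maps for $\P^\hop$ applied to $\comp$; the naturality with respect to the inclusions $[1]_v, [1]_h \to \comp$ is automatic since the analogous inclusions into $\conj$ are obtained from them by applying $(-)^\hop$.

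Next, I would verify that the local-completeness hypotheses transfer under $(-)^\hop$. By \ref{rem.vertical-fragment}, $\Hor(\P^\hop) \simeq \Hor(\P)^{1-\op}$ and $\Vert(\P^\hop) \simeq \Vert(\P)^{2-\op}$, and local completeness of a 2-fold Segal space is preserved under 1- and 2-opposites (a direct check using the characterization via mapping Segal spaces together with the fact that completeness of Segal spaces is invariant under $(-)^\op$). Hence if $\Hor(\P)$ (respectively $\Vert(\P)$) is locally complete, then so is $\Hor(\P^\hop)$ (respectively $\Vert(\P^\hop)$).

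With this in hand, the first bullet follows by applying \ref{thm.htpy-coh-comp-2} to $\P^\hop$ and the second by applying \ref{cor.htpy-coh-comp-3} to $\P^\hop$. The image descriptions translate via \ref{rem.comp-conj-dual}: a vertical arrow $f$ of $\P$ admits a conjoint if and only if, as a vertical arrow of $\P^\hop$, it admits a companion; dually for horizontal arrows. I do not foresee any genuine obstacle here, the content being entirely formal once one has set up the dictionary between companionships in $\P^\hop$ and conjunctions in $\P$.
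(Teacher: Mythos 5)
Your proposal is correct and follows essentially the same route as the paper, which deduces the corollary formally from \ref{thm.htpy-coh-comp-2} and \ref{cor.htpy-coh-comp-3} by applying the horizontal opposite duality $(-)^\hop$, using $\conj = \comp^\hop$, the transfer of the local completeness hypotheses via \ref{rem.vertical-fragment}, and the dictionary of \ref{rem.comp-conj-dual}. The paper leaves these verifications implicit; your write-up simply makes them explicit.
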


\begin{remark}
	In particular, note that every locally complete double Segal space $\P$ meets the conditions that are posed in \ref{thm.htpy-coh-comp-2}, 
	\ref{cor.htpy-coh-comp-3} and \ref{cor.htpy-coh-conj}.
\end{remark}

\begin{corollary}\label{cor.closure-limits-dss-with-companions}
	The full subcategory spanned by the double Segal spaces $\P$ for which:
	\begin{itemize}
		\item $\Hor(\P)$ is locally complete,
		\item every vertical arrow of $\P$ admits a companion,
	\end{itemize}
	is a reflective subcategory of $\Cat^2(\S)$, and particular, closed under limits.
\end{corollary}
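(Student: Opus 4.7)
The plan is to exhibit $\mathscr{C}$ as the reflective subcategory of $\Cat^2(\S)$ of objects that are local with respect to a small set of maps of bisimplicial spaces, after which the standard consequences of being a small-generated localization (reflectivity and closure under limits) are immediate.

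Concretely, I would work with the set consisting of the two maps
\begin{equation*}
[1;J]_h \rightarrow [1]_h \quad \text{and} \quad [1]_v \rightarrow \comp,
\end{equation*}
where the second map is the composite of the canonical inclusion $[1]_v \rightarrow L$ picking out the lone vertical arrow of the free-living lower triangle with the inclusion $L \rightarrow \comp$. The first condition is of type (\textit{lhc}), so by \ref{prop.locally-complete} a double Segal space $\P$ is local with respect to it exactly when $\Hor(\P)$ is locally complete. For the second condition, locality means that the restriction map $\map_{\Cat^2(\S)}(\comp, \P) \rightarrow \map_{\Cat^2(\S)}([1]_v, \P)$ is an equivalence; unpacking, this forces every vertical arrow of $\P$ to extend to a map out of $\comp$, hence to admit a companion. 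Conversely, if $\Hor(\P)$ is locally complete and every vertical arrow admits a companion, then \ref{thm.htpy-coh-comp-2} tells us that this restriction map is a monomorphism whose image is all of $\map_{\Cat^2(\S)}([1]_v, \P)$, so it is an equivalence.

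With this characterization in hand, the reflectivity of $\mathscr{C}$ inside $\Cat^2(\S)$ follows from general facts about small-generated localizations of presheaf $\infty$-categories, and reflective subcategories are closed under all limits. The main (modest) obstacle is to verify the joint characterization above cleanly: locality with respect to $[1]_v \rightarrow \comp$ alone does not obviously impose local completeness of $\Hor(\P)$, so one really uses both local maps together, invoking \ref{thm.htpy-coh-comp-2} only after the first locality condition has ensured the relevant monomorphism property. Once that is unraveled, the argument reduces to a routine application of the theory of localizations recollected in \ref{section.prelims}.
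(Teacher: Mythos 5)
Your proposal is correct and follows essentially the same route as the paper, which identifies the subcategory as the local objects for exactly the two maps $[1;J]_h \rightarrow [1]_h$ and $[1]_v \rightarrow \comp$ and then invokes \ref{thm.htpy-coh-comp-2}. Your additional care about using both locality conditions jointly (local completeness of $\Hor(\P)$ first, so that \ref{thm.htpy-coh-comp-2} supplies the monomorphism onto the companion-admitting arrows) is a correct unpacking of what the paper leaves implicit.
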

\begin{proof}
	On account of \ref{thm.htpy-coh-comp-2}, this is precisely the full subcategory of double Segal spaces 
	local with respect to the maps $[1;J]_h \rightarrow [1]_h$ and $[1]_v \rightarrow \comp$.
\end{proof}

\begin{remark}\label{rem.free-adj}
	In \cite{SchanuelStreet}, Schanuel and Street define the \textit{free-living adjunction} (gaunt) 2-category $$\adj,$$ and show 
	that every adjunction in a 2-category can be upgraded to a functor out of this 2-category.  This result was then upgraded by Riehl--Verity to the $(\infty,2)$-categorical context \cite{RiehlVerityAdj},
	where 
	it is shown that every left adjoint arrow in an $(\infty,2)$-category uniquely (up to contractible choice) extends to a functor out of the free-living adjunction.
	
	One may combine 
	\ref{ex.compconj-sq} and \ref{cor.htpy-coh-conj}
	to conclude that the $(\infty,2)$-category 
	obtained by applying the left adjoint $\mathrm{Gr}(-)$ of the squares functor $\Sq : \Cat_{(\infty,2)} \rightarrow \DblCat_\infty^c$ 
	to the free-living conjunction, carries the same universal property of $\adj$ exhibited by Riehl--Verity. Consequently, we deduce 
	that there is an equivalence 
	$$
	\mathrm{Gr}(\conj) \simeq \adj.
	$$

	In fact, one can write down an explicit functor $\conj \rightarrow \Sq(\adj)$ using 
	the description of the free-living adjunction of Schanuel--Street. If one succeeds to show that the adjunct map 
	$\mathrm{Gr}(\conj) \rightarrow \adj$ is an equivalence --- without reasoning with universal properties --- 
	then \ref{cor.htpy-coh-conj} can be used to provide an independent proof of the main result of Riehl--Verity \cite{RiehlVerityAdj}.
\end{remark}

\subsection{A special extension theorem}\label{ssection.special-lifting} The main ingredient of the proof of \ref{thm.htpy-coh-comp-1} and 
\ref{thm.htpy-coh-comp-2}
is a special extension result for double Segal spaces with respect to companionship units. 
We first set up the necessary notation.

\begin{notation}
	We will consider the quotient $L[n,m]$ that is defined by the pushout square
	\[
        \begin{tikzcd}
             { [\{0\}, \{0 \leq 1\}]} \cup_{[\{0\}, \{0\}]} [\{0 \leq \dotsb \leq n\}, \{0\}] \arrow[r]\arrow[d] &  {[n,m]}.\arrow[d] \\
             {[0,0]} \arrow[r] & L[n,m],
        \end{tikzcd}
    \]
	of bisimplicial spaces, so that $L[1,1] = L$.
\end{notation}

\begin{notation}\label{not.special-lifting-notation}
	Let $k \geq 0$ and suppose that $S$ is a subset of $\{0,\dotsc, k\}$. Then we will use the notation of \cite[Subsection 2.2.1]{Joyal} and 
	consider the following simplicial subset
	$$
	\textstyle \Lambda^S[k] := \bigcup_{i \in \{0,\dotsc, k\}\setminus S}d_i[k-1] \subset [k].
	$$
	For $n, m \geq 0$ and subsets $S$ and $T$ of $[n]$ and $[m]$ respectively, we will consider the bisimplicial subset 
	$$
	\Lambda^{S,T}[n,m] := \Lambda^S[n]_h \times [m]_v \cup_{\Lambda^S[n]_h \times \Lambda^T[m]_v} [n]_h \times \Lambda^T[m]_v \subset [n,m].
	$$
	Whenever $S \subset \{0,\dotsc, n-1\}$ and $T \subset \{0, \dotsc, m-1\}$, we will consider the quotients
	\[
        \begin{tikzcd}
             { [\{0\}, \{0 \leq 1\}]} \cup_{[\{0\}, \{0\}]} [\{0 \leq \dotsb \leq n\}, \{0\}] \arrow[r]\arrow[d] &  \Lambda^{S,T}[n,m]\arrow[d] \\
             {[0,0]} \arrow[r] & \Lambda^{S,T}_L[n,m].
        \end{tikzcd}
    \]
	For $n\geq 1$ and $m \geq 2$, we have a map
	$$
	L^{\sqcup n} \cong \{1, \dotsc, n\} \times L \rightarrow \Lambda^{S,T}_L[n,m] 
	$$
	where the restriction to $\{i\} \times L$ factors through the $(1,1)$-bisimplex
	$$
	[\{0 \leq i\}, \{0 \leq 1\}] : \{i\} \times [1,1] \rightarrow \Lambda^{S,T}[n,m].
	$$
	For brevity, we will also make use of the notation 
	$$
	\Gamma_L^T[n,m] := \Lambda^{\emptyset, T}_L[n,m].
	$$
\end{notation}

Let $\P$ be a double Segal space, and 
suppose that we are given a map $\sigma : \Gamma^0_L[n,m] \rightarrow \P$ of bisimplicial spaces 
with $n \geq 1$ and $m\geq 2$. Moreover, we impose the condition that each restriction 
$
\sigma|(\{i\} \times L) : L \rightarrow \P 
$
classifies a companionship unit for $i =1, \dotsc, n$. 
The special extension theorem that we will prove, asserts that the dotted lift in the diagram 
\[
	\begin{tikzcd}
		\Gamma^0_L[n,m] \arrow[r,"\sigma"]\arrow[d] & \P \\
		L[n,m] \arrow[ur, dotted]
	\end{tikzcd}
\]
exists under these conditions, and that it is unique up to contractible choice. Here the left vertical arrow is the inclusion.

Before we state the theorem, let us sketch informally how one can produce such a lift for the minimal case that $n=1$ and $m=2$. In this case, 
$\sigma$ classifies a vertical arrow $k : y \rightarrow b$ and two 2-cells 
\[
	\eta = \begin{tikzcd}
		x\arrow[d, equal] \arrow[r, equal,""name=f] & x\arrow[d, "f"] \\
		x \arrow[r, "F"'name=t] & y,
		\arrow[from=f,to=t,Rightarrow, shorten <= 6pt, shorten >= 6pt]
	\end{tikzcd}
	\quad 
	\alpha = \begin{tikzcd}
		x\arrow[d, "h"'] \arrow[r, equal,""name=f] & x\arrow[d, "kf"] \\
		a \arrow[r, "G"'name=t] & b,
		\arrow[from=f,to=t,Rightarrow, shorten <= 6pt, shorten >= 6pt]
	\end{tikzcd}
\]
in $\P$. By assumption, $\eta$ is a companionship unit and has an associated 
counit $\epsilon$. The candidate lift $L[1,2] \rightarrow \P$ 
 classifies the diagram in $\P$ that is obtained from the diagram 
\[
        \begin{tikzcd}
            x \arrow[r,equal]\arrow[d,equal, ""'name=f1] & x \arrow[r,equal]\arrow[d,equal, ""name=t1] & x \arrow[d, "f"name=t5] \\
            x \arrow[r,equal]\arrow[d,equal] & x  \arrow[r,"F"] \arrow[d, "f"'name=f4] & y\arrow[d,equal, ""name=t4] \\ 
            |[alias=f3]| x \arrow[d, "h"'] & |[alias=t3]|y \arrow[r,equal] \arrow[d, "k"'name=f2] & y \arrow[d, "k"name=t2] \\
            a \arrow[r, "G"] & b\arrow[r,equal] &  b
			\arrow[from=f1,to=t1,equal, shorten <=15pt, shorten >= 15pt]
			\arrow[from=f2,to=t2,equal, shorten <=15pt, shorten >= 15pt]
			\arrow[from=f3,to=t3, phantom, "{\scriptstyle \alpha}"]
			\arrow[from=f4,to=t4, phantom, "{\scriptstyle \epsilon}"]
			\arrow[from=t1,to=t5, phantom, "{\scriptstyle \eta}"]
        \end{tikzcd}
 \]
by horizontally pasting the columns (and the vertical boundaries of the subdivided 2-cell in the resulting second row). 
One can check that this is indeed a lift by using the companionship triangle identities. The proof of the special extension theorem 
for the case that $n=1$ and $m=2$ is essentially an elaboration of this idea.

\begin{theorem}\label{thm.lifting-thm}
	Let $\P$ be a double Segal space.
    Let $n \geq 1$ and $m \geq 2$. Then the fibered map 
    \[
    \begin{tikzcd}[column sep =tiny]
    \map_{\PSh(\Delta^{\times 2})}(L[n,m], \P) \arrow[rr]\arrow[dr] && \map_{\PSh(\Delta^{\times 2})}(\Gamma^0_L[n,m], \P)\arrow[dl] \\
    & \map_{\PSh(\Delta^{\times 2})}(L, \P)^{\times n} 
    \end{tikzcd}
    \]
    becomes an equivalence when pulled back to the subspace $C_\P^{\times n}$ of $\map_{\PSh(\Delta^{\times 2})}(L, \P)^{\times n}$. In other words, 
	the map on fibers 
	$$
	\map_{\PSh(\Delta^{\times 2})}(L[n,m], \P)_\eta \rightarrow \map_{\PSh(\Delta^{\times 2})}(\Gamma^0_L[n,m], \P)_{\eta}
	$$
	is an equivalence for every tuple $\eta = (\eta_1, \dotsc, \eta_n) \in C_\P^{\times n}$.
\end{theorem}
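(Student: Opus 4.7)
My plan is to reduce the theorem to the base case $n = 1$, $m = 2$ (where the companionship condition is essential) and to handle this case by an explicit pasting construction using the companion counit.

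Step one is to observe that for every pair $(n, m) \neq (1, 2)$, the inclusion $\Gamma^0_L[n, m] \hookrightarrow L[n, m]$ becomes an equivalence after double Segal localization, \emph{independently} of the companionship hypothesis. The key input is that $\Lambda^{\{0\}}[m]$ omits only $[m]$ and its face $d_0[m - 1] = \{1, \dotsc, m\}$. Consequently, for $m \geq 3$ every 1-simplex $\{j, j + 1\}$ of $[m]$ lies in $\Lambda^{\{0\}}[m]$, and for $n \geq 2$ every 1-simplex $\{i, i + 1\}$ of $[n]$ lies in $\partial[n]$. In both cases, every step-by-step 2-cell $[\{i, i + 1\}]_h \times \{j, j + 1\}_v$ already lies in $\Gamma^0[n, m]$, so the missing bisimplices $[n]_h \times S_v$ (with $S = [m]$ or $\{1, \dotsc, m\}$) are filled by iterated pushouts along maps in (\textit{Seg}). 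Since the top hook used to define the $L$-quotient lies inside $\Gamma^0[n, m]$, this reduction descends to the $L$-quotients, giving the desired equivalence of mapping spaces into any double Segal space $\P$.

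For the base case $(n, m) = (1, 2)$, the step-by-step 2-cell $[1]_h \times \{1, 2\}_v$ is genuinely missing from $\Gamma^0[1, 2]$ and the companionship structure is essential. A map $\sigma : \Gamma^0_L[1, 2] \to \P$ classifies a companionship unit $\eta$ for some $(f, F)$ together with a 2-cell $\alpha$ filling the composite square with identities on top and left. I would construct the required lift $B$ via the $3 \times 3$ pasting sketched in the paper just before \ref{thm.lifting-thm}: $\eta$ sits at the top-right, the companion counit $\epsilon$ occupies the middle-right, an identity horizontal column appears on the left, and $\alpha$ fills the relevant diagonal region; the middle row of the resulting grid then defines $B$. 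The companionship triangle identities guarantee that the vertical pasting $\eta \cdot_v B$ recovers $\alpha$. To upgrade this to contractibility of fibers, I would present $L[1, 2]$ and $\Gamma^0_L[1, 2]$ as explicit pushouts of $[1, 1]$-cells, so that the restriction map on mapping spaces becomes a morphism between iterated homotopy fiber products of $\P_{1, 1}$-spaces, along which the pasting construction is manifestly inverse by the triangle identities.

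The main obstacle is a bootstrapping issue in the base case: the counit $\epsilon$ is itself part of the homotopy-coherent companionship data whose essential uniqueness the full theorem is designed to establish. I would resolve this by first proving directly, via an explicit analysis of the 2-cells of $\comp = \Sq([1])$, that given any companionship unit $\eta$ the space of pairs consisting of a counit $\epsilon$ and witnesses of the triangle identities is contractible. This is morally the $m = 2$ statement itself, but it can be set up without circularity by exploiting the concrete description $\comp_{n, m} = \mathrm{Poset}([n] \times [m], [1])$ recalled in the introduction, so that the relevant 2-cells in $\comp$ can be enumerated by hand. Once the counit is fixed in a contractible way, the pasting construction becomes a well-defined and natural inverse to restriction, establishing the required equivalence on fibers.
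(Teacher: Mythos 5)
Your Step one contains a genuine error: the inclusion $\Gamma^0_L[n,m] \rightarrow L[n,m]$ is \emph{not} a (\textit{Seg})-local equivalence for $(n,m) \neq (1,2)$, and the companionship hypothesis cannot be discarded for higher $(n,m)$. Containing all the unit cells $\{i,i+1\}_h \times \{j,j+1\}_v$ is not sufficient, because $\Gamma^0[n,m]$ also contains extra ``composite-shaped'' cells whose values are not pinned down by the core inside $\Gamma^0[n,m]$. Concretely, for $(n,m)=(1,3)$ the subobject $\Gamma^0[1,3]$ contains $[1]_h \times \{1,3\}_v$ (it lies in $[1]_h \times \{0,1,3\}_v$, and $\{0,1,3\} = d_2[2] \subset \Lambda^{\{0\}}[3]$), but it does \emph{not} contain $[1]_h \times \{1,2,3\}_v$. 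Writing $\alpha_j$ for the core $2$-cell over $\{j-1,j\}_v$ and $\beta_{13}$ for the cell over $\{1,3\}_v$, the only relations imposed by the three $2$-faces of $\Lambda^{\{0\}}[3]$ are $\beta_{02} \simeq \alpha_1 \cdot \alpha_2$, $\beta_{03} \simeq \beta_{02}\cdot\alpha_3$ and $\beta_{03} \simeq \alpha_1 \cdot \beta_{13}$; these yield only $\alpha_1 \cdot \beta_{13} \simeq \alpha_1\cdot\alpha_2\cdot\alpha_3$, which does not force $\beta_{13} \simeq \alpha_2\cdot\alpha_3$ unless $\alpha_1$ is vertically left-cancellable. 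This is exactly the failure of outer horn filling $\Lambda^0[3] \rightarrow [3]$ for Segal spaces, and it is precisely what the hypothesis that $\alpha_1 = \eta$ is a companionship unit is there to repair. (An explicit counterexample: take $\P$ with only vertical structure coming from $B\mathscr{M}$ for a monoidal category $\mathscr{M}$ in which tensoring with some object is not faithful.) The same problem occurs for $(2,2)$ with the cell $\{0,2\}_h \times \{1,2\}_v \subset \partial[2]_h \times [2]_v$. The paper's reduction (\ref{lem.reduction}) is instead an induction in which the companionship hypothesis is propagated at every step: the pushout squares exhibit the map for $(n+1,m)$ (resp.\ $(n,m+1)$) as a base change of the map for $(n,m)$, and \ref{lem.gen-horns} is only applicable to the horns $\Lambda^{S,T}$ whose complement is non-convex, which $\Gamma^0 = \Lambda^{\emptyset,\{0\}}$ is not.

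Your treatment of the base case also does not close. You correctly identify the circularity --- the counit $\epsilon$ used in the pasting is part of the coherence data whose essential uniqueness is at stake --- but the proposed fix (``prove directly that the space of counits together with triangle-identity witnesses is contractible'') is a statement about mapping spaces into an arbitrary double Segal space $\P$, and enumerating the cells of $\comp = \Sq([1])$ by hand says nothing about it; that contractibility claim is essentially the $(1,2)$ and $(2,2)$ instances of the theorem itself. The paper avoids this by never fixing a counit in advance: it adjoins to $[1,2]$ the whole subset $T \subset \comp$ encoding unit, counit and triangle identities, and then shows that the induced map $\vrectangle \cup_R T \rightarrow [1,2]\cup_{[1,1]} T$ becomes invertible after Segal localization by constructing an explicit retraction via pasting-shape decompositions. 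Some version of that global argument (or of the filtration of $\comp$ in \ref{con.filtration-comp}) is needed; the pointwise pasting construction alone does not yield contractibility of the fibers.
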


\begin{remark}
	There are more special extension or lifting results of a similar flavor in higher category theory:
	\begin{itemize}
		\item \textit{Joyal's lifting theorem} \cite[Theorem 2.2]{JoyalQCatKan} for quasi-categories with respect to equivalences,
		\item a version of the preceding theorem for Segal spaces by Rezk \cite[Lemma 11.10]{RezkSeg},
		\item a result for quasi-categorically enriched categories by Riehl and Verity \cite[Proposition 4.3.6]{RiehlVerityAdj} with respect to adjunction counits.
	\end{itemize}
\end{remark}

To prove \ref{thm.lifting-thm}, we present the following reduction step:

\begin{lemma}\label{lem.reduction}
	Suppose that \ref{thm.lifting-thm} holds for $(n,m) = (1,2)$. Then it holds for all $n \geq 1$ and $m\geq 2$.
\end{lemma}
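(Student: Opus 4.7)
The plan is to prove the reduction by a double induction, combining a horizontal induction on $n \geq 1$ (for every fixed $m \geq 2$) with a vertical induction on $m \geq 2$ in the case $n = 1$. In both steps, the goal is to exhibit the inclusion $\Gamma^0_L[n,m] \hookrightarrow L[n,m]$ — together with the tuple of companionship units on $\{1,\dotsc,n\} \times L$ — as an iterated pushout of inclusions of the same shape with strictly smaller parameters, modulo bisimplicial maps in the saturated class generated by (\textit{Seg}). Since the latter maps become equivalences upon applying $\map_{\PSh(\Delta^{\times 2})}(-, \P)$ (as $\P$ is a double Segal space), the extension problem for $(n,m)$ then reduces by standard pullback manipulations of mapping spaces to the extension problems for the smaller parameters.

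For the horizontal step with $n \geq 2$, I would invoke the horizontal Segal cover $[\{0\leq 1\}, m] \cup_{[\{1\}, m]} [\{1\leq\dotsb\leq n\}, m] \to [n,m]$, which lies in the Segal saturation. The collapsed subcomplex defining $L[n,m]$ splits along the vertex $[\{1\}, \{0\}]$: the top horizontal spine $[\{0 \leq \dotsb \leq n\}, \{0\}]$ is the pushout of its first edge (contained in the $(1,m)$-piece) and the remaining spine of length $n-1$ (contained in the $(n-1,m)$-piece), while the left vertical edge lies entirely in the $(1,m)$-piece. A parallel analysis of the horizontal boundary $\partial[n]$ and of $\Gamma^0_L[n,m]$ matches this decomposition and distributes the companionship units appropriately: the unit on $\{1\} \times L$ lies in the first factor and those on $\{2,\dotsc,n\} \times L$ lie in the second. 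Invoking the inductive hypothesis on each factor then concludes the horizontal step.

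For the vertical step with $n = 1$ and $m \geq 3$, I would use the vertical Segal cover $[1, \{0 \leq 1 \leq 2\}] \cup_{[1, \{2\}]} [1, \{2\leq\dotsb\leq m\}] \to [1,m]$. Because both the collapsed $L$-shape and the companionship unit $\{1\} \times L$ lie entirely in the first piece, one obtains, up to Segal saturation, that $L[1,m]$ is the pushout $L[1,2] \cup_{[1, \{2\}]} [1, \{2 \leq \dotsb \leq m\}]$. The horn $\Lambda^{\{0\}}[m]$ splits compatibly as $\Lambda^{\{0\}}[\{0, 1, 2\}] \cup_{\{2\}} [\{2\leq\dotsb\leq m\}]$, yielding the parallel decomposition of $\Gamma^0_L[1, m]$. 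Since the second factor contributes no collapse and no companionship-unit constraint, the corresponding lifting becomes automatic against a double Segal space — it is purely a Segal extension. The inductive hypothesis at $(1, m-1)$, combined with the base case $(1, 2)$, then yields the claim.

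The main technical obstacle is the combinatorial verification that the claimed Segal-type decompositions of $L[n,m]$ and $\Gamma^0_L[n,m]$ are genuinely valid at the level of bisimplicial presheaves. One must keep careful track of how the collapse defining $L[-,-]$ interacts with both the horizontal and the vertical Segal covers, and of how the partial boundary $\Lambda^{\emptyset, \{0\}}$ splits under these covers. Once these decompositions are in place, passing to mapping spaces into $\P$, pulling back to $C_\P^{\times n}$, and applying 2-out-of-3 mechanically reduces the lemma to the previous inductive cases, and ultimately to the base case $(n,m) = (1,2)$.
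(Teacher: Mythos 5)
Your high-level skeleton --- a coordinate-wise induction that exhibits $\Gamma^0_L[n,m] \rightarrow L[n,m]$ as a pushout of a smaller instance modulo maps in the saturation of (\textit{Seg}), followed by 2-out-of-3 on fibers of mapping spaces --- is the same as the paper's, but both of your proposed decompositions fail, and the failure is not just the deferred ``combinatorial verification''. Recall that $\Gamma^0_L[n,m]$ is the collapse of $\Lambda^{\emptyset,\{0\}}[n,m] = \partial[n]_h \times [m]_v \cup [n]_h \times \Lambda^{\{0\}}[m]_v$. In the horizontal step, this subobject contains the \emph{entire} horizontal boundary $\partial[n]_h \times [m]_v$, which for $n \geq 2$ is strictly larger than, and not (\textit{Seg})-equivalent to, the two-block union $[\{0\leq 1\},m] \cup_{[\{1\},m]} [\{1 \leq \dots \leq n\},m]$: already $\{0 \leq 2\}_h \times [m]_v$ is absent from the latter, and adjoining a free edge to an outer-horn configuration is not a (\textit{Seg})-anodyne operation. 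Moreover both blocks lie entirely inside $\Gamma^0[n,m]$ (their horizontal simplices sit in $\partial[n]$), so restricting the extension problem to a block produces a trivial problem rather than a copy of the $(1,m)$- or $(n-1,m)$-problem; and the $i$-th companionship unit is attached along the bisimplex $[\{0 \leq i\}, \{0\leq 1\}]$, whose horizontal edge straddles both blocks for $i \geq 2$, so the units cannot be distributed over the factors as you assert. The vertical step has the analogous defect: $\Lambda^{\{0\}}[m]$ is \emph{not} $\Lambda^{\{0\}}[\{0,1,2\}] \cup_{\{2\}} \{2 \leq \dots \leq m\}$, even up to (\textit{Seg})-saturation (the face $\{0,\dots,m-1\}$ and the edge $\{1\leq 2\}$ are missing from the right-hand side, and the latter can only be recovered by filling an outer horn), so the square you would need to be a pushout is not one.

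The ingredient your argument is missing is \ref{lem.gen-horns}: a generalized horn $\Lambda^{S,T}[n,m] \rightarrow [n,m]$ lies in the saturation of (\textit{Seg}) as soon as one of the complements $[n]\setminus S$ or $[m]\setminus T$ is \emph{non-convex}. The paper's induction interposes, between the smaller problem and $L[\cdot,\cdot]$, a subobject tailored to this criterion: for $(n,m) \rightsquigarrow (n+1,m)$ it uses $\Lambda^{\{1\},\{0\}}_L[n+1,m]$, whose inclusion into $L[n+1,m]$ is a pushout of the anodyne horn $\Lambda^{\{1\},\{0\}}[n+1,m] \rightarrow [n+1,m]$ (here $[n+1]\setminus\{1\}$ is non-convex), while $\Gamma^0_L[n+1,m]$ is obtained from it by an \emph{exact} pushout gluing $L[n,m]$ along $\Gamma^0_L[n,m]$ via the face $(d_1)_h \times \id_v$ --- exact because the cells added are precisely those of the $d_1$-face not already present. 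The step $(n,m) \rightsquigarrow (n,m+1)$ works identically with $\Gamma^{\{0,2\}}_L[n,m+1]$ and the face $\id_h \times (d_2)_v$. Your Segal-cover decompositions cannot be repaired into pushouts of this kind, so the reduction does not go through as written.
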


In turn, we need the following ingredient to make this reduction:

\begin{lemma}\label{lem.gen-horns}
	Let $n, m \geq 0$, and suppose that $S$ and $T$ are subsets of $[n]$ and $[m]$ respectively. Suppose that one of the complements 
	$[n] \setminus S$ or $[m] \setminus T$ is not convex. 
	Then the inclusion $\Lambda^{S,T}[n,m] \rightarrow [n,m]$ is contained in the saturation of (\textit{Seg}).
\end{lemma}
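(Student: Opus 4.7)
By applying the transpose involution $(-)^\tp$, which preserves the class (\textit{Seg}), we may assume without loss of generality that $[n] \setminus S$ is non-convex. The key observation is that the inclusion in question is a pushout-product: writing $\iota_S : \Lambda^S[n] \hookrightarrow [n]$ and $\iota_T : \Lambda^T[m] \hookrightarrow [m]$ for the underlying 1-dimensional horn inclusions in $\PSh(\Delta)$, one has
$$
\bigl(\Lambda^{S,T}[n,m] \hookrightarrow [n,m]\bigr) \;=\; (\iota_S)_h \boxtimes (\iota_T)_v,
$$
where $\boxtimes$ denotes the pushout-product in $\PSh(\Delta^{\times 2})$. The plan is then to reduce to a 1-dimensional fact about $\iota_S$ and promote it back via the pushout-product.

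The 1-dimensional fact is classical: if $[n] \setminus S$ is non-convex, then $\iota_S$ lies in the weakly saturated class generated by the 1-dimensional spine inclusions in $\PSh(\Delta)$. I would prove this by induction on the pair $(n, |S|)$ in lexicographic order. The case $|S|=1$ is Joyal's theorem that inner horns $\Lambda^b[n] \hookrightarrow [n]$ with $0 < b < n$ are spine-anodyne, established via a filtration on non-degenerate simplices. For $|S| \geq 2$, pick $b \in S$ such that $[n]\setminus(S\setminus\{b\})$ remains non-convex (possible since $|S| \geq 2$), and factor $\iota_S$ as $\Lambda^S[n] \hookrightarrow \Lambda^{S\setminus\{b\}}[n] \hookrightarrow [n]$. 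The second inclusion is handled by the inductive hypothesis at $|S|-1$; the first is a pushout of $\Lambda^{S'}[n-1] \hookrightarrow [n-1]$ obtained by identifying $d_b[n-1] \cap \Lambda^S[n]$ with a horn in $[n-1]$ (where $S'$ is the shifted image of $S\setminus\{b\}$ after deleting $b$), whose complement is again non-convex, so it is handled by the inductive hypothesis at $n-1$.

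To promote to 2D, consider the class $W$ of maps $f$ in $\PSh(\Delta)$ such that $(f)_h \boxtimes g$ lies in the saturation of (\textit{Seg}) for every monomorphism $g$ in $\PSh(\Delta^{\times 2})$. Since $(-)_h$ and $(-)\boxtimes g$ preserve pushouts, transfinite compositions, and retracts, $W$ is itself weakly saturated; it therefore suffices to show that $W$ contains every generating 1-dimensional spine inclusion. By an analogous saturation argument applied to $g$, this reduces to the case $g = (\partial[p,q] \hookrightarrow [p,q])$; writing the latter as $(\partial[p] \hookrightarrow [p])_h \boxtimes (\partial[q] \hookrightarrow [q])_v$ and using associativity of the pushout-product, the verification reduces to a direct combinatorial decomposition of the resulting bisimplicial subspace into iterated pushouts of the generators of (\textit{Seg}). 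Applying $W \ni \iota_S$ with $g := (\iota_T)_v$ then yields $(\iota_S)_h \boxtimes (\iota_T)_v$ in the saturation of (\textit{Seg}), as required.

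The main obstacle is the combinatorial bookkeeping in the 1D induction, in particular identifying the attaching map $d_b[n-1]\cap\Lambda^S[n]$ with a horn of the required form in $[n-1]$ and verifying that non-convexity of complements is preserved at each step; the 2D promotion via the pushout-product is essentially formal once the 1D statement is in hand.
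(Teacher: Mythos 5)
Your two main inputs are exactly the ones the paper uses: the identification of $\Lambda^{S,T}[n,m] \rightarrow [n,m]$ as the pushout-product $(\iota_S)_h \boxtimes (\iota_T)_v$, and the one-dimensional fact that $\Lambda^S[n] \rightarrow [n]$ lies in the saturation of the spine inclusions when $[n]\setminus S$ is non-convex (the paper simply cites Joyal--Tierney and Joyal for this rather than re-running the induction, which is a matter of taste). The gap is in your promotion step. The class $W$ is set up so that membership of the generating spine inclusions in $W$ carries the entire content, and you discharge that base case by asserting that $f_h \boxtimes (\partial[p,q]\rightarrow[p,q])$ admits ``a direct combinatorial decomposition into iterated pushouts of the generators of (\textit{Seg})''. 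That is precisely the hard, unexecuted part: you are claiming membership in the \emph{weak} saturation (relative cell complexes built from cobase changes, transfinite composites and retracts of the generators), which is considerably stronger than what the lemma requires and is not established anywhere the paper relies on --- Rezk's treatment, which the paper cites for the analogous one-dimensional step, only handles products with representables, not pushout-products against boundary inclusions. (The reduction of an arbitrary monomorphism $g$ to boundary inclusions is also delicate for presheaves of \emph{spaces} rather than sets, though this is moot for the particular $g=(\iota_T)_v$ you need.)

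The saturation of (\textit{Seg}) relevant here is the \emph{strongly} saturated class, i.e.\ the class of maps inverted by the reflector onto $\Cat^2(\S)$; it is closed under cobase change along arbitrary maps and under 2-out-of-3, and the paper's proof of this very lemma uses both closure properties. Once this is granted, no cellular decomposition is needed: by \ref{prop.exp-ideals-dss} the functor $X\times(-)$ preserves this class for every $X \in \PSh(\Delta^{\times 2})$, so both $\Lambda^S[n]_h\times[m]_v \rightarrow [n,m]$ and $\Lambda^S[n]_h\times\Lambda^T[m]_v\rightarrow[n]_h\times\Lambda^T[m]_v$ lie in it; the left leg $\Lambda^S[n]_h\times[m]_v\rightarrow\Lambda^{S,T}[n,m]$ of your pushout-product square is a cobase change of the second of these maps, hence lies in it; and 2-out-of-3 applied to the factorization $\Lambda^S[n]_h\times[m]_v\rightarrow\Lambda^{S,T}[n,m]\rightarrow[n,m]$ finishes the proof. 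This formal argument is exactly what the paper does, and it lets you delete the $W$-machinery and the reduction to boundary inclusions entirely.
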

\begin{proof}
	Without loss of generality, let us assume $[n] \setminus S$ is not convex. It follows from \cite[Lemma 3.5]{JoyalTierney} and \cite[Proposition 2.12]{Joyal} that $\Lambda^S[n]_h \rightarrow [n]_h$ is contained in the saturation 
	of (\textit{Seg}). Thus \ref{prop.exp-ideals-dss} now implies that $\Lambda^S[n]_h \times [m]_v \rightarrow [n,m]$ and 
	$\Lambda^S[n]_h \times \Lambda^T[m]_v \rightarrow [n]_h \times \Lambda^T[m]_v$ are contained in (\textit{Seg}) as well. In turn,
	this implies that 
	that the total composite 
	$$
	\Lambda^S[n]_h \times [m]_v  \rightarrow \Lambda^{S,T}[n,m] \rightarrow [n,m]
	$$
	as well as the left inclusion in the composite are contained in (\textit{Seg}). So the desired result follows from 2-out-of-3.
\end{proof}

\begin{proof}[Proof of \ref{lem.reduction}]
	The proof is by induction.
    We  will demonstrate that if \ref{thm.lifting-thm} holds for a pair $(n,m)$ with $n\geq 1$, $m \geq 2$, then it will hold for the succesive pairs $(n+1,m)$ and $(n,m+1)$ as well. 
	
	Let us start by treating the first pair.  Note that we have inclusions
	$$
	\Lambda^{\{1\}, \{0\}}_L[n+1,m] \rightarrow \Gamma^0_L[n+1, m] \rightarrow L[n+1, m].
	$$
	The total composite is contained in the saturation of (\textit{Seg}) by \ref{lem.gen-horns}, as it is a pushout along $\Lambda^{\{1\}, \{0\}}[n+1,m] \rightarrow [n+1, m]$.
	By 2-out-of-3, it suffices to check that the map 
	$$
	\map(\Gamma^0_L[n,m], \P) \rightarrow \map(\Lambda^{\{1\},\{0\}}_L[n+1,m], \P)
	$$
	induces an equivalence on the fibers above tuples in $C_\P^{\times(n+1)}$.
	We now observe that we have a pushout square 
	\[
		\begin{tikzcd}
			\Gamma^0_L[n,m] \arrow[r]\arrow[d] & \Lambda^{\{1\}, \{0\}}_L[n+1,m] \arrow[d] \\
			{L[n,m]} \arrow[r, "(d_1)_h \times \id_v"] & \Gamma^0_L[n+1,m].
		\end{tikzcd}
	\] 
	Consequently, in the induced commutative diagram 
	\[
		\begin{tikzcd}[column sep = small]
		 \map(\Gamma^{0}_L[n+1,m], \P)\arrow[d] \arrow[r] &  \map(\Lambda^{\{1\},\{0\}}_L[n+1,m], \P) \arrow[r]\arrow[d] & \map(L, \P)^{\times (n+1)} \arrow[d] \\
		 \map(L[n,m], \P) \arrow[r] &  \map(\Gamma^0_L[n,m], \P) \arrow[r] & \map(L, \P)^{\times n}, \\ 
		\end{tikzcd}
	\]
	 the left square must be a pullback square. Suppose that we have a tuple $\eta \in C_\P^{\times (n+1)}$. 
	Then the above diagram specializes to a pullback square
	\[
		\begin{tikzcd}[column sep = small]
		 \map(\Gamma^{0}_L[n+1,m], \P)_{\eta}\arrow[d] \arrow[r] &  \map(\Lambda^{\{1\},\{0\}}_L[n+1,m], \P)_{\eta}\arrow[d] \\
		 \map(L[n,m], \P)_{\hat{\eta}} \arrow[r] &  \map(\Gamma^0_L[n,m], \P)_{\hat{\eta}},
		\end{tikzcd}
	\]
	where $\hat{\eta}$ is given by the tuple $(\eta_2, \eta_3, \dotsc, \eta_{n+1})$. 
	So the desired conclusion follows from the induction hypothesis.

	Secondly, we will handle the pair $(n,m+1)$. The argument is similar. In light of 
	\ref{lem.gen-horns},  the total composite
    $$
    \Lambda^{\emptyset, \{0,2\}}_L[n,m+1] = \Gamma^{\{0,2\}}_L[n,m+1] \rightarrow \Gamma^0_L[n,m+1] \rightarrow L[n,m+1]
    $$
    is contained in the saturation of (\textit{Seg}) since it can be written as a pushout along the inclusion $\Lambda^{\emptyset, \{0,2\}}[n,m+1] \rightarrow [n,m+1]$. 
	By 2-out-of-3, the theorem holds for the pair $(n, m+1)$ whenever the map
     \[
     \map(\Gamma^{0}_L[n,m+1], \P) \rightarrow \map(\Gamma^{\{0,2\}}_L[n,m+1], \P) \\
    \]
    is an equivalence on the fibers above tuples in $C_\P^{\times n}$. But now we note that we have a pushout 
	square \[
		\begin{tikzcd}
			\Gamma^0_L[n,m] \arrow[r]\arrow[d] & \Gamma^{\{0,2\}}_L[n,m+1] \arrow[d] \\
			{L[n,m]} \arrow[r, "\id_h \times (d_2)_v"] & \Gamma^0_L[n,m+1].
		\end{tikzcd}
	\] Thus the desired conclusion may be inferred from the induction hypothesis again.
\end{proof}

\begin{proof}[Proof of \ref{thm.lifting-thm}]
	We may reduce to the case that $(n,m) = (1,2)$ in light of \ref{lem.reduction}. We will consider the sub double Segal spaces 
	\[
		R:=\begin{tikzcd}[column sep = tiny, row sep = tiny]
			(0,0) \arrow[r]\arrow[d] & (1,0) \arrow[d] \\
			(0,1) & (1,1)
		\end{tikzcd}
		\quad \subset \quad 
		\vrectangle :=\begin{tikzcd}[column sep = tiny, row sep = tiny]
			(0,0) \arrow[r, ""name=f]\arrow[d] & (1,0) \arrow[d] \\
			(0,1) \arrow[d] & (1,1) \arrow[d] \\
			(0,2)\arrow[r, ""'name=t] & (1,2)
			\arrow[from=f,to=t,Rightarrow, shorten <= 6pt, shorten >= 6pt]
		\end{tikzcd}
		\quad \subset \quad [1,2].
	\]
	Suppose that $\eta : [1,1] \rightarrow \P$ is a 2-cell of $\P$ that classifies a unit of a companionship. Then we have to show that the map 
	$$
	\map([1,2], \P) \times_{\map([1,1], \P)} \{\eta\} \rightarrow \map(\Gamma^0[1,2], \P) \times_{\map([1,1], \P)} \{\eta\}
	$$
	is an equivalence. In turn, this translates to the condition for the map
	$$
	\map([1,2], \P) \times_{\map([1,1], \P)} \{\eta\} \rightarrow \map(\vrectangle, \P) \times_{\map(R, \P)} \{\eta|R\}
	$$
	to be an equivalence. 

	Let us write $T$ for the smallest bisimplicial subset of $\comp$ that contains the bisimplices 
	\[
		\begin{tikzcd}[column sep =tiny, row sep = tiny]
			0 \arrow[r,equal]\arrow[d,equal] & 0 \arrow[r,equal] \arrow[d] & 1 \arrow[d,equal] \\
			0 \arrow[r] & 1 \arrow[r,equal]  & 1,
		\end{tikzcd}
	   \quad 
		\begin{tikzcd}[column sep =tiny, row sep = tiny]
			 0 \arrow[r,equal] \arrow[d,equal] & 0 \arrow[d] \\
			0 \arrow[d] \arrow[r] & 1\arrow[d,equal] \\
			 1 \arrow[r,equal] & 1.
		\end{tikzcd}
	\]
	The companionship data 
	associated to the unit $\eta$ witness the existence of an extension 
	$\eta' : T \rightarrow \P$
	so that the composite $[1,1] \rightarrow L \rightarrow T$ recovers $\eta$.
	It is sufficient to demonstrate that $\P$ is local with respect to the inclusion of bisimplicial spaces
	$$
	\vrectangle \cup_R T \rightarrow [1,2] \cup_{[1,1]} T,
	$$
	because we have a commutative square
	\[
	\begin{tikzcd}
		\map([1,2] \cup_{[1,1]} T, \P) \times_{\map(T,\P)} \{\eta'\} \arrow[d]\arrow[r] & \map([1,2], \P) \times_{\map([1,1], \P)} \{\eta\} \arrow[d] \\ 
		\map(\vrectangle \cup_R T, \P) \times_{\map(T,\P)} \{\eta'\} \arrow[r] & \map(\vrectangle,\P)\times_{\map(R,\P)} \{\eta|R\},
	\end{tikzcd}
	\]
	where the horizontal arrows are equivalences by the pasting lemma for pullback squares.
	Let $L : \PSh(\Delta^{\times 2}) \rightarrow \Cat^2(\S)$ denote the reflector. 
	Then we equivalently have to show that the induced map 
	$$
	i : \vrectangle' := \vrectangle \cup_R LT \simeq L(\vrectangle \cup_R T) \rightarrow L([1,2] \cup_{[1,1]} T) \simeq [1,2] \cup_{[1,1]} LT =: [1,2]'
	$$
	constitutes an equivalence in the subcategory $\Cat^2(\S)$, where both pushouts are now computed in this subcategory.
	
	Throughout the remainder of the proof, we will exploit the language of pasting shapes that was developed in \cite{Pasting}. 
	We will make use of the \textit{composable 2-dimensional subshape} 
	(see Definitions 3.2 and 3.28 of \cite{Pasting}) 
	defined by
	\[
        \Sigma := \begin{tikzcd}[column sep = tiny, row sep = tiny]
            (0,0) \arrow[r, ""name=a1]\arrow[d] & (1,0)  \arrow[r, ""name=b1]\arrow[d] & (2,0) \arrow[d] \\
            (0,1) \arrow[r, ""name=a2]\arrow[d] & (1,1) \arrow[r, ""name=b2] \arrow[d] & (2,1) \arrow[d] \\ 
            (0,2)\arrow[d] & (1,2)\arrow[r,""name=b3] \arrow[d] & (2,2) \arrow[d] \\
            (0,3)\arrow[r, ""name=a3] & (1,3) \arrow[r, ""name=b4] &  (2,3).
			\arrow[from=a1, to=a2, "{\scriptstyle A_1}", phantom]
			\arrow[from=a2, to=a3, "{\scriptstyle A_{23}}", phantom]
			\arrow[from=b1, to=b2, "{\scriptstyle A_4}", phantom]
			\arrow[from=b2, to=b3, "{\scriptstyle A_5}", phantom]
			\arrow[from=b3, to=b4, "{\scriptstyle A_6}", phantom]
        \end{tikzcd}
		\subset 
		[2,3] = \begin{tikzcd}[column sep = tiny, row sep = tiny]
            (0,0) \arrow[r, ""name=a1]\arrow[d] & (1,0)  \arrow[r, ""name=b1]\arrow[d] & (2,0) \arrow[d] \\
            (0,1) \arrow[r, ""name=a2]\arrow[d] & (1,1) \arrow[r, ""name=b2] \arrow[d] & (2,1) \arrow[d] \\ 
            (0,2)\arrow[d]\arrow[r, ""name=a3] & (1,2)\arrow[r,""name=b3] \arrow[d] & (2,2) \arrow[d] \\
            (0,3)\arrow[r, ""name=a4] & (1,3) \arrow[r, ""name=b4] &  (2,3).
			\arrow[from=a1, to=a2, "{\scriptstyle A_1}", phantom]
			\arrow[from=a2, to=a3, "{\scriptstyle A_{2}}", phantom]
			\arrow[from=a3, to=a4, "{\scriptstyle A_{3}}", phantom]
			\arrow[from=b1, to=b2, "{\scriptstyle A_4}", phantom]
			\arrow[from=b2, to=b3, "{\scriptstyle A_5}", phantom]
			\arrow[from=b3, to=b4, "{\scriptstyle A_6}", phantom]
        \end{tikzcd}
	\]
	In the pictures above, we have also labeled the \textit{vertebrae} (see \cite[Definition 3.5]{Pasting}) of the shapes. 
	We will write $[-]$ for the nerve functor of \cite[Construction 3.11]{Pasting} for 
	2-dimensional pasting shapes. 
	On account of \cite[Theorem 3.49]{Pasting}, the canonical inclusions
	\begin{gather*}
	D_0 := [A_1 \cup A_4] \cup [A_4 \cup A_5]  \cup [A_{23}\cup A_6] \rightarrow [\Sigma], \\
	D_1 := [A_1 \cup A_4] \cup [A_2 \cup A_4 \cup A_5] \cup [A_2 \cup A_3 \cup A_6]  \rightarrow [2,3] ,
	\end{gather*}
	are contained in the saturation of (\textit{Seg}). We can now write down a map of bisimplicial sets
	$
	g' : D_1 \rightarrow [1,2] \cup_{[1,1]} T 
	$
	so that \begin{itemize}
		\item $g'|[A_1 \cup A_4]$ selects the degenerate $(2,1)$-bisimplex $$[2,1] \xrightarrow {(s_0)_h \times \id_v} [1,1] \rightarrow  [1,2] \cup_{[1,1]} T,$$
		\item $g'|[A_2 \cup A_4 \cup A_5]$ is the canonical map that surjects onto the image of $T$ in $[1,2] \cup_{[1,1]} T$,
		\item $g'|[A_2 \cup A_3 \cup A_6]$ is the canonical map that surjects onto the image $[1,2]$ in $[1,2] \cup_{[1,1]} T$.
	\end{itemize}
	Note that $g'$ restricts to a map $f'$ that fits in the commutative diagram
	\[
		\begin{tikzcd}[column sep = tiny, row sep = tiny]
			D_0 \arrow[dr]\arrow[rr, "\hspace{8pt}f'"]\arrow[dd] && \vrectangle \cup_R T \arrow[dr]\arrow[dd] \\
			& |[alias=f]|{[\Sigma]}\arrow[rr, crossing over, dashed, "\hspace{-20pt}f"] && \vrectangle' \arrow[dd, "i"]\\ 
			D_1\arrow[dr]\arrow[rr, "\hspace{20pt}g'"] && {[1,2] \cup_{[1,1]} T}\arrow[dr] \\
			& |[alias=t]|{[2,3]} \arrow[rr, dashed, "g"] &&{[1,2]'}.
			\arrow[from=f, to=t, crossing over]
		\end{tikzcd}
	\]
	Here, the slanted arrows are localizations with respect to the reflector $L$. The dashed arrows in the cube hence exist and are unique (up to contractible choice). 
	The resulting map $g$ selects the diagram in $[1,2]'$ that can be pictured as 
	\[
        \begin{tikzcd}[column sep = tiny, row sep = tiny]
            (0,0) \arrow[r,equal]\arrow[d,equal, ""'name=f1] & (0,0) \arrow[r,equal,""name=b1]\arrow[d,equal, ""name=t1] & (0,0) \arrow[d] \\
            (0,0) \arrow[r,equal,""name=a1]\arrow[d,equal] & (0,0)  \arrow[r, ""name=b2] \arrow[d] & (0,1) \arrow[d,equal] \\ 
            (0,0) \arrow[d]\arrow[r, ""name=a2] & (0,1)\arrow[r,equal, ""name=b3] \arrow[d, ""'name=f2] & (0,1) \arrow[d, ""name=t2] \\
            (2,0) \arrow[r, ""name=a3] & (2,1)\arrow[r,equal] &  (2,1).
			\arrow[from=f1,to=t1,equal, shorten <=15pt, shorten >= 15pt]
			\arrow[from=f2,to=t2,equal, shorten <=15pt, shorten >= 15pt]
			\arrow[from=a1,to=a2,Rightarrow, shorten <= 6pt]
			\arrow[from=a2,to=a3,Rightarrow, shorten <= 6pt]
			\arrow[from=b1,to=b2,Rightarrow, shorten <= 6pt]
			\arrow[from=b2, to=b3,Rightarrow, shorten <= 6pt]
        \end{tikzcd}
    \]
	Note that there is a canonical map $[1,2] \rightarrow [\Sigma]$ that factors the inclusion of the outer 2-cell in $[\Sigma]$.
	By construction, the restriction 
	$$
	[1,1] \xrightarrow{\id_h \times (d_2)_v} [1,2] \rightarrow [\Sigma] \xrightarrow{f} \vrectangle' 
	$$
	is given by the map $[1,1] \rightarrow [A_1 \cup A_4] \rightarrow D_0 \rightarrow T \rightarrow \vrectangle'$. So, the universal property of the pushout 
	gives rise to a unique map 
	$$
	r : [1,2]' \rightarrow \vrectangle' 
	$$
	so that $r|[1,2] = f|[1,2]$. Our next goal is now to show that $r$ is an inverse to $i$.

	We will first show that $ri \simeq \id_{\vrectangle'}$. This precisely entails demonstrating that the restriction
	$$
	\vrectangle \rightarrow [1,2] \rightarrow [\Sigma] \xrightarrow{f} \vrectangle' 
	$$
	is equivalent to the canonical inclusion $\vrectangle \rightarrow \vrectangle'$. To this end, we note that 
	there is a subshape 
	\[
        \Sigma' := \begin{tikzcd}[column sep = tiny, row sep = tiny]
            (0,0) \arrow[r, ""name=a1]\arrow[d] & (1,0)  \arrow[r, ""name=b1]\arrow[d] & (2,0) \arrow[dd] \\
            (0,1) \arrow[r, ""name=a2]\arrow[d] & (1,1)  \arrow[d] &  \\ 
            (0,2)\arrow[d] & (1,2)\arrow[r,""name=b2] \arrow[d] & (2,2) \arrow[d] \\
            (0,3)\arrow[r, ""name=a3] & (1,3) \arrow[r, ""name=b3] &  (2,3)
			\arrow[from=a1,to=a2,Rightarrow, shorten <= 6pt]
			\arrow[from=a2,to=a3,Rightarrow, shorten <= 6pt]
			\arrow[from=b1,to=b2,Rightarrow, shorten <= 6pt]
			\arrow[from=b2, to=b3,Rightarrow, shorten <= 6pt]
        \end{tikzcd}
		\quad \subset\quad \Sigma.
	\]
	The composite $\vrectangle \rightarrow [1,2] \rightarrow [\Sigma]$ factors through the inclusion 
	$[\Sigma'] \subset [\Sigma]$, yielding a map 
	$\vrectangle \rightarrow [\Sigma']$ that admits a retraction $\rho : [\Sigma'] \rightarrow \vrectangle$ corresponding to the diagram 
	$$
	\begin{tikzcd}[column sep = tiny, row sep = tiny]
		(0,0) \arrow[r, ""name=f1]\arrow[d,equal] & (1,0)\arrow[d,equal] \arrow[r,equal] & (1,0) \arrow[dd] \\
		(0,0) \arrow[r, ""'name=t1]\arrow[d] & |[alias=f2]|(1,0) \arrow[d] & |[alias=t2]| \\
		(0,1)\arrow[d] & (1,1) \arrow[r,equal]\arrow[d, ""'name=f3] & (1,1) \arrow[d, ""name=t3] \\ 
		(0,2)\arrow[r, ""name=t4] & (1,2) \arrow[r, equal] & (1,2).
		\arrow[from=f1,to=t1,equal, shorten <=15pt, shorten >= 15pt]
		\arrow[from=f3,to=t3,equal, shorten <=15pt, shorten >= 15pt]
		\arrow[from=f2,to=t2,equal, shorten <=8pt, shorten >= 17pt]
		\arrow[from=t1,to=t4,Rightarrow]
	\end{tikzcd}
	$$
	Note that the \textit{spine} $\Sp[\Sigma']$ of 
	$\Sigma'$ (see \cite[Definition 3.51]{Pasting}) is contained in $D_0$, and moreover, the restriction $f'|\Sp[\Sigma']$ factors as the composite 
	\[
			\Sp[\Sigma'] \xrightarrow{\rho} \vrectangle \rightarrow \vrectangle \cup_R T.
		\]
	On account of \cite[Corollary 3.52]{Pasting}, the map $\Sp[\Sigma'] \rightarrow [\Sigma']$ is contained the 
	saturation of (\textit{Seg}).
	Hence, it must follow that the restriction $f|[\Sigma']$ factors as the composite 
	\[
			[\Sigma'] \xrightarrow{\rho} \vrectangle \rightarrow \vrectangle'.
	\]
	The desired conclusion follows from this observation. 

	Finally, we have to show that $ir \simeq \id_{[1,2]'}$. We must demonstrate that the composite 
	$$
	[1,2] \xrightarrow{\id_h \times (d_2)_v} [1,3] \xrightarrow{(d_1)_h \times \id_v} [2,3] \xrightarrow{g} [1,2]'
	$$
	is equivalent to the canonical map $[1,2] \rightarrow [1,2]'$. To this end, we note that the image 
	of the spine $\Sp[1,3]$ under the face map ${(d_1)_h \times \id_v}$ is contained in ${[A_1 \cup A_4]} \cup [A_2 \cup A_5] \cup [A_3 \cup A_6] \subset D_1$.
	Moreover, one check that the following square 
	\[	
		\begin{tikzcd}
			\Sp[1,3] \arrow[r]\arrow[d, "\id_h \times (s_1)_v"'] & {D_1} \arrow[d, "g'"] \\
			{[1,2]} \arrow[r] &{ [1,2]} \cup_{[1,1]} T.
		\end{tikzcd}
	\]
	of bisimplicial sets commutes.
	Thus the restriction $g \circ (d_1)_h \times \id_v$ will factor as 
	\[
		[1,3] \xrightarrow{\id_h \times (s_1)_v} [1,2] \rightarrow [1,2]'.
	\]
	This concludes the proof, since the left arrow is a retraction for $\id_h \times (d_2)_v$.
\end{proof}

\subsection{Proofs of the homotopy coherence results} Now that we have 
established \ref{thm.lifting-thm}, we shift our focus to the proof of \ref{thm.htpy-coh-comp-1}.
We will construct a filtration for $\comp$. Note that the $(n,m)$-bisimplices of $\comp$ are given by the \textit{set} of functors $[n] \otimes [m] \rightarrow [1]$ 
of gaunt 2-categories. Since $[1]$ is a poset, these are described by maps of posets $[n] \times [m] \rightarrow [1]$.

\begin{construction}\label{con.filtration-comp}
	Let $\sigma_n$ be the $(n,n)$-bisimplex of $\comp$ that is given by the map of posets 
	$[n] \times [n] \rightarrow [1]$ so that $\sigma_n(i,j) = 0$ if $j \leq n-i$ and $\sigma_n(i,j) = 1$ otherwise.
	For instance, $\sigma_2$ is described by the picture 
	\[
		\begin{tikzcd}[column sep =tiny, row sep = tiny]
			0 \arrow[r,equal]\arrow[d,equal] & 0 \arrow[r,equal] \arrow[d,equal] & 0 \arrow[d] \\
			0 \arrow[d,equal] \arrow[r,equal] &0 \arrow[d] \arrow[r] & 1\arrow[d,equal] \\
			0 \arrow[r] & 1 \arrow[r,equal] & 1.
		\end{tikzcd}
	\]
	We will write $S_n \subset \comp$ for the smallest bisimplicial subset that contains $\sigma_n$. This yields a sequence of bisimplicial subsets 
	$$
	\textstyle L = S_1 \subset S_2 \subset \dotsb \subset S_n \subset S_{n+1} \subset \dotsb \subset \comp.
	$$ 
\end{construction}

\begin{lemma}\label{lem.filtration-comp}
	The inclusions $S_i \subset \comp$ exhibit that
	$
	\textstyle \colim_{ i \in \mathbb{N}} S_i = \comp.
	$
\end{lemma}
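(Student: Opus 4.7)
The plan is to reduce to a bisimplex-level combinatorial statement and then produce an explicit factorization. Note that $\comp = \Sq([1])$ is level-wise discrete: a bisimplex of $\comp$ in bidegree $(k,l)$ is a map of posets $\tau : [k] \times [l] \to [1]$. Since $S_n \subset \comp$ is the smallest bisimplicial subset containing $\sigma_n$, its $(k,l)$-simplices are precisely the maps of the form $\sigma_n \circ (\alpha \times \beta)$ with $\alpha : [k] \to [n]$ and $\beta : [l] \to [n]$ in $\Delta$. The sequential colimit $\colim_i S_i$ in $\PSh(\Delta^{\times 2})$ is computed level-wise as a union of sets, so it suffices to show that every map of posets $\tau : [k] \times [l] \to [1]$ factors as $\sigma_n \circ (\alpha \times \beta)$ for some $n$ and monotone $\alpha, \beta$.

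To produce the factorization, I would encode $\tau$ by the weakly decreasing sequence $a_i := \max\{ j \in [l] : \tau(i,j) = 0\}$ for $i \in [k]$, with the convention $a_i = -1$ if this set is empty; that the sequence is decreasing in $i$ follows from $\tau$ being monotone in the first variable. Setting $n := l+1$, I would then define $\beta : [l] \to [n]$ by $\beta(j) := j+1$ and
\[
  \alpha(i) := \begin{cases} l - a_i & \text{if } a_i \geq 0, \\ l+1 & \text{if } a_i = -1. \end{cases}
\]
A direct check shows that $\alpha$ is weakly increasing (in each block where the $a_i$ stay $\geq 0$, and at the transition to $a_i = -1$, one has $l - a_{i-1} \leq l + 1$), and that in both cases the equivalence $\alpha(i) + \beta(j) \leq n \iff \tau(i,j) = 0$ holds; this exhibits $\tau = \sigma_n \circ (\alpha \times \beta)$, so $\tau \in (S_{l+1})_{k,l}$.

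The only subtle point, and the place where the construction could appear to fail, is the row-constant-$1$ case $a_i = -1$: one cannot have $\alpha(i) > n$, so to ensure $\sigma_n(\alpha(i), \beta(j)) = 1$ for all $j$ one needs strict positivity of $\beta$. This is exactly the reason for taking $\beta(j) = j+1$ and $n = l+1$, rather than the naive $\beta(j) = j$ and $n = l$; it simultaneously absorbs the off-by-one coming from the empty-row case and preserves the threshold equivalence on the remaining rows. With the factorization in hand, every bisimplex of $\comp$ lies in some $S_i$, proving $\colim_{i \in \mathbb{N}} S_i = \comp$.
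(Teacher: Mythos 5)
Your proof is correct and follows essentially the same route as the paper: the paper also reduces to the level-wise union (citing a remark from the pasting-shapes reference for the colimit-versus-union point, which your level-wise-discreteness observation handles equally well) and then asserts that every non-degenerate bisimplex of $\comp$ is an iterated face of some $\sigma_k$, which is exactly the factorization $\tau = \sigma_n \circ (\alpha \times \beta)$ you construct explicitly. Your argument simply fills in the combinatorial details (the threshold encoding via $a_i$ and the $n = l+1$ shift handling the all-ones rows) that the paper leaves as an easy verification.
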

\begin{proof}
	One may deduce that the canonical map $\colim_{ i \in \mathbb{N}} S_i  \rightarrow \bigcup_{i \in \N}S_i$ is an equivalence from similar considerations as in \cite[Remark 3.50]{Pasting}. 
	Now one may easily conclude that the union is given by $\comp$ by verifying that any non-degenerate bisimplex 
	of $\comp$ can be written as an iterated face of an appropriate choice of bisimplex $\sigma_k$.
\end{proof}

\begin{proof}[Proof of \ref{thm.htpy-coh-comp-1}]
	We will show the second equivalent assertion for a fixed companionship unit $\eta \in \P$.
	In light of \ref{lem.filtration-comp}, it will be sufficient to show that the map
	$
	\map(S_{n+1}, \P)_\eta \rightarrow \map(S_n, \P)_\eta 
	$
	is an equivalence for every $n \geq 1$. Let $T \subset S_{n+1}$ be the bisimplicial subset generated by the face $(d_0, \id)^*\sigma_{n+1}$.
    Then we have a pushout squares
    \[
    \begin{tikzcd}
        \Gamma^0_L[n-1,n] \arrow[r]\arrow[d] & S_n\arrow[d]  \\ 
        {L[n-1, n]} \arrow[r, "{(d_0, \id)^*\sigma_{n+1}}"] & T,
    \end{tikzcd}
   \quad 
    \begin{tikzcd}
        \Gamma^0_L[n,n] \arrow[r]\arrow[d] & T \arrow[d]  \\ 
        L[n, n] \arrow[r, "{\sigma_{n+1}}"] & S_{n+1},
    \end{tikzcd}
    \]
	so that the desired result can be obtained from \ref{thm.lifting-thm}.
\end{proof}

\begin{remark}
	From the filtration of \ref{con.filtration-comp} one can read off what data equivalently determines (and is determined by) a companionship 
	unit and its associated 
	higher coherences that are encoded in $\comp$. This yields observations that are very similar to the situation for adjunctions, cf.\ \cite[Tag 02F4]{Kerodon} and \cite{RiehlVerityAdj}.
\end{remark}

To prove \ref{thm.htpy-coh-comp-2}, we need the following observation:

\begin{lemma}\label{lem.two-companionship-units}
	Suppose that we have 2-cells 
	\[
		\eta = \begin{tikzcd}
			x \arrow[d,equal]\arrow[r, equal,""name=f] & x \arrow[d, "f"] \\ 
			x \arrow[r, "F"'name=t] & y, 
			\arrow[from=f,to=t,Rightarrow, shorten <= 6pt, shorten >= 6pt]
		\end{tikzcd}
		\quad 
		\alpha = \begin{tikzcd}
			x\arrow[d,equal] \arrow[r, "F"name=f] & y \arrow[d,equal] \\ 
			x \arrow[r, "G"'name=t] & y, 
			\arrow[from=f,to=t,Rightarrow, shorten <= 6pt, shorten >= 6pt]
		\end{tikzcd}
	\]
	 in a double Segal space $\P$. If $\eta$ is a companionship unit, then the following assertions are equivalent:
	\begin{enumerate}
		\item the vertical composite $\eta'$ of $\eta$ and $\alpha$ is also a companionship unit,
		\item the horizontal 2-cell $\alpha$ admits an inverse $\beta$ in $\Hor(\P)$.
	\end{enumerate}
\end{lemma}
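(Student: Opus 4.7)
The plan is to first distill a useful universal property of companionship units from the triangle identities, and then to apply it in both directions.

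Let $\epsilon$ be a companionship counit for $\eta$, and write $F$ for the associated companion of $f$. The first step will be to show that, for any horizontal arrow $H : x \to y$, vertical pasting with $\eta$ on top defines an equivalence between the space of 2-cells $F \Rightarrow H$ in $\Hor(\P)$ (i.e.\ 2-cells in $\P$ with top $F$, bottom $H$, and both vertical boundaries identities) and the space of ``unit-shaped'' 2-cells with top $\id_x$, bottom $H$, left $\id_x$, and right $f$. The explicit inverse sends $\mu$ to $\mu \circ_h \epsilon$. Both round-trip identities will be verified by rewriting one factor as a horizontal composite with an appropriate identity 2-cell and applying the interchange law, using the vertical triangle identity $\eta \circ_v \epsilon = \id_f^v$ for one direction and the horizontal triangle identity $\eta \circ_h \epsilon = \id_F^h$ for the other.

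For $(2) \Rightarrow (1)$, I will take $\epsilon' := \beta \circ_v \epsilon$ as the candidate counit for $\eta' := \eta \circ_v \alpha$. The vertical triangle identity $\eta' \circ_v \epsilon' = \id_f^v$ follows from associativity of vertical composition together with the relation $\alpha \circ_v \beta = \id_F^h$. For the horizontal triangle identity, I will assemble a single 2-cell $\Xi \in \P_{2,2}$ whose four sub-2-cells are $\eta$ (top-left), $\epsilon'$ (top-right), $\alpha$ (bottom-left), and the identity 2-cell at $\id_y$ (bottom-right); the internal boundaries manifestly match. Contracting $\Xi$ in the two orders afforded by the Segal condition yields the equation $\eta' \circ_h \epsilon' = \beta \circ_v \alpha$, where the right-hand side uses the auxiliary computation $\eta \circ_h \epsilon' = \beta$ (a direct consequence of $\eta \circ_h \epsilon = \id_F^h$ via interchange). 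The hypothesis $\beta \circ_v \alpha = \id_G^h$ then gives the desired triangle identity.

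For $(1) \Rightarrow (2)$, let $\epsilon''$ be a counit for $\eta'$ and set $\beta := \eta \circ_h \epsilon''$. Applying interchange to the horizontal triangle identity $(\eta \circ_v \alpha) \circ_h \epsilon'' = \id_G^h$ for $(\eta', \epsilon'')$ rewrites it directly as $\beta \circ_v \alpha = \id_G^h$, giving one half of invertibility. The relation $\alpha \circ_v \beta = \id_F^h$ is the main obstacle, because the analogous direct interchange fails here on account of shape mismatches of left columns. To handle it, I will appeal to the universal property above: it suffices to verify $\eta \circ_v (\alpha \circ_v \beta) = \eta \circ_v \id_F^h$, and both sides will evaluate to $\eta$. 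The left-hand side equals $\eta' \circ_v \beta$ by associativity, and an interchange computation that exploits the vertical triangle identity $\eta' \circ_v \epsilon'' = \id_f^v$ further reduces this to $\eta \circ_h \id_f^v = \eta$, as required.
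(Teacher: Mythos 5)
Your proposal is correct and follows essentially the same route as the paper's (very terse) proof: for $(2)\Rightarrow(1)$ you take the counit for $\eta'$ to be $\beta\circ_v\epsilon$, and for $(1)\Rightarrow(2)$ you take the inverse of $\alpha$ to be $\eta\circ_h\epsilon''$, exactly the candidates the paper names before leaving the verification to the reader. Your verifications (the interchange computations and the auxiliary ``paste with $\eta$ on top'' equivalence used to establish $\alpha\circ_v\beta\simeq\id_F$, which indeed does not follow from a single direct interchange) are sound and simply supply the details the paper omits.
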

\begin{proof}
	Suppose that $\epsilon$ is a companionship counit for $\eta$. 
	If (1) holds, then one readily checks that the horizontal composite of $\eta$ and the companionship counit for $\eta'$ is 
	the desired inverse $\beta$ for $\alpha$.
	Conversely, if (2) holds, then one readily verifies that the vertical composite 
	of $\beta$ and $\epsilon$ is a companionship counit for $\eta'$.
\end{proof}

\begin{proof}[Proof of \ref{thm.htpy-coh-comp-2}]
	On account of \ref{thm.htpy-coh-comp-1}, it suffices to show that the composite $$
	C_\P \rightarrow \map(L, \P) \rightarrow \map([1]_v, \P)
	$$ is a monomorphism.\footnote{A similar claim appears in the work of Gaitsgory--Rozenblyum as well \cite[Lemma 5.1.5]{GR}, but without proof.}
	Equivalently, this entails demonstrating that the restriction along the fold map 
	$
	L \cup_{[1]_v} L \rightarrow L
	$
	induces an equivalence
	$$
	C_\P \rightarrow \map(L \cup_{[1]_v} L, \P) \times_{\map(L,\P)^{\times 2}} C_\P^{\times 2}.
	$$

	Let $A \subset \Sq([1;1])$ be the bisimplicial subset that is generated by the $(1,2)$-bisimplex that is pictured as 
	\[
		\begin{tikzcd}[column sep = small, row sep = small]
			0 \arrow[r,equal]\arrow[d,equal] & 0 \arrow[d, "0"] \\
			0 \arrow[r, "0"name=f]\arrow[d,equal] & 1 \arrow[d,equal] \\
			0 \arrow[r,"1"name=t] & 1.
		\end{tikzcd}
	\]
	There are two inclusions $\ell_1 : L \rightarrow A$ and $\ell_2 : L \rightarrow A$ that select 
	the top triangle and composite triangle in $A$ respectively.
	These give rise to a factorization 
	$
	L \cup_{[1]_v} L \rightarrow A \rightarrow L
	$
	of the fold map.
	Note that the first map is a pushout along $\Gamma^0_L[1,2] \rightarrow L[1,2]$. It follows from 
	\ref{thm.lifting-thm} that the inclusion $L \cup_{[1]_v} L \rightarrow A$ 
	induces to an equivalence
	$$
	\map(A, \P) \times_{\map(L,\P)}  C_\P  \xrightarrow{\simeq}  \map(L \cup_{[1]_v} L, \P) \times_{\map(L,\P)} C_\P 
	$$
 	after base changing via $\ell_1$. We may now base change via $\ell_2$ as well, 
	so that we obtain an equivalence 
	$$
	D:= \map(A, \P) \times_{\map(L,\P)^{\times 2}}  C_\P^{\times 2} \xrightarrow{\simeq} \map(L \cup_{[1]_v} L, \P) \times_{\map(L,\P)^{\times 2}} C_\P^{\times 2}.
	$$
	Since monomorphisms are closed under base change, $D$ maybe described as the subspace 
	of $\map(A, \P)$ spanned by the maps $f : A \rightarrow \P$ so that the restrictions 
	$f\ell_1$ and $f\ell_2$ are companionship units. On account of \ref{lem.two-companionship-units}, these two conditions for $f$ 
	are equivalent to conditions that $f\ell_1$ is a companionship unit and the restriction $f|[1;1]_h$ is an invertible 2-cell.
	
	We will now use the fact that the restriction map $$\map([1;J]_h, \P) \rightarrow \map([1;1]_h, \P)$$ is a monomorphism 
	onto the horizontal 2-cells that admit an inverse. This can be deduced from applying \cite[Theorem 6.2]{RezkSeg} to the restricted Segal space $\Hor(\P)_{1,\bullet}$. 
	Combining this with the above description of $D$, it follows that the canonical map
	$
	A \rightarrow A \cup_{[1;1]_h} [1;J]_h 
	$
	induces an equivalence 
	$$
	\map( A \cup_{[1;1]_h} [1;J]_h, \P) \times_{\map(L, \P)} C_\P \xrightarrow{\simeq} D,
	$$
	where the pullback in the domain is formed via $\ell_1$.
	Now, we note that we an obvious commutative diagram 
	\[
		\begin{tikzcd}
			& L \cup_{[1]_h} [1;1]_h \arrow[r]\arrow[d] & L \cup_{[1]_h} [1;J]_h \arrow[d]\arrow[dr] \\
			L \cup_{[1]_v} L \arrow[r]& A \arrow[r] & A \cup_{[1;1]_h} [1;J]_h \arrow[r] & L
		\end{tikzcd}
	\]
	where the bottom arrows compose to the fold map.
	The square that appears in the diagram is a pushout square, and the left vertical arrow is contained in the saturation of (\textit{Seg}). Hence, the right vertical arrow is contained in the saturation of (\textit{Seg}) as well.
	The slanted arrow is contained in the saturation of (\textit{lhc}). Consequently, the map 
	$C_\P \rightarrow \map(L \cup_{[1]_v} L, \P) \times_{\map(L,\P)^{\times 2}} C_\P^{\times 2}$ decomposes into equivalences
	\begin{align*}
	C_\P &\xrightarrow{\simeq } \map( A \cup_{[1;1]_h} [1;J]_h, \P) \times_{\map(L, \P)} C_\P \\&\xrightarrow{\simeq} D\\ &\xrightarrow{\simeq}  \map(L \cup_{[1]_v} L, \P) \times_{\map(L,\P)^{\times 2}} C_\P^{\times 2}. \qedhere
	\end{align*}
\end{proof}

\begin{remark}
	Suppose that $f$ is a vertical arrow in a double Segal space $\P$ (without any completeness assumptions)
	that admits a companion $F$. Then the proof of \ref{thm.htpy-coh-comp-2} shows that the fiber of the map 
	$$
	C_\P \rightarrow \map_{\Cat^2(\S)}([1]_v, \P)
	$$
	above $f$ is equivalent to the space of automorphisms
	$$
	\map_{\Cat^2(\S)}([1;J]_h, \P) \times_{\map_{\Cat^2(\S)}([1]_h, \P)} \{F\}.
	$$
\end{remark}

\section{Companions and conjoints in functor double Segal spaces}\label{section.comp-conj-fundbl}

Our next goal is to identity the {companions} and {conjoints} in functor double Segal spaces 
using the machinery of \ref{section.comp-conj}. 

\subsection{Companionable and conjointable 2-cells}
To 
give the characterization of the companions and conjoints in functor double Segal spaces, we introduce the following double categorical variant on 
adjointable squares:

\begin{definition}\label{def.companionable}
	Let $\P$ be a locally complete double Segal space.
	A 2-cell 
	\[
		\alpha = \begin{tikzcd}
			a \arrow[r, "F"name=f] \arrow[d,"h"'] & b \arrow[d,"k"] \\
			c \arrow[r, "G"'name=t] & d
			\arrow[from=f,to=t,Rightarrow, shorten <= 6pt, shorten >= 6pt]
		\end{tikzcd}
	\] in $\P$ 
	is said to be \textit{companionable} if $F$ and $G$ are the companions 
	of arrows $f : a\rightarrow b$ and $g : c \rightarrow d$ respectively, so that the vertical 2-cell $kf \rightarrow gh $ given by the pasting 
	\[
	\begin{tikzcd}
		a \arrow[r,equal,""name=a1] \arrow[d,equal] & a\arrow[d, "f"] \\ 
		a \arrow[r, "F"name=a2]\arrow[d,"h"'] & b \arrow[d,"k"] \\
		c \arrow[r, "G"'name=a3]\arrow[d,"g"'] & d \arrow[d,equal] \\
		d \arrow[r,equal, ""name=a4] &d,
		\arrow[from=a1, to=a2, Rightarrow, shorten <= 6pt]
		\arrow[from=a2,to=a3,phantom, "{\scriptstyle \alpha}", shorten <= 6pt]
		\arrow[from=a3,to=a4,Rightarrow]
	\end{tikzcd}
	\]
	constitutes an equivalence in $\Vert(\P)(a,d)$. Here the top and bottom 2-cell are the companionship unit and counit respectively.

	Dually, we say that $\alpha$ is \textit{conjointable} if $F$ and $G$ are the conjoints 
	of arrows $f : b \rightarrow a$ and $g : d \rightarrow c$ respectively, and so that the similarly obtained vertical 2-cell $gk \rightarrow hf$ is invertible in 
	$\Vert(\P)(b,c)$.
\end{definition} 

\begin{example}\label{ex.conjointable-sq}
	Let us consider the locally complete double Segal space $\Sq(X)$ of squares in an locally complete 
	2-fold Segal space $X$ (see \ref{cor.sq-restrictions}).
	Let $\alpha$ be a 2-cell in $\Sq(X)$ that corresponds to a lax commutative square
	\[
		\begin{tikzcd}
			a \arrow[r,"F"]\arrow[d,"h"'] & |[alias=f]|b \arrow[d ,"k"]\\
			|[alias=t]|c \arrow[r, "G"'] & d
			\arrow[from=f,to=t, Rightarrow]
		\end{tikzcd}
	\]
	in $X$. One may verify that $\alpha$ is companionable if and only if $\alpha$ strictly commutes, i.e.\ the 
	2-cell filling the square is invertible. From the above, one can deduce that $\alpha$ is conjointable if and only if $F$ and $G$ admit left adjoints $f$ and $g$
	respectively, so that the associated mate
	$$
	gk \rightarrow gkFf \rightarrow gGhf \rightarrow hf
	$$ 
	is an equivalence in $X(b,c)$. 
	In the terminology of Lurie, $\alpha$ is thus conjointable if and only if it is \textit{left adjointable} \cite[Definition 7.3.1.2]{HTT}. 
\end{example}

There is also another characterization of companionable squares:

\begin{proposition}\label{prop.alternative-char-companionables}
	Suppose that
	\[
		\alpha = \begin{tikzcd}
			a \arrow[r, "F"name=f] \arrow[d,"h"'] & b \arrow[d,"k"] \\
			c \arrow[r, "G"'name=t] & d
			\arrow[from=f,to=t,Rightarrow, shorten <= 6pt, shorten >= 6pt]
		\end{tikzcd}
	\] is a 2-cell in a locally complete double Segal space $\P$. Then the following assertions are equivalent: 
	\begin{enumerate}
		\item $\alpha$ is companionable,
		\item $F$ and $G$ are the companions of vertical arrows $f$ and $g$ respectively, so that the there is an equivalence
	\[
		\begin{tikzcd}
			a \arrow[r,equal, ""name=a1]\arrow[d,equal] & a \arrow[d, "f"] \\
			a \arrow[r, "F"name=a2] \arrow[d,"h"'] & b \arrow[d,"k"] \\
			c \arrow[r, "G"'name=a3] & d
			\arrow[from=a1,to=a2,Rightarrow, shorten <= 6pt]
			\arrow[from=a2,to=a3,phantom, "{\scriptstyle \alpha}"]
		\end{tikzcd}
		\simeq 
		\begin{tikzcd}
			a \arrow[d, "h"'name=f]\arrow[r,equal] & a \arrow[d, "h"name=t] \\
			c \arrow[r,equal, ""name=f2] \arrow[d,equal] & c \arrow[d,"g"] \\
			c \arrow[r, "G"'name=t2] & d
			\arrow[from=f,to=t,equal, shorten <= 10pt, shorten >= 10pt]
			\arrow[from=f2,to=t2,Rightarrow, shorten <= 6pt, shorten >= 6pt]
		\end{tikzcd}
	\]
	in the space of 2-cells $\P_{1,1}$. Here the top left and bottom right 2-cells are companionship units.
	\end{enumerate}
\end{proposition}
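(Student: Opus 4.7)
My plan is to encode both conditions in terms of a single vertical 2-cell involving the companionship counit $\epsilon_g$, and then appeal to the local completeness of $\P$ together with the triangle identities of the companionship $(g, G)$. Let $\eta_f, \epsilon_f$ and $\eta_g, \epsilon_g$ denote the companionship units and counits of $(f, F)$ and $(g, G)$ respectively, and set $T := \eta_f \cdot_v \alpha$ (with boundary top $\id_a$, bottom $G$, left $h$, right $kf$) and $R := \id_h \cdot_v \eta_g$ (with boundary top $\id_a$, bottom $G$, left $h$, right $gh$), where $\cdot_v$ denotes vertical pasting. Condition~(2) is precisely the assertion $T \simeq R$ in $\P_{1,1}$, while the vertical 2-cell $\alpha_{vert}$ in~(1) is simply $T \cdot_v \epsilon_g$. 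Applying the triangle identity $\eta_g \cdot_v \epsilon_g \simeq \id_g$ pasted with the identity 2-cell on $h$ on top yields $R \cdot_v \epsilon_g \simeq \id_{gh}$.

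The implication~(2)~$\Rightarrow$~(1) is then essentially formal: if $T \simeq R$ in $\P_{1,1}$, vertical pasting with $\epsilon_g$ below gives $\alpha_{vert} \simeq \id_{gh}$ in the mapping space $\Vert(\P)(a,d)_1$. Since $\Vert(\P)(a,d)$ is a complete Segal space by the local completeness of $\P$, any arrow equivalent to a degenerate 1-simplex is automatically an equivalence, and~(1) follows.

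For the converse~(1)~$\Rightarrow$~(2), the key reduction is to show that vertical pasting with $\epsilon_g$ induces an equivalence of spaces
\[
\Theta : \{\, T' \in \P_{1,1} : \text{top} = \id_a,\ \text{bottom} = G,\ \text{left} = h \,\} \xrightarrow{\;\simeq\;} \{\, \phi \in \Vert(\P)(a,d)_1 : \text{target} = gh\,\}.
\]
Granted this, completeness of $\Vert(\P)(a,d)$ reformulates~(1) as $\alpha_{vert} \simeq \id_{gh}$, which lifts uniquely under $\Theta$ to $T \simeq R$, yielding~(2).

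The main obstacle is verifying that $\Theta$ is an equivalence. Its inverse should be provided by an appropriate pasting with $\eta_g$, and the coherences needed to confirm the inverse property are precisely the two triangle identities for $(g, G)$. I plan to establish this by expressing $\Theta$ as restriction along a specific morphism of bisimplicial sets and checking that this morphism lies in the saturated class of spine inclusions and local-completeness maps generating $\Cat^2(\S)^{lc}$, along the lines of the proof of \ref{thm.lifting-thm}. Alternatively, one can invoke \ref{thm.htpy-coh-comp-1} to realise the companionship $(\eta_g, \epsilon_g)$ as a homotopy coherent functor $\comp \to \P$; the combinatorics of $\comp$ then make the invertibility of $\Theta$ transparent.
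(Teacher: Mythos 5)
Your skeleton is correct and is in substance the paper's own argument: the paper proves $(1)\Rightarrow(2)$ by writing down a $3\times 2$ grid whose rows paste to $T$ (using the horizontal triangle identity $\eta_g\cdot_h\epsilon_g\simeq \id_G$) and whose columns paste to $R\cdot_h\alpha_{vert}$, then replaces the invertible $\alpha_{vert}$ by $\id_{gh}$ via local completeness of $\Vert(\P)$. That grid identity $T'\simeq R\cdot_h(T'\cdot_v\epsilon_g)$ is exactly the statement that your $\Theta$ has $R\cdot_h(-)$ as a left inverse, so the paper is, in effect, exhibiting $\Theta^{-1}$ explicitly where you assert its existence. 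Your $(2)\Rightarrow(1)$ direction is also the paper's (which it leaves as "clear from the triangle identities").

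The one point you should repair is the justification of the key claim that $\Theta$ is an equivalence. Your first proposed route --- realizing $\Theta$ as restriction along a map of bisimplicial sets lying in the saturated class generating $\Cat^2(\S)^{lc}$ --- cannot work as stated: invertibility of $\Theta$ is not a locality property of arbitrary locally complete double Segal spaces, since it genuinely uses that $(g,G)$ is a companionship (compare \ref{thm.lifting-thm}, where the extension property holds only on fibers over companionship units, not for all of $\P$). Either argue over the companionship unit as in \ref{thm.lifting-thm}, or --- more efficiently --- verify directly that $\Psi(\phi):=R\cdot_h\phi$ is a two-sided inverse: $\Theta\Psi\simeq\id$ follows from the vertical triangle identity together with the interchange law, and $\Psi\Theta\simeq\id$ follows from the horizontal triangle identity by pasting the $3\times 2$ grid in two ways, which is precisely the computation the paper writes out. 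With that supplied, your proof is complete and coincides with the paper's.
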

\begin{proof}
	It is clear that (2) implies (1) in light of the companionship triangle identities. Conversely, suppose that (1) holds. 
	Then we can expand the pasting that appears on the left in (2) as
	\[
		\begin{tikzcd}
			a \arrow[r,equal, ""name=a1]\arrow[d,equal] & a \arrow[d, "f"] \\
			a \arrow[r, "F"name=a2] \arrow[d,"h"'] & b \arrow[d,"k"] \\
			c \arrow[r, "G"'name=a3] & d
			\arrow[from=a1,to=a2,Rightarrow, shorten <= 6pt]
			\arrow[from=a2,to=a3,phantom, "{\scriptstyle \alpha}"]
		\end{tikzcd}
		\simeq 
		\begin{tikzcd}
			a\arrow[r,equal]\arrow[d,equal, ""'name=f1] & a \arrow[r,equal,""name=a1]\arrow[d,equal,""name=t1] & a \arrow[d, "f"] \\
			a\arrow[d, "h"'name=f2] \arrow[r,equal] & a \arrow[r, "F"name=a2] \arrow[d,"h"name=t2] & b \arrow[d,"k"] \\
			c \arrow[r,equal,""name=a4]\arrow[d,equal] & c \arrow[r, "G"'name=a3]\arrow[d,"g"'] & d\arrow[d,equal] \\
			c \arrow[r, "G"'name=a5] & d \arrow[r,equal, ""name=a6] & d.
			\arrow[from=f1,to=t1,equal, shorten <= 10pt, shorten >= 10pt]
			\arrow[from=f2,to=t2,equal, shorten <= 10pt, shorten >= 10pt]
			\arrow[from=a1,to=a2,Rightarrow, shorten <= 6pt]
			\arrow[from=a2,to=a3,phantom, "{\scriptstyle \alpha}"]
			\arrow[from=a4,to=a5,Rightarrow, shorten <= 6pt, shorten >= 6pt]
			\arrow[from=a3,to=a6,Rightarrow]
		\end{tikzcd}	
	\]
	Here the bottom right 2-cell is the companionship counit for the pair $(g,G)$. Since $\alpha$ is companionable, 
	the right column of the right hand-side pastes to an invertible 2-cell in $\Vert(\P)$. By assumption, 
	$\Vert(\P)$ is locally complete, hence this column must be equivalent to the identity 2-cell on the vertical arrow $gh$. Thus we deduce that 
	\[
		\begin{tikzcd}
			a\arrow[r,equal]\arrow[d,equal, ""'name=f1] & a \arrow[r,equal, ""name=a1]\arrow[d,equal,""name=t1] & a \arrow[d, "f"] \\
			a\arrow[d, "h"'name=f2] \arrow[r,equal] & a \arrow[r, "F"name=a3] \arrow[d,"h"name=t2] & b \arrow[d,"k"] \\
			c \arrow[r,equal, ""name=a4]\arrow[d,equal] & c \arrow[r, "G"'name=a3]\arrow[d,"g"'] & d\arrow[d,equal] \\
			c \arrow[r, "G"'name=a5] & d \arrow[r,equal,""name=a6] & d
			\arrow[from=f1,to=t1,equal, shorten <= 10pt, shorten >= 10pt]
			\arrow[from=f2,to=t2,equal, shorten <= 10pt, shorten >= 10pt]
			\arrow[from=a1,to=a2,Rightarrow, shorten <= 6pt]
			\arrow[from=a2,to=a3,phantom, "{\scriptstyle \alpha}"]
			\arrow[from=a4,to=a5,Rightarrow, shorten <= 6pt, shorten >= 6pt]
			\arrow[from=a3,to=a6,Rightarrow]
		\end{tikzcd}	
		\simeq 	
		\begin{tikzcd}
			a \arrow[d, "h"'name=f]\arrow[r,equal] & a \arrow[d, "h"name=t] \\
			c \arrow[r,equal, ""name=f2] \arrow[d,equal] & c \arrow[d,"g"] \\
			c \arrow[r, "G"'name=t2] & d
			\arrow[from=f,to=t,equal, shorten <= 10pt, shorten >= 10pt]
			\arrow[from=f2,to=t2,Rightarrow, shorten <= 6pt, shorten >= 6pt]
		\end{tikzcd}
	\]
	as desired.
\end{proof}

Using the language of companionable 2-cells, we can phrase the main theorem of this section:

\begin{theorem}\label{thm.companions-fun-dblcat}
	Let $X$ and $\P$ be a bisimplicial space and locally complete double Segal space respectively.
	Suppose that  $\alpha : h \rightarrow k$ is a horizontal natural transformation between functors $h,k : X \rightarrow \P$. 
	Then the following assertions are equivalent:
	\begin{enumerate}
		\item $\alpha$ is a companion in $\FFUN(X, \P)$,
		\item for every vertical arrow $f : x \rightarrow y$ in $X$, the associated naturality 2-cell 
		\[ 
		\alpha|(f\times\id_{[1]_h}) = \begin{tikzcd}
			h(x) \arrow[r, "\alpha_x"name=f]\arrow[d, "h(f)"'] & k(x) \arrow[d, "k(f)"] \\
			h(y) \arrow[r, "\alpha_y"'name=t]& k(y)
			\arrow[from=f,to=t,Rightarrow, shorten <= 6pt, shorten >= 6pt]
		\end{tikzcd}
		\]
		is companionable in $\P$.
	\end{enumerate}
	If  these equivalent conditions are met, then the vertical natural 
	transformation $\beta : h \rightarrow k$ in $\FFUN(X,\P)$ whose companion is given by $\alpha$, admits the following description.
	For every vertical arrow $f : x \rightarrow y$ in $X$, the commutative naturality square
	\[
		\begin{tikzcd}
			h(x) \arrow[r, "\beta_x"]\arrow[d, "h(f)"'] & k(x) \arrow[d, "k(f)"] \\
			h(y) \arrow[r, "\beta_y"]& k(y)
		\end{tikzcd}
	\]
	in $\Vert(\P)$ corresponds to the pasting in \ref{def.companionable} associated to the 2-cell in (2).
\end{theorem}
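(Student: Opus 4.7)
The plan is to reduce to the special case $X=[1]_v$ via a local-to-global argument. Since $\Cat^2(\S)^{lc}$ is an exponential ideal of $\PSh(\Delta^{\times 2})$ by \ref{prop.exp-ideals-dss}, the functor double Segal space $\FFUN(X,\P)$ is again locally complete, so \ref{cor.htpy-coh-comp-3} applies: condition (1) is equivalent to the existence of a unique extension $\tilde\alpha\colon\comp\times X\to\P$ of the map classifying $\alpha$. Via the tensor--hom adjunction, such extensions correspond to lifts of the classifying map $\alpha\colon X\to\FFUN([1]_h,\P)$ through the natural functor $\FFUN(\comp,\P)\to\FFUN([1]_h,\P)$, which is a monomorphism of locally complete double Segal spaces -- this follows by applying \ref{cor.htpy-coh-comp-3} to the locally complete $\FFUN([n,m],\P)$ at each bidegree.

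The crucial technical input will be the case $X=[1]_v$: I claim that a 2-cell $\alpha$ of $\P$, viewed as a horizontal arrow of $\FFUN([1]_v,\P)$, is a companion in $\FFUN([1]_v,\P)$ if and only if it is companionable in $\P$. The proof is by direct unraveling. A candidate companion $\beta$ is a vertical arrow of $\FFUN([1]_v,\P)$, corresponding under the Segal condition to a commutative square of vertical arrows in $\P$, which in turn is precisely the data of vertical arrows $\beta_x,\beta_y$ together with a connecting 2-cell $\beta(f)\in\Vert(\P)(h(x),k(y))$ expressing $k\circ\beta_x\simeq\beta_y\circ h$. Chasing the companionship units and counits in $\FFUN([1]_v,\P)$ through the same identification, and using \ref{prop.alternative-char-companionables} to compare the requisite pastings, will yield exactly the pasting condition defining companionability in \ref{def.companionable}. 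The same identification simultaneously reads off the explicit formula for the naturality square of $\beta$ asserted at the end of the theorem. Direction (1) $\Rightarrow$ (2) then follows by restricting $\tilde\alpha$ along each $f\colon[1]_v\to X$ and invoking this identification.

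For the converse (2) $\Rightarrow$ (1), I need to produce the lift $X\to\FFUN(\comp,\P)$. Because $X$ is the colimit of its bisimplices and the established monomorphism forces such a lift to be unique (hence automatically compatible under any restriction), it suffices to lift $\alpha|_{[n,m]}$ for each bisimplex $[n,m]\to X$, i.e.\ to verify that $\alpha|_{[n,m]}$ is a companion in the locally complete $\FFUN([n,m],\P)$. The conditions at $(0,0)$- and $(0,1)$-bisimplices are guaranteed by (2) together with the key identification of the previous paragraph -- the $(0,0)$-case arising from $f=\id_x$, which forces each $\alpha_x$ to be a companion in $\P$. The conditions at $(1,0)$- and $(1,1)$-bisimplices will be automatic: once companions $\beta_x$ of the $\alpha_x$ are constructed in $\P$, the required companion structure at the image of a horizontal arrow or 2-cell of $X$ under $\alpha$ can be produced as a pasting of the companionship units and counits for the $\beta_x$ with this image data, and is uniquely determined by the monomorphism property. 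The main obstacle is the unraveling step in the second paragraph -- translating the abstract companion structure in $\FFUN([1]_v,\P)$ into the concrete pasting diagram of \ref{def.companionable} via \ref{prop.alternative-char-companionables} -- together with the Segal and uniqueness bookkeeping needed to reduce the lifting problem at higher bisimplices of $X$ to the vertical-arrow case in this final paragraph.
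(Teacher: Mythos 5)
Your global strategy is essentially the paper's: local completeness of $\FFUN(X,\P)$ (via \ref{prop.exp-ideals-dss}) together with \ref{thm.htpy-coh-comp-2}/\ref{cor.htpy-coh-comp-3} turns ``being a companion'' into membership in the image of a levelwise monomorphism, which lets you descend along the cell decomposition of $X$ to the cases $X=[0,0]$, $[1]_h$, $[1]_v$, $[1,1]$; and \ref{prop.alternative-char-companionables} is indeed the bridge between the abstract companion structure and the pasting of \ref{def.companionable}. The gaps are precisely in the two cases you declare easy. For $X=[1]_v$, ``direct unraveling'' does not suffice: from the single pasting equivalence of \ref{def.companionable} you must manufacture a companionship \emph{unit} in $\FFUN([1]_v,\P)$, i.e.\ a map $[1]_v\times L\to\P$ together with a counit and triangle-identity witnesses. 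The paper gets this from two nontrivial inputs: the coequalizer presentation of $[1]_v\times L$ as a pushout along $c_1,c_2:[1,1]\rightrightarrows Q$ (\ref{lem.companions-cotensor1}), which identifies maps $[1]_v\times L\to\P$ with exactly the data that \ref{prop.alternative-char-companionables} supplies, and the pointwise detection of companionship units in functor double Segal spaces (\ref{lem.companionship-units-fundbl}), whose proof is a lengthy filtration argument resting on the special extension theorem \ref{thm.lifting-thm}. Your sketch gives no indication of how the counit and higher coherences in $\FFUN([1]_v,\P)$ are to be produced; that is the actual content of the case.

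Second, the claim that the $(1,0)$- and $(1,1)$-cases are ``automatic'' is where the argument would fail as written. For $X=[1]_h$ one can write down the candidate square $\beta$ by pasting the units and counits of $\beta_0,\beta_1$ with the commuting square $\alpha$, but one must then show this datum extends to a companionship unit over $[1]_h\times L$ and onward over $[1]_h\times\comp$; this is \ref{lem.companions-cotensor2}, yet another application of \ref{thm.lifting-thm}. Appealing to ``uniquely determined by the monomorphism property'' does not help here: a monomorphism gives uniqueness of lifts, never existence. So the skeleton of your reduction is correct and matches the paper, but the proposal omits the entire extension machinery (\ref{thm.lifting-thm} and the three lemmas built on it) that carries the proof.
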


\begin{remark}
	There is a dual statement for conjoints which can be formally obtained from the above.
\end{remark}

\begin{corollary}
	Let $\P$ be a locally complete double Segal space. Suppose that $X$ 
	is a 2-fold Segal space and let $\alpha : h \rightarrow k$ be a horizontal natural transformation in the 
	horizontal cotensor product $\{X, \P\}$. 
	Then the following assertions are equivalent:
	\begin{enumerate}
		\item $\alpha$ is a companion,
		\item for each $x\in X$, the component horizontal arrow 
		$$
		\alpha_x : h(x) \rightarrow k(x)
		$$
		is a companion in $\P$.
	\end{enumerate}
\end{corollary}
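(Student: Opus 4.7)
The plan is to apply \ref{thm.companions-fun-dblcat} with the bisimplicial space $X_h$ in place of $X$. By definition, $\{X, \P\} = \FFUN(X_h, \P)$, so (1) in the corollary coincides with condition (1) of \ref{thm.companions-fun-dblcat} applied to $X_h$. That theorem then translates (1) into the condition that, for every vertical arrow $f : x \rightarrow y$ of $X_h$, the naturality 2-cell $\alpha|(f \times \id_{[1]_h})$ is companionable in $\P$.

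Since $X_h$ lies in $\PSh(\Delta^{\times 2})_{\mathrm{deg}}$, the degeneracy map $(X_h)_{0,0} \rightarrow (X_h)_{0,1}$ is an equivalence; equivalently, every vertical arrow of $X_h$ is equivalent to an identity arrow $\id_x$ on an object $x$ of $X$. The associated naturality 2-cell is then the vertical identity 2-cell on $\alpha_x$, having $\alpha_x$ on both horizontal sides and identities on the vertical sides. It therefore suffices to show that this vertical identity 2-cell is companionable in $\P$ if and only if $\alpha_x$ is itself a companion of some vertical arrow of $\P$.

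The \emph{only if} direction is immediate from \ref{def.companionable}, since companionability of a 2-cell requires both its top and bottom horizontal arrows to be companions. For the \emph{if} direction, suppose $\alpha_x$ is the companion of a vertical arrow $f$, with unit $\eta$ and counit $\epsilon$. Taking both horizontal arrows in \ref{def.companionable} to be $\alpha_x$, both as companions of $f$, the pasting described there reduces to the vertical composite $\epsilon \circ_v \eta$, which by the first companionship triangle identity of \ref{def.compconj} equals the vertical identity 2-cell on $f$ in $\Vert(\P)(h(x), k(x))$, and is hence invertible. The main subtlety is this last step: unwinding \ref{def.companionable} for the degenerate naturality 2-cell at $\id_x$ to see that both the vanishing of the vertical sides and the triangle identity are what make the invertibility condition automatic.
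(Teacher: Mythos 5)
Your argument is correct and follows the same route as the paper: reduce via \ref{thm.companions-fun-dblcat} to companionability of the naturality 2-cells, note that every vertical arrow of $X_h$ factors through the degeneracy $[1]_v \rightarrow [0]_v$ so these cells are the degenerate identity 2-cells on the components $\alpha_x$, and check that companionability of such a cell amounts to $\alpha_x$ being a companion. Your explicit verification of the last step (the pasting of \ref{def.companionable} collapsing to $\epsilon \circ_v \eta$, hence to an identity by the triangle identity) is precisely what the paper leaves implicit; the only quibble is terminological --- the cell you describe, with identity vertical sides and $\alpha_x$ on top and bottom, is what the paper calls a \emph{horizontal} identity 2-cell rather than a vertical one.
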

\begin{proof}
	It is clear that (1) implies (2). Conversely, suppose that (2) holds. In light of \ref{thm.companions-fun-dblcat},
	we have to show that the naturality 2-cell of $\alpha$ associated with any vertical arrow $f$ of $X_h$, is companionable. These 
	naturality 2-cells must be horizontal identity 2-cells, since every $f : [1]_v \rightarrow X_h$ factors through the degeneracy map $[1]_v \rightarrow [0]_v$. The companionability 
	requirement thus translates to condition (2).
\end{proof}

\begin{corollary}\label{cor.closure-equipments}
	The following classes of locally complete double Segal spaces are closed 
	under vertical cotensor products:
	\begin{enumerate}
		\item the ones with all companions of vertical arrows,
		\item the ones with all conjoints of vertical arrows.
	\end{enumerate}
\end{corollary}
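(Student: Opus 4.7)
Case (2) is dual to case (1) using the horizontal opposite $(-)^\hop$ and \ref{rem.comp-conj-dual}, so the plan is to prove (1). My strategy is to show that the full subcategory $\mathscr{L} \subset \Cat^2(\S)$ of locally complete double Segal spaces in which every vertical arrow has a companion is an exponential ideal of $\PSh(\Delta^{\times 2})$; once this is established, the internal hom $[X,\P] = \FFUN(X_v,\P)$ automatically lies in $\mathscr{L}$ whenever $\P$ does, which is exactly the claim.

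First, $\mathscr{L}$ is reflective in $\PSh(\Delta^{\times 2})$, being the intersection of $\Cat^2(\S)^{lc}$ with the reflective subcategory of \ref{cor.closure-limits-dss-with-companions}. A generating set of local maps for the combined reflector is (\textit{Seg}), (\textit{lhc}), (\textit{lvc}), and — invoking \ref{thm.htpy-coh-comp-2} — the inclusion $[1]_v \to \comp$. By \ref{lem.exp-ideals}, it then suffices to show that for every $Y \in \PSh(\Delta^{\times 2})$, the functor $Y \times (-)$ sends each of these generating maps into the saturation of the same class. Using colimit-preservation of products and the retract $[n,m] \prec [1,0]^{\times n} \times [0,1]^{\times m}$, this reduces to checking the case $Y \in \{[1]_h,[1]_v\}$. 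The cases (\textit{Seg}), (\textit{lhc}), (\textit{lvc}) are already handled in the proof of \ref{prop.exp-ideals-dss}.

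The main obstacle is therefore to show that
\[
[1]_h \times ([1]_v \to \comp) \quad \text{and} \quad [1]_v \times ([1]_v \to \comp)
\]
lie in the saturation of (\textit{Seg}), (\textit{lhc}), (\textit{lvc}), $[1]_v \to \comp$. The plan is to use the filtration $\comp = \colim_n S_n$ of \ref{con.filtration-comp}, under which $S_{n+1}$ is built from $S_n$ by pushouts along $\Gamma^0_L[n,n] \to L[n,n]$. Products preserve pushouts, so the problem reduces to showing that $Y \times \Gamma^0_L[n,n] \to Y \times L[n,n]$ lies in the desired saturation for $Y \in \{[1]_h,[1]_v\}$. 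I anticipate establishing this via a relative version of \ref{thm.lifting-thm}: identifying the product with $[1]_v$ or $[1]_h$ as (a suitable subdivision of) a larger-bidegree horn or quotient, and then reusing the inductive reduction of \ref{lem.reduction} together with the horn-in-saturation statement \ref{lem.gen-horns}, one can decompose these relative inclusions into pushouts along already-known generators. Keeping track of which companionship units are implicitly held fixed by the fiberwise argument of \ref{thm.lifting-thm} is the delicate point.

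As a more concrete sanity check, one can alternatively argue via \ref{thm.companions-fun-dblcat}. Given a vertical natural transformation $\beta : h \to k$ in $[X,\P]$, one constructs a candidate horizontal companion $\alpha$ by setting $\alpha_x$ to be the companion of $\beta_x$ in $\P$ (existing by hypothesis), and defining each naturality 2-cell of $\alpha$ at a vertical arrow $f : x \to y$ of $X_v$ by pasting $\eta_{\beta_x}$, the $\beta$-naturality 2-cell at $f$, and $\epsilon_{\beta_y}$. By \ref{prop.alternative-char-companionables} the resulting 2-cells are companionable in $\P$ with invertibility witnesses given by $\beta$'s naturality, and then \ref{thm.companions-fun-dblcat} identifies $\alpha$ as the companion of $\beta$. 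The full coherence of $\alpha$ as a horizontal natural transformation is secured by applying \ref{thm.htpy-coh-comp-1} to $\P$ pointwise along $X_v$.
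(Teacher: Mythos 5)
The paper states this corollary without proof, as an immediate consequence of \ref{thm.companions-fun-dblcat}; your second, ``sanity check'' argument is in that spirit, but your primary strategy rests on a false claim. The class of locally complete double Segal spaces with all companions is \emph{not} an exponential ideal of $\PSh(\Delta^{\times 2})$: if it were, it would also be closed under horizontal cotensors $\{X,\P\} = \Hom_{\PSh(\Delta^{\times 2})}(X_h,\P)$, and this fails. Indeed, a vertical arrow of $\{[1],\P\} = \FFUN([1]_h,\P)$ is an arbitrary $2$-cell $\sigma$ of $\P$; since companionships in $\Q$ correspond to companionships in $\Q^\tp$ with the roles of the two arrows exchanged, $\sigma$ admits a companion in $\{[1],\P\}$ if and only if the horizontal arrow $\sigma^\tp$ of $\{[1],\P\}^\tp \simeq [[1],\P^\tp]$ is a companion, which by \ref{thm.companions-fun-dblcat} happens exactly when $\sigma^\tp$ is companionable in $\P^\tp$. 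Taking $\P = \Sq(\X)$ for an $(\infty,2)$-category $\X$ containing a non-invertible $2$-cell (so that $\P$ is complete with all companions by \ref{ex.compconj-sq}), any lax square with non-invertible filler gives a $\sigma$ with no companion. Consequently $[1]_h \times ([1]_v \to \comp)$ does \emph{not} lie in the saturation of your generating class: the step you flag as ``the main obstacle'' is not merely delicate but impossible. The failure is forced by your reduction to $Y \in \{[1]_h,[1]_v\}$, which is only appropriate for the full exponential-ideal property; for vertical cotensors one need only test against the cells $[0,0]$, $[1]_v$ and $[1;1]_v$ of presheaves in the image of $(-)_v$, and $[1;1]_v$ is a \emph{quotient} of $[1,1]$ rather than a product containing a free $[1]_h$ factor --- this is precisely where the hypothesis ``vertical'' must enter.

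Your second argument has the right shape but is incomplete at the decisive point: prescribing the components $\alpha_x$ and the naturality $2$-cells of $\alpha$ does not yet produce a map $X_v \times [1]_h \to \P$, and ``applying \ref{thm.htpy-coh-comp-1} to $\P$ pointwise along $X_v$'' does not supply the higher coherences. What is needed is the descent argument from the proofs of \ref{lem.companionship-units-fundbl} and \ref{thm.companions-fun-dblcat}: write $X_v$ as a colimit of cells, use the essential uniqueness provided by \ref{thm.htpy-coh-comp-1} and \ref{thm.htpy-coh-comp-2} to glue the cellwise extensions, and treat the cell $[1]_v$ via the mate construction of \ref{prop.alternative-char-companionables} together with \ref{lem.companions-cotensor1}. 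With that replacement (and keeping your correct $(-)^\hop$ reduction of case (2) to case (1)), the proof goes through.
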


\begin{remark}
	We apply \ref{thm.companions-fun-dblcat} in \cite{EquipII} to obtain a characterization 
	of companions and conjoints in the double $\infty$-categories of $\infty$-categories internal to $\infty$-toposes.
\end{remark}

\subsection{Proof of the main result} 
We will proceed \textit{corepresentably}
in our demonstration of \ref{thm.companions-fun-dblcat}. We will make use of
\ref{thm.htpy-coh-comp-2} which 
allows us reduce the proof to the cases that $X = [1]_h$ and $X = [1]_v$ via a descent argument. These instances can then be tackled using explicit combinatorics. 

The first substantial input is the following characterization of companionship units in functor double Segal space:

\begin{lemma}\label{lem.companionship-units-fundbl}
	Let $X$ and $\P$ be a bisimplicial space and double Segal space respectively. Then a map $\eta : L \rightarrow \FFUN(X,\P)$ classifies a companionship unit if and only if 
	each restriction
	$$
	\eta_x : L \rightarrow \FFUN(X, \P) \xrightarrow{\{x\}^*} \P 
	$$
	classifies a companionship unit in $\P$ for every $x \in X$.
\end{lemma}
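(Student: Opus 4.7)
The plan is to treat the two directions separately, the reverse being the substantive one.

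For the $(\Rightarrow)$ direction, observe that each evaluation $\{x\}^* : \FFUN(X, \P) \to \FFUN([0, 0], \P) = \P$ is a functor of double Segal spaces, induced from the cartesian closed structure of \ref{prop.exp-ideals-dss}. By \ref{thm.htpy-coh-comp-1}, a companionship unit in any double Segal space $Q$ is precisely a 2-cell $L \to Q$ that admits an (essentially unique) extension to $\comp \to Q$, and such extensions are preserved by any map of double Segal spaces. Hence $\eta_x = \{x\}^* \eta$ is a companionship unit in $\P$ whenever $\eta$ is a companionship unit in $\FFUN(X, \P)$.

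For the $(\Leftarrow)$ direction, \ref{thm.htpy-coh-comp-1} applied to $\FFUN(X, \P)$ (which is a double Segal space by \ref{prop.exp-ideals-dss}) reduces the claim to constructing an extension $\comp \to \FFUN(X, \P)$ of $\eta$. By cartesian closure this is the same as lifting the adjoint $\eta' : X \to \FFUN(L, \P)$ along the restriction $\pi : \FFUN(\comp, \P) \to \FFUN(L, \P)$. The key observation is that $\pi$ is a level-wise monomorphism of bisimplicial spaces: for each $(n', m')$, applying \ref{thm.htpy-coh-comp-1} to $\FFUN([n', m'], \P)$ together with the tensor-hom adjunction identifies $\pi_{n', m'}$ with the inclusion $C_{\FFUN([n', m'], \P)} \hookrightarrow \map_{\PSh(\Delta^{\times 2})}(L, \FFUN([n', m'], \P))$. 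Thus factoring $\eta'$ through $\pi$ is a property.

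Since $X$ is the colimit of its representables in $\PSh(\Delta^{\times 2})$ and a lift through a monomorphism is uniquely determined when it exists, the lifting reduces to the representable case: for each $\sigma : [n', m'] \to X$, we must show that the composite $L \to \FFUN([n', m'], \P)$ corresponding (by cartesian closure) to $\eta' \circ \sigma$ is a companionship unit in $\FFUN([n', m'], \P)$. Its restriction at each $a \in [n', m']_{0, 0}$ is $\eta_{\sigma(a)}$, a companionship unit by hypothesis. This is precisely the statement of the lemma applied to $X = [n', m']$.

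For this remaining case, we use that $\P$ is an exponential ideal so that $\FFUN([n', m'], \P) \simeq \FFUN([n']_h, \FFUN([m']_v, \P))$; since the target is a double Segal space, we may further replace $[n']_h$ and $[m']_v$ by their Segal spines, exhibiting $\FFUN([n', m'], \P)$ as an iterated pullback of copies of $\FFUN([1]_h, -)$ and $\FFUN([1]_v, -)$ over smaller double Segal spaces. As $\map_{\Cat^2(\S)}(\comp, -)$ preserves limits, a 2-cell is a companionship unit in a pullback of double Segal spaces iff each of its projections is a companionship unit in the corresponding factor. This reduces the problem to the two instances $X = [1]_h$ and $X = [1]_v$ (with $\P$ replaced by a general double Segal space). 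The main obstacle is to handle these by direct combinatorics: given, say, $\eta : L \to \FFUN([1]_h, \P)$ with companionship units at the two endpoints, one first extracts pointwise companion counits (essentially unique by \ref{thm.htpy-coh-comp-1}) and then assembles them into a companion counit in $\FFUN([1]_h, \P)$ via an application of the special extension theorem \ref{thm.lifting-thm}; the triangle identities in $\FFUN([1]_h, \P)$ then follow from the pointwise ones. The case $X = [1]_v$ is analogous.
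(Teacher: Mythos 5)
Your soft reductions match the paper's own strategy almost exactly: both arguments use \ref{thm.htpy-coh-comp-1} to recast ``being a companionship unit'' as membership in the image of the restriction monomorphism $\map_{\Cat^2(\S)}(\comp,\FFUN(X,\P))\to\map_{\Cat^2(\S)}(L,\FFUN(X,\P))$, reduce to representable $X=[n,m]$ by writing $X$ as a colimit of grids, and then reduce further to the two cases $X=[1]_h$ and $X=[1]_v$ (the paper via the decomposition $[1,1]=[1]_h\times[1]_v$ and transpose duality, you via currying and Segal spines --- these are equivalent). Your $(\Rightarrow)$ direction, and the observation that lifting through a monomorphism is a property detected factor-wise in a limit, are both fine.

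The gap is in the base case, which is where essentially all the content of the lemma lives; you dispose of it with ``one first extracts pointwise companion counits and then assembles them into a companion counit via an application of the special extension theorem; the triangle identities then follow from the pointwise ones.'' Two problems. First, it is not a single application of \ref{thm.lifting-thm}: the paper's treatment of $X=[1]_v$ constructs explicit bisimplicial subsets $V\subset S\subset\comp$ (encoding, respectively, the counit, and the counit together with the triangle-identity witnesses) and builds the extension of $[1]_v\times L\to \P$ to $[1]_v\times S$ through a filtration by some ten intermediate subcomplexes, each inclusion being a pushout along either a generalized horn (handled by \ref{lem.gen-horns}) or a map $\Gamma^0_L[n,m]\to L[n,m]$ (handled by \ref{thm.lifting-thm}); finding an order of cell attachment in which every step has one of these two forms is the actual work, and nothing in your outline indicates how it would go. Second, and more seriously, ``the triangle identities follow from the pointwise ones'' is not a legitimate step: a triangle-identity composite in $\FFUN([1]_v,\P)$ being equivalent to an identity 2-cell is a coherence datum --- a path in the space $\FFUN([1]_v,\P)_{1,1}$ --- and pointwise such paths do not automatically assemble into a single coherent one. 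The paper never ``checks'' the triangle identities separately; it extends along the subcomplex $S$, which carries the counit and the identities simultaneously, with contractible ambiguity at each stage guaranteed by \ref{thm.lifting-thm}. As written, your proposal defers the only hard part of the proof and misdescribes how that part would be resolved.
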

\begin{proof}
	We must show that ${\eta}$ lies in the image of the restriction monomorphism 
	$$
	\map_{\Cat^2(\S)}(\comp, \FFUN(X, \P)) \rightarrow \map_{\Cat^2(\S)}(L, \FFUN(X, \P))
	$$
	on account of \ref{thm.htpy-coh-comp-1}.
	We can write $X$ as a colimit of grids,
	$$
	X \simeq \colim_{[n,m] \rightarrow X \in \Delta^{\times 2}/X} [n,m],
	$$
	so that we may reduce to the case that $X = [n,m]$. In turn, such a grid may canonically be decomposed as a colimit 
	of free 0-cells, 1-cells and 2-cells. This allows us to reduce to the case that $n,m \in \{0,1\}$. Using the 
	fact that $[1,1]$ decomposes as the product $$[1,1] = [1,0] \times [0,1] = [1]_h \times [1]_v,$$ one can verify that the instance $X=[1,1]$
	follows from the cases $X=[1]_h$ and $X=[1]_v$. We 
	may further reduce to the single case $X=[1]_v$ by using the transpose duality.

	We will write $V$ and $S$ for the smallest bisimplicial subsets of $\comp$ containing the bisimplices
	$$
	\begin{tikzcd}[column sep = tiny, row sep = tiny]
		0 \arrow[r,equal]\arrow[d,equal ] & 0 \arrow[d] \\
		0 \arrow[r]\arrow[d] & 1\arrow[d,equal]  \\
		1 \arrow[r,equal] & 1,
	\end{tikzcd}\quad \text{and}\quad 
	\begin{tikzcd}[column sep = tiny, row sep = tiny]
		0 \arrow[r,equal]\arrow[d,equal]  & 0 \arrow[d,equal] \arrow[r,equal] & 0\arrow[d] \\
		0 \arrow[r,equal]\arrow[d,equal ] & 0 \arrow[d] \arrow[r] & 1\arrow[d,equal] \\
		0 \arrow[r] & 1 \arrow[r,equal] & 1
	\end{tikzcd}
	$$
	respectively. An extension of $\eta$ to a map $S \rightarrow \FFUN(X,\P)$ would witness that $\eta$ 
	is a companionship unit, since $S$ encodes a companionship counit and triangle identities (with extra coherences). 
	We will equivalently demonstrate that the adjunct map  
	$\eta^\sharp : [1]_v \times L \rightarrow \P$ extends to a map $[1]_v \times S \rightarrow \P$. In fact, we will end up showing that this extension 
	is essentially unique. 
	Suppose that $[1]_v \times L \subset A \subset B \subset [1]_v \times \comp$ are bisimplicial subsets. For the sake of this argument, 
	we will say that the inclusion $A \rightarrow B$ is \textit{good} if it induces an equivalence on fibers 
	$$
	\map_{\PSh(\Delta^{\times 2})}(B, \P)_\eta \rightarrow \map_{\PSh(\Delta^{\times 2})}(A, \P)_\eta.
	$$
	We will show that all inclusions 
	$$
	[1]_v\times L \rightarrow [1]_v \times V \rightarrow [1]_v \times S
	$$
	are good, and this will then prove the lemma.

	Let us start by handling the first inclusion. We will consider the inclusion $$[1]_v \times L \rightarrow [1]_v \times L \cup_{\{0,1\}_v \times L} \{0,1\}_v \times V =: A.$$ 
	The inclusion $L \rightarrow V$ is a pushout along $\Lambda^0_L[1,2] \rightarrow L[1,2]$ (see \ref{not.special-lifting-notation}). 
	Hence, it follows from \ref{thm.lifting-thm} and the fact that $\eta_0$ and $\eta_1$ are companionship units that this inclusion is good.
	By 2-out-of-3, it is now
	necessary and sufficient to show that the inclusion $A \rightarrow [1]_v \times V$
	is good. 
	Note that $A$ is precisely 
	the smallest bisimplicial subset of $[1]_v \times \comp$ containing the (1,2)-bisimplices
	\[
		\begin{tikzcd}[column sep = tiny, row sep = tiny]
			(0,0) \arrow[r,equal]\arrow[d,equal] & (0,0) \arrow[d]\\
			(0,0) \arrow[r]\arrow[d] & (0,1) \arrow[d] \\
			(1,0) \arrow[r] & (1,1)
		\end{tikzcd}
		\begin{tikzcd}[column sep = tiny, row sep = tiny]
		(0,0) \arrow[r,equal]\arrow[d] & (0,0) \arrow[d]\\
		(1,0) \arrow[r,equal]\arrow[d,equal] & (1,0) \arrow[d] \\
		(1,0) \arrow[r] & (1,1),
	\end{tikzcd}
	\begin{tikzcd}[column sep = tiny, row sep = tiny]
		(0,0) \arrow[r,equal]\arrow[d,equal] & (0,0) \arrow[d]\\
		(0,0) \arrow[r]\arrow[d] & (0,1) \arrow[d,equal] \\
		(0,1) \arrow[r,equal] & (0,1)
	\end{tikzcd}
	\begin{tikzcd}[column sep = tiny, row sep = tiny]
		(1,0) \arrow[r,equal]\arrow[d,equal] & (1,0) \arrow[d]\\
		(1,0) \arrow[r]\arrow[d] & (1,1) \arrow[d,equal] \\
		(1,1) \arrow[r,equal] & (1,1).
	\end{tikzcd}
	\]
	Let us define $B \subset [1]_v \times V$ to be the smallest bisimplicial subset containing $A$ and the 
	(1,2)-bisimplex 
	\[
		\begin{tikzcd}[column sep = tiny, row sep = tiny]
			(0,0) \arrow[r,equal]\arrow[d,equal] & (0,0) \arrow[d]\\
			(0,0) \arrow[r]\arrow[d] & (0,1) \arrow[d] \\
			(1,1) \arrow[r] & (1,1).
		\end{tikzcd}
	\]
	It is readily verified that the inclusion $A \rightarrow B$ is a pushout along $\Gamma^0_L[1,2] \rightarrow L[1,2]$. Using \ref{thm.lifting-thm} once again, 
	we deduce that 
	it suffices to check that $B \rightarrow [1]_v \times V$ is good.
	Now we use that $[1]_v \times V$ is generated by the non-degenerate (1,3)-bisimplices
	\[
		\sigma_1 := \begin{tikzcd}[column sep = tiny, row sep = tiny]
			(0,0) \arrow[r,equal]\arrow[d] & (0,0) \arrow[d] \\
			(1,0)\arrow[d,equal] \arrow[r,equal] & (1,0)\arrow[d] \\
			(1,0) \arrow[d]\arrow[r] & (1,1)\arrow[d,equal] \\
			(1,1) \arrow[r,equal] & (1,1). 
		\end{tikzcd}\quad 
		\sigma_2 := \begin{tikzcd}[column sep = tiny, row sep = tiny]
			(0,0) \arrow[r,equal]\arrow[d,equal] & (0,0) \arrow[d] \\
			(0,0)\arrow[d] \arrow[r] & (0,1)\arrow[d,equal]\\
			(0,1) \arrow[d]\arrow[r,equal] & (0,1)\arrow[d]\\
			(1,1) \arrow[r,equal] & (1,1), 
		\end{tikzcd}\quad 
		\sigma_3 := \begin{tikzcd}[column sep = tiny, row sep = tiny]
			(0,0) \arrow[r,equal]\arrow[d,equal] & (0,0) \arrow[d] \\
			(0,0)\arrow[d] \arrow[r] & (0,1)\arrow[d] \\ 
			(1,0) \arrow[d]\arrow[r] & (1,1)\arrow[d,equal] \\
			(1,1) \arrow[r,equal] & (1,1), 
		\end{tikzcd}
	\]
	Let $C_i$ denote the smallest bisimplicial subset of $[1]_v \times V$ containing $B$ and $\sigma_1, \dotsc, \sigma_i$.
	This yields a finite filtration 
	$$
	B = C_0 \rightarrow C_1 \rightarrow C_2 \rightarrow C_3 = [1]_v \times V.
	$$
	On account of \ref{lem.gen-horns} and \ref{thm.lifting-thm}, each inclusion 
	is good: 
	the first inclusion is a pushout along 
	$\Lambda^{\emptyset, 1}[1,3] \rightarrow [1,3]$, and the second and third maps are pushouts along  $\Gamma^0_L[1,3] \rightarrow L[1,3]$.
	
	Finally, we show that the map 
	$$
	[1]_v \times V \rightarrow [1]_v \times S 
	$$
	is good. This demonstration is similar. We will first consider the inclusion $$[1]_v \times V \rightarrow  [1]_v \times V \cup_{\{0,1\}_v \times V} \{0,1\}_v \times S =: D.$$
	The inclusion $V \rightarrow S$ is a pushout along the map $\Gamma^0_L[2,2] \rightarrow L[2,2]$, so that we can use 
	\ref{thm.lifting-thm} to conclude that the above inclusion is good.
	Hence, it suffices to show that the inclusion $D \rightarrow [1]_v \times S$ is good.
	Let $E$ be the smallest bisimplicial subset of $[1]_v \times \comp$ containing $D$ and the (3,3)-bisimplex 
	\[
		\begin{tikzcd}[column sep = tiny, row sep = tiny]
			(0,0) \arrow[r,equal]\arrow[d,equal] & (0,0) \arrow[r,equal]\arrow[d,equal] & (0,0) \arrow[d] \\
			(0,0) \arrow[r,equal]\arrow[d] & (0,0) \arrow[d] \arrow[r] & (0,1) \arrow[d] \\
			(1,0) \arrow[r] & (1,1) \arrow[r,equal] & (1,1).
		\end{tikzcd}
	\]
	Then $D \rightarrow E$ is a pushout along $\Gamma^0_L[3,3] \rightarrow L[3,3]$, so it suffices 
	to check that $E \rightarrow [1]_v \times S$ is good by \ref{thm.lifting-thm}. Again, we filter this as follows. Consider 
	the generating (2,3)-bisimplices of $[1]_v \times S$:
	\[
		\tau_1 := \begin{tikzcd}[column sep = tiny, row sep = tiny]
		(0,0) \arrow[r,equal]\arrow[d] &	(0,0) \arrow[r,equal]\arrow[d] & (0,0) \arrow[d] \\
		(1,0)\arrow[d,equal] \arrow[r,equal]&	(1,0)\arrow[d,equal] \arrow[r,equal] & (1,0)\arrow[d] \\
		(1,0) \arrow[r,equal] \arrow[d,equal] &	(1,0) \arrow[d]\arrow[r] & (1,1)\arrow[d,equal] \\
		(1,0) \arrow[r] &	(1,1) \arrow[r,equal] & (1,1). 
		\end{tikzcd}\quad 
		\tau_2 := \begin{tikzcd}[column sep = tiny, row sep = tiny]
			(0,0) \arrow[r,equal]\arrow[d,equal] & (0,0) \arrow[r,equal]\arrow[d,equal] & (0,0) \arrow[d] \\
			(0,0) \arrow[r,equal] \arrow[d,equal] & (0,0)\arrow[d] \arrow[r] & (0,1)\arrow[d,equal]\\
			(0,0) \arrow[r]\arrow[d] & (0,1) \arrow[d]\arrow[r,equal] & (0,1)\arrow[d]\\
			(1,0) \arrow[r] & (1,1) \arrow[r,equal] & (1,1), 
		\end{tikzcd}
	\] 
	\[
		\tau_3 := \begin{tikzcd}[column sep = tiny, row sep = tiny]
			(0,0) \arrow[r,equal]\arrow[d,equal] & (0,0) \arrow[r,equal]\arrow[d,equal] & (0,0) \arrow[d] \\
			(0,0) \arrow[r,equal]\arrow[d] & (0,0)\arrow[d] \arrow[r] & (0,1)\arrow[d] \\ 
			(1,0) \arrow[r]\arrow[d,equal] & (1,0) \arrow[d]\arrow[r] & (1,1)\arrow[d,equal] \\
			(1,0) \arrow[r] & (1,1) \arrow[r,equal] & (1,1), 
		\end{tikzcd}
	\]
	Let $F_i$ denote the smallest bisimplicial subset of $[1]_v \times S$ containing $E$ and $\tau_1, \dotsc, \tau_i$.
	Then we obtain a filtration 
	$$
	E = F_0 \rightarrow F_1 \rightarrow F_2 \rightarrow F_3 = [1]_v \times S,
	$$
	so that each map is good: the first map is a pushout along 
	$\Lambda^{\emptyset, 1}[2,3] \rightarrow [2,3]$ and the second and third maps are pushouts along  $\Gamma^0_L[2,3] \rightarrow L[2,3]$.
\end{proof}

\begin{construction}
We
consider the double Segal space 
$$
Q := [1]_v \times [1]_h \cup_{\{0,1\}_v \times [1]_h} \{0,1\}_v \times L,
$$
induced by the map $[1]_h \rightarrow L$ that selects the non-degenerate horizontal arrow. This pushout is computed in $\Cat^2(\S)$. It comes with a canonical functor
$$
q : Q \rightarrow [1]_v \times L.
$$ 
Now there are two 2-cells 
$$
c_1, c_2 : [1,1] \rightarrow Q
$$
that select the pastings 
\[
\begin{tikzcd}[column sep = tiny, row sep = tiny]
	(0,0) \arrow[r,equal, ""name=a1]\arrow[d,equal] & (0,0) \arrow[d] \\
	(0,0)\arrow[d] \arrow[r, ""name=a2] & (0,1)\arrow[d] \\ 
	(1,0) \arrow[r, ""name=a3] & (1,1)
	\arrow[from=a1,to=a2,Rightarrow, shorten <= 6pt]
	\arrow[from=a2,to=a3,Rightarrow, shorten <= 6pt]
	\end{tikzcd}
	\quad 
	\text{and} \quad 
	\begin{tikzcd}[column sep = tiny, row sep = tiny]
		(0,0) \arrow[r,equal]\arrow[d, ""'name=f] & (0,0) \arrow[d, ""name=t] \\
		(1,0)\arrow[d,equal] \arrow[r,equal, ""name=f2] & (1,0)\arrow[d] \\ 
		(1,0) \arrow[r, ""name=t2] & (1,1),
		\arrow[from=f,to=t,equal, shorten <= 16pt, shorten >= 16pt]
		\arrow[from=f2,to=t2,Rightarrow, shorten <= 6pt]
	\end{tikzcd}
\]
respectively.
\end{construction}

One may readily verify that the constructed pastings $c_1$ and $c_2$ are co-equalized by $q$ in $[1]_v \times L$, i.e.\ there is a commutative square 
\[
		\begin{tikzcd}
			{[1,1]} \cup_{\Lambda^{1, \emptyset}[1,1]} {[1,1]} \arrow[r,"{(c_1,c_2)}"]\arrow[d] & Q \arrow[d,"q"] \\
			{[1,1]} \arrow[r] & {[1]}_v \times L
		\end{tikzcd}
\]
of double Segal spaces.
In fact, a crucial observation is that $[1]_v \times L$ is universal with regard to this property:

\begin{lemma}\label{lem.companions-cotensor1}
	The above square is a pushout square of double Segal spaces. 
\end{lemma}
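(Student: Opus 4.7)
The plan is to establish the pushout property via the cartesian closed structure of $\Cat^2(\S)$ from \ref{prop.exp-ideals-dss}. Since $[1]_v \times (-)$ preserves colimits, applying it to the defining pushout of $L$ (\ref{def.lower-triangle}) presents $[1]_v \times L$ as a pushout
\[
[1]_v \times L \simeq [1]_v \cup_{[1]_v \times W} ([1]_v \times [1,1]),
\]
where $W := [\{0,1\},0]\cup_{[\{0\},\{0\}]}[0,\{0,1\}]$ denotes the top-left sub-object of $[1,1]$ that is collapsed in the definition of $L$.

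The key combinatorial input is the standard triangulation $\Delta^1 \times \Delta^1 \simeq \Delta^2 \cup_{\Delta^1} \Delta^2$ of the square, which in bisimplicial form yields $[1]_v \times [1]_v \simeq [2]_v \cup_{[1]_v} [2]_v$, gluing along the diagonal. Tensoring with $[1]_h$ and using $[1,1] = [1]_h \times [1]_v$ gives
\[
[1]_v \times [1,1] \simeq [1,2] \cup_{[1,1]} [1,2],
\]
where the two $[1,2]$-bisimplices correspond precisely to the two pastings $c_1$ and $c_2$, and the shared $[1,1]$ is the diagonal 2-cell --- the image of the bottom map $[1,1] \to [1]_v \times L$ in the statement.

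Collapsing the boundary $[1]_v \times W$ collapses the outer edges of the two $[1,2]$-bisimplices, identifying them with the cylinder $[1]_v \times [1]_h$ glued to the two collapsed $L$-copies, that is, with $Q$ itself. The remaining shared inner $[1,1]$ is the diagonal, and the common boundary with $Q$ along the quotient becomes $\Lambda^{1,\emptyset}[1,1]$; tracking these identifications reproduces the stated pushout presentation.

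The main obstacle is the combinatorial bookkeeping needed to match the two $[1,2]$-bisimplices of $[1]_v \times [1,1]$ with the pastings $c_1$ and $c_2$. An alternative that I expect to be cleaner is to verify the universal property directly: for a test double Segal space $\P$, the cartesian closed adjunction $\map([1]_v \times L, \P) \simeq \map(L, \FFUN([1]_v, \P))$ identifies a map $[1]_v \times L \to \P$ with an $L$-shaped diagram in $\FFUN([1]_v, \P)$, which upon unpacking using the description of horizontal and vertical arrows of $\FFUN([1]_v, \P)$ matches the data of a single $[1,1]$-cell of $\P$ together with a compatible map $Q \to \P$.
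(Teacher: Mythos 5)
Your skeleton coincides with the paper's: the same triangulation $[1]_v\times[1]_v\simeq[2]_v\cup_{[1]_v}[2]_v$ drives both arguments, the two resulting prisms are matched with $c_1$ and $c_2$, and the shared diagonal is matched with the bottom map of the square. The genuine gap is at the step ``collapsing the boundary \ldots identifying them with \ldots $Q$ itself.'' That identification is false before localization and is precisely what has to be proved: $Q$ is a pushout computed in $\Cat^2(\S)$, while the collapsed union of the two prisms is a presheaf-level construction that contains non-degenerate $(1,2)$-bisimplices (the triangle-over-square pasting data coming from the prisms) which the presheaf-level pushout $[1]_v\times[1]_h\cup_{\{0,1\}_v\times[1]_h}\{0,1\}_v\times L$ does not contain. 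The two objects only agree after applying the reflector $\PSh(\Delta^{\times 2})\rightarrow\Cat^2(\S)$, and establishing this requires exhibiting the comparison maps as belonging to the saturation of (\textit{Seg}) --- in the paper this is done by showing that the inclusion $\Lambda^{1,1}[1,2]\cup_{\Lambda^{1,\emptyset}[1,1]}\Lambda^{1,1}[1,2]\rightarrow[1,2]\cup_{\Lambda^{1,\emptyset}[1,1]}[1,2]$ is generated by spine inclusions (via \ref{lem.gen-horns}) and then transporting the presheaf-level pushout across the resulting local equivalences. This is the substance of the proof rather than bookkeeping, and your write-up does not engage with it. A second, smaller omission: the triangulation glues the prisms along the full diagonal $[1,1]$, whereas the lemma glues along $[1,1]\cup_{\Lambda^{1,\emptyset}[1,1]}[1,1]$; you need the re-bracketing $[1,2]\cup_{[1,1]}[1,2]\simeq([1,2]\cup_{\Lambda^{1,\emptyset}[1,1]}[1,2])\cup_{[1,1]\cup_{\Lambda^{1,\emptyset}[1,1]}[1,1]}[1,1]$ (a pasting of pushouts), which is what actually produces the shape of the asserted square.

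Your proposed alternative does not avoid the problem. By the cotensor adjunction and the Yoneda lemma, the claim that an $L$-shaped diagram in $\FFUN([1]_v,\P)$ ``matches the data of a single $[1,1]$-cell of $\P$ together with a compatible map $Q\rightarrow\P$'' is equivalent to the lemma itself; the entire content is hidden in the word ``unpacking.'' Carrying out that unpacking for the 2-cell datum of the $L$-diagram means analyzing maps $[1]_v\times[1,1]\rightarrow\P$, which returns you to exactly the same decomposition and the same localization issue.
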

\begin{proof}
	Let us consider the following diagram of bisimplicial spaces 
	\[
		\begin{tikzcd}
			{[1,1]^{\sqcup 2}} \arrow[rr, "{[1, d_1]^{\sqcup 2}}"]\arrow[d] && {[1,2]}^{\sqcup 2} \arrow[r, "{(c_1',c_2')}"]\arrow[d] & Q\arrow[d, equal]\\
			{[1,1]} \cup_{\Lambda^{1, \emptyset}[1,1]} [1,1]\arrow[d] \arrow[rr] && {[1,2]} \cup_{\Lambda^{1, \emptyset}[1,1]} [1,2]\arrow[d] \arrow[r] & Q\arrow[d,"q"]\\
			{[1,1]} \arrow[rr] && {[1]_v \times [1,1]} \arrow[r] & {[1]}_v \times L,
		\end{tikzcd}
	\]
	where all pushouts are computed in $\PSh(\Delta^{\times 2})$.
	Here $c_1'$ and $c_2'$ are the evident extensions of $c_1$ and $c_2$.
	The total square of the left column is a pushout square in $\PSh(\Delta^{\times 2})$, which comes 
	from the decomposition 
	$$
	[1]_v \times [1,1] \simeq [1]_h \times [1]_v \times [1]_v \simeq [1]_h \times ([2]_v \cup_{[1]_v} [2]_v).
	$$
	The top left square is a pushout square as well. Hence, the bottom left square is a pushout square. 
	Consequently, the lemma holds if the right bottom square is carried to a pushout square by the reflector 
	$\PSh(\Delta^{\times 2}) \rightarrow \Cat^2(\S)$.

	Consider the \textit{bisimplicial set} $$
	Q' := [1]_v \times [1]_h \cup_{\{0,1\}_v \times [1]_h} \{0,1\}_v \times L \subset [1]_v \times L,
	$$
	where the pushout is now computed in $\PSh(\Delta^{\times 2})$.  
	It comes with a canonical map $Q' \rightarrow Q$ that is contained in the saturation of (\textit{Seg}). The right bottom square of the above diagram 
	fits in a further commutative diagram 
	\[
		\begin{tikzcd}
			\Lambda^{1,1}[1,2] \cup_{\Lambda^{1, \emptyset}[1,1]} \Lambda^{1,1}[1,2] \arrow[r]\arrow[d] & {[1,2]} \cup_{\Lambda^{1, \emptyset}[1,1]} [1,2] \arrow[r]\arrow[d] & {[1]_v} \times [1,1] \arrow[d] \\
			Q' \arrow[r] & Q \arrow[r] & {[1]}_v \times L.
		\end{tikzcd}
	\]
	One readily verifies that the outer square is a pushout square in $\PSh(\Delta^{\times 2})$. 
	The desired result now follows from the abservation that the top left horizontal arrow is contained in the saturation of 
	(\textit{Seg}) on account of \ref{lem.gen-horns}.
\end{proof}

We will now treat the final ingredient for the proof of \ref{thm.companions-fun-dblcat}.

\begin{construction}
	We will make use  of the bisimplicial subset 
	$$
	\textstyle P:=  {[1]_h} \times [1]_h \bigcup_{{\{0,1\}_h \times [1]_h}} \{0,1\}_h \times L
	$$
	of $[1]_h \times L$. Here, the pushout is computed in $\PSh(\Delta^{\times 2})$.
\end{construction}

\begin{lemma}\label{lem.companions-cotensor2}
	Suppose that $\eta_0$ and $\eta_1$ are companionship units in a double Segal space $\P$. 
	Then the inclusion $P \rightarrow [1]_h \times L$ induces an equivalence on fibers 
	\[
	\begin{tikzcd}
	\map_{\PSh(\Delta^{\times 2})}([1]_h \times L, \P) \times_{\map(\{0,1\}_h \times L, \P)} \{(\eta_0, \eta_1)\} \arrow[d] \\ 
	 \map_{\PSh(\Delta^{\times 2})}(P, \P) \times_{\map(\{0,1\}_h \times L, \P)} \{(\eta_0, \eta_1)\}.
	\end{tikzcd}
	\]
\end{lemma}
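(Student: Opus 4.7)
The plan is to decompose the inclusion $P \hookrightarrow [1]_h \times L$ as a finite chain of bisimplicial inclusions, each of which is either a pushout along a map in the saturation of \textit{(Seg)} (and hence induces an equivalence on all mapping spaces into $\P$), or a pushout along one of the horn maps $\Gamma^0_L[n,m] \to L[n,m]$ from \ref{not.special-lifting-notation}, to which Theorem \ref{thm.lifting-thm} applies (and hence induces the desired fiber equivalence over companionship units).

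First, I would enumerate the non-degenerate cells of $[1]_h \times L$ that are not already contained in $P$. Since $L$ has a single non-degenerate horizontal 1-cell $F$, a single non-degenerate vertical 1-cell $f$, and a single non-degenerate 2-cell $\eta$, and since $P$ already captures the $F$-direction via its $[1]_h \times [1]_h$ component, the missing cells come from products of the non-degenerate horizontal edge of $[1]_h$ with $f$ and $\eta$. Concretely, the interior cells that must be filled in are a naturality 2-cell for $f$ along the horizontal edge of $[1]_h$, plus a small number of higher coherence cells encoding its interaction with $\eta_0$ and $\eta_1$.

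Next, I would attach these cells one at a time, in an order compatible with dimension and the Segal structure. At each stage, the attaching map should be identifiable as one of the two permitted types: either an inclusion $\Lambda^{S,T}[n,m] \to [n,m]$ with non-convex complement (which lies in the saturation of \textit{(Seg)} by \ref{lem.gen-horns}), or a horn map $\Gamma^0_L[n,m] \to L[n,m]$ whose $L$-shape factors through either $\{0\}_h \times L$ or $\{1\}_h \times L$ and hence restricts to one of the prescribed companionship units $\eta_0$ or $\eta_1$. Theorem \ref{thm.lifting-thm} then supplies the needed fiber equivalence at each stage, and the lemma follows by composition. This filtration strategy mirrors the argument already carried out in the proof of \ref{lem.companionship-units-fundbl} for $[1]_v \times L$ and its refinements.

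The main obstacle will be the explicit combinatorial bookkeeping of non-degenerate cells in the three-dimensional bisimplicial set $[1]_h \times L$, and choosing their order of attachment so that every attaching map is recognizable as a member of one of the two permitted families. The key conceptual observation is that the two endpoint copies $\{0\}_h \times L$ and $\{1\}_h \times L$ carry companionship units by hypothesis, so every horn $\Gamma^0_L[n,m] \to L[n,m]$ appearing in the filtration automatically verifies the hypotheses of \ref{thm.lifting-thm}.
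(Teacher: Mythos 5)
Your strategy is exactly the one the paper uses: the paper attaches the two generating $(2,1)$-bisimplices of $[1]_h\times L$ to $P$ in two steps, the first along an inner horn $\Lambda^{1,\emptyset}[2,1]\rightarrow[2,1]$ handled by \ref{lem.gen-horns}, the second along a special horn handled by \ref{thm.lifting-thm}, with the endpoint copies $\{0\}_h\times L$ and $\{1\}_h\times L$ supplying the companionship units. So the approach is correct and the bookkeeping you defer is genuinely short (two cells). One concrete point needs fixing, however: the permitted family you name, $\Gamma^0_L[n,m]\rightarrow L[n,m]$, cannot attach any of the missing cells, since those horns live in bidegree $(n,m)$ with $m\geq 2$ vertical steps, whereas every cell of $[1]_h\times L$ not in $P$ is a $(2,1)$-bisimplex (two horizontal steps, one vertical). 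The second attachment must therefore be made along the \emph{transposed} horn $\Gamma^0_L[1,2]^\tp\rightarrow L[1,2]^\tp$, and \ref{thm.lifting-thm} is invoked for $\P^\tp$ rather than $\P$; this is legitimate because $\P^\tp$ is again a double Segal space and a $2$-cell is a companionship unit in $\P$ if and only if its transpose is one in $\P^\tp$, but it is a step your appeal to the (untransposed) argument for $[1]_v\times L$ in \ref{lem.companionship-units-fundbl} silently skips. With that adjustment your plan reproduces the paper's proof.
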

\begin{proof}
	Note that $[1]_h \times L$ is generated 
	by the bisimplices 
	\[
		\begin{tikzcd}[column sep = tiny, row sep = tiny]
			(0,0) \arrow[r] \arrow[d,equal] & (1,0) \arrow[r, equal]\arrow[d,equal] & (1,0) \arrow[d] \\ 
			(0,0) \arrow[r] & (1,0) \arrow[r] & (1,1),
		\end{tikzcd} \quad \begin{tikzcd}[column sep = tiny, row sep = tiny]
			(0,0) \arrow[r,equal] \arrow[d,equal] & (0,0) \arrow[r]\arrow[d] & (1,0) \arrow[d] \\ 
			(0,0) \arrow[r] & (0,1) \arrow[r] & (1,1).
		\end{tikzcd}
	\]
	We can attach the first bisimplex to $P$ by pushing out along $\Lambda^{1, \emptyset}[2,1] \rightarrow [2,1]$. Then we may attach 
	the second bisimplex to the resulting bisimplicial space via a pushout along $\Gamma^0_L[1,2]^\tp \rightarrow L[1,2]^\tp$. 
	So, the desired result follows from \ref{lem.gen-horns} and \ref{thm.lifting-thm}.
\end{proof}

\begin{proof}[Proof of \ref{thm.companions-fun-dblcat}]
	Suppose that (1) holds so that we obtain an extension 
	$$
	\bar{\alpha} :  X \times \comp \rightarrow \P 
	$$
	of $\alpha$. For any vertical arrow $f$ in $X$, we can consider the two restrictions 
	$$
	[1,1] \overset{c_1}{\underset{c_1}{\rightrightarrows}} Q \xrightarrow{q} [1]_v \times L \xrightarrow{f\times \mathrm{incl}} X \times \comp \rightarrow \P. 
	$$
	We already observed at the start of this subsection that 
	 $c_1$ and $c_2$ are co-equalized by $q$, hence the two restrictions are equivalent. 
	Thus the naturality 2-cell $\alpha$ for $f$ is companionable on account of 
	characterization (2) of \ref{prop.alternative-char-companionables}.

	Conversely, suppose that (2) holds. Then we must show that ${\alpha}$ lies in the image of the restriction monomorphism 
	$$
	\map_{\Cat^2(\S)}(\comp, \FFUN(X, \P)) \rightarrow \map_{\Cat^2(\S)}([1]_h, \FFUN(X, \P))
	$$
	on account of \ref{thm.htpy-coh-comp-2}. We may apply the same reasoning as in the proof of \ref{lem.companionship-units-fundbl} 
	to reduce to the cases that $X=[1]_h$ and $X=[1]_v$. 
	On account of \ref{thm.htpy-coh-comp-1} and \ref{lem.companionship-units-fundbl}, we must show that in these cases 
	a dotted extension in the diagram
	\[
		\begin{tikzcd}
			X \times [1]_h  \arrow[r, "\alpha"]\arrow[d] & \P \\
			X \times L \arrow[ur, dotted, "\bar{\alpha}"']
		\end{tikzcd}
	\]
	exists so that the restriction of $\bar{\alpha}$ to $\{x\} \times L$ classifies a companionship unit for each $x \in X$.

	Consider the case that $X = [1]_v$. Then $\alpha$ is the datum of a companionable 2-cell in $\P$. The associated companionship units 
	yield an extension $\alpha' : Q \rightarrow \P$ of $\alpha$. In light of characterization (2) 
	of \ref{prop.alternative-char-companionables}, 
	the companionability of $\alpha$ bears witnesses to the co-equalization of $c_1$ and $c_2$ by $\alpha'$, so that \ref{lem.companions-cotensor1} implies that the desired extension $\bar{\alpha}$ exists.  The case that 
	$X = [1]_h$ is covered by \ref{lem.companions-cotensor2}.
\end{proof}

\section{Application: double \texorpdfstring{$\infty$}{∞}-categories of \texorpdfstring{$(\infty,2)$}{(∞,2)}-functors}\label{section.app-lax-nt}

The goal of this section is to give an application of \ref{thm.companions-fun-dblcat} to $(\infty,2)$-category theory. 
Given two $(\infty,2)$-categories $\X$ and $\Y$, we will first show that one can construct a double $\infty$-category 
$$
\FFUN^\lax(\X,\Y) 
$$
with:
\begin{itemize}
	\item the objects given by functors $\X \rightarrow \Y$,
	\item the vertical arrows given by natural transformations between functors $\X \rightarrow \Y$,
	\item the horizontal arrows given by \textit{lax} natural transformations between functors $\X \rightarrow \Y$,
\end{itemize}
as a particular vertical cotensor product. We will then show that one can use \ref{thm.companions-fun-dblcat} 
to recover Haugseng's 
recognition theorem for adjunctions in $(\infty,2)$-categories 
of functors and lax natural transformations \cite[Theorem 4.6]{HaugsengLax}, for a particular 
choice of model for the Gray tensor product (cf.\ \ref{rem.gray-comparisons}).

\subsection{The Gray tensor product}\label{ssection.gray}
There is a wide array of different approaches to defining the Gray tensor product for $(\infty,2)$-categories. A nice 
summary of the different models can be found in the introduction of \cite{CampionMaehara}.  Since we have developed enough theory, we can give a brief and reasonably self-contained treatment of a model of this tensor product using double $\infty$-categorical technology 
that was proposed by Gaitsgory and Rozenblyum \cite[Section 10.4.5]{GR}. The following is justified by 
\ref{cor.sq-restrictions}:

\begin{definition}
	The  \textit{Gray tensor product} of $(\infty,2)$-categories $\X$ and $\Y$ is defined 
	to be the (essentially) unique $(\infty,2)$-category $\X \otimes \Y$ with the property that
	$$
	\map_{\Cat_{(\infty,2)}}(\X\otimes \Y, \mathscr{Z}) \simeq \map_{\DblCat_\infty^c}(\X_h \times \Y_v, \Sq(\mathscr{Z}))
	$$
	naturally for all $\mathscr{Z} \in \Cat_{(\infty,2)}$. 
\end{definition}

Note that the above definition is functorial in both variables so that we obtain a functor
$$
(-) \otimes (-) : \Cat_{(\infty,2)} \times \Cat_{(\infty, 2)} \rightarrow \Cat_{(\infty,2)}.
$$

\begin{remark}
It directly follows from these definitions that the Gray tensor product preserves colimits in each variable and that 
$[n] \otimes [m] \in \Cat_{(\infty,2)}$ agrees with the strict Gray tensor product of $[n]$ and $[m]$ for all $[n], [m] \in \Delta$.
\end{remark}

\begin{remark}\label{rem.gray-comparisons}
	This definition of the Gray tensor product has not yet been compared to other constructions 
	of Gray tensor products that appear in the literature. 
	
	However, we can say something about its restriction to $(\infty,1)$-categories:
	the restricted bifunctor $(-) \otimes (-) : \Cat_\infty \times \Cat_\infty \rightarrow \Cat_{(\infty,2)}$ is the unique 
	one that preserves 
	colimits in each variable and restricts to the strict Gray tensor product on $\Delta \times \Delta$. In light of \cite[Proposition 5.1.9]{HHLN}, 
	the Gray tensor product of Gagna, Harpaz and Lanari \cite{GagnaHarpazLanari} satisfies these conditions, thus it coincides with $(-) \otimes (-)$ when restricted 
	to $(\infty,1)$-categories.

	To compare with other Gray tensor products in full generality, one can readily compute the values of $[1;1] \otimes [1]$ and $[1;1] \otimes [1;1]$ in terms of certain 
	 pushouts by using the description $[1;1]_h = [0,1]^{\sqcup 2} \cup_{[0,0]^{\sqcup 2}} [1,1]$. This yields fundamental formulas where the only Gray tensor products appearing are reduced to the form $([1] \times [1]) \otimes [1]$ and $([1] \times [1]) \otimes ([1] \times [1])$.
	If these formulas are true for other Gray tensor products, this allows you to reduce to the above case since 
	every $(\infty,2)$-category can be written as a canonical colimit of the free cells $[0]$, $[1]$ and $[1;1]$.
\end{remark}

\begin{definition}
	Let $\X$ and $\Y$ be two $(\infty,2)$-categories. We define the \textit{double $\infty$-category 
	of functors from $\X$ to $\Y$ with lax natural transformations} as the vertical cotensor product
	$$
	\FFUN^\lax(\X, \Y) := [\X, \Sq(\Y)].
	$$
	The horizontal arrows of this double $\infty$-category correspond precisely to functors $[1] \otimes \X \rightarrow \Y$ and these are called \textit{lax natural transformations}.
\end{definition}

\begin{proposition}\label{prop.fun-lax-props}
	Let $\X$ and $\Y$ be $(\infty,2)$-categories, then: 
	\begin{enumerate}
		\item $\FFUN^\lax(\X,\Y)$ is complete and admits all companions,
		\item there is a natural equivalence $\Vert(\FFUN^\lax(\X,\Y)) \simeq \FUN(\X, \Y)$,
		\item a lax natural transformation $\alpha : h \rightarrow k$ is a companion if and only if 
		for every arrow $x \rightarrow y$ in $\X$, the associated lax square 
		\[
		\begin{tikzcd}
			h(x) \arrow[r, "\alpha_x"] \arrow[d]& |[alias=f]| k(x) \arrow[d] \\
			|[alias=t]| h(y) \arrow[r, "\alpha_y"'] & k(y)
			\arrow[from=f,to=t, Rightarrow]
		\end{tikzcd}
	\]
	commutes, i.e.\ the 2-cell filling the square is invertible.
	\end{enumerate}
\end{proposition}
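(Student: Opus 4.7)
The plan is to deduce (1) and (2) directly from closure properties established earlier, and to reduce (3) to Theorem \ref{thm.companions-fun-dblcat} applied to $\FFUN(\X_v, \Sq(\Y))$, combined with the characterization of companionable 2-cells in $\Sq$-constructions from Example \ref{ex.conjointable-sq}.

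For (1), I would argue completeness and the existence of companions separately. By Corollary \ref{cor.sq-restrictions}, $\Sq(\Y)$ is a complete double $\infty$-category, and in particular locally complete. Since $\DblCat_\infty^c$ is an exponential ideal of $\PSh(\Delta^{\times 2})$ by Proposition \ref{prop.exp-ideals-dss}, the internal hom $\FFUN(\X_v, \Sq(\Y)) = [\X, \Sq(\Y)]$ inherits completeness. For companions, Example \ref{ex.compconj-sq} shows that every vertical arrow of $\Sq(\Y)$ admits a companion, namely the corresponding horizontal arrow. Thus $\Sq(\Y)$ belongs to the class of Corollary \ref{cor.closure-equipments}(1), and so does the vertical cotensor $[\X, \Sq(\Y)] = \FFUN^\lax(\X, \Y)$. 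For (2), I would simply chain together Proposition \ref{prop.cotensor-vert}, which provides $\Vert([\X, \Sq(\Y)]) \simeq \FUN(\X, \Vert(\Sq(\Y)))$, and Proposition \ref{prop.vert-hor-sq}, which identifies $\Vert(\Sq(\Y)) \simeq \Y$.

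For (3), I would apply Theorem \ref{thm.companions-fun-dblcat} with $X = \X_v$ and $\P = \Sq(\Y)$, whose hypothesis of local completeness is met by Corollary \ref{cor.sq-restrictions}. The theorem translates the statement that $\alpha$ is a companion into the statement that for every vertical arrow $f : x \to y$ of $\X_v$ (equivalently, every arrow $x \to y$ in $\X$), the naturality 2-cell $\alpha|(f \times \id_{[1]_h})$ is companionable in $\Sq(\Y)$. By Example \ref{ex.conjointable-sq}, a 2-cell in $\Sq(\Y)$ is companionable if and only if the 2-cell filling the associated lax commutative square in $\Y$ is invertible, which is precisely condition (3).

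The main obstacle will be the bookkeeping in (3): one must verify that the naturality 2-cell of $\alpha$ in $\FFUN(\X_v, \Sq(\Y))$ corresponds, under the squares adjunction, to the lax square in $\Y$ displayed in the statement. Concretely, using that the left adjoint $\mathrm{Gr}$ of $\Sq$ satisfies $\mathrm{Gr}([1]_h \times \X_v) \simeq [1] \otimes \X$ and $\mathrm{Gr}([1]_h \times [1]_v) \simeq [1] \otimes [1]$, horizontal natural transformations in $\FFUN(\X_v, \Sq(\Y))$ correspond to functors $[1] \otimes \X \to \Y$, and restriction along a vertical arrow $f : [1]_v \to \X_v$ corresponds to precomposition with $[1] \otimes f : [1] \otimes [1] \to [1] \otimes \X$. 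Once this compatibility is unwound, identifying the horizontal boundary of the naturality 2-cell with $(\alpha_x, \alpha_y)$ and the vertical boundary with $(h(f), k(f))$ is immediate from Proposition \ref{prop.vert-hor-sq}, and the result follows.
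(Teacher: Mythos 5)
Your proposal is correct and follows essentially the same route as the paper, which proves (1) via \ref{prop.exp-ideals-dss} and \ref{cor.closure-equipments}, (2) via \ref{prop.cotensor-vert}, and (3) via \ref{thm.companions-fun-dblcat} together with \ref{ex.conjointable-sq}; you merely make explicit the auxiliary inputs (\ref{cor.sq-restrictions}, \ref{ex.compconj-sq}, \ref{prop.vert-hor-sq}) that the paper's one-line proof leaves implicit.
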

\begin{proof}
	 Note that (1) follows from \ref{prop.exp-ideals-dss} and \ref{cor.closure-equipments}. Assertion (2)
	 follows from \ref{prop.cotensor-vert} and (3) can be deduced from \ref{ex.conjointable-sq} and \ref{thm.companions-fun-dblcat}. 
\end{proof}

\begin{definition}
	We will write 
	$$
	\fun^\lax(\X,\Y) \text{ and }\FUN^\lax(\X, \Y)
	$$
	for the horizontal $\infty$- and $(\infty,2)$-categorical fragments of $\FFUN^\lax(\X,\Y)$ respectively. 
\end{definition}

\subsection{Lax adjunctions between functors}
We can now give a double categorical proof of Haugseng's 
recognition theorem for adjunctions in $(\infty,2)$-categories 
of functors and lax natural transformations \cite[Theorem 4.6]{HaugsengLax}:

\begin{theorem}\label{thm.haugseng}
	Let $v : k \rightarrow h$ be a lax natural transformation between functors $k,h: \X \rightarrow \Y$. Then the 
	following assertions are equivalent:
	\begin{enumerate}
		\item $v$ is a right adjoint in $\FUN^\lax(\X, \Y)$,
		\item  $v$ is a conjoint in $\FFUN^\lax(\X,\Y)$,
		\item for any arrow $x \rightarrow y$ in $\X$, the horizontal morphisms in the lax square
		\[
			\begin{tikzcd}
				k(x) \arrow[r, "v_x"] \arrow[d]& |[alias=f]| h(x) \arrow[d] \\
				|[alias=t]| k(y) \arrow[r, "v_y"'] & h(y)
				\arrow[from=f,to=t, Rightarrow]
			\end{tikzcd}
		\]
		in $\Y$ admit left adjoints and the associated mate is an equivalence (cf. \ref{ex.conjointable-sq}).
		
	\end{enumerate}
	If these equivalent conditions are met, the left adjoint $u$ of $v$ is strict and for any arrow $x \rightarrow y$ in $\X$, the horizontal morphisms in the 
	commutative square
	\[
			\begin{tikzcd}
				h(x) \arrow[r, "u_x"] \arrow[d]& |[alias=f]| k(x) \arrow[d] \\
				|[alias=t]| h(y) \arrow[r, "u_y"] & k(y)
			\end{tikzcd}
		\]
		are given by left adjoints to $v_x$ and $v_y$, and its commutativity is witnessed by the mate of the square in (2).
\end{theorem}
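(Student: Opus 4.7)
The plan is to run the cycle $(1) \Rightarrow (3) \Rightarrow (2) \Rightarrow (1)$; the addendum will then be read off from the construction in the implications $(3) \Rightarrow (2) \Rightarrow (1)$. The implications $(3) \Rightarrow (2)$ and $(2) \Rightarrow (1)$ will follow formally from the companion--conjoint machinery developed earlier, while $(1) \Rightarrow (3)$ is the only substantive step.

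For $(3) \Rightarrow (2)$: apply the horizontal-opposite dual of \ref{thm.companions-fun-dblcat} to $X = \X_v$ and $\P = \Sq(\Y)$, which is locally complete by \ref{cor.sq-restrictions}. This identifies conjoints in $\FFUN^\lax(\X, \Y) = \FFUN(\X_v, \Sq(\Y))$ with those horizontal natural transformations whose naturality 2-cell at each arrow $f : x \to y$ of $\X$ is conjointable in $\Sq(\Y)$. By \ref{ex.conjointable-sq}, such conjointability in $\Sq(\Y)$ coincides exactly with the left-adjointability and mate-equivalence condition in (3). For $(2) \Rightarrow (1)$: given that $v$ is the conjoint of a vertical arrow $u : h \to k$ in $\FFUN^\lax(\X, \Y)$, \ref{prop.fun-lax-props}(1) furnishes a companion $U$ of $u$, and \ref{prop.conj-comp-adjunction} then produces an adjunction $(U, v)$ in $\Hor(\FFUN^\lax(\X, \Y)) = \FUN^\lax(\X, \Y)$, using \ref{prop.fun-lax-props}(2). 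The addendum is immediate: $u$ is strict by construction (being a vertical arrow of $\FFUN^\lax(\X, \Y)$), and its commutative naturality squares at each $f$ are extracted directly from the conjunction datum, which via $(3) \Rightarrow (2)$ corresponds precisely to the mate of the lax naturality square of $v$.

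The remaining implication $(1) \Rightarrow (3)$ is where the main technical content sits. Evaluation at each $x \in \X$ gives a functor of $(\infty,2)$-categories $\FUN^\lax(\X, \Y) \to \Y$ which preserves adjunctions, so pointwise adjunctions $U_x \dashv v_x$ are immediate. The mate equivalence at each $f : x \to y$ will be verified by pasting the lax naturality 2-cells of $U$ and $v$ against the components of the unit and counit of $(U, v)$ and invoking the triangle identities to show that the mate of the $v$-square and the $U$-square are mutually inverse 2-cells in $\Y$. This bookkeeping of horizontal pastings inside the functor double $\infty$-category is the main obstacle; once it is carried out, uniqueness of adjoints ensures the original $U$ coincides with the strict companion produced by the conjunction, closing the cycle and confirming the addendum.
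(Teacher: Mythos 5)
Your proposal is essentially the paper's argument, reorganized as a cycle $(1)\Rightarrow(3)\Rightarrow(2)\Rightarrow(1)$ rather than the paper's two separate equivalences. Your $(3)\Leftrightarrow(2)$ and $(2)\Rightarrow(1)$ steps coincide with the paper's: the former is the dual of \ref{thm.companions-fun-dblcat} applied to $[\X,\Sq(\Y)]$ together with \ref{ex.conjointable-sq}, and the latter is \ref{prop.conj-comp-adjunction} plus the existence of companions from \ref{prop.fun-lax-props}. The difference lies in where the substantive content is placed. The paper proves $(1)\Rightarrow(2)$ by means of a standalone lemma (\ref{prop.adjunctions-cotensor-hor}): for a general cotensor $[X,\P]$, a horizontal arrow that is a right adjoint in $\Hor([X,\P])$ and pointwise a conjoint is itself a conjoint; the pointwise hypothesis is automatic for $\P=\Sq(\Y)$ since conjoints there are exactly right adjoints. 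The proof of that lemma first pins down the left adjoint as the pointwise companion (via \ref{prop.conj-comp-adjunction} and uniqueness of adjoints), decomposes the unit and counit componentwise into pastings of companion/conjoint (co)units, and then reduces, via \ref{thm.companions-fun-dblcat}, to checking that two explicit pastings are mutually inverse --- which is precisely the doctrinal-adjunction mate computation you defer in your $(1)\Rightarrow(3)$ as ``bookkeeping of horizontal pastings.'' So the obstacle you flag is genuinely the heart of the matter, and your sketch does not discharge it: to complete your route you must either carry out that pasting argument (using the coherence of $\eta$ and $\epsilon$ as 2-cells of $[\X,\Sq(\Y)]$ restricted along each $f:[1]_v\rightarrow \X_v$, not merely their components) or invoke the paper's lemma. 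Two smaller points: your justification that $u$ is strict ``being a vertical arrow of $\FFUN^\lax(\X,\Y)$'' is imprecise --- the left adjoint is a horizontal arrow, and its strictness follows because it is the companion of a vertical arrow, companions being exactly the strict transformations by \ref{prop.fun-lax-props}(3); and in $(1)\Rightarrow(3)$ the existence of pointwise left adjoints via evaluation is fine, but the mate-equivalence claim cannot be checked purely pointwise, which is why the restricted modification data is indispensable.
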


We will need the following lemma:

\begin{lemma}\label{prop.adjunctions-cotensor-hor}
	Let $\P$ be a locally complete double Segal space that admits all companions. 
	Suppose that $X$ is a 2-fold Segal space and $v : k \rightarrow h$ is a horizontal arrow 
	in $[X,\P]$.
	 Then the following assertions are equivalent:
	\begin{enumerate}
		\item $v$ is a conjoint,
		\item $v$ is a right adjoint in $\Hor([X,\P])$ and  $v_x : k(x) \rightarrow h(x)$ is a conjoint in $\P$ for all $x \in X$.
	\end{enumerate}
	If these equivalent conditions are met, then the 
	left adjoint of $v$ is given by the companion of the vertical arrow 
	in $[X,\P]$ whose conjoint is $v$. 
\end{lemma}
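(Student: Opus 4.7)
The plan is to prove the two implications separately, reducing the harder direction to \ref{thm.companions-fun-dblcat} and its dual.

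For (1) $\Rightarrow$ (2): Suppose $v$ is a conjoint in $[X,\P]$ of some vertical arrow $u : h \to k$. Since $\P$ is locally complete and admits all companions, \ref{cor.closure-equipments} shows that $[X,\P]$ also admits all companions (and it is locally complete by \ref{prop.exp-ideals-dss}); let $U$ be a companion of $u$ in $[X,\P]$. Then \ref{prop.conj-comp-adjunction} immediately yields that $(U,v)$ is an adjunction in $\Hor([X,\P])$, so $v$ is a right adjoint there. Restricting the conjunction $(u,v)$ along each object $x \in X$ produces a conjunction $(u_x, v_x)$ in $\P$, showing that each $v_x$ is a conjoint.

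For (2) $\Rightarrow$ (1), let $U$ denote the left adjoint of $v$ in $\Hor([X,\P])$; pointwise restriction gives adjunctions $(U_x, v_x)$ in $\Hor(\P)$. By hypothesis each $v_x$ is a conjoint of some unique $u_x$ in $\P$, and since $\P$ admits all companions, $u_x$ has a companion $U_x'$, so \ref{prop.conj-comp-adjunction} provides an adjunction $(U_x', v_x)$; essential uniqueness of left adjoints then forces $U_x \simeq U_x'$. Hence each $U_x$ is simultaneously a left adjoint of $v_x$ and a companion of $u_x$ in $\P$. By the dual of \ref{thm.companions-fun-dblcat}, to show $v$ is a conjoint in $[X,\P]$ it suffices to verify that each naturality 2-cell $\alpha_f$ of $v$, for $f : x \to y$ an arrow of $X$, is conjointable in $\P$. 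The first companionability condition is met; what remains is the invertibility of the associated mate $\mu_f : k(f)\,u_x \to u_y\,h(f)$ in $\Vert(\P)$.

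The key point is that the adjunction $(U,v)$ in $\Hor([X,\P])$ carries more information than just pointwise adjunctions: its unit $\eta^{\mathrm{ad}}$ and counit $\epsilon^{\mathrm{ad}}$ are horizontal 2-cells in $[X,\P]$, equivalently maps $X_v \times [1;1]_h \to \P$, and restricting along any $f : [1]_v \to X_v$ produces coherence data relating $\eta^{\mathrm{ad}}_x, \eta^{\mathrm{ad}}_y$ and $\epsilon^{\mathrm{ad}}_x, \epsilon^{\mathrm{ad}}_y$ with the naturality 2-cells of $v$ and $U$. Using the identification (from the proof of \ref{prop.conj-comp-adjunction}) of $\eta^{\mathrm{ad}}_x$ and $\epsilon^{\mathrm{ad}}_x$ as horizontal pastings of the companionship and conjunction units/counits at $x$, together with the triangle identities at $x$ and $y$, one can construct an inverse to $\mu_f$ out of this coherence data. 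Equivalently, the naturality 2-cell $\gamma_f$ of $U$ is companionable, and under the pointwise companion/conjoint correspondence between $U_x$ and $v_x$ the mate of $\gamma_f$ is identified with an inverse of $\mu_f$. Consequently $v$ is a conjoint of a vertical arrow $u$ in $[X,\P]$ whose pointwise components are the $u_x$, and (1) $\Rightarrow$ (2) applied to $u$ identifies its companion with the left adjoint $U$ of $v$, giving the final description.

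The main obstacle is the construction of the inverse to $\mu_f$ from the data of the adjunction $(U,v)$ in $\Hor([X,\P])$; this is a diagram chase that requires carefully combining the naturality of $\eta^{\mathrm{ad}}$ and $\epsilon^{\mathrm{ad}}$ with respect to vertical arrows of $X$ with the companionship triangle identities at the source and target objects.
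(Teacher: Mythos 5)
Your proposal is correct and follows essentially the same route as the paper: both directions use \ref{cor.closure-equipments} and \ref{prop.conj-comp-adjunction}, and the converse is reduced via \ref{thm.companions-fun-dblcat} to conjointability of the naturality 2-cells, verified by identifying the pointwise adjunction unit and counit with horizontal pastings of companionship and conjunction (co)units and invoking the triangle identities. The paper likewise leaves the final mate-inversion check as a diagram chase, exhibiting the explicit candidate inverse pasting.
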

\begin{proof}
	Note that $[X,\P]$ again admits all companions on account of \ref{cor.closure-equipments}.
	If (1) holds, then the left adjoint to $v$ exists and is given by the companion 
	of the vertical arrow of $[X, \P]$ whose conjoint is $v$, see \ref{prop.conj-comp-adjunction}. Clearly, 
	each component $v_x$ is also a conjoint in this case.

	Conversely, let us assume that (2) holds. Suppose that $u : h \rightarrow k$ 
	is the left adjoint of $v$ and denote the unit and counit 
	of the adjunction $(u,v)$ by $\eta$ and $\epsilon$ respectively. Moreover, 
	let us write $f_x : h(x) \rightarrow k(x)$ for the vertical arrow in $\P$ whose conjoint is $v_x$ for each $x \in X$. 
	Then it must follow that $u_x$ is the companion of $f_x$ by \ref{prop.conj-comp-adjunction} and unicity of left adjoints. Moreover, 
	$\eta(x)$ and $\epsilon(x)$ can be identified with the horizontal pastings of the companion and conjoint unit, and companion and conjoint counit,
	\[
		\begin{tikzcd}
			h(x) \arrow[r, equal, ""name=f]\arrow[d,equal] & h(x) \arrow[r,equal, ""name=f2]\arrow[d, "f_x"] & h(x) \arrow[d,equal] \\
			h(x) \arrow[r, "u_x"name=t] & k(x) \arrow[r, "v_x"name=t2] & h(x),
			\arrow[from=f,to=t,Rightarrow, shorten <= 6pt]
			\arrow[from=f2,to=t2,Rightarrow, shorten <= 6pt]
		\end{tikzcd}
		\quad \text{ and } \quad
		\begin{tikzcd}
			k(x) \arrow[r, "v_x"name=f]\arrow[d,equal] & h(x)\arrow[d,"f_x"] \arrow[r, "u_x"name=f2] & k(x)\arrow[d,equal] \\ 
			k(x) \arrow[r, equal, ""name=t] & k(x) \arrow[r,equal, ""name=t2] & k(x),
			\arrow[from=f,to=t,Rightarrow, shorten <= 6pt]
			\arrow[from=f2,to=t2,Rightarrow, shorten <= 6pt]
		\end{tikzcd}
	\] 
	respectively. To show that $v$ is a conjoint, \ref{thm.companions-fun-dblcat} implies that 
	it suffices to show 
	that for any arrow $f : x \rightarrow y$ in $X$, the pastings 
	\[
	\begin{tikzcd}
		h(x) \arrow[r,equal, ""name=a1] \arrow[d,"f_x"'] & h(x)\arrow[d, equal] \\ 
		k(x) \arrow[r, "v_x"name=a2]\arrow[d,"k(f)"'] & h(x) \arrow[d,"h(f)"] \\
		k(y) \arrow[r, "v_y"name=a3]\arrow[d,equal] & h(y) \arrow[d,"f_y"] \\
		k(y) \arrow[r,equal, ""name=a4] & k(y),
		\arrow[from=a1,to=a2,Rightarrow, shorten <= 6pt]
		\arrow[from=a2,to=a3,Rightarrow, shorten <= 6pt]
		\arrow[from=a3,to=a4,Rightarrow, shorten <= 6pt]
	\end{tikzcd} \quad \text { and }\quad
	\begin{tikzcd}
		h(x) \arrow[r,equal, ""name=a1] \arrow[d,equal] & h(x)\arrow[d, "f_x"] \\ 
		h(x) \arrow[r, "u_x"name=a2]\arrow[d,"h(f)"'] & k(x) \arrow[d,"k(f)"] \\
		h(y) \arrow[r, "u_y"name=a3]\arrow[d, "f_y"'] & k(y) \arrow[d,equal] \\
		k(y) \arrow[r,equal, ""name=a4] & k(y),
		\arrow[from=a1,to=a2,Rightarrow, shorten <= 6pt]
		\arrow[from=a2,to=a3,Rightarrow, shorten <= 6pt]
		\arrow[from=a3,to=a4,Rightarrow, shorten <= 6pt]
	\end{tikzcd}
	\]
	are inverses to each other in $\Vert(\P)(h(x), k(y))$. 
	This is now readily verified using the coherence and above pointwise decompositions of $\eta$ and $\epsilon$, and the companionship and conjunction identities.
\end{proof}

\begin{proof}[Proof of \ref{thm.haugseng}]
	The equivalence of (1) and (2) follows from \ref{prop.adjunctions-cotensor-hor} and the fact that all horizontal arrows in $\Sq(\Y)$ are companions. 
	In turn, the equivalence of the assertions (2) and (3) follows from \ref{thm.companions-fun-dblcat} and \ref{ex.conjointable-sq}. The final claim 
	in the theorem statement follows from 
	\ref{thm.companions-fun-dblcat} as well.
\end{proof}

\begin{corollary}\label{cor.thm-haugseng}
	Let $u : h \rightarrow k$ be a lax natural transformation between functors $h,k : \X \rightarrow \Y$. Then the following assertions are equivalent:
	\begin{enumerate}
		\item $u$ is a left adjoint in $\FUN^\lax(\X, \Y)$,
		\item $u$ is strict and admits a conjoint in $\FFUN^\lax(\X,\Y)$,
		\item $u$ is strict and each component $u_x : h(x) \rightarrow k(x)$ is a left adjoint in $\Y$.
	\end{enumerate}
	If these equivalent conditions are met, then for any arrow $x \rightarrow y$ in $\X$, the horizontal morphisms in the 
	lax naturality square of the right adjoint $v$ 
	\[
			\begin{tikzcd}
				k(x) \arrow[r, "v_x"] \arrow[d]& |[alias=f]| h(x) \arrow[d] \\
				|[alias=t]| k(y) \arrow[r, "v_y"'] & h(y)
				\arrow[from=f,to=t, Rightarrow]
			\end{tikzcd}
		\]
	are given by left adjoints to $v_x$ and $v_y$, and the filling 2-cell is given by the mate of the corresponding 
	naturality square of $u$. 
\end{corollary}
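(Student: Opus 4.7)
The plan is to deduce this from Theorem \ref{thm.haugseng} together with Propositions \ref{prop.conj-comp-adjunction} and \ref{prop.fun-lax-props}, by translating freely between adjunctions, companions, and conjoints in $\FFUN^\lax(\X,\Y)$.

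I would first establish (1) $\Leftrightarrow$ (2). Starting from (2), strictness of $u$ identifies $u$ as the companion of some vertical natural transformation $\tilde{u}$ in $\FFUN^\lax(\X,\Y)$ by Proposition \ref{prop.fun-lax-props}(3); a conjoint $v$ of $\tilde{u}$ then yields, via Proposition \ref{prop.conj-comp-adjunction}, an adjunction $(u,v)$ in $\Hor(\FFUN^\lax(\X,\Y)) = \FUN^\lax(\X,\Y)$. Conversely, starting from (1), let $v$ be a right adjoint of $u$; Theorem \ref{thm.haugseng} exhibits $v$ as a conjoint of some $\tilde{u}'$, and since $\FFUN^\lax(\X,\Y)$ admits all companions by Proposition \ref{prop.fun-lax-props}(1), $\tilde{u}'$ has a companion $u'$ which, via Proposition \ref{prop.conj-comp-adjunction}, is also a left adjoint of $v$. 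Uniqueness of left adjoints forces $u \simeq u'$, so $u$ is a companion (hence strict by Proposition \ref{prop.fun-lax-props}(3)) and $\tilde{u}'$ is a vertical arrow witnessing (2).

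The implication (1) $\Rightarrow$ (3) is then immediate: evaluation at $x \in \X$ is a functor of $(\infty,2)$-categories $\FUN^\lax(\X,\Y) \to \Y$ and preserves the adjunction $u \dashv v$, so $u_x \dashv v_x$ in $\Y$.

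The main obstacle will be the reverse direction (3) $\Rightarrow$ (2). Given (3), $u$ is the companion of some $\tilde{u}$ in $[\X, \Sq(\Y)]$, and each $\tilde{u}_x$ acquires a conjoint $v_x$ in $\Sq(\Y)$ by Example \ref{ex.compconj-sq}. By Lemma \ref{prop.adjunctions-cotensor-hor}, in order to conclude that $\tilde{u}$ itself admits a conjoint it suffices to produce a right adjoint $v$ of $u$ in $\FUN^\lax(\X,\Y)$; the componentwise conjoint condition of that lemma then follows automatically from evaluation. The candidate $v$ has components $v_x$ as in (3), and lax naturality $2$-cells given by the mates of the (invertible, by strictness) naturality squares of $u$. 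The chief technical difficulty is assembling this pointwise data into a coherent lax natural transformation in the $\infty$-categorical sense; this could be carried out via the homotopy-coherent mate calculus, or alternatively by invoking Corollary \ref{cor.htpy-coh-conj} (and the dual of Theorem \ref{thm.htpy-coh-comp-1}), using that pointwise conjointability ensures that the classifying map $[1]_v \to [\X, \Sq(\Y)]$ of $\tilde{u}$ extends pointwise out of $\conj$, and exploiting uniqueness of such extensions to glue them into a single global extension. Once $v$ has been constructed, Theorem \ref{thm.haugseng} applied to $v$ recognises it as a right adjoint of $u$ (since the mate of $v$'s square recovers the invertible filling of $u$'s square), and the final description of $v$'s naturality squares as mates of $u$'s follows from the ``if these conditions are met'' clause of that theorem.
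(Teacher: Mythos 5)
The paper itself records no proof of this corollary, treating it as a formal consequence of \ref{thm.haugseng}, and your handling of the formal implications is correct: (2) $\Rightarrow$ (1) via \ref{prop.fun-lax-props}(3) and \ref{prop.conj-comp-adjunction}, (1) $\Rightarrow$ (2) via \ref{thm.haugseng} together with uniqueness of left adjoints, and (1) $\Rightarrow$ (3) by evaluating at objects. You also correctly isolate (3) $\Rightarrow$ (2) as the only direction that does not follow formally from \ref{thm.haugseng} (whose condition (3) presupposes that the right adjoint $v$ already exists), and your second proposed strategy --- descent over a cell decomposition of $\X$ using the monomorphism statement of \ref{cor.htpy-coh-conj} applied to the locally complete double $\infty$-category $[\X,\Sq(\Y)]$ and to each $\FFUN(C,\Sq(\Y))$ for $C$ a cell --- is exactly the mechanism the paper uses for the analogous reductions in \ref{lem.companionship-units-fundbl} and \ref{thm.companions-fun-dblcat}.

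The gap is in how that descent terminates. Writing $\X_v$ as a colimit of cells $[n,m]$, the monomorphism argument shows that $\tilde u$ extends over $\conj$ globally if and only if its restriction along \emph{every} cell $[n,m]\to\X_v$ extends; your sketch only invokes extendability ``pointwise'', i.e.\ over the $0$-cells, where it is supplied by the hypothesis that each $u_x$ is a left adjoint. That is not sufficient: the essential base case is a $1$-cell $f\colon x\to y$ of $\X$, where one must show that the commutative naturality square of $\tilde u$ at $f$, viewed as a vertical arrow of $\FFUN([1]_v,\Sq(\Y))$, admits a conjoint. This is precisely where the strictness of $u$ enters (one constructs the candidate conjoint as the mate square, a finite pasting of the invertible naturality $2$-cell with the conjunction units and counits of $(u_x,v_x)$ and $(u_y,v_y)$ in $\Sq(\Y)$, and then verifies via the conjoint dual of \ref{thm.companions-fun-dblcat} and \ref{ex.conjointable-sq} that its mate is again invertible); if the object-level data alone sufficed, strictness would be superfluous in (3), which it is not. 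So your argument as written reduces the problem correctly but omits the step that carries all the mathematical content; supplying the $[1]_v$-case (the $[1]_h$-cells are degenerate in $\X_v$ and the $[1,1]$-cells then follow) would complete the proof.
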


\begin{remark}
	A first complete proof of \ref{thm.haugseng}, albeit using a different definition of the Gray tensor product, was 
	recently given by Abell\'an, Gagna and Haugseng \cite{AbellanGagnaHaugseng}.
\end{remark}

\begin{remark}
	One readily verifies that  \ref{thm.haugseng} and \ref{cor.thm-haugseng} also go through whenever $\X \in \PSh(\Delta^{\times 2})_{\mathrm{deg}}$, and $\Y$ is a locally complete 2-fold Segal space. In this case, one can still 
	define $\FFUN^\lax(\X,\Y) := [\X, \Sq(\Y)]$, which will now be a locally complete double Segal space.
\end{remark}

\nocite{*}
\bibliographystyle{amsalpha}
\bibliography{Comp}

\end{document}